
\documentclass[noams]{compositio}


\usepackage{amsmath,arydshln,multirow}

\usepackage{hyperref}
\usepackage{arydshln}
\usepackage{cases}
\usepackage{quiver}
\usepackage{amsfonts}
\usepackage{bm}
\usepackage{arydshln}
\usepackage{amsfonts,amssymb,amscd,bbm,mathrsfs,dsfont}
\usepackage{mathrsfs}
\usepackage{pb-diagram}
\usepackage{amssymb}

\usepackage[all]{xy}

\newtheorem{Thm}{THEOREM}[section]

\newtheorem{Prop}[Thm]{PROPOSITION}
\newtheorem{Lem}[Thm]{LEMMA}
\newtheorem{Cor}[Thm]{COROLLARY}

\theoremstyle{definition}
\newtheorem{Def}[Thm]{DEFINITION}
\newtheorem{example}[Thm]{EXAMPLE}
\newtheorem{proposition-definition}[Thm]{Proposition-Definition}

\newtheorem{conj}[Thm]{CONJECTURE}

\theoremstyle{remark}
\newtheorem{Rem}[Thm]{REMARK}

\newtheorem{Convention}[Thm]{NOTATION}

\begin{document}

\title{On the Boston's Unramified Fontaine-Mazur Conjecture}
\author{Yufan Luo}
\email{yufanluo@hotmail.com}

\classification{11F80, 11R32, 20E18, 22E35.}
\keywords{Fontaine-Mazur Conjecture, Galois extensions, profinite groups, $p$-adic analytic groups}
\thanks{The author is supported by the China Scholarship Council (CSC, grant number: 201908440220).}

\begin{abstract}
This paper studies the Unramified Fontaine-Mazur Conjecture for $ p $-adic Galois representations and its generalizations. We prove some basic cases of the conjecture and provide some useful criterions for verifying it. In addition, we propose several different strategies to attack the conjecture and reduce it to some special cases.
We also prove many new results of the conjecture in the two-dimensional case. Furthermore, we also study the unramified Galois deformation rings.  Assuming the Unramified Fontaine-Mazur conjecture, we prove that the generic fiber of the unramified deformation ring is a finite direct product of fields. In particular, the unramified deformation ring has only finitely many $\overline{\mathbb{Q}}_{p}$-valued points. We also give some counterexamples to the so-called dimension conjecture for Galois deformation rings assuming the conjecture.
\end{abstract}

\maketitle

\vspace*{6pt}

\section{Introduction}

\subsection{Background}
Let $ p $ be a prime. The Fontaine-Mazur Conjecture (see \cite[Conjecture 1]{MR1363495}) is one of the central problems in modern algebraic number theory. 

\begin{conj}[Fontaine-Mazur Conjecture]\label{FM}
	Let $ K $ be a number field and $ G_{K} $ the absolute Galois group of $ K $. An irreducible $ p $-adic representation $ \rho:G_{K} \to {\rm GL}_{n}(\mathbb{Q}_{p})$ of $ G_{K} $ is geometric (i.e. it is unramifed outside a finite set of primes of $K$ and its restriction to the decomposition groups $ D_{p} $ is potentially semi-stable) if and only if it comes from algebraic geometry (i.e. if it is isomorphic to a subquotient of an \'etale cohomology group with coeffcients in $ \mathbb{Q}_{p}(r) $ for some Tate twist $ r\in \mathbb{Z} $, of a smooth projective variety over $ K $.)
\end{conj}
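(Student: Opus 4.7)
The statement has two directions, and any proof strategy must treat them quite asymmetrically: one is a compilation of deep but established results in $p$-adic Hodge theory, the other is a wide-open conjecture. My plan is therefore to outline a strategy for each separately, and to be honest about where the genuine obstruction lies.

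For the ``comes from algebraic geometry implies geometric'' direction, I would start from the observation that if $\rho$ occurs as a subquotient of $H^{i}_{\mathrm{et}}(X_{\overline{K}}, \mathbb{Q}_{p}(r))$ for some smooth projective $X/K$, then by spreading $X$ out to a smooth proper model over $\mathrm{Spec}\,\mathcal{O}_{K,S}$ for some finite $S$ and invoking smooth and proper base change, $\rho$ is automatically unramified outside $S \cup \{\mathfrak{p} \mid p\}$. Potential semistability at primes above $p$ would then follow from the $p$-adic comparison theorems of Fontaine, Faltings, Tsuji, and Scholze, applied after a finite extension realising semistable reduction of $X$ (reducing to this case via de Jong's alterations). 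So this half of the statement is essentially a packaging exercise.

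For the converse, which is the real content of the conjecture, the plan is Langlands-theoretic: to attach to any geometric $\rho$ a cuspidal automorphic representation $\pi$ on $\mathrm{GL}_{n}(\mathbb{A}_{K})$ whose Satake parameters match those of $\rho$ at unramified primes, and then to realise $\rho$ inside the étale cohomology of an associated Shimura variety via the known construction of Galois representations attached to cohomological automorphic forms. To produce such a $\pi$, the natural approach is to select a residual representation $\overline{\rho}$ whose modularity is accessible (through Serre's conjecture when $n=2$ and $K=\mathbb{Q}$, or through base change / potential automorphy techniques of the Harris--Shepherd-Barron--Taylor school in the self-dual CM setting), and then lift modularity to $\rho$ itself by a Taylor--Wiles--Kisin patching argument identifying the relevant universal deformation ring with a Hecke algebra, $R^{\square}_{\overline{\rho}} = \mathbb{T}$.

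The main obstacle is exactly what everyone expects it to be: producing $\pi$ from $\overline{\rho}$ in full generality. Only for $n=2$ over $\mathbb{Q}$ has the conjecture been essentially settled, through the work of Kisin and Emerton exploiting Colmez's $p$-adic local Langlands correspondence for $\mathrm{GL}_{2}(\mathbb{Q}_{p})$; for higher $n$, non-totally-real ground fields, or residually reducible $\overline{\rho}$, no general technique exists. Since the paper restricts to the unramified case, the $p$-adic Hodge condition is vacuous at all places and the conjecture reduces to the prediction that such $\rho$ must have finite image; the sequel should therefore abandon the automorphic route and instead attack the image of $\rho$ directly, using the structure theory of $p$-adic analytic groups in the spirit of Boston's original formulation --- this is presumably where the actual arguments of the paper will live.
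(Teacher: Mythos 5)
The statement you were asked about is Conjecture \ref{FM} itself: the paper never proves it and never claims to. It is quoted verbatim from Fontaine--Mazur as background and is only ever used as a \emph{hypothesis} (e.g.\ in Theorem \ref{hah}, where it is combined with the Tate conjecture to deduce the unramified Conjecture \ref{UFM}); all of the paper's actual arguments concern the unramified and Boston variants. So there is no proof in the paper against which your attempt can be matched, and your closing remark correctly anticipates where the paper's real work lies.

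Judged as a proof attempt, your text has a genuine and (at present) unavoidable gap, which you yourself flag: the direction ``geometric $\Rightarrow$ comes from algebraic geometry'' is open. The automorphic route you sketch --- attach a cuspidal $\pi$ to $\rho$ via a residual modularity input and an $R=\mathbb{T}$ patching argument, then realize $\rho$ in the cohomology of a Shimura variety --- is only carried out in special situations (essentially $n=2$, $K=\mathbb{Q}$, odd, irreducible residual representation, via Kisin and Emerton using Colmez's $p$-adic local Langlands for ${\rm GL}_{2}(\mathbb{Q}_{p})$, or potentially self-dual settings over CM fields), and even when automorphy is known one still needs a geometric realization of $\rho$ as a subquotient of the \'etale cohomology of a smooth projective variety with a Tate twist, which is an extra step beyond ``automorphic.'' The other direction (motivic $\Rightarrow$ geometric) is indeed a packaging of smooth-proper base change plus the comparison theorems (Faltings, Tsuji), as you say, so your outline is a fair description of the state of the art rather than a proof; no reviewer could accept it as one, and the paper does not expect one.
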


When we assume some standard conjectures in algebraic geometry, there is a special case of Conjecture \ref{FM}, the so-called unramified Fontaine-Mazur Conjecture (see \cite[Conjecture 5a]{MR1363495}), as follows.

\begin{conj}[Unramified Fontaine-Mazur Conjecture]\label{UFM}
	If $ K $ is a number field and $ S $ is a finite set of primes of $ K $ not containing any prime above $ p $, then any continuous homomorphism $ \rho:G_{K,S}\to {\rm GL}_{n}(\mathbb{Q}_{p}) $ has finite image where $ G_{K,S} $ is the Galois group of the maximal extension of $ K $ unramified outside $ S $.
\end{conj}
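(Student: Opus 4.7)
The plan is to reduce Conjecture~\ref{UFM} to a structural question about the maximal pro-$p$ quotient of $G_{K,S}$, and then to exploit class field theory for the abelian contributions.

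First, I would pass to the pro-$p$ case. Since the image $H := \rho(G_{K,S})$ is compact, it lies (after conjugation) in $\mathrm{GL}_n(\mathbb{Z}_p)$, and the mod-$p$ reduction $\bar\rho$ has finite image. The kernel of $\bar\rho$ corresponds via Galois theory to a finite extension $L/K$ unramified outside $S$. Replacing $(K,S)$ by $(L,S_L)$, we may assume $\rho(G_{L,S_L}) \subset 1 + p M_n(\mathbb{Z}_p)$, which is pro-$p$. It therefore suffices to show that the maximal pro-$p$ quotient of $G_{K,S}$ has no infinite $p$-adic analytic quotient whenever $S$ contains no prime above $p$.

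Next, by Lazard's theorem, any infinite $p$-adic analytic pro-$p$ group $U$ contains an open normal uniform subgroup whose associated $\mathbb{Z}_p$-Lie algebra $\mathfrak{g}$ has dimension $d\geq 1$. I would argue by induction on $d$. If $\mathfrak{g}$ is solvable, then $\mathfrak{g}/[\mathfrak{g},\mathfrak{g}]\neq 0$, so, after passing to an open subgroup of $U$ (which corresponds to a finite extension $M/K$ unramified outside $S$), we obtain a continuous surjection onto $\mathbb{Z}_p$. Abelianizing gives a surjection of the pro-$p$ part of $G_{M,S_M}^{\mathrm{ab}}$ onto $\mathbb{Z}_p$. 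But class field theory computes this pro-$p$ abelianization as a quotient of an $S_M$-ray class group; because no $v\in S_M$ lies above $p$, the local units $\mathcal{O}_{M_v}^\times$ have only finite pro-$p$ part (the $p$-power roots of unity in the residue field), so the pro-$p$ abelianization is itself finite. This contradicts the surjection onto $\mathbb{Z}_p$ and settles the solvable case.

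The hard part, and where I expect the argument to break down without genuinely new ideas, is the non-solvable case. Here $\mathfrak{g}$ may contain a semisimple factor such as $\mathfrak{sl}_2$, whose commutator subalgebra equals itself; the corresponding uniform group then has finite abelianization, and the class-field-theoretic trick above is powerless. One must show directly that no finite-index subgroup of $G_{K,S}$ surjects onto an open compact subgroup of a semisimple $p$-adic Lie group. Possible avenues include: exploiting that decomposition groups at $v\in S$ factor through the tame quotient (topologically generated by $\sigma,\tau$ with $\sigma\tau\sigma^{-1}=\tau^{N(v)}$), which sharply restricts the local image; combining class field theory in an infinite tower with cohomological-dimension bounds on the maximal pro-$p$ extension unramified outside $S$; or invoking Golod--Shafarevich type inequalities on the presentation of $G_{K,S}(p)$. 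This last reduction is essentially the content of the conjecture itself, and it is where every known partial result concentrates its effort.
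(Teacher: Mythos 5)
The statement you are asked about is Conjecture \ref{UFM} itself: it is an open conjecture, and the paper does not prove it. The paper only shows (Theorem \ref{hah}) that it follows from Conjecture \ref{FM} together with the Tate conjecture, and then establishes various special cases (e.g.\ Theorem \ref{virtually solvable case}, Proposition \ref{equi}, Theorem \ref{simple}, Theorem \ref{sl21p}). So no complete argument of the kind you attempt exists in the paper, and your proposal cannot be judged "correct" as a proof of the statement.

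That said, your own diagnosis of where the argument stops is accurate, and it is worth comparing with what the paper actually does. Your first two reductions are sound and match the paper's basic-case results: passing to the kernel of $\overline{\rho}$ to land in the pro-$p$ group ${\rm GL}_n^{1}(\mathbb{Z}_p)$ is exactly the paper's standard move (Lemma \ref{pro} plus Lemma \ref{1}), and your class-field-theoretic argument that the pro-$p$ abelianization of $G_{M,S_M}$ is finite when $S_M\cap S_p=\emptyset$ is precisely the FAb property of Theorem \ref{global structure}, from which the paper deduces the solvable (indeed virtually solvable) case in Proposition \ref{solvablequotient} and Theorem \ref{virtually solvable case}; your induction on the solvable Lie algebra $\mathfrak{g}$ is a slightly different packaging of the same idea, via the derived series of the group rather than of the Lie algebra, and needs the finite generation supplied by Corollary \ref{imageisfinitelygenerated} to run cleanly. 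The genuine gap is the one you name yourself: when the associated Lie algebra has a semisimple factor (already $\mathfrak{sl}_2$, cf.\ Theorem \ref{bigimage}, which shows any infinite image must be essentially of this shape), the abelianization obstruction vanishes and nothing in your sketch rules out a finite-index subgroup of $G_{K,S}$ surjecting onto an open subgroup of ${\rm SL}_2(\mathbb{Z}_p)$. The avenues you list (tame local structure $\sigma\tau\sigma^{-1}=\tau^{N(v)}$, Golod--Shafarevich-type presentations of $G_{\mathbb{Q},S}(p)$) are exactly the tools the paper uses to obtain its partial results in Sections \ref{section4.3} and \ref{chapter5}, but they only yield the conjecture under extra hypotheses on $S$ (e.g.\ $q_i\not\equiv 1 \bmod p^2$ and conditions on the linking numbers $\ell_{ij}$), not in general. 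So: your reductions are fine and consistent with the paper, but the non-solvable case is not an omitted detail --- it is the conjecture, and your write-up should present the solvable case as a partial result rather than as a step in a proof.
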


In \cite{MR1363495}, it's remarked that Conjecture \ref{FM} together with the Tate conjecture implies Conjecture \ref{UFM} without proof. Later, Mark Kisin and Sigrid Wortmann proved this remark assuming in addition that the action of $ G_{K,S} $ on the $ p $-adic \'etale cohomology of a smooth projective variety over $ K $ is semi-simple, see \cite{MR1981910}. Thanks to the work of Ben Moonen, now we have the following.

\begin{Thm}\label{hah}
	Assume that the following assumptions hold.
	\begin{enumerate}
		\item Fontaine-Mazur Conjecture \ref{FM} is true;
		\item the Tate Conjecture on algebraic cycles holds;
	\end{enumerate}
	Then Unramified Fontaine-Mazur Conjecture \ref{UFM} holds.
\end{Thm}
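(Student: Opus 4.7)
The plan is to reproduce, in outline, the strategy of Kisin--Wortmann \cite{MR1981910}, with their semisimplicity hypothesis on $p$-adic \'etale cohomology now supplied (modulo the Tate Conjecture) by Moonen's theorem. Let $\rho:G_{K,S}\to\mathrm{GL}_n(\mathbb{Q}_p)$ be a continuous representation. At each prime $v\mid p$ the restriction $\rho|_{D_v}$ is unramified, hence crystalline with all Hodge--Tate weights equal to zero, so $\rho$ is potentially semistable at $p$ and is therefore geometric. Conjecture \ref{FM} applies to each irreducible Jordan--H\"older constituent $\sigma$ of $\rho$ and realises $\sigma$ as a subquotient of some $H^{i}_{\mathrm{\acute{e}t}}(X_{\overline{K}},\mathbb{Q}_p(r))$ for a smooth projective variety $X/K$ and integers $i,r$. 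Moonen's theorem, whose principal input is hypothesis (2), asserts semisimplicity of the $G_K$-action on $H^{i}_{\mathrm{\acute{e}t}}(X_{\overline{K}},\mathbb{Q}_p)$, and this upgrades the inclusion of $\sigma$ to an actual subrepresentation $\sigma\hookrightarrow H^{i}(X_{\overline{K}},\mathbb{Q}_p(r))$.

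The core step is a Hodge--Tate/purity argument identifying $\sigma$ with a Galois-stable piece of the algebraic cycles on $X$. Because $\sigma|_{D_v}$ has only the Hodge--Tate weight $0$, Faltings' decomposition places $\sigma\otimes\mathbb{C}_p$ inside the $(r,i-r)$-Hodge piece $H^{i-r}(X,\Omega^{r})\otimes_K\mathbb{C}_p$. Combining this constraint with Deligne's purity of Frobenius eigenvalues on $H^{i}(X,\mathbb{Q}_\ell)$ at an unramified prime of good reduction, together with the weight information coming from the crystalline comparison, forces $i=2r$ and confines $\sigma$ to the Tate subspace $T\subseteq H^{2r}(X,\mathbb{Q}_p(r))$ consisting of vectors on which some open subgroup of $G_K$ acts trivially. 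By hypothesis (2), $T$ coincides with the image of the cycle class map $\mathrm{CH}^{r}(X_{\overline{K}})\otimes\mathbb{Q}_p\to H^{2r}(X,\mathbb{Q}_p(r))$; every algebraic cycle on $X_{\overline{K}}$ is defined over a finite extension of $K$, so $G_K$ acts on $T$ through a finite quotient, and $\sigma$ therefore has finite image.

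To deduce finiteness of the image of $\rho$ itself from finiteness of each irreducible $\sigma$, I would pass to the finite extension $K':=\overline{K}^{\ker\rho^{\mathrm{ss}}}$ of $K$ and to the finite set $S'$ of primes of $K'$ above $S$; then $\rho|_{G_{K',S'}}$ factors through a pro-$p$ unipotent quotient whose abelianisation, if nontrivial, would supply a continuous character $G_{K',S'}\to\mathbb{Q}_p$ of infinite order, and such a character is already ruled out by the preceding paragraph applied over $K'$ in dimension one. The principal obstacle lies in the second paragraph: converting the Hodge--Tate weight-zero condition into membership of the Tate subspace $T$ requires the coordinated use of semisimplicity (to lift subquotients to subrepresentations), Faltings' Hodge--Tate decomposition, and Deligne's purity, and it is precisely here that hypothesis (2) enters twice---once through Moonen's theorem and once directly through the Tate Conjecture.
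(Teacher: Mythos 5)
The paper's own proof is a two-line citation of \cite[Proposition 3.2]{MR1981910} and \cite[Theorem 1]{MR3959072}, and your outline follows the same Kisin--Wortmann-plus-Moonen strategy, so the only question is whether your reconstruction of the Kisin--Wortmann step actually goes through. As written it does not: the sentence claiming that Hodge--Tate weight zero, Faltings' decomposition and Deligne's purity ``confine $\sigma$ to the Tate subspace $T$'' is the theorem in disguise. Since $T$ is by definition the union of the fixed spaces of open subgroups, the inclusion $\sigma\subseteq T$ for a finite-dimensional $G_K$-stable $\sigma$ is \emph{equivalent} to $\sigma$ having finite image; once you had it, the identification of $T$ with the span of cycle classes would add nothing, so your direct appeal to hypothesis (2) at that point does no work. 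What the inputs you list actually give is eigenvalue information: purity together with the finiteness of the determinant character (the classical $n=1$ case via class field theory, which you never invoke) forces $i=2r$, and then the Frobenius eigenvalues at good primes away from $p$ are Weil numbers of weight zero; but weight zero plus Hodge--Tate weight zero does not make vectors fixed by an open subgroup, nor even immediately make the eigenvalues roots of unity (one still has to control the valuations above the residue characteristic of the Frobenius in question). Passing from this eigenvalue information to finite image is exactly where the substance of \cite{MR1981910} lies --- either along the route the paper itself sketches (roots of unity on a Chebotarev-dense set, hence an open unipotent subgroup of the image, hence finiteness by the FAb/class-field-theory mechanism of Proposition \ref{equi} and Theorem \ref{virtually solvable case}), or through the genuinely nontrivial use of the Tate conjecture in Kisin--Wortmann --- and your proposal leaves this step unproved; indeed your closing sentence concedes it is ``the principal obstacle'' without resolving it.

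Your third paragraph, reducing the general case to the irreducible constituents, is essentially sound: $\rho(\ker\rho^{\mathrm{ss}})$ is unipotent with respect to the Jordan--H\"older filtration, hence nilpotent, so an infinite image would force an infinite abelian pro-$p$ quotient and a character of infinite order, which is excluded; this is the paper's Theorem \ref{virtually solvable case}. So the gap is confined to, but central in, your second paragraph.
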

\begin{proof}
	It follows from \cite[Proposition 3.2]{MR1981910} and \cite[Theorem 1]{MR3959072}.
\end{proof}

Roughly speaking, the philosophy of Conjecture \ref{UFM} is that the eigenvalues of Frobenius elements must be roots of unity. By Chebotarev's density theorem, we see that there is a dense subset $ D $ of the image $ \text{Im}(\rho) $ of $ \rho $ such that the eigenvalues of all elements in $ D $ are roots of unity. Then the image of such a representation contains an open unipotent subgroup, and hence is finite by class field theory. See Proposition \ref{equi}. For more details, we refer to \cite{MR1981910}.

Conjecture \ref{UFM} for $ S=\emptyset $ implies the following, see \cite[Conjecture 5b]{MR1363495}.

\begin{conj}[Weak Unramified Fontaine-Mazur Conjecture]\label{WUFM}
	Let $ K $ be a number field, and $ G_{K,\emptyset}(p) $ the Galois group of the maximal everywhere unramified pro-$ p $-extensions of $ K $. Then any continuous homomorphism $ \rho:G_{K,\emptyset}(p)\to {\rm GL}_{n}(\mathbb{Q}_{p}) $ has finite image.
\end{conj}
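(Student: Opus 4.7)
The plan is to deduce Conjecture \ref{WUFM} directly from the $S=\emptyset$ case of Conjecture \ref{UFM}. The crucial observation is that $G_{K,\emptyset}(p)$ is, by definition, precisely the maximal pro-$p$ quotient of $G_{K,\emptyset}$, the Galois group of the maximal everywhere unramified extension of $K$. Consequently, there is a canonical continuous surjection $\pi: G_{K,\emptyset} \twoheadrightarrow G_{K,\emptyset}(p)$, through which any representation of $G_{K,\emptyset}(p)$ can be pulled back to $G_{K,\emptyset}$.

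Given any continuous homomorphism $\rho: G_{K,\emptyset}(p) \to {\rm GL}_{n}(\mathbb{Q}_{p})$, I would form the composition $\tilde{\rho} := \rho \circ \pi$, which is a continuous homomorphism $G_{K,\emptyset} \to {\rm GL}_{n}(\mathbb{Q}_{p})$. Since $S=\emptyset$ trivially contains no prime above $p$, Conjecture \ref{UFM} applies to $\tilde{\rho}$ and yields that $\tilde{\rho}$ has finite image. Because $\pi$ is surjective, $\text{Im}(\rho) = \text{Im}(\tilde{\rho})$, so $\rho$ itself has finite image, establishing the desired implication.

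The argument is essentially a formal consequence of the universal property of the maximal pro-$p$ quotient, and no substantive obstacle arises once Conjecture \ref{UFM} is granted. It is worth emphasizing, however, that the implication is genuinely one-way: Conjecture \ref{WUFM} only constrains representations factoring through the pro-$p$ quotient $G_{K,\emptyset}(p)$, and is a priori strictly weaker than Conjecture \ref{UFM} with $S=\emptyset$, which must also handle continuous $p$-adic representations of $G_{K,\emptyset}$ whose image is a compact $p$-adic analytic group that need not itself be pro-$p$. A direct unconditional proof of Conjecture \ref{WUFM} would require controlling the $p$-adic analytic pro-$p$ quotients of $G_{K,\emptyset}(p)$, whose very finiteness is a deep open problem related to the $p$-class field tower, and this is where the true difficulty lies.
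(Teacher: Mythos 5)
Your derivation is correct and is exactly the (implicit) reasoning behind the paper's remark that Conjecture \ref{UFM} for $S=\emptyset$ implies Conjecture \ref{WUFM}: since $G_{K,\emptyset}(p)$ is the maximal pro-$p$ quotient of $G_{K,\emptyset}$, pulling $\rho$ back along the canonical surjection and applying Conjecture \ref{UFM} to the composite immediately gives finiteness of $\mathrm{Im}(\rho)$. Note that the statement itself is a conjecture in the paper, so only this conditional implication is at issue, and your closing caveat that the implication is one-way and that an unconditional proof lies much deeper is consistent with the paper's treatment.
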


Recall that, in \cite{MR0161852}, Golod and Shafarevich proved that $ G_{K,\emptyset}(p) $ can be infinite. However, Conjecture \ref{WUFM} says that there does not exist an everywhere unramified $ p $-adic representation of the absolute Galois group of a number field $ K $ with infinite image. 

Motivated by considerations concerning the deformations of representations of the global Galois group $ G_{K,S} $, Boston extended Conjecture \ref{UFM} in \cite[Conjecture 2]{MR1681626} as follows. 

\begin{conj}[Boston's generalization of unramified Fontaine-Mazur Conjecture]\label{BUFM}
	Let $ K $ be a number field and $ S $ a finite set of primes of $ K $ not containing any prime above $ p $. Then every continuous homomorphism $ \rho:G_{K,S}\to {\rm GL}_{n}(A) $, where $ A $ is a complete Noetherian local ring with finite residue field of characteristic $ p $, has finite image where $ G_{K,S} $ is the Galois group of the maximal extension of $ K $ unramified outside $ S $.
\end{conj}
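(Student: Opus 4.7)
The plan is to reduce Boston's generalization (Conjecture \ref{BUFM}) to the Unramified Fontaine-Mazur Conjecture (Conjecture \ref{UFM}) via a careful analysis of continuous representations into ${\rm GL}_n(A)$ for a complete Noetherian local ring $A$ with finite residue field $k$. The first step is to reduce to the pro-$p$ situation. Since $k$ is finite, the residual representation $\overline{\rho}\colon G_{K,S}\to {\rm GL}_n(k)$ has finite image, so after replacing $K$ by the finite extension $L$ cut out by $\ker(\overline{\rho})$, we may restrict to an open normal subgroup $G_{L,S_L}\subset G_{K,S}$ on which $\rho$ takes values in the principal congruence subgroup ${\rm GL}_n^{1}(A):=1+M_n(\mathfrak{m}_A)$, which is a pro-$p$ group. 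Because $S$ contains no prime above $p$, neither does $S_L$, so the problem becomes: any continuous $\rho\colon G_{L,S_L}\to 1+M_n(\mathfrak{m}_A)$ has finite image.

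Next, for each continuous $\mathbb{Z}_p$-algebra homomorphism $\varphi\colon A\to \mathcal{O}$ with $\mathcal{O}$ the ring of integers in a finite extension of $\mathbb{Q}_p$ (equivalently, each $\overline{\mathbb{Q}}_p$-valued point of $\operatorname{Spec} A[1/p]$), the composition $\rho_\varphi\colon G_{L,S_L}\to {\rm GL}_n(\mathcal{O})\hookrightarrow {\rm GL}_n(\overline{\mathbb{Q}}_p)$ is a continuous $p$-adic representation unramified outside $S_L$. By Conjecture \ref{UFM}, each $\rho_\varphi$ has finite image. The job is then to bootstrap this pointwise finiteness to finiteness of $\rho$ itself, by exploiting the fact that specializations jointly separate enough of $A$ to control $\rho$.

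If $A[1/p]$ is reduced, one hopes that the intersection of the kernels of all such $\varphi$ vanishes, and then the image of $\rho$, being a topologically finitely generated closed subgroup of $1+M_n(\mathfrak{m}_A)$ whose every specialization is finite, must itself be finite. For the general case one filters $A$ by powers of its nilradical (or by $\mathfrak{m}_A^i/\mathfrak{m}_A^{i+1}$) and argues by induction on the length of $A/\mathfrak{m}_A^i$, so that each graded piece is an extension by a finite abelian $p$-group of a representation already handled; the Chebotarev-density philosophy recorded after Theorem~\ref{hah} then forces the eigenvalues of Frobenius elements to be roots of unity under every specialization, so the image becomes virtually unipotent and, by class field theory in the unramified-outside-$S$ setting, ultimately trivial on an open subgroup.

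The main obstacle is handling the ``fat'' case where $A$ is, say, a formal power series ring $\mathbb{Z}_p[[T_1,\dots,T_d]]$: then ${\rm GL}_n(A)$ contains huge non-$p$-adic-analytic pro-$p$ subgroups, and it is not clear that the image of $\rho$ is determined by its characteristic-zero specializations. Proving that every continuous $\rho\colon G_{L,S_L}\to {\rm GL}_n(A)$ factors, up to finite image, through some quotient of $A$ whose generic fiber is a finite product of fields — so that only finitely many specializations $\varphi$ are relevant — is exactly the hard step, and in full generality it appears to lie essentially as deep as Conjecture \ref{BUFM} itself. Accordingly, the realistic goal is not a complete proof but a reduction theorem: assuming Conjecture \ref{UFM}, Conjecture \ref{BUFM} for $A$ follows once one knows that the unramified deformation ring associated to $\overline{\rho}$ has finite generic fiber, which is precisely the criterion the paper develops.
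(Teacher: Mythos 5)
Your proposal cannot be assessed as a proof of the statement, because the statement is Conjecture \ref{BUFM} itself: the paper never proves it (it only proves partial cases and reduction theorems), and your final paragraph concedes that you are not proving it either. Judged as the conditional reduction you actually propose ("assuming Conjecture \ref{UFM}, Conjecture \ref{BUFM} follows once the relevant deformation ring has finite generic fiber"), there is a concrete gap: your whole mechanism is specialization along continuous $\mathbb{Z}_p$-algebra maps $A\to\mathcal{O}\subset\overline{\mathbb{Q}}_p$, but Boston's conjecture allows coefficient rings $A$ of characteristic $p$, for instance $A=\mathbb{F}_p[[T]]$. For such $A$ one has $A[1/p]=0$, there are no characteristic-zero points at all, and Conjecture \ref{UFM} gives no information about $\rho$. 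This is not a marginal case: the paper's own reduction (Theorem \ref{reduction} and Theorem \ref{reductionofBUFM}) shows that Conjecture \ref{BUFM} is equivalent to the conjunction of Conjecture \ref{UFM} with the equal-characteristic conjecture \ref{WUFMforcharp} (finiteness of image for representations over local fields like $\mathbb{F}_p((T))$), and the latter is a logically independent input that no amount of $\overline{\mathbb{Q}}_p$-specialization can supply. So your concluding reduction statement is false as stated.

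Even in the mixed-characteristic case, the steps you label as bootstrapping are exactly where the work lies, and your sketch of them would not go through as written. The paper handles nilpotents not by an induction on $\mathfrak{m}_A$-adic graded pieces but by showing the kernel of ${\rm GL}_n(A)\to{\rm GL}_n(A^{\mathrm{red}})$ is a nilpotent group (Lemma \ref{nilpotentkernel}) and then invoking the virtually solvable case, which itself rests on $G_{K,S}^{\text{tame}}$ being FAb together with topological finite generation of the image (Proposition \ref{solvablequotient}, Corollary \ref{imageisfinitelygenerated}, Theorem \ref{virtually solvable case}); your appeal to "Chebotarev philosophy, hence virtually unipotent, hence finite by class field theory" is not an argument at this level of generality. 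Likewise, passing from "every specialization has finite image" to "$\rho$ has finite image" when $A$ is reduced is not automatic from the specializations separating points of $A$: the paper needs Cohen's structure theorem, evaluation maps and going-up to produce enough one-dimensional quotients $A\twoheadrightarrow\mathcal{O}_F$ of bounded degree (Proposition \ref{propdomainisdeterminendby}), a uniform bound on torsion in ${\rm GL}_n(\mathcal{O}_F)$ (Lemma \ref{bounded}), and Zelmanov's theorem that a topologically finitely generated profinite torsion group is finite (used in Proposition \ref{true}). In short, your route would have to be repaired to target finiteness over all non-Archimedean local fields of residue characteristic $p$, including those of characteristic $p$, which is precisely the shape of the paper's Theorem \ref{reduction} rather than a reduction to Conjecture \ref{UFM} alone.
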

The first results towards \ref{BUFM} were given by Frank Calegari and David Geraghty in \cite[Corollary 1.6]{MR3742760}. When $ K $ is a totally real field and $ n=2 $, many cases of Conjecture \ref{BUFM} were proved by Patrick B. Allen and Frank Calegari, following from the result of \cite{MR3581178}, see \cite[Corollary 3]{MR3294389}. Moreover, when $ A $ has positive characteristic, they can even prove Conjecture \ref{BUFM} for any $ n $-dimensional representations for many cases, see \cite[Theorem 1]{MR3294389}. See also Theorem \ref{abllen}. 

\subsection{Outline of the paper}

The goal of Section \ref{chapter1} is to collect all the information we need on profinite groups, $ p $-adic analytic groups, and the structure of local and global Galois groups to detect Boston's generalization of unramified Fontaine-Mazur Conjecture \ref{BUFM}. It is worth mentioning that the solvable case of Conjecture \ref{BUFM} is not a direct corollary of the abelian case, but also relies on the fact that its image in the general linear group over a pro-$p$ ring is always topologically finitely generated. See Corollary \ref{imageisfinitelygenerated}. 

In Section \ref{chapter2}, we study the Galois group $ G_{K,S}^{\text{tame}} $ of maximal tamely ramified extension of a number field $ K $ which is unramified outside a finite set $ S $ of primes of $ K $. In Section \ref{sectionaneuvaliant}, we show that Conjecture \ref{BUFM} is essentially a conjecture about the Galois group $ G_{K,S}^{\text{tame}} $.
In Section \ref{section2.2}, we show that $ G_{K,S}^{\text{tame}} $ is finite when $ K=\mathbb{Q} $, and the ramified primes are very small. In particular, Conjecture \ref{BUFM} holds in this case in a trivial way. In Section \ref{section2.3}, we show that, if Conjecture \ref{BUFM} fails, then each conjugacy class in the image of the continuous homomorphism cannot be too large or too small. See Theorem \ref{Conjugacy classes and countable dense normal subgroups}.
In Section \ref{section2.4}, we prove that the group $ G_{K,S}^{\text{tame}} $ can be topologically generated by a finite number of Frobenius conjugacy classes. The main theorem of this section is Theorem \ref{topologically generated by a finite number of Frobenius conjugacy classes}.

In Section \ref{section3.1}, we study Conjecture \ref{BUFM}, and it is proved in Theorem \ref{virtually solvable case} that Conjecture \ref{BUFM} holds if the image of the continuous homomorphism contains an open solvable subgroup. This is the basic case of Conjecture \ref{BUFM}. From this, we provide a useful criterion for verifying Conjecture \ref{BUFM} in Proposition \ref{equi}. 

In Section \ref{section3.2}, we study the reductions of Conjecture \ref{BUFM}, and we explain how to divide Conjecture \ref{BUFM} into several parts in Theorem \ref{reductionofBUFM}. It turns out that we can verify Conjecture \ref{BUFM} by restricting ourselves to examining continuous representations of global Galois groups over non-Archimedean local fields. The key to the proof is the following Theorem, which is also the first main result of this paper.

\begin{Thm}[THEOREM \ref{reduction}]
	Let $ K $ be a number field and $ S $ a finite set of primes of $ K $ not containing any prime above $ p $. Let $ A $ be a complete Noetherian local ring with finite residue field of characteristic $ p $. Let $ n\geq 1$ be a fixed positive integer. Assume the following:
	\begin{enumerate}
		\item When $\text{char}(A)=0$, suppose that for any non-Archimedean local field $F$ with residue field of characteristic $p$, any continuous representation $\rho:G_{K,S}\to {\rm GL}_{n}(F)$ has finite image.
		\item When $\text{char}(A)>0$, suppose that for any non-Archimedean local field $F$ of characteristic $p$, any continuous representation $\rho:G_{K,S}\to {\rm GL}_{n}(F)$ has finite image.
	\end{enumerate}
	Then any continuous homomorphism $ \rho:G_{K,S}\to {\rm GL}_{n}(A) $ has finite image. 
\end{Thm}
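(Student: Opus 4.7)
The plan is a multistep reduction from $\rho$ valued in an arbitrary complete Noetherian local ring $A$ to representations valued in non-Archimedean local fields, where hypotheses~(1) and~(2) apply directly. A crucial input throughout is Corollary~\ref{imageisfinitelygenerated}, which ensures that $H := \rho(G_{K,S})$ is topologically finitely generated.

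First I would reduce to the case where $A$ is a complete Noetherian local integral domain. This proceeds in two substeps. (a)~Reduce to $A$ reduced: exploit the filtration $1 + M_n(N^i)$ of the congruence kernel $1 + M_n(N) \trianglelefteq GL_n(A)$ (where $N$ is the nilradical, nilpotent since $A$ is Noetherian). This is a nilpotent pro-$p$ subgroup whose graded pieces are finitely generated $A/N$-modules; combined with topological finite generation of $H \cap (1 + M_n(N))$, a careful analysis reduces the problem to $\rho \bmod N$. (b)~For $A$ reduced, the embedding $A \hookrightarrow \prod_{i=1}^r A/P_i$ into the \emph{finite} product over minimal primes induces $H \hookrightarrow \prod_i GL_n(A/P_i)$, reducing to the domain case factor by factor.

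Next, for $A$ a complete Noetherian local integral domain of Krull dimension $d$, I would induct on $d$. The base cases are direct: $d = 0$ means $A$ is a finite field, so $GL_n(A)$ is finite; $d = 1$ allows passage through the normalization $\widetilde{A}$, which is a semilocal Dedekind domain finite over $A$, giving $A \hookrightarrow \widetilde{A} \hookrightarrow \prod_{\mathfrak{m}'} \widetilde{A}_{\mathfrak{m}'}$ into a finite product of complete DVRs whose fraction fields are non-Archimedean local fields with finite residue field of characteristic $p$ and of the same characteristic as $A$, so that hypothesis~(1) or~(2) applies directly. For $d \geq 2$, I would invoke Noetherian induction on $\mathrm{Spec}(A)$: choose a prime $\mathfrak{p}$ maximal among those with $\rho \bmod \mathfrak{p}$ of infinite image (if no such $\mathfrak{p}$ exists then $\rho$ itself has finite image and we are done), replace $A$ by $A/\mathfrak{p}$, and work in the setting where $\rho$ has infinite image while $\rho \bmod \mathfrak{q}$ has finite image for every nonzero prime $\mathfrak{q}$ of $A$.

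The main obstacle is deriving a contradiction in the dimension $\geq 2$ case. A naive embedding of $H$ into $\prod_{\mathfrak{q}} H / \bigl(H \cap (1 + M_n(\mathfrak{q}))\bigr)$ only yields an infinite product of finite groups, and a topologically finitely generated closed subgroup of such a product need not be finite (witness $\mathbb{Z}_p \hookrightarrow \prod_n \mathbb{Z}/p^n\mathbb{Z}$). The key will be to combine topological finite generation of $H$ with the Noetherian structure of $A$ and the finiteness of the residue field to exhibit a \emph{finite} collection of continuous specializations $A \to \mathcal{O}_i$ (to complete DVRs with finite residue field of characteristic $p$) whose joint kernel on $H$ is already trivial; once this is in hand, hypotheses~(1) or~(2) applied to each $\mathcal{O}_i \hookrightarrow \mathrm{Frac}(\mathcal{O}_i)$ force $H$ to embed into a finite product of finite groups, yielding the desired finiteness.
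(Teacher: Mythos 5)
Your reduction steps (nilradical, then minimal primes, then domains) match the paper's overall architecture, and your diagnosis of where the difficulty lies is accurate; but the proposal does not actually contain the idea that closes the argument. Your stated ``key'' for the dimension $\geq 2$ case --- exhibiting a \emph{finite} collection of specializations $A\to\mathcal{O}_i$ whose joint kernel on $H=\mathrm{Im}(\rho)$ is trivial --- is not a strategy but a restatement of the goal: by hypotheses (1)/(2) each specialized representation has finite image, so a finite jointly injective collection exists if and only if $H$ is finite. Hence there is no way to produce such a collection in advance, and no contradiction is derived from your Noetherian-induction setup. The paper's proof fills this gap differently: by Cohen's structure theorem $A$ is a finite integral extension of $A_0[[T_1,\dots,T_{r-1}]]$ (with $A_0=\mathbb{Z}_p$ or $\mathbb{F}_p[[T]]$), and evaluating the variables at points of $\mathfrak{m}_{A_0}^{(r-1)}$ yields (infinitely many) specializations $A\to\mathcal{O}_F$ with $[F:\mathrm{Frac}(A_0)]$ bounded by the number of module generators of $A$, whose kernels intersect in the zero ideal (Proposition \ref{propdomainisdeterminendby}, using Lemma \ref{evaluation} and the going-up Lemma \ref{goingup}); then Lemma \ref{bounded} gives a uniform bound $N$ on torsion orders in all such $\mathrm{GL}_n(\mathcal{O}_F)$, so every $g\in H$ satisfies $g^N=1$, and finally Zelmanov's theorem (\cite[Theorem 1]{MR1194787}) that a topologically finitely generated torsion profinite group is finite concludes. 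These two ingredients --- the uniform torsion bound over bounded-degree local fields and Zelmanov's theorem (or some substitute) --- are exactly what your plan is missing; no finite product of finite groups ever appears in the paper's argument.

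A secondary, smaller gap: in your step (a) you claim that topological finite generation of $H\cap(1+M_n(N))$ plus the nilpotent filtration reduces matters to $\rho\bmod N$. Finite generation alone does not suffice, since a topologically finitely generated nilpotent pro-$p$ group (e.g.\ $\mathbb{Z}_p$) can be infinite; the reduction needs the arithmetic input that virtually solvable quotients of $G_{K,S}$ (with $S\cap S_p=\emptyset$) are finite, i.e.\ Theorem \ref{virtually solvable case} (via the FAb property), which is exactly how Theorem \ref{doctoral} in the paper passes from $A^{\mathrm{red}}$ back to $A$. This part is fillable from results already in the paper, but as written it is not justified.
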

The proof consists of three steps. Firstly, we reduce the problem to the case where $A$ is reduced, since the image of $\rho$ must be a FAb profinite group. Secondly, we can further reduce to the case where $A$ is an integral domain, which follows from $A$ being Noetherian. Finally, we apply some theorems from commutative ring theory to prove the theorem.

In Section \ref{section3.5}, we verify some cases of Conjecture \ref{BUFM} for $ \mathbb{F}_{p}[[T]] $-adic representations. See Theorem \ref{positivecharcase}.

In Section \ref{chapter4}, we study Conjecture \ref{UFM}. 
In Section \ref{section4.3}, we first investigate Conjecture \ref{UFM} using Lazard's theory on $ p $-valued groups. In this direction, our main theorems  are Theorem \ref{simple} and Corollary \ref{gl2}. In section \ref{section4.4}, we show that the pro-$p$ group $G_{\mathbb{Q},\{q_{1},q_{2},q_{3}\}}(p)$ is finite under mild conditions. In Section \ref{chapter5}, we study the two-dimensional case of Conjecture \ref{UFM}. Our second main result of this paper is as follows:

\begin{Thm}[THEOREM \ref{sl21p}]
	Let $ p$ be an odd prime, and $ S=\{q_{1},\cdots,q_{d}\} $ a finite set of primes of $ \mathbb{Q} $ not containing $p$ such that $ q_{i}\not \equiv 1~\text{mod}~p^{2} $ for each $ i\in \{1,\cdots,d\} $. Let $\rho:G_{\mathbb{Q},S}\to {\rm GL}_{2}(\mathbb{Z}_{p})$ be a continuous homomorphism. Suppose that the image $\text{Im}(\overline{\rho})$ of the reduction $\overline{\rho}$ of $\rho$ modulo $p$ is trivial. Then either $\text{Im}(\rho)=\{1\}$ is trivial or $\text{Im}(\rho)$ is the kernel of the natural surjection $ {\rm SL}_{2}(\mathbb{Z}_{p})\twoheadrightarrow {\rm SL}_{2}(\mathbb{F}_{p})$. Assume further that $q_{i}$ is a $p$-th power modulo $q_{j}$ for all $ i,j\in \{1,\cdots,d\} $ with $ i\neq j$. Then $\rho$ is trivial.
	
\end{Thm}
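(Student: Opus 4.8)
The plan is to make the usual reductions, pin down the image inside a congruence subgroup of $\mathrm{SL}_2(\mathbb{Z}_p)$, exploit the local relations at the $q_i$ to show the image (if nontrivial) is exactly the first congruence subgroup, and then use the $p$-th power hypothesis to kill even that case. First I would note that $\mathrm{Im}(\overline\rho)=\{1\}$ forces $\mathrm{Im}(\rho)$ into the principal congruence subgroup $\Gamma:=\ker(\mathrm{GL}_2(\mathbb{Z}_p)\to\mathrm{GL}_2(\mathbb{F}_p))$, which for $p$ odd is uniform, hence torsion-free, so $\rho$ factors through $G:=G_{\mathbb{Q},S}(p)$, the Galois group of the maximal pro-$p$ extension of $\mathbb{Q}$ unramified outside $S$. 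Class field theory gives $G^{\mathrm{ab}}\cong\prod_i\mathbb{Z}/p^{v_p(q_i-1)}\mathbb{Z}$, and the hypothesis $q_i\not\equiv1\bmod p^2$ makes this the finite elementary abelian group $(\mathbb{Z}/p\mathbb{Z})^{t}$ with $t=\#\{i:q_i\equiv1\bmod p\}$; since $\det\circ\rho$ factors through $G^{\mathrm{ab}}$ and lands in the torsion-free group $1+p\mathbb{Z}_p$, it is trivial, so $H:=\mathrm{Im}(\rho)\subseteq\Gamma_{\mathrm{SL}}:=\ker(\mathrm{SL}_2(\mathbb{Z}_p)\to\mathrm{SL}_2(\mathbb{F}_p))$. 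The inertia generators $\tau_1,\dots,\tau_t$ at the primes with $q_i\equiv1\bmod p$ already span $G^{\mathrm{ab}}$, hence topologically generate $G$, so $H=\overline{\langle\rho(\tau_1),\dots,\rho(\tau_t)\rangle}$. Now if $H\neq\{1\}$ I would show $H$ is open in $\mathrm{SL}_2(\mathbb{Z}_p)$: letting $\mathfrak h\subseteq\mathfrak{sl}_2(\mathbb{Q}_p)$ be its Lie algebra, if $\mathfrak h$ is solvable then $H$ is solvable (its derived subgroups have Lie algebras with $\mathbb{Q}_p$-span $\mathfrak h^{(k)}$ and $H\subseteq\Gamma_{\mathrm{SL}}$ is torsion-free), so Theorem~\ref{virtually solvable case} makes $H$ finite, hence trivial — a contradiction; thus $\mathfrak h$ is non-solvable, and the only non-solvable subalgebra of $\mathfrak{sl}_2(\mathbb{Q}_p)$ is the whole thing, so $\mathfrak h=\mathfrak{sl}_2(\mathbb{Q}_p)$ and $H$ is open in $\mathrm{SL}_2(\mathbb{Z}_p)$ and contained in $\Gamma_{\mathrm{SL}}$.

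Next I would exploit the local structure at each $q_i$. The decomposition group at $q_i$ furnishes a Frobenius lift $\sigma_i\in G$ with $\rho(\sigma_i)\rho(\tau_i)\rho(\sigma_i)^{-1}=\rho(\tau_i)^{q_i}$; applying the logarithm of the uniform group $\Gamma_{\mathrm{SL}}$ and writing $u_i=\log\rho(\tau_i)$, this becomes $\mathrm{Ad}(\rho(\sigma_i))u_i=q_i u_i$. Fixing $i$ with $\rho(\tau_i)\neq1$ and writing $q_i=1+pw_i$ with $w_i\in\mathbb{Z}_p^{\times}$, the operator $\mathrm{Ad}(\rho(\sigma_i))$ then has the eigenvalue $q_i\neq1$ on $u_i$; since the eigenvalues of $\mathrm{Ad}(g)$ for $g\in\mathrm{SL}_2$ are $1$ and $\mu^{\pm2}$ (with $\mu^{\pm1}$ the eigenvalues of $g$), this forces $\rho(\sigma_i)$ to be regular semisimple with eigenvalues $q_i^{\pm1/2}\in1+p\mathbb{Z}_p$, and forces $u_i$ into a root space, so $\rho(\tau_i)$ is unipotent. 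A valuation count in the exact identity $(\mathrm{Ad}(\rho(\sigma_i))-1)u_i=pw_iu_i$ then gives $v_p(\log\rho(\sigma_i))=1$, with reduction $X_i:=\rho(\sigma_i)\bmod\Gamma_{\mathrm{SL}}(p^2)$ a nonzero regular semisimple element of $\mathfrak{sl}_2(\mathbb{F}_p)$ (here $\Gamma_{\mathrm{SL}}(p^2):=\ker(\mathrm{SL}_2(\mathbb{Z}_p)\to\mathrm{SL}_2(\mathbb{Z}/p^2))$), and, when moreover $\rho(\tau_i)\notin\Gamma_{\mathrm{SL}}(p^2)$, $Y_i:=\rho(\tau_i)\bmod\Gamma_{\mathrm{SL}}(p^2)$ a nonzero nilpotent element with $[X_i,Y_i]=\overline{w_i}\,Y_i$. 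Set $W:=H\,\Gamma_{\mathrm{SL}}(p^2)/\Gamma_{\mathrm{SL}}(p^2)\subseteq\mathfrak{sl}_2(\mathbb{F}_p)$. Since $H$ is open and nontrivial, some $\rho(\tau_i)\notin\Gamma_{\mathrm{SL}}(p^2)$ (otherwise the valuation identity applied one layer deeper forces every $\rho(\tau_i)=1$), so $W$ contains the nonzero nilpotent $Y_i$ and, because $\rho(\sigma_i)\in H$, also the nonzero regular semisimple $X_i$; on the other hand $W$ is spanned by the nilpotents $\rho(\tau_k)\bmod\Gamma_{\mathrm{SL}}(p^2)$, since $\rho\bmod\Gamma_{\mathrm{SL}}(p^2)\colon G\to\mathfrak{sl}_2(\mathbb{F}_p)$ factors through $G^{\mathrm{ab}}$. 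If $W\neq\mathfrak{sl}_2(\mathbb{F}_p)$ then $\dim W=2$ and $W=\langle X_i,Y_i\rangle$ is a Borel subalgebra; but then every $\rho(\tau_k)\bmod\Gamma_{\mathrm{SL}}(p^2)$ lies in its unique nilpotent line, forcing $\dim W\le1$ — absurd. Hence $W=\mathfrak{sl}_2(\mathbb{F}_p)=\Gamma_{\mathrm{SL}}/\Phi(\Gamma_{\mathrm{SL}})$ (using $\Phi(\Gamma_{\mathrm{SL}})=\Gamma_{\mathrm{SL}}(p^2)$, valid for $p$ odd), and the pro-$p$ Frattini argument yields $H=\Gamma_{\mathrm{SL}}$, which is the first assertion.

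For the second assertion I would assume in addition that $q_i$ is a $p$-th power modulo $q_j$ for all $i\neq j$. By class field theory this says the Frobenius at $q_i$ is trivial in the degree-$p$ extension of $\mathbb{Q}$ ramified at $q_j$, equivalently the Frobenius lift $\sigma_i$ lies, modulo $\Phi(G)$, in $\langle\tau_i\rangle$; feeding this into the standard presentation of $G_{\mathbb{Q},S}(p)$ (whose relation attached to $q_i$ is $\tau_i^{q_i-1}[\tau_i,\sigma_i]$ up to higher-order terms) gives $r_i\equiv\tau_i^{pw_i}\pmod{F_3}$, where $F_\bullet$ is the lower $p$-central series of the free pro-$p$ group on $\tau_1,\dots,\tau_t$. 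Consequently, in $G/G_3$ the $p$-th powers of the generators vanish, so by the Hall--Petrescu formula (using $p$ odd) $G/G_3$ has exponent $p$. If $H=\Gamma_{\mathrm{SL}}$, then by functoriality of the lower $p$-central series together with $\Gamma_{\mathrm{SL},\,n}=\Gamma_{\mathrm{SL}}(p^n)$ (valid since $\Gamma_{\mathrm{SL}}$ is uniform), $\rho$ induces a surjection $G/G_3\twoheadrightarrow\Gamma_{\mathrm{SL}}/\Gamma_{\mathrm{SL}}(p^3)$; but the unipotent matrix $1+p\bigl(\begin{smallmatrix}0&1\\0&0\end{smallmatrix}\bigr)$ has order $p^2$ in the target, contradicting exponent $p$. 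Hence $H\neq\Gamma_{\mathrm{SL}}$, so by the first assertion $\rho$ is trivial.

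The hard part will be the local-to-global step in the penultimate paragraph: forcing the mod-$p^2$ image $W$ to be all of $\mathfrak{sl}_2(\mathbb{F}_p)$. This rests on the exact shape of the Frobenius elements $\rho(\sigma_i)$ — regular semisimple with a regular semisimple reduction, a feature squeezed out precisely by $q_i\not\equiv1\bmod p^2$ — coupled with the fact that it is the \emph{inertia} images $\rho(\tau_i)$, which are unipotent, that topologically generate $H$. A secondary technical point is pinning down the defining relations of $G_{\mathbb{Q},S}(p)$ used in the last paragraph, in particular that primes $q_i\not\equiv1\bmod p$ contribute no generators and that the $p$-th power hypothesis exactly kills the commutator part of each relation modulo $F_3$; alternatively, the final two paragraphs can be packaged through Corollary~\ref{gl2}.
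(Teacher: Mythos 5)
Your proof is correct, but its engine differs from the paper's. Both arguments share the same frame: factor $\rho$ through $G_{\mathbb{Q},S}(p)$, note $\mathrm{Im}(\rho)\subseteq{\rm SL}_2^1(\mathbb{Z}_p)$ is torsion-free (so finite image means trivial image), identify ${\rm SL}_2^1(\mathbb{Z}_p)/{\rm SL}_2^2(\mathbb{Z}_p)$ with $M_2^0(\mathbb{F}_p)$ as in Lemma \ref{sl2}, and finish with the Frattini argument. The paper obtains the mod-$p^2$ surjectivity by expanding Koch's relators modulo the third step of the lower $p$-central series, getting $c_i\overline{A_i}+\sum_{j\neq i}\ell_{ij}[\overline{A_i},\overline{A_j}]=0$ and doing linear algebra; the degenerate case (all $\overline{A_i}=0$, i.e.\ image in ${\rm GL}_2^2(\mathbb{Z}_p)$) is excluded by Corollary \ref{gl2}(i), and the second assertion is exactly Corollary \ref{gl2}(ii), both resting on the $p$-valuation results (Theorems \ref{simple} and \ref{lequalto}). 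You instead use only the local tame relation $\sigma_i\tau_i\sigma_i^{-1}=\tau_i^{q_i}$ through the logarithm: a nontrivial inertia image is unipotent, the corresponding Frobenius image is regular semisimple with $\log$ of valuation exactly $1$ (this is precisely where $q_i\not\equiv1\bmod p^2$ enters), so the mod-$p^2$ image contains a regular semisimple element and a nonzero nilpotent spanning a Borel with $[X_i,Y_i]=\overline{w_i}Y_i$, while at the same time it is spanned by nilpotent inertia images; the only consistent possibility is all of $M_2^0(\mathbb{F}_p)$. This makes the first dichotomy independent of the global relators and of Corollary \ref{gl2}(i) -- only generation by inertia and the decomposition-group relation are used -- which is more elementary and more portable, whereas the paper's relator computation lets it treat both assertions uniformly with the valuation machinery it has already built. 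Your exponent argument for the second assertion (with all $\ell_{ij}=0$ the relators become $\tau_i^{pc_i}$ modulo $F_3$, so $G/G_3$ has exponent $p$, while ${\rm SL}_2^1(\mathbb{Z}_p)/{\rm SL}_2^3(\mathbb{Z}_p)$ contains $1+pE_{12}$ of order $p^2$) is also sound and bypasses Theorem \ref{lequalto}, though, as you note, citing Corollary \ref{gl2} is shorter. Two minor remarks: the Lie-algebra openness step in your first paragraph is never actually needed (the Frattini argument does not use openness, and the existence of some $\rho(\tau_i)\notin{\rm SL}_2^2(\mathbb{Z}_p)$ already follows from nontriviality via $v_p(\log\rho(\sigma_i))=1$ together with $\rho(\sigma_i)\in\mathrm{Im}(\rho)$), and the solvability criterion via the $\mathbb{Q}_p$-Lie algebra that you sketch there would need a proper reference if you keep it.
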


The key to the proof is the presentation of the pro-$p$ group $G_{\mathbb{Q},S}(p)$ which is due to Helmut Koch. 

In Section \ref{chapter6}, we study the universal deformation ring of a continuous absolutely irreducible mod $p$ representation $G_{K,S}\to {\rm GL}_{n}(\mathbb{F}_{p})$ where $S$ is a finite set of primes of $K$ not containing any prime above $p$. After some preparation in Section \ref{section6.1}, we prove the finiteness of unramified deformation rings assuming a special case of Conjecture \ref{BUFM} in Section \ref{section6.2}. Moreover, we prove the following theorem which is the third main result of this paper.

\begin{Thm}(THEOREM \ref{main})
	Let $ K $ be a number field and $ S $ a finite set of primes of $ K $ not containing any prime above $p$. Let $\overline{\rho}:G_{K,S}\to {\rm GL}_{n}(\mathbb{F}_{p})$ be a continuous absolutely irreducible representation. Suppose that the Galois representation associated to any $\overline{\mathbb{Q}}_{p}$-points of the universal deformation ring $R_{\overline{\rho}}$ of $\overline{\rho}$ has finite image. Then the ring $R_{\overline{\rho}}[1/p]=\prod_{x}E_{x}$ is the finite direct product of fields $E_{x}$ where $E_{x}$ is a finite extension of $\mathbb{Q}_{p}$ indexed by $\overline{\mathbb{Q}}_{p}$-points of $R_{\overline{\rho}}$. In particular, there are only finitely many $\overline{\mathbb{Q}}_{p}$-points of $R_{\overline{\rho}}$, i.e. the set $\text{Hom}_{\mathbb{Z}_{p}}(R_{\overline{\rho}},\overline{\mathbb{Q}}_{p})$  of continuous $\mathbb{Z}_{p}$-algebra homomorphisms is finite. Moreover, assume further that $R_{\overline{\rho}}$ is $p$-torsion-free, then $R_{\overline{\rho}}$ is finite over $\mathbb{Z}_{p}$, and the universal deformation $\rho^{\text{univ}}:G_{K,S}\to {\rm GL}_{n}(R_{\overline{\rho}})$ has finite image.
\end{Thm}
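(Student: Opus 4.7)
The goal is to prove that $R_{\overline{\rho}}[1/p]$ is a finite direct product of fields, each a finite extension of $\mathbb{Q}_{p}$; the remaining assertions are then formal consequences. Since $R_{\overline{\rho}}$ is a complete Noetherian local ring with finite residue field, the ring $R_{\overline{\rho}}[1/p]$ is automatically Jacobson and the residue field at any maximal ideal is a finite extension of $\mathbb{Q}_{p}$. The plan is to establish two structural claims: (i) $R_{\overline{\rho}}[1/p]$ has Krull dimension zero, and (ii) the localization at every maximal ideal is a field. Together these force $R_{\overline{\rho}}[1/p]$ to be semisimple Artinian, hence a finite direct product $\prod_{x}E_{x}$, and the finiteness of $\text{Hom}_{\mathbb{Z}_{p}}(R_{\overline{\rho}},\overline{\mathbb{Q}}_{p})$ follows at once.

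For (i), I would argue by contradiction using rigid analytic geometry. Suppose some minimal prime $\mathfrak{p}$ of $R_{\overline{\rho}}$ with $p\notin\mathfrak{p}$ satisfies $\dim R_{\overline{\rho}}/\mathfrak{p}\geq 2$. Then the rigid generic fibre $X=\text{Spf}(R_{\overline{\rho}}/\mathfrak{p})^{\text{rig}}$ is a positive-dimensional reduced rigid space over $\mathbb{Q}_{p}$, so one can find a non-constant rigid morphism $\phi\colon B\to X$ from a $p$-adic open disk $B$. Each classical point $t\in B(\overline{\mathbb{Q}}_{p})$ gives a $\overline{\mathbb{Q}}_{p}$-point of $R_{\overline{\rho}}$, hence a Galois representation $\rho_{t}$ of finite image by hypothesis. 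For each $g\in G_{K,S}$, the trace function $T_{g}:=\text{tr}\,\rho^{\text{univ}}(g)\in R_{\overline{\rho}}$ induces a rigid analytic function $t\mapsto \text{tr}\,\rho_{t}(g)$ on $B$, whose values are sums of at most $n$ roots of unity in $\overline{\mathbb{Q}}_{p}$ and so lie in a countable subset. By the $p$-adic open mapping principle (or Weierstrass preparation), a non-constant rigid analytic function on $B$ has uncountable image, so each such trace function must be constant in $t$. Since $\overline{\rho}$ is absolutely irreducible, the traces $T_{g}$ topologically generate $R_{\overline{\rho}}$, so $\phi^{*}$ is constant on all of $\mathcal{O}(X)$, forcing $\phi$ to factor through a single point---contradiction.

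For (ii), the tangent space of $R_{\overline{\rho}}[1/p]$ at a maximal ideal $\mathfrak{m}_{x}$ with residue field $E_{x}$ is canonically $H^{1}(G_{K,S},\text{ad}(\rho_{x}))$ in continuous cohomology. Since $\rho_{x}$ has finite image, it factors through a finite Galois quotient $G=\text{Gal}(L/K)$ unramified outside $S$, and $G_{L,S'}$---where $S'$ denotes the places of $L$ above $S$---acts trivially on $\text{ad}(\rho_{x})$. By Maschke, $H^{i}(G,V)=0$ for $i\geq 1$ and $V$ any $E_{x}$-vector space, so the Hochschild--Serre spectral sequence identifies the tangent space with a subspace of $H^{1}(G_{L,S'},\text{ad}(\rho_{x}))=\text{Hom}^{\text{cont}}(G_{L,S'}^{\text{ab}},E_{x})\otimes_{E_{x}}\text{ad}(\rho_{x})$. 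I then claim $\text{Hom}^{\text{cont}}(G_{L,S'}^{\text{ab}},\mathbb{Q}_{p})=0$: by class field theory any such character corresponds to a continuous $\mathbb{Q}_{p}$-valued character of the id\`ele class group $C_{L}$ unramified outside $S'$; at a place $v\mid p$ (necessarily with $v\notin S'$) the ramification hypothesis annihilates $\mathcal{O}_{v}^{*}$, the only source of $\mathbb{Z}_{p}$-rank, while at other finite places $\mathcal{O}_{v}^{*}$ admits no continuous $\mathbb{Q}_{p}$-valued character at all, so the character must have the form $\sum_{v}c_{v}v$, and triviality on $L^{*}$ combined with finiteness of the class group forces all $c_{v}$ to vanish. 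Hence the tangent space is zero and the completed local ring at $\mathfrak{m}_{x}$ equals $E_{x}$.

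For the final assertion, assume $R_{\overline{\rho}}$ is $p$-torsion-free. Then the natural inclusion $R_{\overline{\rho}}\hookrightarrow\prod_{x}E_{x}$ combined with compactness of $R_{\overline{\rho}}$ places each coordinate image inside the integer ring $\mathcal{O}_{E_{x}}$, so $R_{\overline{\rho}}$ embeds into the finitely generated $\mathbb{Z}_{p}$-module $\prod_{x}\mathcal{O}_{E_{x}}$; Noetherianity of $\mathbb{Z}_{p}$ then gives $R_{\overline{\rho}}$ itself finite over $\mathbb{Z}_{p}$. The image of $\rho^{\text{univ}}$ in $\text{GL}_{n}(R_{\overline{\rho}})$ projects to the finite group $\text{Im}(\rho_{x})$ in each $\text{GL}_{n}(\mathcal{O}_{E_{x}})$, so it lies in a finite product of finite groups and is itself finite. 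The main obstacle in the above plan is the vanishing of $H^{1}(G_{L,S'},\mathbb{Q}_{p})$ appearing in (ii): this is where the hypothesis that $S$ contains no place above $p$ plays a genuinely essential role, since a ramified $\mathbb{Z}_{p}$-extension at $p$ would otherwise produce nontrivial first-order deformations and break the rigidity.
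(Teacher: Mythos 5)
Your proposal is correct, and its mathematical core coincides with the paper's own proof: the decisive step in both is Kisin's description of the completed local ring of $R_{\overline{\rho}}[1/p]$ at a maximal ideal $\mathfrak{m}_{x}$ as the universal deformation ring of $\rho_{x}$ (equivalently, the identification of the tangent space with $H^{1}(G_{K,S},\mathrm{ad}(\rho_{x}))$), combined with the vanishing of this $H^{1}$ via inflation--restriction and the fact that open subgroups of $G_{K,S}$ have no nontrivial continuous homomorphisms to $\mathbb{Q}_{p}$; the paper invokes the FAb property of $G_{K,S}$ established earlier, while you reprove exactly that vanishing by a direct (and correct) class-field-theory computation. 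Two remarks on the differences. First, your step (i) is logically redundant: once every localization $R_{\overline{\rho}}[1/p]_{\mathfrak{m}_{x}}$ is a field, every maximal ideal is a minimal prime, so Noetherianity and the Jacobson property already give dimension zero and the decomposition $\prod_{x}E_{x}$; the rigid-analytic argument (in particular the existence of a non-constant map from a disk into a positive-dimensional generic fibre, and the density/continuity step needed to pass from constancy of the trace functions to constancy of $\phi$) would itself require justification, but nothing in your proof depends on it. Second, the identification of the tangent space with $H^{1}(G_{K,S},\mathrm{ad}(\rho_{x}))$ is not formal -- it is precisely the theorem of Kisin that the paper cites -- so it should be quoted rather than asserted as canonical. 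For the last assertion, your direct argument (injectivity of $R_{\overline{\rho}}\hookrightarrow\prod_{x}\mathcal{O}_{E_{x}}$ from $p$-torsion-freeness and compactness, hence finiteness over $\mathbb{Z}_{p}$ and an embedding of $\mathrm{Im}(\rho^{\mathrm{univ}})$ into the finite group $\prod_{x}\mathrm{Im}(\rho_{x})$) is a small genuine simplification: the paper instead deduces finiteness of the image by citing Allen--Calegari.
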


This generalizes \cite[Lemma 4.14]{MR3742760}. The proof of the above theorem relies on the theory of generic fibers of deformation rings developed by Kisin, and the fact that the tangent space to any $\overline{\mathbb{Q}}_{p}$-point with finite image will be trivial. 
In Section \ref{section6.3}, we also study deformations of mod $ p $ representations of the global Galois group $ G_{K,S} $ with big image. We obtain the following result.

\begin{Thm}(THEOREM \ref{dimensionalconjecturefails}).
	Suppose that Conjecture \ref{BUFM} holds. Let $ n\geq 2 $ and $ p\geq 7 $. Let $ K $ be a number field and $ S $ a finite set of primes of $ K $ not containing any prime above $p$. If $ \overline{\rho}:G_{K,S}\to {\rm GL}_{n}(\mathbb{F}_{p}) $ is a mod $ p $ representation of $ G_{K,S} $ such that $ \text{Im}(\overline{\rho})\supset {\rm SL}_{n}(\mathbb{F}_{p}) $, then the universal deformation ring $ R_{\overline{\rho}} $ of $ \overline{\rho} $ is a finite ring.
\end{Thm}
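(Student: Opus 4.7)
My plan is to combine Conjecture \ref{BUFM} with Theorem \ref{main} to pin down $R_{\overline{\rho}}[1/p]$, use a Jordan-type obstruction to rule out characteristic-zero lifts of $\overline{\rho}$ under the big-image hypothesis, and finally exploit the integrality of traces of finite-order matrices to see $R_{\overline{\rho}}$ as topologically generated over $\mathbb{Z}/p^N$ by finitely many nilpotent elements, which forces finiteness.

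Since $R_{\overline{\rho}}$ is a complete Noetherian local ring with residue field $\mathbb{F}_p$, Conjecture \ref{BUFM} applied to $\rho^{\text{univ}}:G_{K,S}\to {\rm GL}_n(R_{\overline{\rho}})$ gives that $\text{Im}(\rho^{\text{univ}})$ is finite. Every Galois representation attached to a $\overline{\mathbb{Q}}_p$-point of $R_{\overline{\rho}}$ is a specialization of $\rho^{\text{univ}}$ and inherits finite image, so the hypothesis of Theorem \ref{main} is met, and $R_{\overline{\rho}}[1/p]=\prod_x E_x$ is a finite direct product of finite extensions of $\mathbb{Q}_p$. I would next show $R_{\overline{\rho}}[1/p]=0$: otherwise, any $\overline{\mathbb{Q}}_p$-point supplies a continuous lift $\rho:G_{K,S}\to {\rm GL}_n(\mathcal{O})$ with finite image $G\subset {\rm GL}_n(\overline{\mathbb{Q}}_p)$; reduction modulo the maximal ideal of $\mathcal{O}$ yields a surjection onto $\text{Im}(\overline{\rho})\supset {\rm SL}_n(\mathbb{F}_p)$ with finite $p$-group kernel, so the non-abelian simple group ${\rm PSL}_n(\mathbb{F}_p)$ appears as a subquotient of $G$. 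Jordan's theorem for ${\rm GL}_n(\overline{\mathbb{Q}}_p)$, however, furnishes an abelian normal subgroup $A\trianglelefteq G$ of index at most the Jordan constant $J(n)$, so every non-abelian simple subquotient of $G$ has order at most $J(n)$. The comparison $|{\rm PSL}_n(\mathbb{F}_p)|>J(n)$ for all $n\geq 2$, $p\geq 7$ --- with tightest case $n=2$, $p=7$ giving $168>60=J(2)$, while in general $|{\rm PSL}_n(\mathbb{F}_p)|\sim p^{n^2-1}/\gcd(n,p-1)$ dominates Collins' polynomial-in-$n$ bound on $J(n)$ --- produces the required contradiction.

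Thus $p^N=0$ in $R_{\overline{\rho}}$ for some $N$. The hypothesis $\text{Im}(\overline{\rho})\supset {\rm SL}_n(\mathbb{F}_p)$ makes $\overline{\rho}$ absolutely irreducible, so by the Carayol--Nyssen theorem $R_{\overline{\rho}}$ is topologically generated as a $\mathbb{Z}_p$-algebra by the traces $t_g=\text{tr}(\rho^{\text{univ}}(g))$ for $g\in G_{K,S}$, and these take only finitely many values because $\rho^{\text{univ}}$ has finite image. For each such $g$, the image of $\rho^{\text{univ}}(g)$ in any CNL domain quotient $R_{\overline{\rho}}/\mathfrak{p}$ has finite order $m$, so $t_g \bmod \mathfrak{p}$ is a sum of $m$-th roots of unity and thereby algebraic over $\mathbb{F}_p$; by Cohen's structure theorem this image lies in the coefficient copy of $\mathbb{F}_p\subset R_{\overline{\rho}}/\mathfrak{p}$. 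Collecting over all primes gives $t_g=\tilde{t}_{0,g}+n_g$ with $\tilde{t}_{0,g}$ in the image of $\mathbb{Z}/p^N\to R_{\overline{\rho}}$ and $n_g\in\mathrm{nilrad}(R_{\overline{\rho}})$. Hence $R_{\overline{\rho}}$ is topologically generated over the finite ring $\mathbb{Z}/p^N$ by finitely many nilpotent elements; such a closed subalgebra of a Hausdorff complete local ring is finite as a set, so $R_{\overline{\rho}}$ is a finite ring.

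The main obstacle will be verifying the Jordan comparison $|{\rm PSL}_n(\mathbb{F}_p)|>J(n)$ uniformly for $p\geq 7$, $n\geq 2$; this relies on Collins' explicit tabulation of the Jordan constants in the irregular range of small $n$, where $J(n)$ is not yet its asymptotic value $(n+1)!$. The remaining passages --- topological generation by traces via Carayol--Nyssen and the reduction to a finite subalgebra via nilpotent generators --- are largely formal once the first step is in hand.
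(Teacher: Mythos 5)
Your argument is correct, but it takes a genuinely different route from the paper's proof of Theorem \ref{dimensionalconjecturefails}. The paper forces $\text{char}(R_{\overline{\rho}})>0$ by applying Manoharmayum's theorem (Theorem \ref{fullimage}) directly to the closed subgroup $\text{Im}(\rho^{\text{univ}})\subset {\rm GL}_{n}(R_{\overline{\rho}})$: in characteristic zero the image would contain a conjugate of ${\rm SL}_{n}(\mathbb{Z}_{p})$, contradicting Conjecture \ref{BUFM}; it then concludes finiteness by citing Theorem \ref{finitenessofunramifieddeforamtionring} (finiteness of $R_{\overline{\rho}}/(p)$ under Conjecture \ref{UFMforcharp}, which follows from \ref{BUFM} via Theorem \ref{reductionofBUFM}) together with the Nakayama-type lemma. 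You instead rule out characteristic zero by producing, from any $\overline{\mathbb{Q}}_{p}$-point, a finite subgroup of ${\rm GL}_{n}$ over a $p$-adic field having ${\rm PSL}_{n}(\mathbb{F}_{p})$ as a subquotient, and contradicting Jordan's theorem via $|{\rm PSL}_{n}(\mathbb{F}_{p})|>J(n)$; this is sound (the worst case $n=2$, $p=7$ gives $168>60$), but note that the classical explicit Jordan bounds of Schur--Blichfeldt type are \emph{not} sufficient when $n$ is large and $p=7$ is fixed, so you genuinely need Collins' CFSG-based determination of $J(n)$, including the tabulated irregular range, exactly the chore you flag --- whereas the paper gets this step for free from Theorem \ref{fullimage}, which is also what dictates the hypothesis $p\geq 7$. (You could shortcut your Step 2 by applying Theorem \ref{fullimage} at the point, recovering part (1) of the paper's theorem.) Your final step is also different and is a nice self-contained alternative to the appeal to Theorem \ref{finitenessofunramifieddeforamtionring}: using Proposition \ref{existenceofdeformationring} (trace generation), finiteness of $\text{Im}(\rho^{\text{univ}})$ from Conjecture \ref{BUFM}, and the decomposition of each trace as (integer)$+$(nilpotent) obtained by passing to the domain quotients $R_{\overline{\rho}}/\mathfrak{p}$ at minimal primes, you get that $R_{\overline{\rho}}$ is the closure of a finite subring of a Hausdorff ring, hence finite; the constants at different minimal primes agree because they all reduce to $\mathrm{Tr}(\overline{\rho}(g))$ in the residue field, so this step is complete as written. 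Also worth recording: your use of Theorem \ref{main} could be replaced by the weaker quoted fact that $R_{\overline{\rho}}[1/p]$ is Jacobson with residue fields finite over $\mathbb{Q}_{p}$, since all you need is that a nonzero generic fiber has a $\overline{\mathbb{Q}}_{p}$-point.
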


Our argument crucially exploits the work of Manoharmayum \cite{MR3336600} on the subgroups of ${\rm GL}_{n}$ over complete local Noetherian rings with large residual image. Note that the above theorem gives counterexamples to the so-called dimension conjecture \ref{dimensionconjecture} for Galois deformation rings assuming Conjecture \ref{BUFM}. For more details, see Corollary \ref{counterexample}. 

\subsection{Notation and conventions}
Throughout this paper,
\begin{itemize}
	\item $ p $ will be a rational prime.
	\item $ \mathbb{Z} $ will be the ring of integers, and $ \mathbb{Q} $ will be the field of rational numbers and $ \overline{\mathbb{Q}} $ will be a fixed algebraic closure of $ \mathbb{Q} $.
	\item $ \mathbb{F}_{p} $ will be a finite field of order $ p $, $ \mathbb{Q}_{p} $ will be the field of $ p $-adic numbers, $ \mathbb{Z}_{p} $ will be the ring of integers of $ \mathbb{Q}_{p} $,
	and $ v_{p} $ will be the usual $ p $-adic valuation of $ \mathbb{Q}_{p} $. We also fix an algebraic closure $ \overline{\mathbb{Q}}_{p} $ of the field $ \mathbb{Q}_{p} $.
	\item If $ K $ is a field, then we denote by $ G_{K} $ the absolute Galois group of $ K $.
	
	\item If $ K $ is a number field, then we denote by $ S_{p}=S_{p}(K) $ the set of primes of $ K $ above $ p $ and $ S_{\infty}=S_{\infty}(K) $ the set of all Archimedean primes of $ K $.
	\item If $ K $ is a number field and $ S $ a finite set of primes of $ K $, then we denote by $ G_{K,S} $ (resp. $ G_{K,S}(p)$) the Galois group of maximal extension (resp. maximal $ p $-extension) of $ K $ in $ \overline{\mathbb{Q}} $ which is unramified outside $ S $. 
	\item If $ F $ is a non-Archimedean local field, then we denote by $ \mathcal{O}_{F} $ the ring of integers of $ F $ and $ \pi_{F} $ a uniformizer of $ \mathcal{O}_{F} $.
	\item If $ \mathbb{F} $ is a finite field, then we denote by $ \mathbb{F}[[T]] $ the ring of formal power series over $ \mathbb{F} $, and $\mathbb{F}((T))$ the field of formal Laurent series in one variable over $\mathbb{F}$.
	\item If $R$ is a commutative ring, then we will denote by $\text{char}(R)$ the characteristic of $R$. Recall that $\text{char}(R)$ is the natural number $n$ such that $n\mathbb{Z}$ is the kernel of the unique ring homomorphism from $\mathbb{Z}$ to $R$.
	\item If $ R $ is a commutative ring and $ n $ a positive integer, then $ M_{n}(R) $ is the space of all $ n\times n $ matrices over $ R $, $ {\rm GL}_{n}(R) $ is the general linear group over $ R $, and $ {\rm SL}_{n}(R) $ is the special linear group over $ R $.
	\item If $ G $ is a group and $ \rho:G\to {\rm GL}_{n}(R) $ is a group homomorphism, then we denote by $ \text{Im}(\rho) $ the image of $ \rho $.
\end{itemize}

\section{Preliminaries}\label{chapter1}

\subsection{The general linear group over a pro-$p$ ring and FAb profinite groups}\label{Section1.1}
By a \emph{pro-$p$ ring} we mean a complete Noetherian local ring with finite residue field of characteristic $ p $. 

\begin{Convention}
	If $ A $ is a pro-$p$ ring with finite residue field $ \mathbb{F} $, then we will denote $ {\rm GL}_{n}^{1}(A) $ to be the kernel of the natural projection $ \eta:{\rm GL}_{n}(A)\to {\rm GL}_{n}(\mathbb{F}) $, and we define
	\[ {\rm SL}_{n}^{1}(A):={\rm SL}_{n}(A)\cap {\rm GL}_{n}^{1}(A). \]
	Moreover, if $ \rho:G\to {\rm GL}_{n}(A) $ is a continuous homomorphism where $ G $ is a profinite group, then we will denote by $ \overline{\rho} $ the composition of morphisms
	\[ \overline{\rho}:G\xrightarrow{\rho} {\rm GL}_{n}(A)\xrightarrow{\eta} {\rm GL}_{n}(\mathbb{F}) ,\]
	and we say that $ \overline{\rho}$ is the \emph{reducion of $ \rho $} modulo $ \mathfrak{m}_{A} $.
\end{Convention}

The following lemma is useful.

\begin{Lem}\label{pro}
	Let $ A $ be a complete Noetherian local ring with finite residue field $ \mathbb{F} $ of characteristic $ p $ and $ n $ a fixed positive integer. Then we have the following:
	\begin{enumerate}
		\item The profinite group $ {\rm GL}_{n}^{1}(A) $ is a pro-$ p $ group.
		\item If $ G $ is a closed subgroup of $ {\rm GL}_{n}(A) $ with order (as a supernatural number) prime to $ p $, then $ G $ is finite. 
		\item If $ G $ is a torsion-free closed subgroup of $ {\rm GL}_{n}(A) $, then it is a pro-$ p $ group.
	\end{enumerate}
	
\end{Lem}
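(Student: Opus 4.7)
The plan is to handle the three parts in order, using the $\mathfrak{m}$-adic filtration of ${\rm GL}_n(A)$ as the main tool.

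For part (1), I would introduce the filtration ${\rm GL}_n^i(A) := \ker\bigl({\rm GL}_n(A)\to {\rm GL}_n(A/\mathfrak{m}^i)\bigr)$ for $i\ge 1$. Since $A$ is Noetherian with finite residue field $\mathbb{F}$, each quotient $A/\mathfrak{m}^i$ is finite, and the map $g\mapsto g-1$ induces an isomorphism of abelian groups ${\rm GL}_n^i(A)/{\rm GL}_n^{i+1}(A)\xrightarrow{\sim} M_n(\mathfrak{m}^i/\mathfrak{m}^{i+1})$. The right-hand side is a finite $\mathbb{F}$-vector space, hence a finite elementary abelian $p$-group. Iterating shows ${\rm GL}_n^1(A/\mathfrak{m}^i)$ is a finite $p$-group for every $i$, and then ${\rm GL}_n^1(A)=\varprojlim_i {\rm GL}_n^1(A/\mathfrak{m}^i)$ is a pro-$p$ group.

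For part (2), I would restrict the reduction map $\eta$ to $G$. The image $\eta(G)$ is a subgroup of the finite group ${\rm GL}_n(\mathbb{F})$, hence finite. The kernel $G\cap {\rm GL}_n^1(A)$ is a closed subgroup of the pro-$p$ group ${\rm GL}_n^1(A)$ from part (1), so it is itself pro-$p$. But $G$ has supernatural order prime to $p$, forcing this pro-$p$ normal subgroup to be trivial. Therefore $G\hookrightarrow \eta(G)$ is finite.

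For part (3), set $H=G\cap {\rm GL}_n^1(A)$, which is open and normal in $G$ because ${\rm GL}_n^1(A)$ is open normal in ${\rm GL}_n(A)$, and which is pro-$p$ by the argument of part (2). It suffices to show $G=H$, i.e.\ that the finite quotient $Q:=G/H$ is trivial. Suppose for contradiction that some prime $\ell\ne p$ divides $|Q|$; by Cauchy's theorem in $Q$ we can lift to find $g\in G$ whose image in $Q$ has order exactly $\ell$. Then $g^\ell\in H$ is torsion-free (as $G$ is), so the closed procyclic subgroup $\overline{\langle g^\ell\rangle}\subseteq H$ is isomorphic to $\mathbb{Z}_p$. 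Decomposing the procyclic group $\overline{\langle g\rangle}=\prod_q C_q$ into its pro-$q$ components, only the pro-$\ell$ and pro-$p$ parts can be nontrivial, and $C_\ell$ has order dividing $\ell$. The image of $g$ in $Q$ has order $\ell\ne p$, so $C_\ell$ cannot be trivial and must have order exactly $\ell$. Thus $G$ contains a nontrivial element of order $\ell$, contradicting torsion-freeness. Hence no such $\ell$ exists, $Q$ is a finite $p$-group, and $G$ itself is pro-$p$.

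The routine pieces are (1) and (2); the real content is the torsion-free argument in (3), where the obstacle is that having an open pro-$p$ subgroup does not by itself force a profinite group to be pro-$p$. The procyclic decomposition of $\overline{\langle g\rangle}$ (equivalently, a Schur--Zassenhaus style splitting) is what bridges the gap, using torsion-freeness to eliminate the non-$p$ primes in $G/H$.
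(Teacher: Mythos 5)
Your proof is correct, and in parts (2) and (3) it takes a genuinely different route from the paper's. For (1) the paper simply cites Boston's lemma, while you reprove it via the congruence filtration, using that $\mathfrak{m}^i/\mathfrak{m}^{i+1}$ is a finite $\mathbb{F}$-vector space (Noetherianity plus finite residue field) so that each quotient ${\rm GL}_n^i(A)/{\rm GL}_n^{i+1}(A)\cong M_n(\mathfrak{m}^i/\mathfrak{m}^{i+1})$ is a finite elementary abelian $p$-group; this is the standard proof behind the citation. For (2) the paper argues with a Sylow pro-$p$ subgroup of ${\rm GL}_n(A)$ (the preimage of a Sylow $p$-subgroup of ${\rm GL}_n(\mathbb{F})$, hence of finite index), concluding that a closed subgroup of order prime to $p$ has order a natural number; you instead observe that $G\cap{\rm GL}_n^1(A)$ is simultaneously pro-$p$ and of order prime to $p$, hence trivial, so $G$ embeds into ${\rm GL}_n(\mathbb{F})$ --- more direct, and it even bounds $|G|$ by $|{\rm GL}_n(\mathbb{F})|$. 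For (3) the paper gets a one-line proof out of (2): each Sylow pro-$\ell$ subgroup of $G$ with $\ell\ne p$ is finite by (2), hence trivial by torsion-freeness, so the supernatural order of $G$ is a power of $p$; you avoid Sylow theory for $G$ entirely and instead eliminate each prime $\ell\ne p$ dividing $|G/(G\cap{\rm GL}_n^1(A))|$ by decomposing the procyclic closure of a single element into its pro-$q$ components --- longer, but valid, self-contained, and independent of (2). Two cosmetic points: in (3) you say it suffices to show $G=H$, but what you actually prove (and what suffices) is that $G/H$ is a finite $p$-group, since then the supernatural order of $G$ is a $p$-power; and the claim $\overline{\langle g^{\ell}\rangle}\cong\mathbb{Z}_p$ presumes $g^{\ell}\neq 1$ (if $g^{\ell}=1$ you are done at once by torsion-freeness) and is in any case never used afterwards.
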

\begin{proof}
	We prove our lemma as follows:
	\begin{enumerate}
		\item It is \cite[Lemma 1.2]{MR1079842}. 
		\item Since $ {\rm GL}_{n}^{1}(A) $ is a pro-$ p $ group by the claim (i), we see that the Sylow pro-$ p $ subgroup $ H $ of $ {\rm GL}_{n}(A) $ is just the preimage under $ \eta $ of any Sylow $ p $-subgroup of the finite group $ {\rm GL}_{n}(\mathbb{F}) $. By Lemma \cite[Corollary 2.3.6]{MR2599132}, any two Sylow pro-$ p $ subgroups are conjugate. Without loss of generality, we choose one particular $ H $. Since $ H $ has finite index in $ {\rm GL}_{n}(A) $, $ [{\rm GL}_{n}(A):H] $ is a natural number. Note that the order of a profinite group is a natural number if and only if the order of the profinite group is finite, cf. \cite[Proposition 2.3.2(a)]{MR2599132}. Thus, if $ G $ is a closed subgroup of $ {\rm GL}_{n}(A) $ with order prime to $ p $, then the order $ G $ must be a natural number and hence it is finite. 
		\item By (ii), we know that for every prime $ \ell $ different from $ p $, the Sylow pro-$ \ell $ subgroup of $ G$ is finite. Since $ G $ is torsion-free, we see that $ G $ must be pro-$ p $. The claim is proved.
	\end{enumerate}
	
\end{proof}

We say that a profinite group $ G $ has a property $ \mathcal{P} $ \emph{virtually} if $ G $ has an open subgroup with property $ \mathcal{P} $. For example, a profinite group is called \emph{virtually solvable} if it has an open solvable subgroup.

A profinite group $G$ is called \emph{FAb} if $U^{\text{ab}}$ is finite for all closed subgroups $U$ of $G$ of finite index where $U^{\text{ab}}$ denotes the topological abelianization of $U$. Observe that any open subgroup (resp. continuous quotient) of a FAb profinite group is FAb.

\begin{Prop}\label{solvablequotient}
	Let $ G $ be a FAb profinite group. Then any topologically finitely generated virtually solvable (continuous) quotient of $ G $ is finite.
\end{Prop}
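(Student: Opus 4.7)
The plan is a short induction on derived length, once we establish that the FAb property is preserved under the two operations we need: passing to continuous quotients, and passing to open subgroups. Let $\pi:G \twoheadrightarrow Q$ be a continuous surjection with $G$ FAb, and let $V$ be an open finite-index subgroup of $Q$. Then $U := \pi^{-1}(V)$ is open of finite index in $G$, so $U^{\mathrm{ab}}$ is finite. Because $\pi|_{U}:U \twoheadrightarrow V$ is a continuous surjection and $V^{\mathrm{ab}}$ is a topological abelian group, this map factors through $U^{\mathrm{ab}} \twoheadrightarrow V^{\mathrm{ab}}$, so $V^{\mathrm{ab}}$ is finite. Hence $Q$ is FAb. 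Similarly, any open subgroup of a FAb profinite group is FAb, since its finite-index open subgroups are finite-index open subgroups of the ambient group.

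Now let $Q$ be a topologically finitely generated virtually solvable continuous quotient of $G$. By the first reduction above, $Q$ is FAb. Pick an open solvable normal subgroup $N \trianglelefteq Q$; it suffices to show $N$ is finite, since $[Q:N]<\infty$. The group $N$ is FAb (as an open subgroup of the FAb group $Q$) and is topologically finitely generated (as an open subgroup of a topologically finitely generated profinite group — this is standard, using a Schreier-coset argument, and is for instance \cite[Proposition 2.5.5]{MR2599132}).

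I then induct on the derived length $d$ of $N$. If $d=0$, then $N$ is trivial. For the inductive step with $d \geq 1$, set $N' := \overline{[N,N]}$, the closed topological commutator subgroup. The topological abelianization $N/N' = N^{\mathrm{ab}}$ is finite by the FAb property of $N$ applied to the open subgroup $N \leq N$ itself. Consequently $N'$ is open in $N$, hence again topologically finitely generated and FAb, and has derived length $d-1$. By induction $N'$ is finite, so $N$ is finite, and therefore $Q$ is finite.

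No step presents a serious obstacle; the entire content of the argument is the bookkeeping that FAb, topological finite generation, and solvability all descend along the inclusions $Q \supseteq N \supseteq N' \supseteq N'' \supseteq \cdots$. The only slightly non-formal point is the preservation of topological finite generation under passage to an open subgroup, which is a standard structural fact about finitely generated profinite groups.
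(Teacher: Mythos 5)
Your argument is correct, and its skeleton is the same as the paper's: reduce the virtually solvable case to the solvable case via an open subgroup (FAb and topological finite generation pass to open subgroups, the latter by \cite[Prop.\ 2.5.5]{MR2599132}), then walk down a derived series, using FAb to see each abelian quotient is finite and hence each successive term is open. The one genuine difference is which derived series you use. The paper works with the \emph{abstract} derived series and must invoke Nikolov--Segal (\cite[Theorem 1.4]{MR2276769}) to know that the abstract commutator subgroup of a topologically finitely generated profinite group is closed, so that $G/G^{(1)}$ is a continuous abelian quotient to which FAb applies. You instead use the \emph{closed} derived series $N' = \overline{[N,N]}$, so that $N/N'$ is by definition the topological abelianization and FAb applies with no deep input; this makes your proof more elementary. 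The price is the one step you assert without proof: that $\overline{[N,N]}$ is solvable of derived length at most $d-1$. This is true but should be said: by continuity of the commutator map, $\bigl(\overline{H}\bigr)' \subseteq \overline{H'}$ for any subgroup $H$, whence by induction $\bigl(\overline{N^{(1)}}\bigr)^{(k)} \subseteq \overline{N^{(k+1)}}$, and taking $k=d-1$ gives $(N')^{(d-1)} \subseteq \overline{N^{(d)}} = \{1\}$. With that one line added (and noting that the normality of your open solvable subgroup is unnecessary --- any open solvable subgroup, or its normal core, works), your proof is complete and matches the paper's conclusion while avoiding its strongest citation.
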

\begin{proof}
	We prove our claim as follows:
	\begin{itemize}
		\item Step $1$: We show that any topologically finitely generated solvable (continuous) quotient of $ G $ is finite. Since any (continuous) quotient of a FAb profinite group is also FAb, we can assume that $ G $ is a FAb solvable group, and we need to show that $G$ is finite. Let $ G=G^{(0)}\supset G^{(1)}\supset \cdots \supset G^{(i)}\supset \cdots$ be the derived series of $G$. Since $ G $ is solvable, it has finite length. Thus, it suffices to show that all (abstract) quotients $ G^{(i)}/G^{(i+1)} $ are finite. By \cite[Theorem 1.4]{MR2276769}, $G^{(1)}$ is a closed subgroup of $G$, and hence $G/G^{(1)}$ is an abelian (continuous) quotient of $G$. Since $G$ is FAb, we know that $G/G^{(1)}$ is finite, and hence $G^{(1)}$ is an open subgroup of $G$. Since $G$ is topologically finitely generated, so is $G^{(1)}$ by \cite[Prop. 2.5.5]{MR2599132}. Since $G$ is FAb, it follows that all $G^{(i)}/G^{(i+1)}$ are finite by induction. Therefore, $ G $ is finite.
		\item Step $2$: In general, we need to show that if $G$ is a topologically finitely generated, FAb virtually solvable group, then $G$ is finite. By definition, $G$ contains an open solvable subgroup $H$. Since $H$ is an open subgroup of $G$ and $G$ is FAb, we see that $H$ is FAb and topologically finitely generated. By Step $1$, $H$ is finite and so is $G$. We are done.
	\end{itemize}
\end{proof}

\subsection{$ p $-adic analytic groups}\label{padicanalyticgroup}
In this subsection, we review some basic definitions and properties of $ p $-adic analytic groups.

We say that a compact topological group $ G $ is \emph{$ p $-adic analytic} if it is isomorphic to a closed subgroup of $ {\rm GL}_{d}(\mathbb{Z}_{p}) $ for some positive integer $ d $. We say that a pro-$p$ group $G$ is \emph{powerful} if $ p $ is odd and $ G/\overline{G^{p}} $ is abelian, or if $ p=2 $ and $ G/\overline{G^{4}} $ is abelian where $\overline{G^{p}}  $ is the closure of the subgroup of $ G $ generated by the set $ \{g^{p}~|~g\in G\} $ in $ G $. Morever, a powerful group is \emph{uniform} if it is also torsion-free.

A key invariant of a $ p $-adic analytic group is its \emph{dimension} $ \dim(G) $ as a $ p $-adic manifold. Algebraically, one can define $ \dim(G) $ as $ d(U) $, where $ U $ is any uniform open pro-$ p $ subgroup of $ G $ and $ d(U) $ is the minimal cardinality of a topological generating set for $ U $.

\begin{example}\label{dimensionlessthan3}
	If $ G $ is a compact $ p $-adic analytic group, then we have the following:
	\begin{enumerate}
		\item $ \dim(G)=0 $ if and only if $ G $ is finite.
		\item $ \dim(G)=1 $ if and only if $ G $ contains an open procyclic pro-$ p $ group $ H $, i.e. $ H $ is ismorphic to $ \mathbb{Z}_{p} $.
		\item If $ \dim(G)=2 $, then $ G $ contains an open meta-procyclic pro-$ p $ group $ H $, i.e. $ H $ has a procyclic normal subgroup with procyclic quotient. See \cite[Exercise 3.11]{MR1720368}.
	\end{enumerate}
\end{example}

\begin{Lem}\cite[Theorem 9.11 and Theorem 9.14]{MR1720368}\label{closedsubgrouphavingsamediemsnion}
	Let $ G $ be a $ p $-adic analytic group and $ H $ be a closed subgroup of $ G $ such that $ \dim(H)=\dim(G) $. Then $ G $ is an open subgroup of $ G $.
\end{Lem}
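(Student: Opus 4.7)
The statement (modulo what looks like a typo: the conclusion should read that $H$ is open in $G$) is a standard consequence of Lazard's correspondence for uniform pro-$p$ groups, and the plan is to reduce to that setting.

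First, I would pick a uniform open pro-$p$ subgroup $U$ of $G$; such a $U$ exists by the structure theorem for compact $p$-adic analytic groups. Since $U$ is open in $G$, the intersection $V:=H\cap U$ is open in $H$ and closed in $U$. By the definition of dimension recalled in the excerpt (and the fact that open subgroups of $p$-adic analytic groups have the same dimension), $\dim(V)=\dim(H)=\dim(G)=\dim(U)$. Thus the problem reduces to showing: if $V$ is a closed subgroup of a uniform pro-$p$ group $U$ with $\dim(V)=\dim(U)$, then $V$ has finite index in $U$.

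To carry out this reduction step, I would invoke the Lie-algebra correspondence for uniform pro-$p$ groups (Theorem 9.10 of Dixon--du~Sautoy--Mann--Segal, the reference cited in the excerpt). To $U$ one associates a $\mathbb{Z}_p$-Lie algebra $L(U)$ which is free of rank $\dim(U)$ as a $\mathbb{Z}_p$-module, and closed subgroups of $U$ correspond bijectively to closed $\mathbb{Z}_p$-Lie subalgebras of $L(U)$, with dimension preserved on both sides. Under this correspondence $V$ corresponds to a closed $\mathbb{Z}_p$-subalgebra $L(V)\subset L(U)$ of the same $\mathbb{Z}_p$-rank. Any $\mathbb{Z}_p$-submodule of a finitely generated free $\mathbb{Z}_p$-module that has full rank has finite index (by the elementary divisor theorem), so $L(V)$ has finite index in $L(U)$, and consequently $V$ has finite index in $U$.

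Finally, since $V$ is a closed subgroup of finite index of the profinite group $U$, it is open in $U$; and $U$ is open in $G$, so $V$ is open in $G$; hence $H\supset V$ is open in $G$. The only subtlety, and the step where one must be careful, is the bookkeeping of dimensions under the correspondence: one must check that the invariant $d(-)$ used to define the dimension of a $p$-adic analytic group really does agree with the $\mathbb{Z}_p$-rank of the associated Lie algebra, so that the equality $\dim(V)=\dim(U)$ translates into equality of $\mathbb{Z}_p$-ranks of $L(V)$ and $L(U)$. This is precisely the content of the cited Theorems~9.11 and~9.14 of DDMS, which makes the argument go through without further work.
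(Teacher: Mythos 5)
First, a point of comparison: the paper does not prove this lemma at all \textemdash{} it is quoted verbatim from Dixon\textendash du Sautoy\textendash Mann\textendash Segal with a citation to their Theorems 9.11 and 9.14, so there is no internal argument to measure you against, and your reading of the conclusion (that $H$, not $G$, is the subgroup asserted to be open in $G$) is the right correction of the typo. Your overall strategy \textemdash{} pass to a uniform open subgroup $U\leq G$, set $V=H\cap U$, note $\dim V=\dim U$, and convert the equality of dimensions into a full-rank statement about $\mathbb{Z}_{p}$-lattices inside the Lie algebra $L(U)$ \textemdash{} is the standard Lie-theoretic proof of this fact and does work.

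There is, however, one step you state in a form that is too strong and, taken literally, false: the assertion that closed subgroups of $U$ correspond bijectively to closed $\mathbb{Z}_{p}$-Lie subalgebras of $L(U)$. The equivalence of categories in DDMS is between uniform pro-$p$ groups and powerful $\mathbb{Z}_{p}$-Lie algebras; an arbitrary closed subgroup of a uniform group need not be stable under the Lie operations, and conversely (for small $p$) a subalgebra need not exponentiate to a subgroup because of denominators in the Campbell\textendash Hausdorff series. So the sentence ``consequently $V$ has finite index in $U$'' rests on a correspondence that does not exist in the generality you invoke. The repair is routine: $V$ is a closed subgroup of a pro-$p$ group of finite rank, hence contains an open uniform subgroup $W$, still with $\dim W=\dim U$. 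For a \emph{uniform} closed subgroup $W\leq U$ one checks directly that $W$ is stable under $x+y=\lim_{n}(x^{p^{n}}y^{p^{n}})^{p^{-n}}$ and under the bracket (the element $x^{p^{n}}y^{p^{n}}$ lies in the subgroup $W^{p^{n}}$, and $p^{n}$-th roots in $U$ are unique, so the approximating roots already lie in $W$), hence $W$ is a $\mathbb{Z}_{p}$-subalgebra of $L(U)$, free of rank $\dim W$. Then, as in your plan, the elementary divisor theorem gives $p^{k}L(U)\subseteq W$ for some $k$; since $p^{k}L(U)=U^{p^{k}}$ as subsets and $U^{p^{k}}$ is open in $U$, the subgroup $W$, and therefore $H\supseteq W$, is open in $G$. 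With this adjustment your argument is complete; without it, the key transfer from equal dimensions to finite index is not justified.
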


\begin{Def}\cite[Section 23]{MR2810332}
	A \emph{$ p $-valued group} is a group $ G $ together with a real valued function $\omega:G\to \mathbb{R}_{>0}\cup \{\infty\}$ on $ G $, which is called \emph{$p$-valuation}, such that the following properties hold for all $ g,h\in G $:
	\begin{enumerate}
		\item $ \omega(g)>\frac{1}{p-1} $,
		\item $\omega(g)=\infty$ if and only if $g=1$,
		\item $ \omega(g^{-1}h)\geq \min\{\omega(g),\omega(h)\} $,
		\item $\omega([g,h])\geq \omega(g)+\omega(h)$,
		\item $ \omega(g^{p})=\omega(g)+1 $
	\end{enumerate}
	where $ [g,h]=g^{-1}h^{-1}gh $.  
\end{Def}

It is said to have \emph{integer values} if $ \omega(x)\in \mathbb{Z} $ for all $ g\in G,~g\neq 1 $. Note that any $p$-valued group is torsion-free.

Lazard uses the valuation $\omega$ to define a topology on $G$ by choosing the subgroups $G_{\nu}=\{g\in G~|~ \omega(g)\geq \nu\}$, with $\nu \in \mathbb{R}_{>0}$, as a fundamental system of neighborhoods of the identity. 

Let $ G $ be a pro-$ p $ group and $ \omega $ be a $ p $-valuation on $ G $ which we assume defines the topology on $ G $. Let $ g\in G $ be any element. Then we have the group homomorphism
\[ c:\mathbb{Z}\to G,~m \mapsto g^{m} ,\]
and it extends uniquely to a continuous group homomorphism $ c:\mathbb{Z}_{p}\to G$ which we always will write as $ g^{x}:=c(x) $.
The $ p $-valuation $ \omega $ on $ G $ has the following property, see \cite[Remark 26.3]{MR2810332}.
\begin{enumerate}
	\item[(vi)] $ \omega(g^{x})=\omega(g)+v_{p}(x) $ for any $ g\in G \backslash \{1\}  $ and any $ x\neq 0 $ in $ \mathbb{Z}_{p} $ where $ v_{p} $ denotes the usual $ p $-adic valuation on $ \mathbb{Q}_{p} $.
\end{enumerate}

\begin{Def}\cite[Section 26]{MR2810332}
	A pro-$p$ group is called \emph{$p$-valuable} if there exists a $p$-valuation $\omega$ on $G$ which defines the topology on $G$ and $G$ has finite rank. (See \cite[Section 26]{MR2810332} for the precise definition of finite rank.) A $ p $-valuable group $ G $ is called \emph{$ p $-saturated} if it has the following property: if $ g\in G $ with $ \omega(g)>\frac{p}{p-1} $, then there exists a $ h\in G $ such that $ g=h^{p} $.
\end{Def}

\begin{Thm}[Lazard]\cite{MR209286}\label{lazard}
	A compact topological group is $p$-adic analytic if and only if it contains an open $p$-saturated pro-$ p $ subgroup.
\end{Thm}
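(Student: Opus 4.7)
The statement to prove is an equivalence, so both directions need to be established. The plan is to handle the forward direction via an explicit $p$-valuation on the matrix congruence subgroups of $\mathrm{GL}_{d}(\mathbb{Z}_{p})$, and the reverse direction via the Lazard correspondence together with the construction of a faithful continuous linear representation.

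For the forward direction, suppose $G$ is realized as a closed subgroup of $\mathrm{GL}_{d}(\mathbb{Z}_{p})$. The strategy is to exhibit a specific open $p$-saturated pro-$p$ subgroup of $\mathrm{GL}_{d}(\mathbb{Z}_{p})$ and then intersect it with $G$. The natural candidate is the congruence kernel
\[
\Gamma_{k} := 1 + p^{k}M_{d}(\mathbb{Z}_{p}),
\]
with $k \geq 1$ for odd $p$ and $k \geq 2$ for $p=2$, equipped with $\omega(g) := v_{p}(g-1)$ (the minimum of the $p$-adic valuations of the entries of $g-1$). I would verify the five $p$-valuation axioms (i)--(v) using the ultrametric inequality on the matrix norm, the identity $[1+A,1+B] - 1 = (1+A)^{-1}(1+B)^{-1}(AB-BA)$ to obtain the commutator bound, and the binomial expansion $(1+X)^{p} = 1 + pX + \binom{p}{2}X^{2} + \cdots + X^{p}$ together with $v_{p}\!\bigl(\binom{p}{i}\bigr) \geq 1$ for $1 \leq i \leq p-1$ to obtain $\omega(g^{p}) = \omega(g) + 1$. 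Saturation then follows from the convergence of the $p$-adic $\log$ and $\exp$ series on $\Gamma_{k}$: for $g$ with $\omega(g) > p/(p-1)$, the element $\exp(\tfrac{1}{p}\log g)$ lies in $\Gamma_{k}$ and raises to $g$.

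For the reverse direction, let $H \subseteq G$ be an open $p$-saturated pro-$p$ subgroup; compactness of $G$ forces $[G:H] < \infty$. The plan is to first produce a faithful continuous embedding $H \hookrightarrow \mathrm{GL}_{m}(\mathbb{Z}_{p})$ and then pass to the induced representation of $G$, whose kernel is the largest normal subgroup of $G$ contained in the trivial group and hence is trivial. To embed $H$, I would invoke the Lazard correspondence: a $p$-saturated group of finite rank carries a canonical $\mathbb{Z}_{p}$-Lie algebra structure $L(H)$ of the same rank, with the group law recovered from the Lie bracket via the Baker--Campbell--Hausdorff series. Then $H$ acts on $L(H)$ through the adjoint representation $\mathrm{Ad}:H \to \mathrm{GL}(L(H))$, whose kernel is the center $Z(H)$. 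The center, being a torsion-free abelian $p$-valuable group of finite rank, is isomorphic to $\mathbb{Z}_{p}^{r}$ and embeds in $\mathrm{GL}_{r}(\mathbb{Z}_{p})$ (say via a diagonal character $x \mapsto (1+p)^{x}$ on each coordinate). The direct sum $\mathrm{Ad} \oplus (\text{embedding of }Z(H))$ gives the required faithful continuous embedding of $H$.

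The main obstacle is precisely the Lazard Lie correspondence for $p$-saturated groups: constructing $L(H)$ and proving that both the BCH series and its Dynkin inverse converge with integer coefficients on $H$. The defining axioms of a $p$-valuation, namely $\omega > 1/(p-1)$, strict commutator additivity $\omega([g,h]) \geq \omega(g)+\omega(h)$, and $\omega(g^{p}) = \omega(g) + 1$, are exactly what is needed to control the denominators appearing in BCH and to guarantee the convergence and injectivity of $\log$ and $\exp$ on $H$; this is the technically demanding ingredient of Lazard's theory. Once this correspondence is in hand, extending the analytic structure from $H$ to $G$ via the finitely many cosets is essentially formal, and the axiom verification for $\Gamma_{k}$ in the forward direction is a routine computation with the matrix norm.
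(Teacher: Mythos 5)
The paper itself gives no argument for this statement --- it is quoted directly from Lazard's work --- so your sketch has to stand on its own, and it has two genuine gaps. In the forward direction, intersecting $G$ with the congruence subgroup $\Gamma_k$ does produce an open pro-$p$ subgroup of $G$ that is $p$-valued for the induced valuation $\omega(g)=v_p(g-1)$, but it need not be $p$-saturated: saturation is not inherited by closed subgroups. Already $G=1+p^2\mathbb{Z}_p\subset \mathrm{GL}_1(\mathbb{Z}_p)$ ($p$ odd) is a counterexample: here $G\cap\Gamma_1=G$, the element $g=1+p^2$ has $\omega(g)=2>p/(p-1)$, yet its unique $p$-th root in $1+p\mathbb{Z}_p$ has valuation $1$ and so lies outside $G$. (Note $G$ \emph{is} saturated for the shifted valuation $\omega-1$, which shows the repair requires changing the valuation, not just intersecting.) What you actually need is the nontrivial structural theorem that every closed subgroup of $\mathrm{GL}_d(\mathbb{Z}_p)$ contains an open subgroup that is saturated for \emph{some} $p$-valuation --- equivalently an open uniform subgroup --- and that is a core piece of Lazard's (or Dixon--du Sautoy--Mann--Segal's) theory, not a routine computation with the matrix norm.

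In the reverse direction, the faithful representation of $H$ is not actually constructed: $\mathrm{Ad}\oplus(\text{embedding of }Z(H))$ is not a representation of $H$, since the second summand is only defined on $Z(H)$. To make it one you would have to extend a faithful character of $Z(H)\cong\mathbb{Z}_p^r$ to $H$, and this is impossible in general: there are $p$-saturated groups with finite abelianization and infinite centre (those whose $\mathbb{Q}_p$-Lie algebra is perfect with nontrivial centre, e.g.\ coming from the universal central extension of $\mathfrak{sl}_n(\mathbb{Q}_p[\epsilon]/(\epsilon^2))$), and for such $H$ every homomorphism to an abelian group has finite image, so any sum of $\mathrm{Ad}$ with representations pulled back from $H^{\mathrm{ab}}$ kills an open subgroup of the infinite torsion-free centre and cannot be faithful. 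The classical proofs obtain faithfulness by a different device (for instance embedding a uniform $H$ into the automorphism group of an auxiliary uniform group such as $H\times\mathbb{Z}_p^{\dim H}$). Your induction step from $H$ to $G$ and the verification that $\Gamma_k$ itself is $p$-valued and saturated are fine, and you rightly flag the BCH/$\log$--$\exp$ convergence as the technical heart, but as written both directions rest on a step that is either missing or false.
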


The following example is important in the context of Conjecture \ref{UFM}.

\begin{example}\label{padicvaluedexample}
	Let $ L/\mathbb{Q}_{p} $ be a finite extension and $ v=v_{L} $ its additive valuation normalized by $ v(p)=1 $. Fix a positive integer $ n $, we consider the space $ M_{n}(L) $ of all $ n\times n $ matrices over $ L $. For any nonzero matrix $ A=(a_{ij}) $, we put
	\[ w(A):=\min_{i,j}{v(a_{ij})}, \]
	and $ w(0):=\infty $. Let 
	\begin{align*}\label{ha}
		\mathcal{G}_{n}(L):=&\left\lbrace g\in {\rm GL}_{n}(L)~|~w(g-1)>\dfrac{1}{p-1}\right\rbrace ,
	\end{align*}
	and
	\[ \omega(g):=w(g-1)\qquad \text{for }g\in  \mathcal{G}_{n}(L). \]
	Then one can check that $ \omega $ is a $ p $-valuation on $  \mathcal{G}_{n}(L) $ and $ \omega $ defines the subspace topology on the open subgroup $  \mathcal{G}_{n}(L) $ of $ {\rm GL}_{n}(\mathcal{O}_{L}) $, cf. \cite[Example 23.2]{MR2810332}. Then $  \mathcal{G}_{n}(L) $ is a torsion-free open subgroup of $ {\rm GL}_{n}(\mathcal{O}_{L}) $.
\end{example}
For any real number $ \nu>0 $, we put
\[  \mathcal{G}_{n}(L)_{\nu+}:=\{g\in  \mathcal{G}_{n}(L)~|~\omega(g)>\nu\}. \]

\begin{Convention}\label{definitiona}
	For any integer $ i\geq 1 $, we define
	\[ {\rm GL}_{n}^{i}(\mathcal{O}_{L}):=\ker({\rm GL}_{n}(\mathcal{O}_{L})\to {\rm GL}_{n}(\mathcal{O}_{L}/\pi^{i}_{L}\mathcal{O}_{L}))=1+\pi_{L}^{i}M_{n}(\mathcal{O}_{L}),\]
	and 
	\[ {\rm SL}_{n}^{i}(\mathcal{O}_{L}):={\rm GL}_{n}^{i}(\mathcal{O}_{L})\cap {\rm SL}_{n}(\mathcal{O}_{L}). \]
\end{Convention}
Note that for any real number $ \nu>0 $,
we have
\[ \mathcal{G}_{n}(L)_{\nu+}={\rm GL}_{n}^{\eta(\nu)}(\mathcal{O}_{L}), \]
where $ \eta(\nu) $ is the smallest integer which is larger than $ e_{L/\mathbb{Q}_{p}}\cdot \nu $. For example, when $ L=\mathbb{Q}_{p} $ where $ p\geq 3 $, we have $  \mathcal{G}_{n}(\mathbb{Q}_{p})={\rm GL}_{n}^{1}(\mathbb{Z}_{p}) $ and $  \mathcal{G}_{n}(\mathbb{Q}_{p})_{m+}={\rm GL}_{n}^{m+1}(\mathbb{Z}_{p}) $ for any integer $ m\geq 1 $.

\subsection{Just-infinite profinite groups}\label{section1.3}
An infinite profinite group $G$ is called \emph{just-infinite} if all non-trivial closed normal subgroup of $G$ have finite index.  For example, the pro-$ p $ groups $ {\rm SL}^{1}_{n}(\mathbb{Z}_{p}) $ and $ {\rm SL}_{n}^{1}(\mathbb{F}_{p}[[T]]) $ are just-infinite where $ p>2 $, see \cite{MR1483894}. The following lemma is known. 

\begin{Lem}\label{justinfinitequotient}\cite[Theorem 3]{MR1765119}
	Let $ G $ be a topologically finitely generated infinite profinite group which is a virtually pro-$ p $ group. Then $ G $ admits a just-infinite quotient which is also a virtually pro-$ p $ group.
\end{Lem}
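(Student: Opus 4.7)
The plan is to produce the just-infinite quotient by Zorn's lemma applied to the poset $\mathcal{F}$ of closed normal subgroups $N$ of $G$ with $G/N$ infinite, ordered by inclusion. This set is nonempty because $G$ is infinite. If $N$ is a maximal element of $\mathcal{F}$, then every closed normal subgroup of $G$ strictly containing $N$ lies outside $\mathcal{F}$, hence gives a finite quotient of $G/N$; this makes $G/N$ just-infinite. The quotient $G/N$ also remains virtually pro-$p$ because the image of an open pro-$p$ subgroup of $G$ is open and pro-$p$ in $G/N$. The real work is to verify the chain condition: for a chain $\{N_\alpha\}$ in $\mathcal{F}$, the candidate upper bound $N:=\overline{\bigcup_\alpha N_\alpha}$ must still have $G/N$ infinite.

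To establish this, the first step is to reduce to the pro-$p$ case. Let $P$ be an open normal pro-$p$ subgroup of $G$, obtained as the normal core of any open pro-$p$ subgroup; then $P$ is itself topologically finitely generated. Since $P$ is clopen one has $P\cap N=\overline{\bigcup_\alpha(P\cap N_\alpha)}$, and using $[G:P]<\infty$ one checks that each $P/(P\cap N_\alpha)$ is infinite. Supposing for contradiction that $G/N$ is finite, $Q:=P\cap N$ is an open normal pro-$p$ subgroup of $G$ that is topologically finitely generated and equals $\overline{\bigcup_\alpha(P\cap N_\alpha)}$, with each quotient $Q/(P\cap N_\alpha)$ still infinite. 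The problem therefore reduces to the following claim: if $Q$ is a topologically finitely generated pro-$p$ group and $\{M_\alpha\}$ is a chain of closed normal subgroups with $\overline{\bigcup_\alpha M_\alpha}=Q$, then $M_{\alpha_0}=Q$ for some $\alpha_0$, which would contradict each $Q/M_\alpha$ being infinite.

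To prove this claim, consider the Frattini subgroup $\Phi(Q)=\overline{Q^p[Q,Q]}$, which is open with $Q/\Phi(Q)$ a finite elementary abelian $p$-group. The ascending chain $\{M_\alpha\Phi(Q)/\Phi(Q)\}$ of subgroups of $Q/\Phi(Q)$ stabilizes at some subgroup $A$; applying the continuous surjection $\pi:Q\twoheadrightarrow Q/\Phi(Q)$ to the dense union gives $Q/\Phi(Q)=\pi(Q)\subseteq\overline{\pi(\bigcup_\alpha M_\alpha)}=\overline{A}=A$, so $A=Q/\Phi(Q)$. Hence $M_{\alpha_0}\Phi(Q)=Q$ for some $\alpha_0$, and the Burnside basis theorem for finitely generated pro-$p$ groups then forces the closed subgroup $M_{\alpha_0}$ to equal $Q$, as required.

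The main obstacle is the bookkeeping in the reduction to the pro-$p$ setting: one has to ensure that normality, topological finite generation, and the infiniteness of all relevant indices survive the passages from $G$ to $P$ to $Q=P\cap N$. The concluding Frattini-and-Burnside step that produces the contradiction is, by comparison, short.
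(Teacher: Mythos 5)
Your argument is correct. Note, however, that the paper does not prove this lemma at all: it simply cites \cite[Theorem 3]{MR1765119}, so what you have written is a self-contained replacement for an external reference rather than a variant of an argument in the text. Your route is the standard one for this result: Zorn's lemma on closed normal subgroups with infinite quotient, with the only genuine issue being the chain condition, and every step checks out --- the identity $P\cap\overline{\bigcup_\alpha N_\alpha}=\overline{\bigcup_\alpha(P\cap N_\alpha)}$ is valid because $P$ is clopen, the indices $[P:P\cap N_\alpha]$ and $[Q:P\cap N_\alpha]$ stay infinite because $P$ and $Q$ have finite index, $Q$ inherits topological finite generation as an open subgroup of $G$, and the Frattini/Burnside step (a closed subgroup $H$ with $H\Phi(Q)=Q$ equals $Q$) is legitimate for pro-$p$ groups. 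It is worth emphasizing that you have used the hypotheses exactly where they are indispensable: the virtually pro-$p$ assumption together with finite generation is what makes the relevant Frattini quotient $Q/\Phi(Q)$ finite, and without such a hypothesis the statement fails (e.g.\ a topologically $2$-generated product of pairwise non-isomorphic finite simple groups has no just-infinite quotient), so the chain condition cannot be had for free. The net effect of your proof is to make the paper independent of the cited reference at the cost of about a page of standard profinite bookkeeping.
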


We prove the following theorem:

\begin{Thm}\label{densenormalsubgroup}
	Let $G$ be an infinite profinite group such that it admits a topologically finitely generated just-infinite quotient which is not virtually abelian. Then we have the following:
	\begin{enumerate}
		\item There is no dense normal subgroup $N$ of $G$ such that $N$ has only countably many conjugacy classes.
		\item There is no countable subset $D$ of $G$ such that normal subgroup of $G$ generated by $D$ is dense in $G$ and the conjugacy class $g^{G}$ of $g$ in $G$ is finite (as a set) for any $g\in D$.
	\end{enumerate}
\end{Thm}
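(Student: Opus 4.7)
The plan is to pass to a topologically finitely generated just-infinite quotient $Q$ of $G$ that is not virtually abelian (guaranteed by hypothesis and Lemma~\ref{justinfinitequotient}), and to derive a contradiction inside $Q$. Both hypotheses transfer cleanly under the surjection $\pi\colon G\twoheadrightarrow Q$: for part~(i), $N_{Q} := \pi(N)$ is dense and normal in $Q$ with at most countably many $N_{Q}$-conjugacy classes; for part~(ii), $D_{Q} := \pi(D)$ is countable with $|\pi(g)^{Q}|\leq|g^{G}|<\infty$ for each $g\in D$. So we may assume $G = Q$.

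The heart of the argument is the key lemma that the FC-center $\mathrm{FC}(Q) = \{1\}$. First, $Z(Q) = \{1\}$: otherwise by just-infiniteness the closed normal subgroup $Z(Q)$ would have finite index, making $Q$ virtually abelian. Now suppose $n \in \mathrm{FC}(Q)\setminus\{1\}$. The subgroup $U := \bigcap_{m \in n^{Q}} C_{Q}(m)$ --- the kernel of the conjugation action of $Q$ on the finite set $n^{Q}$ --- is a finite intersection of finite-index subgroups, hence open by the Nikolov--Segal theorem, and normal in $Q$. By construction $U$ centralizes $n^{Q}$, so by symmetry $n^{Q} \subseteq C_{Q}(U)$. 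Since $U$ is normal in $Q$, a short computation shows that $C_{Q}(U)$ is also normal in $Q$; it is closed and contains $n \neq 1$. Just-infiniteness then forces $C_{Q}(U)$ to have finite index, hence to be open. But then $Z(U) = U \cap C_{Q}(U)$ is an open abelian subgroup of $Q$, contradicting non-virtual-abelianness.

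Part~(ii) follows immediately: $\pi(D) \subseteq \mathrm{FC}(Q) = \{1\}$, so the normal closure of $\pi(D)$ in $Q$ is trivial, which cannot be dense in the infinite group $Q$. For part~(i), I invoke the profinite cardinality dichotomy: for any $n \in Q$, the coset space $Q/C_{Q}(n)$ is profinite, so $|n^{Q}|$ is either finite or at least $2^{\aleph_{0}}$. Combined with $\mathrm{FC}(Q) = \{1\}$, every nontrivial element of $N_{Q}$ has uncountable $Q$-conjugacy class. If $N_{Q}$ is countable, each $n^{Q} \subseteq N_{Q}$ is countable, hence must be finite, forcing $n = 1$; so $N_{Q} = \{1\}$, contradicting density. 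If $N_{Q}$ is uncountable, then since $N_{Q}$-conjugacy classes refine $Q$-conjugacy classes in $N_{Q}$, the assumption yields $N_{Q}$ as a countable union of closed $Q$-conjugacy classes (each closed as the continuous image of the compact space $Q$ under conjugation), making $N_{Q}$ an $F_{\sigma}$ subgroup of the Polish group $Q$; by Pettis's theorem $N_{Q}$ is either open or meager. The open case gives $N_{Q} = Q$, whereby $Q$ itself has only countably many conjugacy classes, and the Baire category theorem forces some conjugacy class to have nonempty interior --- a situation that can be ruled out by commutator analysis together with the FC-lemma.

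The main technical obstacle is the meager subcase of part~(i). Here one iterates the FC-lemma argument on the open proper subgroup $\overline{[Q,Q]}$ --- which is open by just-infiniteness and non-abelianness of $Q$, and which remains topologically finitely generated and non-virtually-abelian --- using the structure theory of just-infinite profinite groups (e.g.\ Wilson's dichotomy) to ensure that the FC-lemma applies at each stage and that the descent terminates in a contradiction with the density of $N_{Q}$.
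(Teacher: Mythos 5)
Your reduction to the just-infinite quotient $Q$ and your FC-center lemma are fine (indeed $\mathrm{FC}(Q)=\{1\}$ follows exactly as you say, and you do not even need Nikolov--Segal there, since centralizers are closed and a closed finite-index subgroup is automatically open), and your treatment of part (ii) is a correct, pleasantly elementary argument: finite $G$-classes force $\pi(D)\subseteq \mathrm{FC}(Q)=\{1\}$, so the image of the normal closure cannot be dense. Note this is a different route from the paper, which deduces (ii) from (i) via Corollary \ref{countable}.

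The genuine gap is in part (i), in the uncountable case, which is where all the content of the statement lies. After Pettis you are left with two subcases and neither is actually proved. In the open subcase you must show that a topologically finitely generated just-infinite profinite group that is not virtually abelian has no conjugacy class with nonempty interior; you dismiss this with ``commutator analysis together with the FC-lemma,'' but no such short argument is given, and this is essentially the hard theorem the paper invokes (Theorem 1.1 of \cite{MR3993799}, whose proof the paper even borrows for its Lemma \ref{finiteorderconj}). In the meager subcase there is no argument at all: a dense subgroup can perfectly well be meager (any countable dense subgroup is), so meagerness alone yields no contradiction, and your proposed ``descent'' along $\overline{[Q,Q]}$ is only a statement of intent --- moreover $\overline{[Q,Q]}$ need not be a \emph{proper} subgroup of $Q$, and the FC-lemma gives no information about elements whose $Q$-class is uncountable, which is exactly the situation here (a countable union of uncountable closed classes is not obviously incompatible with density). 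The paper closes precisely this gap by citing two external results: Corollary 1.15 of Nikolov--Segal \cite{MR2995181}, which upgrades a dense normal subgroup with countably many conjugacy classes to $N=H$, and then Theorem 1.1 of \cite{MR3993799}, which says such an $H$ cannot have only countably many conjugacy classes. Unless you either reprove these or cite them, your part (i) is incomplete.
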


To prove Theorem \ref{densenormalsubgroup}, we need the following easy lemma.
\begin{Lem}\label{cardinality}
	Let $ G $ be an abstract group and $ A $ be an infinite subset of $ G $. Then $ A $ and the subgroup $ \left\langle A\right\rangle  $ of $ G $ generated by $ A $ have the same cardinality.
\end{Lem}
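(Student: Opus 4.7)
The plan is a straightforward cardinal arithmetic argument. Every element of $\langle A\rangle$ can be written as a finite product $a_{1}^{\epsilon_{1}}\cdots a_{n}^{\epsilon_{n}}$ with $a_{i}\in A$ and $\epsilon_{i}\in\{\pm 1\}$, so $\langle A\rangle$ is the union over $n\geq 0$ of the images of the product maps $(A\cup A^{-1})^{n}\to G$.

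First I would set $B:=A\cup A^{-1}$ and observe $|B|\leq 2|A|=|A|$ since $A$ is infinite. Next, for each integer $n\geq 1$ the Cartesian power $B^{n}$ has cardinality $|B|^{n}=|A|$ (finite powers of an infinite cardinal equal itself). Then I would write
\[
\langle A\rangle\;\subseteq\;\{1\}\cup\bigcup_{n\geq 1} B^{n},
\]
interpreted as the set of products, and bound
\[
|\langle A\rangle|\;\leq\; 1+\sum_{n\geq 1}|B|^{n}\;=\;\aleph_{0}\cdot |A|\;=\;|A|,
\]
where the last equality uses that $|A|$ is infinite. Conversely $A\subseteq\langle A\rangle$ yields $|A|\leq|\langle A\rangle|$, and the Cantor--Schröder--Bernstein theorem gives equality.

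There is no real obstacle here beyond invoking the standard fact that $\kappa^{n}=\kappa$ and $\aleph_{0}\cdot\kappa=\kappa$ for any infinite cardinal $\kappa$, which is why the hypothesis that $A$ is infinite is essential (for finite $A$ the group $\langle A\rangle$ can of course be countably infinite, e.g.\ $A=\{a\}$ generating $\mathbb{Z}$).
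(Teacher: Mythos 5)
Your argument is correct and is essentially the same as the paper's: both decompose $\left\langle A\right\rangle$ into the sets of length-$n$ products of elements of $A\cup A^{-1}$, bound each by $|A|$ using $\kappa^{n}=\kappa$ for infinite $\kappa$, sum over $n$ with $\aleph_{0}\cdot|A|=|A|$, and conclude by the trivial reverse inclusion $A\subseteq\left\langle A\right\rangle$. No issues.
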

\begin{proof}
	Put $ A^{-1}:=\{a^{-1}~|~a\in A\} $, and  denote by $ \text{Pr}_{n}(A) $ the set of all finite products of the form $ a_{1}^{\pm 1}\cdots a_{n}^{\pm 1} $, where $ a_{1},\cdots,a_{n}\in A $ for any integer $ n\geq 1 $. By definition, we have $ \left\langle A\right\rangle=\bigcup_{n\geq 1}^{\infty}\text{Pr}_{n}(A) $. Note that for any $ n\geq 1 $, we have
	\[ |\text{Pr}_{n}(A)|\leq |A\cup A^{-1}|^{n}=|A| \]
	since $ |A| $ is an infinite cardinal. Thus, we have
	\[  |\left\langle A\right\rangle|=\left|  \bigcup_{n\geq 1}^{\infty}\text{Pr}_{n}(A)\right| \leq \sum_{n\geq 1}|\text{Pr}_{n}(A)|\leq \sum_{n\geq 1}|A|=|A|.  \]
	On the other hand, it is clear that $ |A|\leq | \left\langle A\right\rangle| $. We are done.
\end{proof}

\begin{Cor}\label{countable}
	Let $G$ be an abstract group and $A$ a countable subset of $ G$ such that $G=\left\langle A\right\rangle$. Then the group $G$ is countable.
\end{Cor}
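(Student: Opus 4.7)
The plan is to split into two cases according to whether the given set $A$ is finite or countably infinite, and reduce each to something already established. The corollary is a quick consequence of Lemma \ref{cardinality}, so the proof will be very short; the only subtlety is that Lemma \ref{cardinality} as stated applies when $A$ is infinite (it uses $|A \cup A^{-1}|^n = |A|$, which requires $|A|$ to be an infinite cardinal), so the finite case needs a separate sentence.

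First, if $A$ is countably infinite, then Lemma \ref{cardinality} applied with this $A$ immediately yields $|\langle A\rangle| = |A| = \aleph_0$, so $G = \langle A\rangle$ is countable.

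Second, if $A$ is finite, write $A \cup A^{-1} = \{b_1, \dots, b_m\}$ and let $\mathrm{Pr}_n$ denote the set of all products $b_{i_1} \cdots b_{i_n}$ of length $n$. Then $|\mathrm{Pr}_n| \leq m^n$ is finite, and $\langle A\rangle = \{1\} \cup \bigcup_{n \geq 1} \mathrm{Pr}_n$ is a countable union of finite sets, hence countable.

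The main (and only) obstacle here is really just a bookkeeping one: being careful that the statement "countable" encompasses the finite case so that Lemma \ref{cardinality} cannot be invoked verbatim in that subcase. Once that is dispatched as above, combining the two cases yields the conclusion.
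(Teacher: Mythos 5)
Your proposal is correct and follows the same route as the paper's proof: split on whether $A$ is finite or countably infinite, handle the finite case directly, and invoke Lemma \ref{cardinality} in the infinite case. You simply spell out the finite case in a bit more detail than the paper, which dismisses it as clear.
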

\begin{proof}
	If $A$ is finite, then it is clear that $G$ is countable. If $A$ is infinite, then our claim follows from Lemma \ref{cardinality}.
\end{proof}
\begin{proof}[Proof of Theorem \ref{densenormalsubgroup}]
	We prove our theorem as follows:
	\begin{enumerate}
		\item Suppose that a counter-example exists. Let $H$ be a topologically finitely generated just-infinite quotient of $G$ which is not virtually abelian. By our assumption, there exists a dense normal subgroup of $N$ of $H$ such that $N$ has only countably many conjugacy classes. By \cite[Corollary 1.15]{MR2995181}, we know that $H=N$. But this contradicts \cite[Theorem 1.1]{MR3993799}. 
		\item Suppose that a counter-example exists. By our assumption and Corollary \ref{countable}, the normal subgroup $N$ of $G$ generated by $D$ is a countable dense subgroup of $G$. But this contradicts the previous assertion. We are done.
	\end{enumerate}
\end{proof}

\subsection{Some results on local Galois groups and global Galois groups}\label{someresultsonlocalgalosgroup}

\subsubsection{Local Galois groups}
Let $ p $ be a prime.
\begin{Prop}\label{tamelyramified}
	Let $ k $ be a local field with residue field of characteristic $ p $, $ k_{tr} $ the maximal tamely ramified extension of $ k $ and $ \text{Gal}(k_{tr}/k) $ the Galois group of the maximal tamely ramified extension of $ k $. Let $ q $ denote the cardinality of the residue field of $ k $. Then we have the following:
	\begin{enumerate}
		\item The Galois group $ \text{Gal}(k_{tr}/k) $ is isomorphic to the profinite group topologically generated by two elements $ \tau,\sigma $ with the only relation
		\begin{align}\label{relation}
			\sigma \tau \sigma^{-1}=\tau^{q},
		\end{align}
		where the element $ \tau $ is a generator of the inertia group, and $ \sigma $ is a Frobenius lift.
		\item Let $ F $ be a non-Archimedean local field of characteristic zero and $ \rho:\text{Gal}(k_{tr}/k)\to {\rm GL}_{n}(F) $ a continuous representation. Then the roots of the characteristic polynomial of $ \rho(\tau) $ are roots of unity. In particular, $ \rho(\tau) $ has finite order if and only if it's semisimple (i.e. diagonalizable over the algebraic closure of $ F $).
		\item  Let $ F $ be a non-Archimedean local field of positive characteristic and $ \rho:\text{Gal}(k_{tr}/k)\to {\rm GL}_{n}(F) $ a continuous representation. Then $ \rho(\tau) $ has finite order.
	\end{enumerate}
\end{Prop}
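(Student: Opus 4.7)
For part (i), I would invoke the standard structure theorem for the Galois group of the maximal tame extension of a local field. The strategy is to split
\[1 \to I_{k}^{\mathrm{tame}} \to \mathrm{Gal}(k_{tr}/k) \to \mathrm{Gal}(k^{ur}/k) \to 1,\]
identifying $\mathrm{Gal}(k^{ur}/k)\cong\hat{\mathbb{Z}}$ topologically generated by a Frobenius lift $\sigma$, and identifying the tame inertia with $\varprojlim_{(n,p)=1}\mu_{n}(\overline{k})\cong\prod_{\ell\ne p}\mathbb{Z}_{\ell}$ via the Kummer pairing $\sigma\mapsto (\sigma(\pi^{1/n})/\pi^{1/n})_n$ for a chosen uniformizer $\pi$. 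A coherent choice of primitive roots of unity produces a topological generator $\tau$, and the relation $\sigma\tau\sigma^{-1}=\tau^{q}$ is forced by the fact that Frobenius acts on $\mu_{n}(\overline{k})$ via $\zeta\mapsto\zeta^{q}$. I would simply cite Serre's \emph{Local Fields}, Chapter IV.

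For part (ii), I would apply $\rho$ to the defining relation to obtain $\rho(\sigma)\rho(\tau)\rho(\sigma)^{-1}=\rho(\tau)^{q}$. Hence $\rho(\tau)$ and $\rho(\tau)^{q}$ are conjugate in $\mathrm{GL}_{n}(F)$ and share a characteristic polynomial. So the multiset $\{\lambda_{1},\ldots,\lambda_{n}\}\subset\overline{F}^{\times}$ of eigenvalues (nonzero since $\det\rho(\tau)\ne 0$) is preserved by $\lambda\mapsto\lambda^{q}$, and finiteness of the multiset forces $\lambda_{i}^{q^{m}}=\lambda_{i}$ for some common $m\ge 1$, making each $\lambda_{i}$ a root of unity of order dividing $q^{m}-1$. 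For the semisimplicity addendum I would argue directly: a semisimple $\rho(\tau)$ diagonalizes over $\overline{F}$ and, having roots-of-unity eigenvalues, has finite order; conversely, an element of order $N$ has minimal polynomial dividing $x^{N}-1$, which is separable in characteristic zero, so $\rho(\tau)$ is semisimple.

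For part (iii), the eigenvalue argument from (ii) carries over verbatim, so the eigenvalues of $\rho(\tau)$ are again roots of unity (in fact of order coprime to $p$, since $q=p^{f}$ makes $q^{m}-1$ coprime to $p$). With $\mathrm{char}(F)=\ell>0$, I would invoke the Jordan decomposition $\rho(\tau)=\rho(\tau)_{s}\rho(\tau)_{u}$ in $\mathrm{GL}_{n}(\overline{F})$: the semisimple factor has finite order by the previous paragraph, while writing $\rho(\tau)_{u}=I+N$ with $N$ nilpotent of index $n$, Lucas' theorem yields $(I+N)^{\ell^{k}}=I+N^{\ell^{k}}$ in characteristic $\ell$, which equals $I$ once $\ell^{k}\ge n$. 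Since the two factors commute and each has finite order, $\rho(\tau)$ does too.

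The main obstacle is really none of the above individually, as each step is standard; the substantive input is part (i), which I would import wholesale from the local class field theory literature. The only mild subtlety in (ii) and (iii) is that eigenvalues and the Jordan decomposition are taken over $\overline{F}$ rather than $F$ itself, but the order of a matrix in $\mathrm{GL}_{n}(F)$ equals its order in $\mathrm{GL}_{n}(\overline{F})$, so this poses no actual difficulty.
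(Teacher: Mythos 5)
Your proposal is correct and follows essentially the same route as the paper: part (i) is imported from the standard structure theorem for tame local Galois groups, and parts (ii)--(iii) use the relation $\sigma\tau\sigma^{-1}=\tau^{q}$ to see that the $q$-power map permutes the eigenvalues of $\rho(\tau)$, forcing them to be roots of unity, with the positive-characteristic case finished by killing the unipotent part via an $\ell$-power. The only cosmetic difference is that you invoke the multiplicative Jordan decomposition where the paper simply observes that $\rho(\tau)^{m}$ is unipotent and hence an $\ell$-element; the two arguments are interchangeable.
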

\begin{proof}
	The claim (i) is \cite[Theorem 7.5.3]{MR2392026}. If $ F $ is a non-Archimedean local field of characteristic zero or of positive characteristic, then we can assume that $ \rho(\tau) $ is upper triangular after taking finite extension of $ F $. Let $ \lambda_{1},\cdots,\lambda_{n} $ denote the eigenvalues of $ \rho(\tau) $. Using the relation (\ref{relation}), we see that 
	\[ \{\lambda_{1},\cdots,\lambda_{n}\}=\{\lambda_{1}^{q},\cdots,\lambda_{n}^{q}\}. \]
	It follows that $ \lambda_{i}^{q^{n!}-1}=1 $ for all $ i $, and hence $ \lambda_{i} $ are roots of unity. Hence $ \rho(\tau)^{m} $ is unipotent for some positive integer $ m $. Moreover, if $ F $ has positive characteristic $ \ell $, then we see that $ \rho(\tau)^{m} $ is a $ \ell $-element, and hence $ \rho(\tau) $ has finite order. This completes the proof of the claim (ii) and (iii).
	The proposition is proved.
\end{proof}
\begin{Rem}
	In (ii), the image $\rho(\tau)$ of $\rho$ does not necessarily have finite order, see Theorem \ref{notalocalproblem} for a counterexample.
\end{Rem}

By a \textit{$ p $-extension} we mean a Galois extension whose Galois group is a $ p $-group.

\begin{Convention}
	For any prime $ \mathfrak{p} $ of a number field $ K $, we denote by $ G_{\mathfrak{p}} $ (resp. $ G_{\mathfrak{p}}(p) $) the absolute Galois group of the completion $ K_{\mathfrak{p}} $ of $ K $ at $ \mathfrak{p} $ (resp. the Galois group $ \text{Gal}(K_{\mathfrak{p}}(p)/K_{\mathfrak{p}}) $ of the maximal $ p $-extension $ K_{\mathfrak{p}}(p) $ of $ K_{\mathfrak{p}} $ inside $ \overline{\mathbb{Q}}_{p} $). Let $ I_{\mathfrak{p}}\subset G_{\mathfrak{p}} $ (resp. $ I_{\mathfrak{p}}(p)\subset G_{\mathfrak{p}}(p)$) denote the inertia subgroup, and let $ P_{\mathfrak{p}}\subset I_{\mathfrak{p}} $ denote the wild inertia subgroup.
	We denote by $ N(\mathfrak{p}) $ the number of elements of the residue field $ \kappa_{\mathfrak{p}} $ of $ \mathfrak{p} $.
\end{Convention}

\begin{Lem}\cite[Section 8.5]{MR1930372}\label{redundant}
	Let $ K $ be a number field. Then the following type of primes of $ K $ cannot ramify in a $ p $-extension of $ K $:
	\begin{enumerate}
		\item non-Archimedean primes $ \mathfrak{p} $ with $ N(\mathfrak{p})\not \equiv 0,1~\text{mod}~p $,
		\item complex primes,
		\item real primes for $ p\neq 2 $.
	\end{enumerate}
	Moreover, if $L/K $ is a finite $ p $-extension which is ramified at $ \mathfrak{p} $, then $ L/K $ is tamely ramified at $ \mathfrak{p} $ if and only if $ N(\mathfrak{p})\equiv 1~\text{mod}~p $.
\end{Lem}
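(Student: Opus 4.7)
The plan is to reduce each clause to a statement about the decomposition and inertia subgroups at $\mathfrak{p}$ sitting inside $\text{Gal}(L/K)$, using that these are simultaneously subgroups of the known local Galois group $G_\mathfrak{p}$ and of a $p$-group.

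For the Archimedean cases, I would argue directly: for a complex prime $\mathfrak{p}$ the completion $K_\mathfrak{p}=\mathbb{C}$ is algebraically closed, so the decomposition group is trivial and (ii) follows immediately. For a real prime $\mathfrak{p}$, the decomposition group embeds into $\text{Gal}(\mathbb{C}/\mathbb{R})\cong\mathbb{Z}/2\mathbb{Z}$; being simultaneously a subgroup of a $p$-group with $p\neq 2$ forces it to be trivial, yielding (iii).

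For (i), let $\ell$ be the residue characteristic at $\mathfrak{p}$, so $N(\mathfrak{p})$ is a power of $\ell$, and consider the image $\overline{I}$ of $I_\mathfrak{p}$ inside the $p$-group $\text{Gal}(L/K)$. If $\ell=p$, then $N(\mathfrak{p})\equiv 0\pmod p$, which is allowed by the statement. If $\ell\neq p$, then the wild inertia $P_\mathfrak{p}$ is pro-$\ell$, so its image in the pro-$p$ group $\overline{I}$ is trivial; hence $\overline{I}$ is a quotient of the procyclic tame inertia $I_\mathfrak{p}/P_\mathfrak{p}$, whose generator $\tau$ satisfies $\sigma\tau\sigma^{-1}=\tau^{N(\mathfrak{p})}$ from Proposition \ref{tamelyramified}. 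The maximal pro-$p$ quotient of this procyclic group is non-trivial precisely when $p$ divides $N(\mathfrak{p})-1$. Combining the two cases, $\overline{I}$ can be non-trivial only when $N(\mathfrak{p})\equiv 0$ or $1\pmod p$, establishing (i).

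For the ``moreover'' clause, suppose $L/K$ is a ramified $p$-extension at $\mathfrak{p}$, so the ramification index $e=|\overline{I}|$ is a non-trivial $p$-power. Tameness at $\mathfrak{p}$ is equivalent to $\gcd(e,\ell)=1$, hence to $\ell\neq p$, hence to $N(\mathfrak{p})\not\equiv 0\pmod p$; combined with (i) this forces $N(\mathfrak{p})\equiv 1\pmod p$. Conversely, if $N(\mathfrak{p})\equiv 1\pmod p$ then $\ell\neq p$ automatically, so $\gcd(e,\ell)=1$ and the ramification is tame. None of the steps is genuinely difficult; the main conceptual ingredient is the structure of the tame local Galois group recorded in Proposition \ref{tamelyramified}, and the one point to verify carefully is that the pro-$p$ part of the procyclic tame inertia has order exactly equal to the $p$-part of $N(\mathfrak{p})-1$, which comes down to the fact that $I_\mathfrak{p}/P_\mathfrak{p}\cong\varprojlim_n \mathbb{Z}/(N(\mathfrak{p})^n-1)\mathbb{Z}$.
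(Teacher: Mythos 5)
The paper offers no argument of its own here (the lemma is quoted from Koch, Section 8.5), so the only issue is whether your proof is sound. The Archimedean cases and the deduction of the ``moreover'' clause from part (i) are fine, but the crucial non-Archimedean step rests on a false claim. You assert that the maximal pro-$p$ quotient of the procyclic tame inertia $I_{\mathfrak{p}}/P_{\mathfrak{p}}\cong\varprojlim_{n}\mathbb{Z}/(N(\mathfrak{p})^{n}-1)\mathbb{Z}$ is non-trivial precisely when $p\mid N(\mathfrak{p})-1$, and that its pro-$p$ part has order the $p$-part of $N(\mathfrak{p})-1$. In fact $I_{\mathfrak{p}}/P_{\mathfrak{p}}\cong\prod_{\ell'\neq\ell}\mathbb{Z}_{\ell'}$, so for every $p\neq\ell$ its maximal pro-$p$ quotient is $\mathbb{Z}_{p}$: for example with $N(\mathfrak{p})=2$ and $p=5$ one has $5\mid 2^{4}-1$, so the groups $\mathbb{Z}/(2^{n}-1)$ acquire arbitrarily large $5$-parts along the limit even though $5\nmid 2-1$. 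Hence your reduction only shows that $\overline{I}$ is a finite cyclic $p$-group, which is no restriction at all, and the conclusion $\overline{I}=1$ when $N(\mathfrak{p})\not\equiv 0,1\bmod p$ does not follow. What you are implicitly using is the inertia subgroup of the maximal \emph{abelian} pro-$p$ extension (the Frobenius coinvariants of tame inertia, or $\mathcal{O}^{\times}\otimes\mathbb{Z}_{p}$ via local class field theory), which is indeed cyclic of order the $p$-part of $N(\mathfrak{p})-1$; but even that would not suffice, since triviality of the inertia image in the abelianization of a $p$-group does not imply triviality in the group itself.

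The missing ingredient is to exploit the relation $\sigma\tau\sigma^{-1}=\tau^{q}$, $q=N(\mathfrak{p})$, inside the finite $p$-group $D=\langle\overline{\sigma},\overline{\tau}\rangle$, the image of the decomposition group in $\mathrm{Gal}(L/K)$ (note $\overline{I}=\langle\overline{\tau}\rangle$ since the wild part dies). For instance: the relation gives $\overline{\tau}^{\,q-1}=[\overline{\tau},\overline{\sigma}^{-1}]\in[D,D]$; if $p\nmid q-1$ then $\overline{\tau}^{\,q-1}$ generates the same cyclic $p$-group as $\overline{\tau}$, so $\overline{\tau}\in[D,D]\subseteq\Phi(D)$, and by the Burnside basis theorem $D=\langle\overline{\sigma}\rangle$ is cyclic, hence abelian, forcing $\overline{\tau}\in[D,D]=\{1\}$. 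Equivalently, conjugation by $\overline{\sigma}$ induces the automorphism $x\mapsto x^{q}$ of the cyclic $p$-group $\langle\overline{\tau}\rangle$, and this automorphism has $p$-power order only if $q\equiv 1\bmod p$ or $\overline{\tau}=1$. With such a group-theoretic lemma inserted, your case division for (i) and the ``moreover'' argument (whose forward direction depends on (i)) do go through; as written, however, the central step fails.
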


\begin{Thm}\label{local}
	Let $ K $ be a number field and $ \mathfrak{p} $ a non-Archimedean prime of $ K $ with $ \mathfrak{p}\notin S_{p}(K) $. Suppose that the characteristic of the residue field of $ K_{\mathfrak{p}} $ is $ \ell $. Then 
	\begin{enumerate}
		\item If $ N(\mathfrak{p}) \equiv 1~\text{mod}~p $, then $ G_{\mathfrak{p}}(p) $ is the pro-$ p $ group on two generators $ \sigma,\tau $ subject to the relation $ \sigma\tau\sigma^{-1}=\tau^{q} $ where the element $ \tau $ is a generator of the inertia group, $ \sigma $ is a Frobenius lift and $ q=N(\mathfrak{p}) $.
		\item If $ \rho:G_{\mathfrak{p}}\to {\rm GL}_{n}(\overline{\mathbb{Q}}_{p}) $ is a continuous homomorphism, then $ \rho(P_{\mathfrak{p}}) $ is finite and $ \rho(I_{\mathfrak{p}}) $ contains an open unipotent subgroup.
		\item Let $ F $ be a non-Archimedean local field of characteristic $ p $. If $ \rho:G_{\mathfrak{p}}\to {\rm GL}_{n}(F) $ is a continuous representation, then $ \rho(I_{\mathfrak{p}}) $ has finite image.
	\end{enumerate}
\end{Thm}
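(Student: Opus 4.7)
For part (i), my plan is to take the maximal pro-$p$ quotient of the presentation in Proposition \ref{tamelyramified}(i). Since $\mathfrak{p}\notin S_{p}(K)$, the residue characteristic $\ell$ of $K_{\mathfrak{p}}$ is different from $p$ and the wild inertia $P_{\mathfrak{p}}$ is pro-$\ell$, so any continuous homomorphism from $G_{\mathfrak{p}}$ to a pro-$p$ group kills $P_{\mathfrak{p}}$. Hence $G_{\mathfrak{p}}(p)$ coincides with the maximal pro-$p$ quotient of $\text{Gal}(K_{\mathfrak{p},tr}/K_{\mathfrak{p}})$, and by the universal property of the pro-$p$ completion applied to the profinite presentation from Proposition \ref{tamelyramified}(i), this equals the pro-$p$ group on $\sigma,\tau$ with $\sigma\tau\sigma^{-1}=\tau^{q}$. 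The hypothesis $N(\mathfrak{p})\equiv 1\bmod p$ ensures that $v_{p}(q-1)\geq 1$, so $\tau$ remains non-trivial in the pro-$p$ completion; without this congruence, Lemma \ref{redundant} shows $\mathfrak{p}$ cannot ramify in any $p$-extension and the presentation would collapse to $\mathbb{Z}_{p}\langle\sigma\rangle$.

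For part (ii), I will carry out Grothendieck's local monodromy argument. The compact image $\rho(G_{\mathfrak{p}})$ is contained, after conjugation, in ${\rm GL}_{n}(\mathcal{O}_{E})$ for some finite extension $E/\mathbb{Q}_{p}$. Since $P_{\mathfrak{p}}$ is pro-$\ell$ with $\ell\neq p$, Lemma \ref{pro}(ii) applied to $A=\mathcal{O}_{E}$ gives that $\rho(P_{\mathfrak{p}})$ is finite. To produce an open unipotent subgroup of $\rho(I_{\mathfrak{p}})$, I pass to a finite extension $L/K_{\mathfrak{p}}$ where the wild part is killed by $\rho$: setting $N=P_{\mathfrak{p}}\cap\ker(\rho)$, which is closed and normal in $G_{\mathfrak{p}}$ with $P_{\mathfrak{p}}/N$ finite, inside the Hausdorff profinite group $G_{\mathfrak{p}}/N$ one finds an open subgroup $G_{L}\subset G_{\mathfrak{p}}$ with $G_{L}\cap P_{\mathfrak{p}}=N$, so that $\rho|_{G_{L}}$ factors through $\text{Gal}(L_{tr}/L)$. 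Proposition \ref{tamelyramified}(ii) then says the eigenvalues of $\rho(\tau_{L})$ are roots of unity, so $\rho(\tau_{L})^{M}$ is unipotent for some $M$. Because $\rho(I_{L})$ is a compact abelian $p$-adic Lie group topologically generated by $\rho(\tau_{L})$, the closure $\overline{\langle\rho(\tau_{L})^{M}\rangle}$ is open in $\rho(I_{L})$; since $I_{L}$ is open of finite index in $I_{\mathfrak{p}}$, so is $\rho(I_{L})$ in $\rho(I_{\mathfrak{p}})$, and this produces the desired open unipotent subgroup.

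Part (iii) follows the same pattern but exploits characteristic $p$. The same reduction to $L$ makes $\rho|_{G_{L}}$ factor through $\text{Gal}(L_{tr}/L)$, and Lemma \ref{pro}(ii) applied to $\mathcal{O}_{F}$ again yields $\rho(P_{\mathfrak{p}})$ finite. By Proposition \ref{tamelyramified}(iii), $\rho(\tau_{L})$ has finite order. Since the tame quotient $I_{L}/P_{L}$ is topologically generated by $\tau_{L}$, the image $\rho(I_{L})=\overline{\langle\rho(\tau_{L})\rangle}$ is finite, and since $I_{L}$ has finite index in $I_{\mathfrak{p}}$, the full image $\rho(I_{\mathfrak{p}})$ is finite as well. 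The main technical obstacle in (ii) and (iii) is precisely the reduction step to $L$: one needs to convert the Frobenius--tame relation $\sigma\tau\sigma^{-1}=\tau^{q}\cdot u$ with $u\in P_{\mathfrak{p}}$ into the clean relation of Proposition \ref{tamelyramified}, so that the eigenvalue arguments there apply verbatim after replacing $K_{\mathfrak{p}}$ by $L$.
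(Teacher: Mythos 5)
Your proposal is correct and follows essentially the same route as the paper: part (i) by passing from the tame presentation of Proposition \ref{tamelyramified}(i) to the maximal pro-$p$ quotient (using Lemma \ref{redundant} / the pro-$\ell$ nature of wild inertia), and parts (ii), (iii) by first showing $\rho(P_{\mathfrak{p}})$ is finite, passing to a finite extension $L$ killing the wild part so that $\rho$ factors through the tame quotient, and then invoking Proposition \ref{tamelyramified}(ii), (iii). Your write-up merely makes explicit some steps the paper leaves implicit (the choice of $L$ and the transfer of openness/finiteness from $\rho(I_{L})$ back to $\rho(I_{\mathfrak{p}})$), which is harmless.
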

\begin{proof}
	Firstly, note that the prime $ \ell $ is different from $ p $ by our assumption. The claim (i) follows from Lemma \ref{redundant} and (i) in Proposition \ref{tamelyramified} by passing to the maximal $ p $-quotient. For the claim (ii), we may assume that $ \text{Im}(\rho)\subset {\rm GL}_{n}(\mathcal{O}_{L}) $ for some finite extension $ L/\mathbb{Q}_{p} $. Since $ P_{\mathfrak{p}} $ is a pro-$ \ell $ group and $ {\rm GL}_{n}^{1}(\mathcal{O}_{L}) $ is a pro-$ p $ group, we see that the composite morphism $ \rho(P_{\mathfrak{p}})\hookrightarrow {\rm GL}_{n}(\mathcal{O}_{L})\twoheadrightarrow {\rm GL}_{n}(\kappa_{L}) $ is injective. In particular, $ \rho(P_{\mathfrak{p}}) $ is finite. After taking finite extension of $ K_{\mathfrak{p}} $, we may assume that $ \rho(P_{\mathfrak{p}}) $ is trivial, and then it must factor through the Galois group of the maximal tamely ramified extension of $ K_{\mathfrak{p}} $. Thus, the claim (ii) follows from (ii) in Proposition \ref{tamelyramified}. Similarly, the claim (iii) follows from (iii) in Proposition \ref{tamelyramified}. We are done.
\end{proof}

\subsubsection{Global Galois groups}

Recall that the \textit{maximal pro-$ p $ quotient} $  G^{(p)} $ of a profinite group $ G $ is the quotient group of $ G $ such that every pro-$ p $ quotient of $ G $ factors through $ G^{(p)} $.

\begin{Def}
	Let $ K $ be a number field and $ S $ be a finite set of primes of $ K $. 
	\begin{enumerate}
		\item A finite extension $ L/K$ of fields is called \emph{$ S $-ramified} if it is unramified outside $ S $. The composite of two such extensions in $ \overline{\mathbb{Q}} $ is again $ S $-ramified, so it makes sense to consider the composite $ K_{S} $ of all the finite $ S $-ramified extensions of $ K $. Because of maximality, $ K_{S}/K $ is Galois. Define $ G_{K,S}:= \text{Gal}(K_{S}/K) $ and we say that $  G_{K,S} $ is the \emph{Galois group of the maximal extension of $ K $ unramified outside $ S $}.
		\item  We will denote by $ G_{K,S}^{\text{tame}} $ the Galois group of the maximal extension of $ K $ in $ K_{S} $ with at most tame ramification at the primes in $ S $ where we assume that the Archimedean primes of $K$ are always tame. It is a quotient of $ G_{K,S}  $.
		\item Let $ K_{S}(p) $ denote the maximal $ p $-extension of $K $ in $ \overline{ \mathbb{Q}} $ which is unramified outside $ S $ (i.e. it is the compositum of all finite $ p $-power degree extensions of $ K $ unramified outside $ S $.). Put $ G_{K,S}(p)=\text{Gal}(K_{S}(p)/K) $ and we say that $ G_{K,S}(p) $ is the \emph{Galois group of the maximal $ p $-extension of $ K $ unramified outside $ S $}.
	\end{enumerate}
\end{Def}	
\begin{Rem}\label{trivial}
	\begin{enumerate}
		\item Note that $ G_{K,S}(p) $ is the maximal pro-$ p $ quotient of $ G_{K,S} $.
		\item One can always remove redundant primes from $ S $, as in Lemma \ref{redundant}, to get a subset $ S_{\min}\subset S $ which satisfies $ G_{K,S}(p)=G_{K,S_{\min}}(p) $. Furthermore, if all primes $ \mathfrak{p}_{i} $ in $ S_{\min} $ satisfies $ N(\mathfrak{p}_{i})\equiv 1~\text{mod}~p $, then $ G_{K,S_{\min}}(p) $ is a quotient of $ G_{K,S}^{\text{tame}} $.
	\end{enumerate}
\end{Rem}

If $ K $ is a number field, then we denote by $ S_{p}=S_{p}(K) $ the set of primes of $ K $ above $ p $ and $ S_{\infty}=S_{\infty}(K) $ the set of all Archimedean primes of $ K $. 
For simplicity, the following easy lemma will be used often without an explicit reference.

\begin{Lem}\label{1}
	Let $ K $ be a number field and $ S $ a finite set of primes of $ K $. If $ U $ is an open subgroup of $ G_{K,S} $, then there is a finite extension $ K' $ of $ K$ such that $ G_{K',S'}=U $ where $ S'$ is the set of primes of $ K' $ lying above the primes in $ S $. In particular, if  $ S\cap S_{p}(K)=\emptyset $, then $ S'\cap S_{p}(K')=\emptyset $.
\end{Lem}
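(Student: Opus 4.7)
The plan is to run the standard Galois-correspondence argument, with the only delicate point being that $K'/K$ need not itself be Galois, so one must pass through Galois closures to show $K'_{S'}=K_S$.

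First I would apply Galois theory to the open subgroup $U\leq G_{K,S}=\mathrm{Gal}(K_S/K)$: set $K':=K_S^{U}$, so that $K'/K$ is a finite extension contained in $K_S$ with $U=\mathrm{Gal}(K_S/K')$. Let $S'$ denote the set of primes of $K'$ lying above primes in $S$. The goal is to identify $U$ with $G_{K',S'}=\mathrm{Gal}(K'_{S'}/K')$, which amounts to showing $K_S=K'_{S'}$.

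The inclusion $K_S\subseteq K'_{S'}$ is routine: $K_S/K'$ is unramified at every prime of $K'$ not in $S'$, since such a prime lies above some prime of $K$ outside $S$, and $K_S/K$ is unramified there. For the reverse inclusion $K'_{S'}\subseteq K_S$, I would take any finite extension $L/K'$ unramified outside $S'$ and show that $L\subseteq K_S$. The subtlety is that $K'/K$ may fail to be Galois, so I pass to the Galois closure $\widetilde{L}/K$ of $L$ inside $\overline{\mathbb{Q}}$. For each $K$-embedding $\sigma:L\hookrightarrow\overline{\mathbb{Q}}$, the subfield $\sigma(K')\subseteq\overline{\mathbb{Q}}$ is another $K$-conjugate of $K'$; since $K_S/K$ is Galois and $K'\subseteq K_S$, we get $\sigma(K')\subseteq K_S$, so $\sigma(K')/K$ is unramified outside $S$. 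Meanwhile $\sigma(L)/\sigma(K')$ is isomorphic as an extension to $L/K'$ and is therefore unramified above every prime of $K$ outside $S$. Composing the two unramified pieces shows $\sigma(L)/K$ is unramified outside $S$; taking the compositum over all $\sigma$ gives $\widetilde{L}/K$ unramified outside $S$, hence $\widetilde{L}\subseteq K_S$ and in particular $L\subseteq K_S$. This yields $K'_{S'}\subseteq K_S$, and therefore $U=\mathrm{Gal}(K_S/K')=\mathrm{Gal}(K'_{S'}/K')=G_{K',S'}$.

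The final assertion is immediate: a prime $\mathfrak{q}$ of $K'$ lies above $p$ if and only if its contraction to $K$ lies above $p$, so $S\cap S_p(K)=\emptyset$ forces $S'\cap S_p(K')=\emptyset$. The main (and only real) obstacle in the argument is the Galois-closure step, which handles the possible non-normality of $K'/K$; everything else is formal manipulation of ramification under towers of extensions.
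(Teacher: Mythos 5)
Your argument is correct, and in fact the paper offers no proof at all: the lemma is stated as an ``easy'' fact to be used without explicit reference, so there is no in-paper argument to compare against. Your proof is the standard Galois-correspondence one, and the two inclusions $K_S\subseteq K'_{S'}$ and $K'_{S'}\subseteq K_S$ are handled correctly. One small remark: the Galois-closure detour is not actually needed. With the paper's definition, $K_S$ is the compositum of \emph{all} finite $S$-ramified extensions of $K$ in $\overline{\mathbb{Q}}$, Galois or not; so once you observe, by multiplicativity of ramification indices in the tower $L/K'/K$ (and the analogous statement at real places), that any finite $L/K'$ unramified outside $S'$ is automatically unramified over $K$ outside $S$, the containment $L\subseteq K_S$ follows immediately from maximality, with no need to pass to $\widetilde{L}$ or to track the $K$-conjugates $\sigma(K')$. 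Your extra step is harmless and would be the right move if one insisted on exhibiting a \emph{Galois} $S$-ramified extension of $K$ containing $L$, but the non-normality of $K'/K$ is not a genuine obstacle here.
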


The following theorem collects the known information about the structure of the global Galois group.

\begin{Thm}\label{global structure}
	Let $ K $ be a number field, and $ S $ a finite set of primes of $ K $. Then we have the following:
	\begin{enumerate}
		\item The profinite group $G_{K,S}^{\text{tame}}$ is FAb.
		\item Suppose that $ S\cap S_{p}=\emptyset $. Then the pro-$ p $ group $ G_{K,S}(p) $ is FAb.
		\item The pro-$ p $ group $ G_{K,S}(p) $ is topologically finitely generated.
		\item Suppose that $ S\cap S_{p}=\emptyset $ and the class number of $ K $ is prime to $ p $. Then the minimal cardinality $ d(G_{K,S}(p)) $ of a topological generating set for the pro-$ p $ group $ G_{K,S}(p) $ satisfies $ d(G_{K,S}(p))\leq |S|$. In particular, if $ |S|=\emptyset $, then $ G_{K,S}(p) $ is trivial; if $ |S|=1 $, then $ G_{K,S}(p) $ is a finite cyclic group.
		\item If $ K=\mathbb{Q} $, then we have $ G_{\mathbb{Q},S_{\infty}}=\{1\} $.
		\item If $ K=\mathbb{Q} $ and $ S=\{q_{1},\cdots,q_{d}\}$ with $ q_{i}\equiv 1~\text{mod}~p $ for all $ i=1,\cdots,d $. If $ p=2 $, we assume further that $ S\supset S_{\infty} $. We put $ f_{i}:=v_{p}(q_{i}-1) $ where $ v_{p} $ denotes the usual $ p $-adic valuation of $ \mathbb{Q}_{p} $.
		\begin{enumerate}
			\item The (topological) abelianization $ G_{\mathbb{Q},S}(p)^{\text{ab}} $ of the pro-$ p $ group $ G_{\mathbb{Q},S}(p) $ is isomorphic to $\prod_{i=1}^{d}(\mathbb{Z}/p^{f_{i}}\mathbb{Z}) $.
			\item The pro-$ p $ group $ G_{\mathbb{Q},S}(p) $ has a presentation $ F/R $ where $ F $ is the free pro-$ p $ group on $ x_{1},\cdots,x_{d} $ and $ R=(r_{1},\cdots,r_{d}) $ with 
			\begin{align}\label{relationaaa}
				r_{i}&=x_{i}^{q_{i}-1}[x_{i}^{-1},y_{i}^{-1}],\notag\\
				y_{i}&\equiv \prod_{j\neq i}x_{j}^{L_{ij}},~\text{mod}~F^{p}[F,F],~L_{ij} \in \mathbb{Z},
			\end{align}
			where 
			\begin{enumerate}
				\item $ [x_{i}^{-1},y_{i}^{-1}]:=x_{i}y_{i}x_{i}^{-1}y_{i}^{-1} $ is the commutator and $ F^{p}[F,F] $ is the closed subgroup generated by $ p $-th power of elements of $ F $ and the commutators of $ F $;
				\item the image $ \tau_{i} $ of $ x_{i} $ in $ G_{\mathbb{Q},S}(p) $ is a generator of the (cyclic) intertia group at a fixed prime $ \mathfrak{Q}_{i} $ above $ q_{i} $;
				\item the image $ \sigma_{i} $ of $ y_{i} $ is a lifting of the Frobenius automorphism at $ q_{i} $;
				\item Let $ g_{i} $ denote a primitive root mod $ q_{i} $ for any $ i $. Then we have
				\[ q_{i}\equiv g_{j}^{-L_{ij}}~\text{mod}~q_{j}, \]
				Let $\ell_{ij}$ denote the image of $L_{ij}$ in $\mathbb{Z}/p\mathbb{Z}$. Then $\ell_{ij}=0 $ if and only if $ q_{i} $ is a $ p $-th power modulo $ q_{j} $.
			\end{enumerate}
		\end{enumerate}
	\end{enumerate}
\end{Thm}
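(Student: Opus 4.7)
The theorem is a collection of standard facts, so I would organize the proof as a guided tour through class field theory, Shafarevich's theory of pro-$p$ Galois groups, and Koch's presentation, handling the six items in turn. All the heavy lifting is done by results already in the literature, so the writeup is really about assembling the right pieces.

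For parts (i) and (ii), the plan is to fix an open subgroup $U$ of $G_{K,S}^{\text{tame}}$ (resp.\ $G_{K,S}(p)$), realize it as $G_{K',S'}^{\text{tame}}$ (resp.\ $G_{K',S'}(p)$) for a finite extension $K'/K$ via Lemma \ref{1}, and compute $U^{\text{ab}}$ by global class field theory. For (i), $U^{\text{ab}}$ is a quotient of a ray class group of $K'$ with modulus supported on $S'$ together with a tame ramification constraint, which is a finite group. For (ii), one uses the fact that in the pro-$p$ abelianization the contribution of each finite place $v\in S'$ is the pro-$p$ part of $\mathcal{O}_{K'_{v}}^{\ast}$, which is finite because $v\notin S_{p}(K')$ by Lemma \ref{1}; combined with the Dirichlet unit theorem and the finiteness of the class group, this gives that $G_{K',S'}(p)^{\text{ab}}$ is finite. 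Part (iii) is then a formal consequence: even without the $S\cap S_{p}=\emptyset$ hypothesis, class field theory shows $G_{K,S}(p)^{\text{ab}}$ is a finitely generated $\mathbb{Z}_{p}$-module, so by the Burnside basis theorem $G_{K,S}(p)$ is topologically finitely generated.

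Part (iv) is Shafarevich's estimate. I would invoke Burnside once more to identify $d(G_{K,S}(p))$ with $\dim_{\mathbb{F}_{p}}G_{K,S}(p)^{\text{ab}}/p$, then use the class field theory exact sequence
\[
\mathcal{O}_{K,S}^{\ast}\otimes\mathbb{F}_{p}\longrightarrow \bigoplus_{\mathfrak{p}\in S}\kappa(\mathfrak{p})^{\ast}\otimes\mathbb{F}_{p}\longrightarrow G_{K,S}(p)^{\text{ab}}/p\longrightarrow \text{Cl}(K)/p\longrightarrow 0
\]
(valid because $S\cap S_{p}=\emptyset$, so the local units at $S$ contribute only their tame quotients). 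Under the coprimality hypothesis the right-hand group vanishes, and the middle term has dimension at most $|S|$, giving the bound. The corollaries for $|S|=0,1$ are read off immediately, using also that a finite $p$-group topologically generated by one element is cyclic.

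Part (v) is Minkowski's theorem: every nontrivial extension of $\mathbb{Q}$ is ramified at some finite prime. Part (vi) is Koch's theorem, and this is where the main work lies; I would not reprove it but would cite \cite[Chapter 11]{MR1930372} or the analogous reference in Koch's book. The strategy, for the reader's orientation, is: (a) the dimension of $G_{\mathbb{Q},S}(p)^{\text{ab}}\otimes\mathbb{F}_{p}$ is $d$ by part (iv) combined with the congruence conditions, which via Burnside identifies the number of generators; (b) the number of relations equals $\dim H^{2}(G_{\mathbb{Q},S}(p),\mathbb{F}_{p})$, which by the Shafarevich–Koch duality formula and local-global Poitou–Tate equals $d$ as well; (c) the shape $x_{i}^{q_{i}-1}[x_{i}^{-1},y_{i}^{-1}]$ of each relation comes from the presentation of the local pro-$p$ group $G_{\mathfrak{Q}_{i}}(p)$ in Theorem \ref{local}(i), since each decomposition group at $\mathfrak{Q}_{i}$ embeds into $G_{\mathbb{Q},S}(p)$; (d) the congruence $y_{i}\equiv \prod_{j\neq i}x_{j}^{L_{ij}}$ mod $F^{p}[F,F]$ is obtained by evaluating the image of the Frobenius $\sigma_{i}$ in the abelianization and using the reciprocity law, and the identification $q_{i}\equiv g_{j}^{-L_{ij}}$ mod $q_{j}$ is exactly the statement that under the local Artin map at $q_{j}$, the Frobenius $\sigma_{i}$ acts on the cyclotomic $\mathbb{Z}/p\mathbb{Z}$-extension inside $\mathbb{Q}(\mu_{q_{j}})$ by the prescribed power of the chosen primitive root.

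The main obstacle, as usual in Koch-style presentations, is bookkeeping the normalizations of the $x_{i}$, $y_{i}$ and the choice of primitive roots so that the exponents $L_{ij}$ come out with the stated sign, and verifying the vanishing criterion $\ell_{ij}=0\iff q_{i}$ is a $p$-th power mod $q_{j}$. Everything else is an assembly of standard class field theory and cohomological computations.
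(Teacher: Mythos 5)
Your proposal is correct and follows essentially the same route as the paper: the paper's proof is likewise an assembly of standard references (finiteness of the tame abelianizations, i.e.\ ray class groups with squarefree modulus, for (i); Koch's Theorems 11.5 and 11.8 for (iii)--(iv); Minkowski's theorem for (v); and Koch's Section 11.4 for (vi)), and your class-field-theoretic unpacking of (i)--(iv) is just the content behind those citations. The only cosmetic differences are that the paper deduces (ii) from (i) by noting $G_{K,S}(p)$ is a quotient of $G_{K,S}^{\text{tame}}$ and that FAb passes to quotients (your direct computation also works, noting that an open subgroup of $G_{K,S}(p)$ is a quotient of $G_{K',S'}(p)$ rather than equal to it), and that finiteness in the $|S|=1$ case of (iv) should be read off from (ii).
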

\begin{proof}
	By \cite[Section 5.2.2, Chapter II]{MR1941965}, the topological abelianization of the profinite group $G_{F,T}^{\text{tame}}$ is finite for any pair $(F,T)$ where $F$ is a number field and $T$ is a finite set of primes of $F$. Thus, if $U$ is an open subgroup of $G_{K,S}^{\text{tame}}$, then there is a finite extension $ K' $ of $ K $ and a finite set $ S' $ of primes of $ K' $ such that $ G_{K',S'}^{\text{tame}}=U $. It follows that the topological abelianization of $ U $ is finite. Thus, the group $G_{K,S}^{\text{tame}}$ is FAb. This proves the claim (i).
	
	For the claim (ii), note that the pro-$p$ group $G_{K,S}(p)$ is a quotient of $G_{K,S}^{\text{tame}}$ since $S\cap S_{p}=\emptyset$. By the claim (i), we see that $G_{K,S}(p)$ is FAb.
	
	The claims (iii) and (iv) follow from \cite[Theorem 11.5 and Theorem 11.8]{MR1930372}. The claim (v) is \cite[Theorem 2.18, Section III.2]{MR1697859}. The claim (vi) follows from \cite[Section 11.4]{MR1930372}. This completes the proof of our theorem.
\end{proof}
\begin{Rem}\label{remark}
	\begin{enumerate}
		\item It is not known whether or not the profinite group $ G_{K,S} $ is topologically finitely generated. This has been asked by Shafarevich before 1962, see \cite[pp. 283-294]{MR977275}. Moreover, we don't even know if $ G_{K,S}^{tame} $ is topologically finitely generated although this has been conjectured to be the case by Harbater, see \cite[Conjecture 2.1]{MR1299733} and \cite[Proposition 4.1]{MR4177534}. However, we have the following weaker result: the group $G_{K,S}$ is topologically generated by a finite number of conjugacy classes, see \cite[Theorem 10.2.5]{MR2392026}.
		\item When $ S\cap S_{p}=\emptyset $, then group $ G_{K,S}(p) $ can be infinite. For example, if $ K=\mathbb{Q} $, $ p>2 $ and $ |S_{\min}|\geq 4 $, then $ G_{\mathbb{Q},S}(p) $ is infinite (Golod-Shafarevich).
		\item The pro-$ p $ group $ G_{\mathbb{Q},S}(p) $ is still mysterious. The key point is that it is not known how to formulate for $ \sigma_{i} $ in terms of $ \tau_{1},\cdots,\tau_{d} $ in (vi).
		\item Since the quotient $F/(F^{p}[F,F])$ is a $\mathbb{F}_{p}$-vector space, we can write $(\ref{relationaaa})$ as $	y_{i} \equiv \prod_{j\neq i}x_{j}^{\ell_{ij}},\text{mod}~F^{p}[F,F]$ in (vi). 
	\end{enumerate}
\end{Rem}

However, the following corollary shows that we don't need to worry about the question whether or not the global Galois group $ G_{K,S} $ is topologically finitely generated when we consider linear representations of $ G_{K,S}$ over pro-$ p $ rings.

\begin{Cor}\label{imageisfinitelygenerated}
	Let $ K $ be a number field and $ S $ a finite set of primes of $ K $. If $ A $ is a complete Noetherian local ring of finite residue field of characteristic $ p $, and $ \rho:G_{K,S}\to {\rm GL}_{n}(A) $ is a continuous homomorphism, then the image of $ \rho $ is topologically finitely generated.
\end{Cor}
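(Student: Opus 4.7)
The plan is to reduce the problem to the pro-$p$ case, where the topological finite generation of $G_{K',S'}(p)$ from Theorem \ref{global structure}(iii) can be applied.

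First I would consider the reduction $\overline{\rho}:G_{K,S}\to \mathrm{GL}_n(\mathbb{F})$ modulo the maximal ideal. Since $\mathrm{GL}_n(\mathbb{F})$ is a finite group, $\overline{\rho}$ has finite image, so its kernel $H:=\ker(\overline{\rho})$ is an open normal subgroup of $G_{K,S}$. By Lemma \ref{1}, there is a finite extension $K'/K$ and a finite set $S'$ of primes of $K'$ (the primes of $K'$ above those in $S$) such that $H=G_{K',S'}$. The image $\text{Im}(\rho)$ then fits into a short exact sequence
\[ 1\longrightarrow \rho(H)\longrightarrow \text{Im}(\rho)\longrightarrow \text{Im}(\overline{\rho})\longrightarrow 1, \]
where $\text{Im}(\overline{\rho})$ is finite. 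It therefore suffices to show that $\rho(H)$ is topologically finitely generated, since then lifts of a finite generating set of $\text{Im}(\overline{\rho})$ together with a topological generating set of $\rho(H)$ will topologically generate $\text{Im}(\rho)$.

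Next I would note that $\rho(H)\subseteq \mathrm{GL}_n^1(A)$, because by construction $H$ is the preimage of $\mathrm{GL}_n^1(A)$ under $\rho$. By Lemma \ref{pro}(i), the group $\mathrm{GL}_n^1(A)$ is pro-$p$, hence $\rho(H)$ is a pro-$p$ group. Since $\rho|_H$ is a continuous surjection from $H=G_{K',S'}$ onto a pro-$p$ group, it factors through the maximal pro-$p$ quotient, giving a continuous surjection
\[ G_{K',S'}(p)\twoheadrightarrow \rho(H). \]
Finally, by Theorem \ref{global structure}(iii) the pro-$p$ group $G_{K',S'}(p)$ is topologically finitely generated, and any continuous quotient of a topologically finitely generated profinite group is again topologically finitely generated. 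Hence $\rho(H)$ is topologically finitely generated, and combining with the finite quotient $\text{Im}(\overline{\rho})$ yields the conclusion for $\text{Im}(\rho)$.

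There is no substantial obstacle here; the argument is essentially a bookkeeping exercise that converts the pro-$p$ finite generation result of Theorem \ref{global structure}(iii) into a statement about arbitrary continuous representations over a pro-$p$ ring $A$, using that $\mathrm{GL}_n(A)$ is virtually pro-$p$ via the filtration by $\mathrm{GL}_n^1(A)$. The only subtle point worth stating carefully is that one does not need $G_{K,S}$ itself to be topologically finitely generated (which remains an open question, as recalled in Remark \ref{remark}(i)); the finite residual quotient absorbs this difficulty, and only the pro-$p$ piece, living above the number field $K'$, is needed.
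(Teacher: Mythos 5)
Your proof is correct and follows essentially the same route as the paper: reduce modulo $\mathfrak{m}_A$, identify $\ker(\overline{\rho})$ with $G_{K',S'}$ via Lemma \ref{1}, observe that its image lands in the pro-$p$ group ${\rm GL}_n^1(A)$ so that $\rho$ restricted there factors through $G_{K',S'}(p)$, and invoke Theorem \ref{global structure}(iii) before reassembling with the finite quotient $\text{Im}(\overline{\rho})$. No discrepancies worth noting.
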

\begin{proof}
	Let $ \overline{\rho} $ denote the reduction of $ \rho $ modulo $ \mathfrak{m}_{A} $. Then we have the following extension of profinite groups:
	\begin{align*}
		1\to \rho(\ker(\overline{\rho}))\to \text{Im}(\rho)\to \text{Im}(\overline{\rho})\to 1,
	\end{align*}
	where $ \text{Im}(\overline{\rho}) $ is a finite group. By Lemma \ref{pro}, we know that $ {\rm GL}_{n}^{1}(A) $ is a pro-$ p $ group. It follows that $ \rho(\ker(\overline{\rho})) $ is a pro-$ p $ subgroup of $ \text{Im}(\rho) $ since $\rho(\ker(\overline{\rho}))$ is a closed subgroup of $ {\rm GL}_{n}^{1}(A) $. Since $ \ker(\overline{\rho}) $ is an open subgroup of $ G_{K,S} $, there is a finite extension $ K' $ of $ K $ and a finite set $ S' $ of primes of $ K' $ with $ S'\cap S_{p}(K')=\emptyset $ such that $ \ker(\overline{\rho})=G_{K',S'} $. Thus, the restriction $ \rho|_{\ker(\overline{\rho})} $ of $ \rho $ on $ \ker(\overline{\rho}) $ will factor through $ G_{K',S'}(p) $.
	By Theorem \ref{global structure}, $ G_{K',S'}(p) $ is topologically finitely generated. It follows that $ \rho(\ker(\overline{\rho})) $ is topologically finitely generated. 
	Since $ \rho(\ker(\overline{\rho})) $ is an open normal subgroup of $ \text{Im}(\rho) $, we obtain that $ \text{Im}(\rho) $ is also topologically finitely generated. Indeed, $ \text{Im}(\rho) $ is topologically generated by the generators of $ \rho(\ker(\overline{\rho})) $ and lifts of the finite group $ \text{Im}(\overline{\rho}) $. This finishes the proof.
\end{proof}

\section{Galois group of the maximal tamely ramified extension of a number field}\label{chapter2}

In this section, we study the Galois group $ G_{K,S}^{\text{tame}} $ of the maximal tamely ramified extension of a number field $ K $ which is unramified outside a finite set $ S $ of primes of $ K $. 
\subsection{An equivalent description of Conjecture \ref{BUFM}}\label{sectionaneuvaliant}
In this subsection, we give an equivalent description of Conjecture \ref{BUFM} as follows:

\begin{conj}\label{tameramifieddis}
	Let $ K $ be a number field and $ S $ a finite set of primes of $ K $. Let $ A $ be a complete Noetherian local ring with finite residue field of characteristic $ p $. Then any continuous homomorphism $\rho:G_{K,S}^{\text{tame}}\to {\rm GL}_{n}(A)$ has finite image.
\end{conj}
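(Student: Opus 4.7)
The plan is to derive Conjecture \ref{tameramifieddis} from Conjecture \ref{BUFM}, thereby establishing the non-trivial direction of the equivalence announced in the subsection title. Since the conjecture places no restriction on $S$, the delicate point is to dispose of primes in $S$ lying above $p$, where tameness imposes a prime-to-$p$ condition on the inertia that is incompatible with a pro-$p$ image.

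First I would set $\overline{\rho}:G_{K,S}^{\text{tame}}\to {\rm GL}_n(\mathbb{F})$ to be the reduction of $\rho$ modulo $\mathfrak{m}_A$. Since $\mathbb{F}$ is finite, $H:=\ker(\overline{\rho})$ is open of finite index in $G_{K,S}^{\text{tame}}$. Using the Galois correspondence (since a tamely ramified extension of a tamely ramified extension is tamely ramified, the analogue of Lemma \ref{1} holds for $G_{K,S}^{\text{tame}}$), there is a finite extension $K'/K$ with $H=G_{K',S'}^{\text{tame}}$, where $S'$ consists of the primes of $K'$ above those in $S$.

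Next, by Lemma \ref{pro}(i) the image $\rho(H)\subset {\rm GL}_n^1(A)$ is a pro-$p$ group, so $\rho|_H$ factors through the maximal pro-$p$ quotient of $H$. The key identification is
\[
(G_{K',S'}^{\text{tame}})^{(p)} \;=\; G_{K',S''}(p), \qquad S'':=S'\setminus S_p(K').
\]
For the inclusion $\subseteq$, at each prime $\mathfrak{p}$ of $K'$ above $p$ the tame inertia $I_{\mathfrak{p}}/P_{\mathfrak{p}}$ is procyclic of order prime to $p$ by Proposition \ref{tamelyramified}(i); its image in any pro-$p$ quotient is therefore trivial, so a tame pro-$p$ extension at such $\mathfrak{p}$ is actually unramified at $\mathfrak{p}$. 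For the inclusion $\supseteq$, at any prime $\mathfrak{p}\in S''$ lying above a rational prime $\ell\neq p$, the wild inertia $P_{\mathfrak{p}}$ is pro-$\ell$, hence trivial in any pro-$p$ extension; so a pro-$p$ extension of $K'$ unramified outside $S''$ is at most tamely ramified at every prime of $S'$.

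Since $S''\cap S_p(K')=\emptyset$, the group $G_{K',S''}(p)$ is the maximal pro-$p$ quotient of $G_{K',S''}$. Therefore $\rho|_H$ lifts to a continuous homomorphism $\widetilde{\rho}:G_{K',S''}\to {\rm GL}_n(A)$, and invoking Conjecture \ref{BUFM} for the pair $(K',S'')$ yields that $\widetilde{\rho}$, and hence $\rho|_H$, has finite image. Because $H$ has finite index in $G_{K,S}^{\text{tame}}$, it follows that $\rho$ has finite image, proving Conjecture \ref{tameramifieddis}. The main obstacle is the identification $(G_{K',S'}^{\text{tame}})^{(p)}=G_{K',S''}(p)$: once the two local bookkeepings (tame inertia versus pro-$p$ behaviour above $p$, and wild inertia versus pro-$p$ behaviour away from $p$) are cleanly separated using Proposition \ref{tamelyramified}, the descent to the open subgroup $H$ and the direct appeal to Conjecture \ref{BUFM} are routine.
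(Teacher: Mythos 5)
Your argument is correct and coincides with the paper's own proof of the implication from Conjecture \ref{BUFM} to Conjecture \ref{tameramifieddis}: the paper likewise reduces modulo $\mathfrak{m}_{A}$, passes to an open subgroup (a finite extension $K'$) on which the image is pro-$p$ via Lemma \ref{pro}, uses Lemma \ref{redundant} to see that a tame pro-$p$ image kills the inertia at primes above $p$, and then applies Conjecture \ref{BUFM} to a set of primes disjoint from $S_{p}$. Your explicit identification of the maximal pro-$p$ quotient of $G_{K',S'}^{\text{tame}}$ with $G_{K',S''}(p)$ merely spells out what the paper phrases as ``removing redundant primes,'' so the two proofs are essentially the same.
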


\begin{Prop}
	The following are equivalent:
	\begin{enumerate}
		\item Conjecture \ref{BUFM}.
		\item Conjecture \ref{tameramifieddis}.
	\end{enumerate}
\end{Prop}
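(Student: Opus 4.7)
The plan is to prove the two implications separately, using that $G_{K,S}^{\text{tame}}$ is a continuous quotient of $G_{K,S}$ and, in the converse direction, that the pro-$p$ quotient of $G_{K,S}$ itself becomes a quotient of a tame group once one has pruned the redundant primes.

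For $(1)\Rightarrow(2)$, I would start from a continuous $\rho\colon G_{K,S}^{\text{tame}}\to \mathrm{GL}_n(A)$. The only obstruction to lifting $\rho$ to $G_{K,S}$ and invoking $(1)$ is that $S$ may meet $S_p$. For each $\mathfrak{p}\in S\cap S_p$, however, $\rho|_{I_{\mathfrak{p}}}$ factors through the tame inertia $I_{\mathfrak{p}}/P_{\mathfrak{p}}$, which is pro-(prime-to-$p$), so $\rho(I_{\mathfrak{p}})$ is finite by Lemma~\ref{pro}(ii). I would then choose an open normal subgroup $U$ of $\mathrm{Im}(\rho)$ that meets each $\rho(I_{\mathfrak{p}})$ trivially for $\mathfrak{p}\in S\cap S_p$, and set $V=\rho^{-1}(U)$. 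The fixed field of $V$ is a finite extension $K'/K$; writing $S'$ for the primes of $K'$ above $S$ and $S'_p$ for those above $p$, the composition $G_{K',S'}^{\text{tame}}\twoheadrightarrow V \xrightarrow{\rho|_V}\mathrm{GL}_n(A)$ is unramified at every prime in $S'_p$. Hence it factors through $G_{K',S'\setminus S'_p}^{\text{tame}}$, which in turn is a continuous quotient of $G_{K',S'\setminus S'_p}$. Since $(S'\setminus S'_p)\cap S_p(K')=\emptyset$, hypothesis $(1)$ now applies and forces $\rho|_V$, hence $\rho$, to have finite image.

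For $(2)\Rightarrow(1)$, let $\rho\colon G_{K,S}\to \mathrm{GL}_n(A)$ with $S\cap S_p=\emptyset$, and set $H=\ker(\overline{\rho})$. By Lemma~\ref{1}, $H=G_{K',S'}$ for a finite extension $K'/K$ with $S'\cap S_p(K')=\emptyset$. Since $\rho(H)\subset \mathrm{GL}_n^{1}(A)$ and $\mathrm{GL}_n^{1}(A)$ is pro-$p$ by Lemma~\ref{pro}(i), the restriction $\rho|_H$ factors through the maximal pro-$p$ quotient $G_{K',S'}(p)$. Lemma~\ref{redundant} and Remark~\ref{trivial}(ii) let me shrink $S'$ to the subset $S'_{\min}$ of primes with $N(\mathfrak{p})\equiv 1\pmod p$, preserving $G_{K',S'}(p)=G_{K',S'_{\min}}(p)$ and, crucially, expressing this pro-$p$ quotient as a continuous quotient of $G_{K',S'_{\min}}^{\text{tame}}$. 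Thus $\rho|_H$ lifts to a continuous homomorphism $G_{K',S'_{\min}}^{\text{tame}}\to \mathrm{GL}_n(A)$ to which hypothesis $(2)$ applies, giving finite image. As $H$ has finite index in $G_{K,S}$, $\rho$ itself has finite image.

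The main obstacle I expect is the step in $(2)\Rightarrow(1)$ that routes the pro-$p$ image through the tame quotient, which depends on the combination of Lemma~\ref{pro}(i), Lemma~\ref{redundant} (whose content is that $p$-extensions are automatically tame at primes $\mathfrak{p}$ with $N(\mathfrak{p})\equiv 1\pmod p$), and Remark~\ref{trivial}(ii). The converse direction should be more routine once one notices via Lemma~\ref{pro}(ii) that tame inertia above $p$ can contribute only a finite factor to the image, which can be removed by passing to a finite extension of $K$.
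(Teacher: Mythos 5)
Your proof is correct and follows essentially the same route as the paper: in both directions one passes to a suitable finite extension of $K$ so that the ramification above $p$ disappears (via Lemma \ref{pro} and Lemma \ref{redundant}) and then invokes the other conjecture, exactly as in the paper's argument. The only minor variation is in $(1)\Rightarrow(2)$, where the paper first reduces to a pro-$p$ image so that tame inertia above $p$ dies automatically, whereas you note via Lemma \ref{pro}(ii) that this inertia has finite image and remove it with an open normal subgroup; both variants work.
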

\begin{proof}
	We prove our claim as follows:
	\begin{itemize}
		\item (i) $ \implies $ (ii): Let $\rho:G_{K,S}^{\text{tame}}\to {\rm GL}_{n}(A)$ be a continuous homomorphism. We need to show that the group $\text{Im}(\rho)$ is finite. By Lemma \ref{pro}, we know that $\text{Im}(\rho)$ contains an open pro-$p$ group. After taking a finite extension of $K$, we can assume that $\text{Im}(\rho)$ is a pro-$p$ group. By Lemma \ref{redundant}, we see that $ \rho(I_{v}) $ is trivial for all prime $ v\in S_{p} $. Thus, after removing redundant primes from $S$, we can assume that $S\cap S_{p}=\emptyset$. Then Conjecture \ref{BUFM} implies that $ \rho $ has finite image. We are done.
		\item (ii) $ \implies $ (i): Let $ \rho:G_{K,S}\to {\rm GL}_{n}(A) $ be a continuous homomorphism as in Conjecture \ref{BUFM} where $ S\cap S_{p}=\emptyset$. By Lemma \ref{pro}, we know that $\text{Im}(\rho)$ contains an open pro-$p$ group. After taking a finite extension of $K$, we can assume that $\text{Im}(\rho)$ is a pro-$p$ group. By Lemma \ref{redundant}, we see that $\rho$ factors through $G_{K,S}^{\text{tame}}$ since $S\cap S_{p}=\emptyset$. Then Conjecture \ref{tameramifieddis} implies that $ \rho $ has finite image. This completes the proof of the proposition.
	\end{itemize}
\end{proof}

The following theorem confirms a basic case of Conjecture \ref{tameramifieddis}.

\begin{Thm}\label{tamelyvirtually solvable case}
	Let $ K $ be a number field and $ S $ a finite set of primes of $ K $. Then any topologically finitely generated virtually solvable (continuous) quotient of $\text{G}_{K,S}^{\text{tame}} $ is finite.
\end{Thm}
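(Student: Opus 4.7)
The proof should be essentially a one-line combination of two results already established in the excerpt. The plan is to invoke Theorem \ref{global structure}(i), which states that the profinite group $G_{K,S}^{\text{tame}}$ is FAb, and then to apply Proposition \ref{solvablequotient}, which asserts that any topologically finitely generated virtually solvable continuous quotient of a FAb profinite group is finite.

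More explicitly, I would first recall that for any pair $(K,S)$, the topological abelianization of $G_{K,S}^{\text{tame}}$ and of each of its open subgroups is finite (this is the content of Theorem \ref{global structure}(i), which is proved via the finiteness of the abelianization of $G_{K',S'}^{\text{tame}}$ for every finite extension $K'/K$ and corresponding set of primes $S'$). Consequently $G_{K,S}^{\text{tame}}$ is FAb. Given any continuous surjection $G_{K,S}^{\text{tame}}\twoheadrightarrow Q$ onto a topologically finitely generated virtually solvable profinite group $Q$, the group $Q$ inherits the FAb property, since any continuous quotient of a FAb group is FAb. Then Proposition \ref{solvablequotient}, applied to $Q$, yields that $Q$ is finite.

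There is essentially no obstacle here once Theorem \ref{global structure}(i) and Proposition \ref{solvablequotient} are available; the theorem is really just the conceptual packaging of those two facts. The only point to verify is that the FAb property passes to continuous quotients, which is immediate from the definition: if $Q=G/N$ with $N\triangleleft G$ closed and $V\subset Q$ is open of finite index, then its preimage $U\subset G$ is open of finite index, so $U^{\mathrm{ab}}$ is finite, and the canonical surjection $U^{\mathrm{ab}}\twoheadrightarrow V^{\mathrm{ab}}$ forces $V^{\mathrm{ab}}$ to be finite as well.
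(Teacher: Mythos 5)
Your proposal is correct and is exactly the paper's own argument: the paper proves this theorem by citing Theorem \ref{global structure} (which gives that $G_{K,S}^{\text{tame}}$ is FAb) together with Proposition \ref{solvablequotient}. Your additional verification that the FAb property passes to continuous quotients is the same observation the paper makes when stating that any quotient of a FAb group is FAb, so there is nothing to add.
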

\begin{proof}
	It follows from Theorem \ref{global structure} and Proposition \ref{solvablequotient}.
\end{proof}

\subsection{Galois extension of $ \mathbb{Q} $ which is tamely ramified at small primes}\label{section2.2}

In \cite[Theorem 2.6]{MR1299733}, Harbater proved that the Galois group $ G_{\mathbb{Q},\{p\}}^{\text{tame}} $ is cyclic of order $ p-1 $ if $ p<23 $. Inspired by this work, we prove the following theorem:

\begin{Thm}\label{small primes}
	Let $ S=\{p_{1},\cdots,p_{d}\} $ be a finite set of non-Archimedean  primes such that $ \prod_{j=1}^{d}p_{j}<60.1 $. Then the Galois group $ G_{\mathbb{Q},S}^{\text{tame}} $ is finite. In particular, Conjecture \ref{tameramifieddis} holds in this case. 
\end{Thm}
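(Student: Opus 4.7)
The plan is to bound from above the root discriminant of any finite Galois subextension of $\mathbb{Q}_{S}^{\text{tame}}/\mathbb{Q}$, and then contradict an Odlyzko-type lower bound for totally real number fields. First I would observe that since $S$ consists only of non-Archimedean primes, the unique Archimedean place of $\mathbb{Q}$ lies outside $S$ and hence cannot ramify in any subextension; therefore every finite Galois subextension $L/\mathbb{Q}$ inside the fixed field of $G_{\mathbb{Q},S}^{\text{tame}}$ is automatically totally real.

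Next, using the standard formula for the different of a tamely ramified Galois extension, the $p$-adic valuation of the discriminant $d_{L}$ equals $n(1-1/e_{p})$ for each $p\in S$, where $n=[L:\mathbb{Q}]$ and $e_{p}$ is the (common) ramification index at $p$ in $L/\mathbb{Q}$; at primes $p\notin S$ the extension is unramified and contributes nothing. Collecting these contributions gives
\[
|d_{L}|^{1/n}\;=\;\prod_{p\in S}p^{\,1-1/e_{p}}\;\leq\;\prod_{p\in S}p\;<\;60.1.
\]

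I would then invoke the unconditional Odlyzko lower bound on the root discriminant of totally real number fields: there exists a constant $c>60.1$ (with asymptotic value $60.8395\ldots$) such that $|d_{L}|^{1/n}\geq c$ for every totally real $L$ of sufficiently large degree, while the finitely many small degrees can be dispatched either by explicit Odlyzko--Diaz y Diaz tables or directly by the Minkowski bound. This forces $[L:\mathbb{Q}]$ to be uniformly bounded over all finite Galois subextensions $L\subset \mathbb{Q}_{S}^{\text{tame}}$, so the profinite group $G_{\mathbb{Q},S}^{\text{tame}}$ has continuous finite quotients of bounded order and is therefore itself finite. Conjecture \ref{tameramifieddis} for such $S$ then follows trivially, since any continuous homomorphism from a finite group to ${\rm GL}_{n}(A)$ has finite image.

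The hardest part will be pinning down the Odlyzko-type threshold so that the specific constant $60.1$ is the correct cut-off: the unconditional bound $60.8395\ldots$ is valid only in the asymptotic regime, so one must combine explicit lower bounds of Odlyzko--Diaz y Diaz (valid for all $n$ above a computable threshold) with direct Minkowski/Hermite estimates for small degrees to obtain a uniform constant exceeding $60.1$. Once that threshold is secured, the discriminant comparison above, together with the elementary passage from bounded finite quotients to finiteness of the profinite group, closes the argument.
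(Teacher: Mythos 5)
Your proposal follows essentially the same route as the paper: observe that finite Galois subextensions of the fixed field of $G_{\mathbb{Q},S}^{\text{tame}}$ are totally real, bound the root discriminant from above using tame ramification at $S$, bound it from below using an Odlyzko-type estimate, and deduce a uniform bound on degrees (hence finiteness of the profinite group, via the fact that an infinite profinite group has finite quotients of unbounded order). The core steps are all correct.

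The one genuine difference, and also the source of the difficulty you flag at the end, is which form of the lower bound you invoke. The paper uses Poitou's \emph{uniform} bound $|\Delta_K|^{1/n}\geq 60.1\cdot e^{-254/n}$ (cited as Theorem \ref{formula}), valid for every degree $n$ with no threshold — this is exactly why the constant $60.1$ appears in the hypothesis. You instead aim for an asymptotic bound with limit $\approx 60.84$ and then worry about ``dispatching small degrees'' by auxiliary estimates. That worry is actually a red herring: small-degree subextensions pose no contradiction and need not be excluded — the conclusion is only that degrees are bounded, not that none exist. More importantly, the detour is unnecessary. Even with your weaker upper bound $|d_L|^{1/n}\leq \prod_{p\in S}p$ (you dropped the refinement $1-1/e_p\leq 1-1/n$ that the paper uses), the Poitou form already gives $n\leq 254/\log\bigl(60.1/\prod p_j\bigr)$ directly, with no threshold argument and no need to push the constant above $60.1$. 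So your proof works, but it substitutes a harder-to-cite asymptotic bound where a single explicit lemma (the one the $60.1$ is engineered around) suffices, and it spends effort on the small-degree case where nothing needs to be proved.
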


To prove this theorem, we need the following lemmas.

\begin{Lem}\cite[Section 2]{MR1299733}\label{discriminant}
	Let $ K $ be a number field and $ \Delta_{K} $ its (absolute) discriminant. Let $ p $ be a prime dividing $ \Delta_{K} $. Then we have
	\[ v_{p}(\Delta_{K})\leq \sum_{\mathfrak{P}|p}f_{\mathfrak{P}}(e_{\mathfrak{P}}-1+e_{\mathfrak{P}}v_{p}(e_{\mathfrak{P}})),\]
	where $ e_{\mathfrak{P}} $ (resp. $ f_{\mathfrak{P}} $) is the ramification index (resp. the residue extension degree) of a prime ideal $ \mathfrak{P} $ lying over $ p $, and $ v_{p} $ is the usual $ p $-adic valuation of $ \mathbb{Q} $.
\end{Lem}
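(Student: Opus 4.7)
The plan is to reduce the global estimate to a product of local ones via the standard norm-of-the-different formula, and then invoke the classical bound on the different of a local extension, where the only nontrivial input is controlling the wild part.

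First I would recall the relation $\mathfrak{d}_{K/\mathbb{Q}} = N_{K/\mathbb{Q}}(\mathfrak{D}_{K/\mathbb{Q}})$ between the discriminant ideal and the different ideal, and use that the different localizes: for each prime $\mathfrak{P}\mid p$, the $\mathfrak{P}$-part of $\mathfrak{D}_{K/\mathbb{Q}}$ equals the different of the completion $K_{\mathfrak{P}}/\mathbb{Q}_{p}$. Writing $d_{\mathfrak{P}} := v_{\mathfrak{P}}(\mathfrak{D}_{K_{\mathfrak{P}}/\mathbb{Q}_{p}})$, and using that $N_{K_{\mathfrak{P}}/\mathbb{Q}_{p}}(\mathfrak{P}) = p^{f_{\mathfrak{P}}}$ because the residue field $\mathcal{O}_{K_{\mathfrak{P}}}/\mathfrak{P}$ has $p^{f_{\mathfrak{P}}}$ elements, I obtain
\[ v_{p}(\Delta_{K}) \;=\; \sum_{\mathfrak{P}\mid p} v_{p}\bigl(N_{K_{\mathfrak{P}}/\mathbb{Q}_{p}}(\mathfrak{P}^{d_{\mathfrak{P}}})\bigr) \;=\; \sum_{\mathfrak{P}\mid p} f_{\mathfrak{P}}\, d_{\mathfrak{P}}. \]
This reduces the problem to bounding the local exponent $d_{\mathfrak{P}}$ individually.

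Next I would invoke the standard bound on the different exponent of a local extension of $\mathbb{Q}_{p}$ (for instance, Serre, \emph{Local Fields}, Ch.\ III, Prop.\ 13): if $L/\mathbb{Q}_{p}$ is a finite extension with ramification index $e$ and residue degree $f$, then
\[ v_{L}(\mathfrak{D}_{L/\mathbb{Q}_{p}}) \;\leq\; e - 1 + v_{L}(e). \]
Since $v_{L}$ restricted to $\mathbb{Q}_{p}$ equals $e\cdot v_{p}$, one has $v_{L}(e) = e\, v_{p}(e)$, so
\[ d_{\mathfrak{P}} \;\leq\; e_{\mathfrak{P}} - 1 + e_{\mathfrak{P}}\, v_{p}(e_{\mathfrak{P}}). \]
Substituting this into the previous identity and summing over primes $\mathfrak{P}\mid p$ gives the stated inequality.

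The only substantive ingredient is the upper bound on the local different exponent, which is the true content of the lemma; the rest is bookkeeping with norms. The hard part is thus justifying the wild-ramification contribution $e\, v_{p}(e)$. I would derive it by splitting into the maximal tame subextension $L_{0}/\mathbb{Q}_{p}$ (of ramification $e_{0}$ prime to $p$, contributing $e_{0}-1$ to its own different exponent by the standard tame calculation, via a minimal polynomial of a uniformizer of Eisenstein shape $X^{e_{0}} - \pi u$), and then the totally wildly ramified step $L/L_{0}$ of degree $p^{a}$ where $e = e_{0}p^{a}$; for the latter the different exponent is at most $e - 1 + v_{L}(p^{a})$, a consequence of the inequality $v_{L}(f'(\pi_{L})) \leq v_{L}(n) + (n-1)$ for any uniformizer $\pi_{L}$ of $L$ with minimal polynomial $f$ of degree $n$ over the unramified subfield, applied successively. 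Using the tower formula for differents $\mathfrak{D}_{L/\mathbb{Q}_{p}} = \mathfrak{D}_{L/L_{0}}\cdot \mathfrak{D}_{L_{0}/\mathbb{Q}_{p}}\mathcal{O}_{L}$ then yields the clean bound $d_{\mathfrak{P}} \leq e_{\mathfrak{P}} - 1 + e_{\mathfrak{P}} v_{p}(e_{\mathfrak{P}})$, completing the proof.
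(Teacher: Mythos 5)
Your proposal is correct and is exactly the standard argument behind this lemma: the paper itself gives no proof but cites Harbater (Section 2 of the referenced paper), where the bound is obtained in the same way, by writing $v_{p}(\Delta_{K})=\sum_{\mathfrak{P}\mid p}f_{\mathfrak{P}}d_{\mathfrak{P}}$ via the norm of the different and bounding each local exponent by $d_{\mathfrak{P}}\leq e_{\mathfrak{P}}-1+v_{K_{\mathfrak{P}}}(e_{\mathfrak{P}})=e_{\mathfrak{P}}-1+e_{\mathfrak{P}}v_{p}(e_{\mathfrak{P}})$ (Serre, \emph{Local Fields}, Ch.\ III, Prop.\ 13). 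One cosmetic slip in your optional elaboration: for the wild step $L/L_{0}$ of degree $p^{a}$ the exponent bound should read $p^{a}-1+v_{L}(p^{a})$, not $e-1+v_{L}(p^{a})$; this does not affect the proof, since the Eisenstein-polynomial estimate applied over the maximal unramified subfield already yields $d_{\mathfrak{P}}\leq e_{\mathfrak{P}}-1+v_{L}(e_{\mathfrak{P}})$ directly.
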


\begin{Thm}\cite[Theorem 1]{MR441918}\label{formula}
	Let $ K $ be a number field of signature $ (r_{1},r_{2}) $, i.e. $ K $ has $ r_{1} $ real, and $ r_{2} $ pairs of complex conjugate, embeddings. Then we have
	\begin{align*}
		|\Delta_{K}|\geq (60.1)^{r_{1}}(22.2)^{2r_{2}}e^{-254}.
	\end{align*}
\end{Thm}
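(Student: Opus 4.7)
The plan is to derive this Odlyzko--Poitou-style discriminant lower bound from the Weil explicit formula applied to the Dedekind zeta function $\zeta_{K}$. The starting point is the completed zeta
\begin{align*}
\Lambda_{K}(s) = |\Delta_{K}|^{s/2}\, \Gamma_{\mathbb{R}}(s)^{r_{1}}\, \Gamma_{\mathbb{C}}(s)^{r_{2}}\, \zeta_{K}(s),
\end{align*}
which is meromorphic with simple poles at $s=0,1$ and satisfies the functional equation $\Lambda_{K}(s)=\Lambda_{K}(1-s)$. Taking logarithmic derivatives and combining the Hadamard product over non-trivial zeros with the Euler product of $\zeta_{K}$ produces an identity in which $\log|\Delta_{K}|$ appears linearly, balanced against a sum over zeros, archimedean gamma-factor terms split according to real and complex places, and a sum over prime ideals.

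Concretely, I would fix an even, non-negative, compactly supported test function $F$ on $\mathbb{R}$ with $F(0)=1$ whose Fourier transform $\widehat{F}$ is also non-negative on the real line. Pairing $F$, via Mellin inversion, against the explicit formula yields an identity of the shape
\begin{align*}
\tfrac{1}{2}\log|\Delta_{K}| = r_{1} A(F) + r_{2} B(F) + \sum_{\rho}\widehat{F}(\gamma_{\rho}) + \sum_{\mathfrak{p},\, m \geq 1}\frac{\log N\mathfrak{p}}{N\mathfrak{p}^{m/2}}F(m\log N\mathfrak{p}) - D(F),
\end{align*}
where $\gamma_{\rho}$ ranges over imaginary parts of non-trivial zeros of $\zeta_{K}$, and $A(F)$, $B(F)$, $D(F)$ are explicit integrals of $F$ against $\Gamma_{\mathbb{R}}'/\Gamma_{\mathbb{R}}$, $\Gamma_{\mathbb{C}}'/\Gamma_{\mathbb{C}}$, and the contribution from residues at the poles $s=0,1$ respectively. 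Since $F\geq 0$ makes the prime sum non-negative and $\widehat{F}\geq 0$ makes the zero sum non-negative (with no appeal to GRH), dropping these two manifestly non-negative terms yields the unconditional inequality
\begin{align*}
\log|\Delta_{K}| \geq 2 r_{1} A(F) + 2 r_{2} B(F) - 2 D(F).
\end{align*}

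The specific constants $60.1$, $22.2$, and $e^{-254}$ then emerge from a near-optimal choice of $F$: taking $F$ to be a self-convolution of a truncated cosine or Fej\'er-type kernel whose support parameter is tuned numerically, one arranges $2A(F) \geq \log(60.1)$, $2 B(F) \geq 2\log(22.2)$, and $2 D(F) \leq 254$ simultaneously, after which exponentiation yields the claimed bound. The main obstacle is precisely this optimization: simultaneously pushing the archimedean integrals $A(F)$ and $B(F)$ as high as possible while keeping the gamma/pole correction $D(F)$ bounded and rigorously verifying the Fourier positivity $\widehat{F}\geq 0$ forms the technical heart of the argument, and is where Poitou's refinement of Stark's and Odlyzko's methods produces the explicit constants $60.1$ and $22.2$. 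Everything else---setting up the explicit formula, verifying absolute convergence of the prime and zero sums, and evaluating the gamma-factor integrals at real versus complex places---is routine in comparison.
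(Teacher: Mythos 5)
The paper itself gives no argument here: Theorem \ref{formula} is quoted verbatim from the cited reference (Poitou's report on Odlyzko's discriminant bounds), so the only meaningful comparison is with the method of that source, and your outline does follow it — the Weil/Guinand explicit formula for $\zeta_{K}$, a positive test function, discarding the prime and zero contributions, and optimizing the archimedean terms. However, as written your positivity step has a genuine gap. You write the zero contribution as $\sum_{\rho}\widehat{F}(\gamma_{\rho})$ with $\gamma_{\rho}$ the ``imaginary parts'' of the non-trivial zeros and claim $\widehat{F}\geq 0$ makes it non-negative ``with no appeal to GRH''. That form of the zero sum already presupposes every zero lies on the critical line. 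Unconditionally, a zero $\rho=\beta+i\gamma$ with $\beta\neq\tfrac12$ contributes $\Phi(\rho)$, where $\Phi(s)=\int_{\mathbb{R}}F(x)e^{(s-1/2)x}\,dx$, and non-negativity of $\widehat{F}$ on the real line does not force $\operatorname{Re}\Phi(\rho)\geq 0$ off the line. What is needed is the stronger condition that the cosine transform of $x\mapsto F(x)\cosh(ax)$ is non-negative for all $0\leq a\leq \tfrac12$ (equivalently, $\operatorname{Re}\Phi\geq 0$ throughout the critical strip); securing this is exactly why Odlyzko and Poitou restrict to special test functions, and it is not automatic for a generic non-negative self-convolution with non-negative Fourier transform.

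Beyond that, the quantitative heart — producing the specific constants $60.1$, $22.2$ and $e^{-254}$ — is deferred to an unspecified numerical tuning, so the proposal identifies the correct route but does not actually establish the stated inequality: those constants are precisely the output of Poitou's explicit choice of test function, the evaluation of the archimedean integrals (the terms involving $\gamma+\log 4\pi$, $\log 8\pi$, etc.), and the pole correction, together with the positivity verification just described. In short: same strategy as the cited source, but the unconditional positivity claim as stated would fail, and the explicit constants are asserted rather than derived.
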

\begin{Rem}
	Assuming the generalized Riemann Hypothesis, one can get better bounds for $ |\Delta_{K}| $, cf. \cite{MR1061762}.
\end{Rem}

\begin{Lem}\cite[Corollary 2.6.6]{MR2599132}\label{nobound}
	Let $ G $ be a second countable profinite group. Then there is a chain of open normal subgroups $G=G_1\geq G_2\geq \cdots \geq G_{i}\geq \cdots $ of $G$ such that $\bigcap_{i=1}^{\infty} G_i=\{1\}$. In particular, if $ G $ is infinite, then there is no bound on the orders of finite quotients of $ G $.
\end{Lem}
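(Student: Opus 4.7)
The plan is to use the hypothesis of second countability to extract a countable neighborhood basis at the identity consisting of open normal subgroups, then turn this into a descending chain by taking finite intersections, and finally derive the "no bound" consequence from the impossibility of stabilization.

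First I would recall two basic facts about profinite groups: open normal subgroups form a fundamental system of neighborhoods of the identity, and the intersection of all open normal subgroups of a profinite group is trivial. Combining these with the assumption that $G$ is second countable, I would observe that the identity has a countable neighborhood basis, and hence one can select a countable family $\{N_i\}_{i\geq 1}$ of open normal subgroups of $G$ which forms a neighborhood basis at $1$. In particular $\bigcap_{i\geq 1} N_i = \{1\}$, because any element $g\neq 1$ can be separated from $1$ by an open neighborhood, and hence lies outside some $N_i$.

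Next I would define the chain by $G_1 := G$ and $G_{i+1} := G_i \cap N_i$ for $i\geq 1$. Each $G_i$ is a finite intersection of open normal subgroups, hence itself open and normal, and by construction $G = G_1 \geq G_2 \geq \cdots$ with $\bigcap_{i\geq 1} G_i \subseteq \bigcap_{i\geq 1} N_i = \{1\}$. This proves the first assertion.

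For the "in particular" clause, I would argue by contradiction. Assume $G$ is infinite but there is an integer $M$ with $|G/H|\leq M$ for every open normal subgroup $H$ of $G$. Applying this to the chain $\{G_i\}$ just constructed, the sequence of integers $|G/G_i|$ is nondecreasing (since $G_{i+1}\subseteq G_i$) and bounded above by $M$, so it stabilizes: there exists $N$ with $|G/G_i|=|G/G_N|$ for all $i\geq N$. Combined with $G_i\subseteq G_N$, this forces $G_i=G_N$ for all $i\geq N$, and therefore $\{1\}=\bigcap_{i\geq 1} G_i = G_N$. But $G_N$ is open, so $G/G_N = G$ would be finite, contradicting our assumption. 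Hence the orders $|G/H|$, as $H$ ranges over open normal subgroups, are unbounded.

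I do not expect any serious obstacle: the content of the lemma is essentially a packaging of second countability plus the standard neighborhood‐basis property of profinite groups, and the stabilization argument for the second half is elementary. The only point that requires a moment's care is the passage from "countable basis at $1$" to "countable family of open normal subgroups forming a basis at $1$," which uses that open normal subgroups are themselves cofinal among open neighborhoods of $1$ in any profinite group.
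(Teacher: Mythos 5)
Your proof is correct. The paper gives no argument of its own here, merely citing Ribes--Zalesskii (Corollary 2.6.6), and your route --- extracting a countable neighbourhood basis of open normal subgroups from second countability, turning it into a descending chain by finite intersections, and deducing unboundedness of the indices from the stabilization argument --- is exactly the standard proof behind that citation, including the elementary "in particular" step.
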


\begin{proof}[Proof of Theorem \ref{small primes}]
	Since $S$ does not contain $S_{\infty}$, the extension $\mathbb{Q}_{S}/\mathbb{Q}$ is totally real, and any finite extension $ K $ of $\mathbb{Q}$ inside $ \mathbb{Q}_{S} $ is totally real. By Lemma \ref{nobound}, it suffices to show that there is a constant $ C $ such that the degree $n=[K:\mathbb{Q}]$ of any finite Galois extension $ K/\mathbb{Q} $ which is tamely ramified at $ S $ must be less than $ C $. Indeed, for such $ K $, by Lemma \ref{discriminant} we have
	\begin{align}
		|\Delta_{K}|^{1/n}\leq& \prod_{j=1}^{d}p_{j}^{1+v_{p_{j}}(e_{p_{j}})-1/e_{p_{j}}} \notag \\
		\leq &\prod_{j=1}^{d}p_{j}^{1-1/n}
	\end{align}
	since $ K/\mathbb{Q} $ is tamely ramified at each $ p_{j} $. On the other hand, by Theorem \ref{formula} we have
	\begin{align}
		|\Delta_{K}|^{1/n}\geq 60.1 \cdot e^{-254/n},
	\end{align}
	since $ K/\mathbb{Q} $ is totally real extension. Thus, we have
	\[ 60.1 \cdot e^{-254/n}\leq (\prod_{j=1}^{d}p_{j})^{1-1/n}. \]
	Since $ \prod_{j=1}^{d}p_{j}<60.1 $, this implies that
	\[ n\leq \dfrac{254-\log(\prod_{j=1}^{d}p_{j})}{\log(60.1/\prod_{j=1}^{d}p_{j})}. \]
	Finally, take $ C:=  \dfrac{254-\log(\prod_{j=1}^{d}p_{j})}{\log(60.1/\prod_{j=1}^{d}p_{j})}$. This completes our proof.
\end{proof}

\subsection{Conjugacy classes and dense normal subgroups}\label{section2.3}

In this subsection, we prove the following theorem.

\begin{Thm}\label{Conjugacy classes and countable dense normal subgroups}
	Let $ K $ be a number field, $ S $ a finite set of primes of $ K $ and $ A $ a complete Noetherian local ring with finite residue field of characteristic $ p $. Let $ \rho:G_{K,S}^{\text{tame}}\to {\rm GL}_{n}(A) $ be a continuous homomorphism. Assume that at least one of the following conditions holds.
	\begin{enumerate}
		\item $ \text{Im}(\rho) $ has an open conjugacy class.
		\item $\text{Im}(\rho)$ contains a dense normal subgroup which has only countably many conjugacy classes.
		\item There exists a countable subset $D$ of $\text{Im}(\rho)$ such that normal subgroup $N$ of $\text{Im}(\rho)$ generated by $D$ is dense in $\text{Im}(\rho)$ and the conjugacy class $g^{\text{Im}(\rho)}$ of $g$ in $\text{Im}(\rho)$ is finite (as a set) for any $g\in D$.
		\item There are only finitely many elements in each conjugacy class $ \rho(\text{Frob}_{v}) $ in $ \text{Im}(\rho) $ for any $ v\notin S $ where $\text{Frob}_{v}$ is the Frobenius conjugacy class of $v$ in $G_{K,S}^{\text{tame}}$.
	\end{enumerate}
	Then $ \rho $ has finite image. 
\end{Thm}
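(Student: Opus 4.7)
The strategy is to assume $G := \text{Im}(\rho)$ is infinite and derive a contradiction by passing to a topologically finitely generated just-infinite quotient of $G$ and invoking Theorem~\ref{densenormalsubgroup}. By Corollary~\ref{imageisfinitelygenerated}, $G$ is topologically finitely generated, and by Lemma~\ref{pro}(i) the intersection $G\cap {\rm GL}_n^1(A)$ is an open pro-$p$ subgroup of $G$, so $G$ is virtually pro-$p$. Lemma~\ref{justinfinitequotient} therefore furnishes a continuous surjection $\pi\colon G\twoheadrightarrow H$ onto a topologically finitely generated, virtually pro-$p$, just-infinite profinite group $H$. Since $G_{K,S}^{\text{tame}}$ is FAb by Theorem~\ref{global structure}(i), both $G$ and $H$ are FAb. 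Were $H$ virtually abelian, it would be virtually solvable, and Proposition~\ref{solvablequotient} would force $H$ finite, contradicting just-infiniteness; so $H$ is not virtually abelian, and Theorem~\ref{densenormalsubgroup} applies to $H$.

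Next I would push each of the hypotheses (ii)--(iv) through $\pi$. Since $\pi$ is a continuous open surjection of profinite groups, it sends dense subsets to dense subsets and normal subgroups to normal subgroups, and satisfies $\pi(g^G) = \pi(g)^H$ for every $g\in G$. For (ii), the image $\pi(N)$ is a dense normal subgroup of $H$ whose $H$-conjugacy classes are images of the (at most countably many) $G$-conjugacy classes of $N$, contradicting the first assertion of Theorem~\ref{densenormalsubgroup}. For (iii), $\pi(D)$ is countable, the normal closure of $\pi(D)$ in $H$ equals $\pi(\langle\!\langle D\rangle\!\rangle)$ and so is dense, and each $\pi(g)^H=\pi(g^G)$ is the image of a finite set; this contradicts the second assertion of Theorem~\ref{densenormalsubgroup}. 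For (iv), Chebotarev's density theorem gives that $\bigcup_{v\notin S}\text{Frob}_v$ is dense in $G_{K,S}^{\text{tame}}$, so choosing a representative $\xi_v\in\rho(\text{Frob}_v)$ for each $v\notin S$ produces a countable $D\subseteq G$ whose normal closure contains the dense union $\bigcup_v\rho(\text{Frob}_v)$, with every $\xi_v^G=\rho(\text{Frob}_v)$ finite by hypothesis; thus (iv) reduces to (iii).

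Case (i) is the main obstacle. Openness of the quotient map implies that the image $\pi(g^G)=\pi(g)^H$ of an open conjugacy class of $G$ is open in $H$, so it suffices to rule out open conjugacy classes in the FAb, topologically finitely generated, just-infinite, non-virtually-abelian group $H$. Since $H$ is infinite we have $h\neq 1$, and fixing an open normal subgroup $N\trianglelefteq H$ with $hN\subseteq h^H$ one verifies that every $n\in N$ arises as a commutator $[h^{-1},x]$; the normal closure $\overline{\langle\!\langle h\rangle\!\rangle}$ thus contains the open set $h^H$ and is itself open in $H$. The heart of the matter is to leverage this structure to produce a countable dense normal subgroup of $H$ having only countably many $H$-conjugacy classes, thereby reducing case (i) to case (ii) and contradicting the first assertion of Theorem~\ref{densenormalsubgroup}; a natural candidate is the normal subgroup of $H$ generated by $h$ inside $\overline{\langle\!\langle h\rangle\!\rangle}$, whose conjugacy structure under $H$ is controlled by the single orbit $h^H$. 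Alternatively, one may argue directly via Haar measure: $h^H$ has positive Haar measure in $H$, which should be incompatible with the measure-theoretic dispersion of conjugacy classes in such $H$, in the spirit of \cite[Theorem~1.1]{MR3993799}.
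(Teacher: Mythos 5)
Your treatment of cases (ii)--(iv) is essentially the paper's argument and is fine: the paper applies Theorem~\ref{densenormalsubgroup} directly to $\mathrm{Im}(\rho)$ (which, by Corollary~\ref{imageisfinitelygenerated}, Lemma~\ref{pro}, Lemma~\ref{justinfinitequotient} and Theorem~\ref{tamelyvirtually solvable case}, admits a topologically finitely generated just-infinite quotient that is not virtually abelian), whereas you push the data through to the quotient $H$ first; that extra step is harmless, and your reduction of (iv) to (iii) via Chebotarev and $\xi_v^{G}=\rho(\mathrm{Frob}_v)$ matches the paper.

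The genuine gap is case (i), which you do not actually prove. Your proposed reduction to case (ii) does not work: the normal subgroup of $H$ generated by $h$ contains the open conjugacy class $h^{H}$, hence (containing a translate of an open subgroup, after taking products with its inverse) it is an \emph{open}, uncountable subgroup, so it is neither a countable dense normal subgroup nor one with any visible bound on its number of conjugacy classes --- and hypothesis (i) gives you no countability to feed into Theorem~\ref{densenormalsubgroup}, whose two assertions both require countability input. The Haar-measure alternative (``$h^{H}$ has positive measure, which should be incompatible\dots'') is an unproved assertion, not an argument; note that Lemma~\ref{finiteorderconj} only yields that $h$ has finite order, not that $h^{H}$ is finite or that $H$ is small. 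The paper closes case (i) by a different mechanism: since $\mathrm{Im}(\rho)$ is topologically finitely generated and every virtually solvable continuous quotient of it is finite (Theorem~\ref{tamelyvirtually solvable case}), Theorem~\ref{doctoral} reduces finiteness of $\mathrm{Im}(\rho)$ to showing that every continuous representation $f:\mathrm{Im}(\rho)\to {\rm GL}_{n}(F)$ over a non-Archimedean local field has finite image; the image $f(\mathrm{Im}(\rho))$ is a closed subgroup of ${\rm GL}_{n}(F)$ which still has an open conjugacy class (quotient maps of profinite groups are open), so Proposition~\ref{open conjugacy} --- i.e.\ the topological Tits alternative of \cite{MR2373146} combined with Lemma~\ref{finiteorderconj}, since a dense free subgroup would meet the open class and produce a nontrivial torsion element of a free group --- forces it to be virtually solvable, hence finite. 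Without this (or some substitute for your measure-theoretic claim), your proof of case (i) is incomplete.
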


To prove Theorem \ref{Conjugacy classes and countable dense normal subgroups}, we need the following lemma.

\begin{Lem}\label{finiteorderconj}
	Let $G$ be a profinite group. If $g^G$ is an open conjugacy class in $G$ for some $g\in G$, then $g$ has finite order.
\end{Lem}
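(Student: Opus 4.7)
First I would translate the openness of the conjugacy class into an explicit algebraic relation. Since $g^{G}$ is open and contains $g$, there is an open normal subgroup $V$ of $G$ with $gV \subseteq g^{G}$. I would then verify that $g^{G}$ is automatically $V$-saturated: for any $g' = hgh^{-1} \in g^{G}$ and any $v \in V$, normality of $V$ gives $g'v = h(g(h^{-1}vh))h^{-1} \in h(gV)h^{-1} \subseteq g^{G}$. Consequently $g^{G} = \pi^{-1}(\bar{g}^{\bar{G}})$, where $\pi \colon G \to \bar{G} := G/V$ is the quotient map and $\bar{g}^{\bar{G}}$ denotes the conjugacy class of $\bar{g} := \pi(g)$ in the finite group $\bar{G}$.

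Let $k$ be the (finite) order of $\bar{g}$ in $\bar{G}$. Then $g^{k} \in V$, so
\[
g^{k+1} = g \cdot g^{k} \in gV \subseteq g^{G},
\]
and hence there exists $h \in G$ with $hgh^{-1} = g^{k+1}$. Iterating this identity yields $h^{n} g h^{-n} = g^{(k+1)^{n}}$ for every $n \geq 1$, so $g$ is $G$-conjugate to $g^{(k+1)^{n}}$ for all $n$.

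The main obstacle is to upgrade this infinite family of conjugacies into finite order of $g$. If $h$ happens to have finite order $N$, then $g = h^{N} g h^{-N} = g^{(k+1)^{N}}$, which at once yields $g^{(k+1)^{N}-1} = 1$. In general I would exploit compactness of $G$: passing to a subsequence $h^{n_{j}} \to h^{\ast}$ (with $n_{j} \to \infty$ whenever $h$ has infinite order) and taking limits via the continuous extension $\hat{\mathbb{Z}} \to G$, $m \mapsto g^{m}$, one obtains $h^{\ast} g (h^{\ast})^{-1} = g^{x}$ for a limit point $x \in \hat{\mathbb{Z}}$ of $\{(k+1)^{n_{j}}\}$. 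Since $(k+1)^{n_{j}} \to 0$ in $\mathbb{Z}_{p}$ for every prime $p \mid k+1$, the element $g^{x}$ projects trivially to the $p$-part of the pro-cyclic closure $\overline{\langle g \rangle}$. But $g^{x}$ is $G$-conjugate to $g$, so they share the same supernatural order; this forces the $p$-part of $g$ to vanish for every such $p$. Finally, varying $V$ over arbitrarily small open normal subgroups (to realize every prime appearing in the supernatural order of $g$ as a divisor of some $k+1$) eliminates every prime contribution to the order of $g$, and one concludes that $g$ has finite order. The technical crux of the argument—and the step I expect to be the hardest—is precisely this prime-chasing: arranging that, for each prime $p$ contributing to the supernatural order of $g$, some admissible $V$ yields $k_{V} \equiv -1 \pmod{p}$.
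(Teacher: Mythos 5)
Your first two steps are correct: from $gV\subseteq g^{G}$ you do get $hgh^{-1}=g^{k+1}$ with $k$ the order of $gV$ in $G/V$, the iteration $h^{n}gh^{-n}=g^{(k+1)^{n}}$ is valid, and the limiting argument (properly done with nets or with an accumulation point of $\{(n,(k+1)^{n})\}$ in $\hat{\mathbb{Z}}\times\hat{\mathbb{Z}}$, since a general profinite group need not be metrizable) does show that every prime $p$ dividing $k+1$ has trivial contribution to the supernatural order of $g$. But the final step, which you yourself flag as the crux, is not merely hard: it is impossible, and this kills the strategy. Read your own conclusion contrapositively: if a prime $p$ genuinely contributes to the order of $g$, then no admissible $V$ can have $k_{V}\equiv -1\pmod p$ — if one did, your argument (or, when the $p$-part of the order of $g$ is finite, simply the fact that the conjugate elements $g$ and $g^{k_{V}+1}$ must have equal orders) would force that $p$-part to be trivial, a contradiction. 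So the ``prime-chasing'' can only ever eliminate primes that were never present; it cannot reach the primes that do occur, and it gives no bound on their exponents (for instance it never excludes a $q^{\infty}$ contribution for a prime $q$ coprime to every $k_{V}+1$). What your proposal actually proves is only that the order of $g$ is coprime to $k_{V}+1$ for every admissible $V$; finiteness does not follow.

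For contrast, the paper's proof is a short quantitative argument in finite quotients: if $gH\subseteq g^{G}$ with $H$ open, then for every open normal $N$ the conjugacy class of $gN$ in $G/N$ has size at least $|HN/N|\geq |G/N|/[G:H]$, so by the class equation $|C_{G/N}(gN)|\leq [G:H]$; since $gN$ lies in its own centralizer, its order in every finite quotient is bounded by $[G:H]$, whence $g^{[G:H]!}=1$. If you want to complete an argument along your lines you need this kind of uniform bound on the order of $g$ modulo all open normal subgroups, not divisibility information about primes dividing $k_{V}+1$.
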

\begin{proof}
	The proof is the same as the step $ 2.3 $ in the proof of \cite[Theorem 1.1]{MR3993799}. By assumption, $ g^{G} $ contains $ gH $ for some open subgroup $ H $ of $ G $. Then we have
	\[|C_{G/N}(gN)|=\dfrac{|G/N|}{|g^{G}N/N|}\leq \dfrac{|G/N|}{|HN/N|}\leq [G:H]  \]
	for every normal open subgroup $ N $ of $ G $ where $ C_{G/N}(gN) $ denotes the centralizer of $ gN $ in $ G/N $. Since the sizes of $C_{G/N}(gN)$ are uniformly bounded where $N$ ranges over all the open normal subgroups of $G$, we see that $g$ has finite order.
\end{proof}

\begin{Prop}\label{open conjugacy}
	Let $F$ be a non-Archimedean local field. If $H$ is a closed subgroup of ${\rm GL}_n(F)$ such that it has an open conjugacy class, then $H$ is virtually solvable.
\end{Prop}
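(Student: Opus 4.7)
The plan is to pass to the Lie algebra of $H$ and reduce the statement to a fact about the adjoint action of $g$. Since $H$ is closed in the $F$-analytic group ${\rm GL}_{n}(F)$, the $F$-analytic analogue of Cartan's closed-subgroup theorem (see, e.g., \cite[Chapter 9]{MR1720368}) makes $H$ an $F$-analytic Lie subgroup of ${\rm GL}_{n}(F)$ with a finite-dimensional $F$-Lie algebra $\mathfrak{h}\subseteq \mathfrak{gl}_{n}(F)$. The conjugation map $c_{g}\colon H\to H$, $h\mapsto hgh^{-1}$, is $F$-analytic; using right translation by $g^{-1}$ to identify $T_{g}H\cong \mathfrak{h}$, a direct computation shows that its differential at the identity is $1-\text{Ad}(g)\colon \mathfrak{h}\to \mathfrak{h}$. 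Since moving the base point $h_{0}$ only twists this differential by $\text{Ad}(h_{0})$, the rank of $dc_{g}$ is the same at every point of $H$. Hence openness of $g^{H}=\text{Im}(c_{g})$ in $H$, combined with the non-Archimedean implicit function theorem, is equivalent to surjectivity of $1-\text{Ad}(g)$ on $\mathfrak{h}$; as $\mathfrak{h}$ is finite-dimensional this forces $1-\text{Ad}(g)$ to be invertible, so $\text{Ad}(g)$ has no eigenvalue $1$ on $\mathfrak{h}$.

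The crux is to deduce from this that $\mathfrak{h}$ is solvable. Suppose otherwise; then $\bar{\mathfrak{h}}:=\mathfrak{h}/\text{rad}(\mathfrak{h})$ is a nonzero semisimple $F$-Lie algebra, and $\text{Ad}(g)$ preserves the characteristic subspace $\text{rad}(\mathfrak{h})$ and so descends to an automorphism of $\bar{\mathfrak{h}}$. I would then invoke the classical fact that any automorphism of a nonzero semisimple Lie algebra over a characteristic-zero field has a nonzero fixed vector: using Jordan decomposition in the algebraic group $\text{Aut}(\bar{\mathfrak{h}})$, together with the fact that in a reductive group of positive semisimple rank the centralizer of every element contains a maximal torus, one finds a $1$-eigenvector of the induced automorphism on $\bar{\mathfrak{h}}$. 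Factoring characteristic polynomials through the short exact sequence $0\to \text{rad}(\mathfrak{h})\to \mathfrak{h}\to \bar{\mathfrak{h}}\to 0$, one concludes that $1$ must then also be an eigenvalue of $\text{Ad}(g)$ on $\mathfrak{h}$, contradicting the previous paragraph.

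Hence $\mathfrak{h}$ is solvable, and Lazard's correspondence (Theorem \ref{lazard} together with the dictionary between uniform pro-$p$ subgroups of $H$ and their $\mathbb{Z}_{p}$-Lie algebras) produces an open uniform pro-$p$ subgroup $U$ of $H$ whose $\mathbb{Z}_{p}$-Lie algebra is an open sublattice of the solvable Lie algebra $\mathfrak{h}$ and is therefore itself solvable; under the correspondence this translates into $U$ being solvable as a group, so $H$ is virtually solvable. The main obstacle is the Lie-algebra fact invoked in the middle paragraph: in characteristic zero it is standard, but when $F$ has positive characteristic one must replace the Lie-algebra argument with an algebraic-group one — take the Zariski closure $G\subseteq {\rm GL}_{n,F}$ of $H$, replace $H$ by the open finite-index subgroup $H\cap G^{0}(F)$, and apply Steinberg's centralizer-dimension bound $\dim Z_{G^{0}}(g)\ge \text{rank}(G^{0})$ on the reductive quotient of $G^{0}$ to recover the required $1$-eigenvector of $\text{Ad}(g)$ on $\mathfrak{h}$.
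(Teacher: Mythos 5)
Your approach has genuine gaps, the most serious being the positive-characteristic case. The proposition must hold for every non-Archimedean local field $F$, including $F$ of characteristic $p$: it is invoked (via Theorem \ref{doctoral}, inside the proof of Theorem \ref{Conjugacy classes and countable dense normal subgroups}) precisely also in the branch where the coefficient ring has positive characteristic. Your entire mechanism --- a Cartan-type closed-subgroup theorem, a Lie algebra $\mathfrak{h}$ attached to $H$, the constant-rank/implicit-function translation of ``$g^{H}$ open'' into ``$1-\mathrm{Ad}(g)$ surjective on $\mathfrak{h}$'', and Lazard's dictionary at the end --- exists only in residue characteristic $p$ with $\mathrm{char}(F)=0$, and even there it yields a $\mathbb{Q}_p$-analytic structure and a $\mathbb{Q}_p$-Lie algebra, not an $F$-Lie subalgebra of $\mathfrak{gl}_n(F)$ (closed subgroups such as ${\rm GL}_n(\mathbb{Q}_p)\subset {\rm GL}_n(F)$ are not $F$-analytic, so the Cartan theorem in the form you quote is false; this part is repairable by working over $\mathbb{Q}_p$ throughout). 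Over $\mathbb{F}_q((T))$ a closed subgroup of ${\rm GL}_n(F)$ carries no analytic structure at all (e.g. ${\rm SL}_n^1(\mathbb{F}_p[[T]])$ is not $p$-adic analytic, and there is no exponential or Lazard correspondence), and your proposed patch --- pass to the Zariski closure $G$ and quote Steinberg's bound $\dim Z_{G^{0}}(g)\geq \mathrm{rank}(G^{0})$ --- never uses the hypothesis: openness of $g^{H}$ in the totally disconnected group $H$ gives no information about $\mathrm{Ad}(g)$ on $\mathrm{Lie}(G)$, so no contradiction is reached. The characteristic-$p$ half of the statement is therefore not proved.

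Even in characteristic zero the pivotal middle step is not justified as written. You need that every automorphism of a nonzero semisimple Lie algebra has eigenvalue $1$, and the automorphism $\mathrm{Ad}(g)$ of $\bar{\mathfrak{h}}$ need not be inner (there is no reason $g$ lies in the identity component of the relevant algebraic group). The fact you invoke, that ``in a reductive group of positive semisimple rank the centralizer of every element contains a maximal torus'', is false --- the centralizer of a regular unipotent element of ${\rm SL}_2$ contains no nontrivial torus --- and Steinberg's dimension bound concerns elements of the connected group, i.e. inner automorphisms only. The assertion you need is true, but it requires the finer input that a semisimple automorphism is quasi-semisimple (stabilizes a Borel and a maximal torus), hence permutes the simple roots and fixes a nonzero vector of the Cartan subalgebra, after which the unipotent factor of the Jordan decomposition is handled on that fixed space; as written this is a gap, not a citation of a standard fact. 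Finally, your last step (open uniform subgroup $\Rightarrow$ finite index) tacitly assumes $H$ compact; for a discrete free subgroup of ${\rm GL}_2(\mathbb{Q}_p)$ every conjugacy class is open and $H$ is not virtually solvable, so compactness is genuinely needed (the paper's application is to images of profinite groups). This is also why the paper's own proof is so short and uniform in $\mathrm{char}(F)$: if $H$ is not virtually solvable, the topological Tits alternative \cite[Theorem 1.3]{MR2373146} gives a dense free subgroup, which must meet the open conjugacy class, while Lemma \ref{finiteorderconj} forces such an element to be torsion --- impossible in a free group except for the trivial element, which is excluded since a compact group with $\{1\}$ open is finite.
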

\begin{proof}
	Assume that $H$ is not virtually solvable. By the topological Tits alternative in \cite[Theorem 1.3]{MR2373146}, we see that $ \text{Im}(\rho) $ contains a dense free subgroup $ D $. But this contradicts Lemma \ref{finiteorderconj}. We are done.
\end{proof}

\begin{proof}[Proof of Theorem \ref{Conjugacy classes and countable dense normal subgroups}]
	By Corollary \ref{imageisfinitelygenerated}, we know that the group $\text{Im}(\rho) $ is topologically finitely generated. Suppose the condition (i) holds. Then $\text{Im}(\rho) $ contains an open conjugacy class. By Theorem \ref{tamelyvirtually solvable case}, we see that any virtually solvable continuous quotient of $\text{Im}(\rho) $ is finite. To show that $\text{Im}(\rho) $ is finite, it's enough to show that any continuous representation $f:\text{Im}(\rho)\to {\rm GL}_n(F) $ of $\text{Im}(\rho) $ has finite image by Theorem \ref{doctoral} where $F$ is a non-Archimedean local field. But such $f$ always has finite image by Proposition \ref{open conjugacy}. Therefore, $\text{Im}(\rho) $ must be finite.
	
	Assume the condition (ii) (resp. (iii)) holds. Suppose that the group $ \text{Im}(\rho) $ is infinite. By Lemma \ref{pro}, we see that $ \text{Im}(\rho) $ is a topologically finitely generated infinite profinite group which is a virtually pro-$ p $ group. By Lemma \ref{justinfinitequotient} and Theorem \ref{tamelyvirtually solvable case}, we obtain that $\text{Im}(\rho)$ admits a topologically finitely generated just-infinite quotient which is not virtually abelian. But this contradicts Theorem \ref{densenormalsubgroup}. We conclude that $\rho$ has finite image. 
	
	Assume the condition (iv) holds. Let $N $ be the normal subgroup of $\text{Im}(\rho)$ generated by the conjugacy classes $\rho(\text{Frob}_{v})$ where $v\notin S$. By Chebotarev density theorem, $N$ is dense in $\text{Im}(\rho)$. Note that there are only countably many Frobenius conjugacy classes in $G_{K,S}^{\text{tame}}$. By our assumption and the previous assertion, we see that $\rho$ has finite image.
	This completes the proof of our theorem.  
\end{proof}

\subsection{Infinite tamely ramified Galois extensions of a number field}\label{section2.4}

In \cite{MR714470}, Ihara proved the following theorem. See also Tsfasman-Vladut \cite{MR1944510}, and \cite[Proposition F]{MR2726590}.

\begin{Thm}\label{ihara}
	Let $ K $ be a number field and $ L/K $ is an infinite Galois extension of $ K $ which is tamely ramified outside a finite set $ S $ of primes of $ K $. Let $ S_{f} $ denote the set of primes of $ K $ that split completely in $ L $. Then there is a finite constant $ c(K) $ depending only on $ K $ such that
	\[ \sum_{\mathfrak{p}\in S_{f}}\frac{\log N(\mathfrak{p})}{N(\mathfrak{p})-1}\leq c(K). \]
\end{Thm}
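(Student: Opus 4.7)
The plan is to follow the classical argument of Ihara via Dedekind zeta functions and bounded root discriminants of tamely ramified towers. Write $L = \bigcup_n L_n$ as an increasing union of finite Galois sub-extensions $L_n/K$; a prime $\mathfrak{p}$ of $K$ splits completely in $L$ if and only if it splits in every $L_n$, so $S_f = \bigcap_n S_f^{(n)}$ where $S_f^{(n)}$ denotes the primes split completely in $L_n$. The strategy is to estimate, for fixed real $s>1$, the partial sum $\sum_{\mathfrak{p}\in S_f^{(n)}}\log N(\mathfrak{p})/(N(\mathfrak{p})^s-1)$ uniformly in $n$, and then let $n\to\infty$ and $s\to 1^+$.

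First I would use positivity of the Euler-product expansion of $-\zeta_{L_n}'(s)/\zeta_{L_n}(s)$. Since every prime $\mathfrak{p}\in S_f^{(n)}$ contributes $[L_n:K]$ primes $\mathfrak{P}$ of $L_n$ with $N(\mathfrak{P})=N(\mathfrak{p})$, one obtains the lower bound
\[
[L_n:K]\sum_{\mathfrak{p}\in S_f^{(n)}}\frac{\log N(\mathfrak{p})}{N(\mathfrak{p})^s-1}\ \le\ -\frac{\zeta_{L_n}'(s)}{\zeta_{L_n}(s)}.
\]
Dividing by $n_{L_n}:=[L_n:\mathbb{Q}]=n_K\cdot [L_n:K]$ replaces the combinatorial factor by $1/n_K$ and leaves us to bound the normalized logarithmic derivative.

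Next I would apply the Odlyzko--Poitou--Serre explicit formula for $\zeta_{L_n}$. Combining the functional equation with the Hadamard product, and discarding the (non-negative, for $s>1$) contribution of the non-trivial zeros yields an estimate of the form
\[
-\frac{1}{n_{L_n}}\frac{\zeta_{L_n}'(s)}{\zeta_{L_n}(s)}\ \le\ \tfrac{1}{2}\log|\Delta_{L_n}|^{1/n_{L_n}}+\Phi(s)+\frac{1}{n_{L_n}(s-1)},
\]
where $\Phi(s)$ collects Archimedean gamma-factor contributions and is bounded as $s\to 1^+$. The crucial input is then a uniform bound on the root discriminant. Because $L/K$ is tamely ramified outside $S$, Lemma \ref{discriminant} forces $v_\ell(\Delta_{L_n})\le n_{L_n}$ at every ramified rational prime $\ell$ (the term $v_\ell(e_{\mathfrak{P}})$ vanishes under tameness), so only the finite set of rational primes lying under $S\cup S_p(K)\cup\{\text{primes ramified in }K/\mathbb{Q}\}$ contribute, and $|\Delta_{L_n}|^{1/n_{L_n}}\le D$ for a constant $D=D(K,S)$ independent of $n$.

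Combining these ingredients gives, for every $s>1$ and every $n$,
\[
\sum_{\mathfrak{p}\in S_f^{(n)}}\frac{\log N(\mathfrak{p})}{N(\mathfrak{p})^s-1}\ \le\ n_K\Big(\tfrac{1}{2}\log D+\Phi(s)\Big)+\frac{n_K}{n_{L_n}(s-1)}.
\]
Letting $n\to\infty$ (so $n_{L_n}\to\infty$ and the last term vanishes) and then $s\to 1^+$, monotone convergence applied twice collapses the left-hand side to $\sum_{\mathfrak{p}\in S_f}\log N(\mathfrak{p})/(N(\mathfrak{p})-1)$, while the right-hand side tends to a finite constant $c(K)$ (absorbing the $S$-dependence into the implicit discriminant bound). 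The main obstacle is the clean form of the explicit formula and the justification of the double limit; once those analytic estimates are in place and the tame root-discriminant bound is invoked, the result follows. The tameness hypothesis enters in a single essential place, namely the bound on $|\Delta_{L_n}|^{1/n_{L_n}}$, without which the right-hand side of the key inequality would grow with $n$ and the argument would fail.
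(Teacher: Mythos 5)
Your outline is, in substance, the proof behind the references the paper cites: the paper itself gives no argument for this theorem (it points to Ihara and Tsfasman--Vladut), and the positivity/explicit-formula mechanism you describe --- each completely split $\mathfrak{p}$ contributes $[L_n:K]$ primes of equal norm to $-\zeta_{L_n}'(s)/\zeta_{L_n}(s)$, discard the zero terms in the Odlyzko--Poitou--Serre inequality, feed in a root-discriminant bound that is uniform in $n$ thanks to tameness, then let $n\to\infty$ and $s\to 1^{+}$ --- is exactly the classical route, and the skeleton is sound. Two small repairs. For the discriminant bound you should not apply Lemma \ref{discriminant} to $L_n/\mathbb{Q}$ and claim $v_\ell(e_{\mathfrak{P}})=0$: the base field $K/\mathbb{Q}$ may itself be wildly ramified, so the absolute ramification indices of $L_n$ can have nontrivial $\ell$-part. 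Instead use the tower formula and the tameness of $L_n/K$, which gives $v_{\mathfrak{P}}(\mathfrak{d}_{L_n/K})=e_{\mathfrak{P}}-1$ and hence $|\Delta_{L_n}|^{1/n_{L_n}}\le |\Delta_K|^{1/n_K}\prod_{\mathfrak{p}\in S}N(\mathfrak{p})^{1/n_K}$, uniformly in $n$. Also, since $S_f\subseteq S_f^{(n)}$ for every $n$, you may restrict the sum to $S_f$ from the start and avoid any delicacy with the shrinking sets in the double limit.

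The one genuine discrepancy is the constant. Your argument produces a bound depending on $K$ \emph{and} $S$ (through the limiting root discriminant above), not on $K$ alone, and you cannot do better: over a fixed $K$, Golod--Shafarevich constructions with prescribed complete splitting produce, for any finite set $T$ of primes, a finite set $S$ (large, depending on $T$) and an infinite extension of $K$ that is tame and unramified outside $S$ in which every prime of $T$ splits completely, so the left-hand side can be made arbitrarily large as $S$ grows. Thus the theorem as stated, with $c(K)$ depending only on $K$, is stronger than what any proof of this type can give and should be read with $c=c(K,S)$; your parenthetical ``absorbing the $S$-dependence'' is really a correction of the statement rather than a feature of your proof. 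This weakening is harmless for the way the theorem is used in the proof of Theorem \ref{topologically generated by a finite number of Frobenius conjugacy classes}, since there the auxiliary primes $\mathfrak{p}_1,\dots,\mathfrak{p}_r$ are chosen after $S$ is fixed and one only needs their sum to exceed the constant attached to the pair $(K,S)$.
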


As mentioned in Remark \ref{remark}, it's conjectured that the Galois group $ G_{K,S}^{\text{tame}} $ is topologically finitely generated but not too many results are known yet. However, we have the following result which is the main theorem in this section. 

\begin{Thm}\label{topologically generated by a finite number of Frobenius conjugacy classes}
	Let $ K $ be a number field of signature $ (r_{1},r_{2}) $, i.e. $ K $ has $ r_{1} $ real, and $ r_{2} $ pairs of complex conjugate, embeddings, and $ S $ a finite set of primes of $ K $. Then there is a finite set $T$ of primes in $K$ such that $S\cap T=\emptyset$ and the group $G_{K,S}^{\text{tame}}$ is topologically generated by the set $\bigcup_{v\in T} \text{Frob}_{v}$ where $\text{Frob}_{v}$ is the Frobenius conjugacy class of $v$ in $G_{K,S}^{\text{tame}}$. Moreover, we have the following:
	\begin{enumerate}
		\item If the profinite group $G_{K,S}^{\text{tame}}$ admits a topologically finitely generated just-infinite quotient, then there exists a $v\in T$ such that the Frobenius conjugacy class $\text{Frob}_{v}$ contains uncountably many elements.
		\item If $|S|-|S\cap S_{\mathbb{C}}|-|S\cap S_{2}(K)|\geq 2+r_{1}+r_{2}+2\sqrt{r_{1}+r_{2}}$ where $S_{\mathbb{C}}$ is the set of complex primes of $K$ and $S_{2}(K)$ is the set of primes of $K$ above $2$, then the profinite group $G_{K,S}^{\text{tame}}$ admits a topologically finitely generated just-infinite quotient. In particular, there exists a $v\in T$ such that the Frobenius conjugacy class $\text{Frob}_{v}$ contains uncountably many elements.
		\item If there are only finitely many elements in each conjugacy class $ \text{Frob}_{v} $ for any $v\in T$, then any continuous homomorphism $\rho:G_{K,S}^{\text{tame}}\to {\rm GL}_{n}(A)$ has finite image where $ A $ is a complete Noetherian local ring with finite residue field of characteristic $ p $. That is, Conjecture \ref{tameramifieddis} holds for such $G_{K,S}^{\text{tame}}$.
	\end{enumerate}
\end{Thm}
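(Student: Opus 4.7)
The plan is to first establish the main existence claim using Ihara's Theorem \ref{ihara}, then deduce (i) and (iii) from that claim together with the results of Sections \ref{section1.3} and \ref{section2.3}, and finally prove (ii) by a Golod--Shafarevich argument feeding into (i).

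For the main claim, set $G := G_{K,S}^{\text{tame}}$ and, for any finite set $T$ of primes of $K$ with $T \cap S = \emptyset$, let $H_T$ be the closed subgroup of $G$ topologically generated by $\bigcup_{v \in T} \text{Frob}_v$. Since each $\text{Frob}_v$ is a conjugacy class, $H_T$ is automatically normal; its fixed field $M_T$ inside the maximal tame extension is Galois over $K$, tamely ramified outside $S$, and every prime of $T$ splits completely in $M_T$. I would first choose a finite $T_0$ with
\[
\sum_{v \in T_0} \frac{\log N(v)}{N(v)-1} > c(K),
\]
where $c(K)$ is the constant of Theorem \ref{ihara}; such $T_0$ exists because the full series over the primes of $K$ diverges. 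Ihara's inequality then rules out $M_{T_0}/K$ being infinite, so $[M_{T_0}:K] < \infty$. If $M_{T_0} \neq K$, Chebotarev's density theorem produces a prime $w \notin S \cup T_0$ whose Frobenius is non-trivial in $\text{Gal}(M_{T_0}/K)$; adjoining $w$ to $T_0$ strictly enlarges $H_{T_0}$ and strictly shrinks $M_{T_0}$. As $[M_T:K]$ is then a strictly decreasing sequence of positive integers, the process terminates after finitely many steps at $M_T = K$, i.e.\ $H_T = G$.

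For (iii) I would apply criterion (iii) of Theorem \ref{Conjugacy classes and countable dense normal subgroups} to the set $D := \rho\!\left(\bigcup_{v \in T} \text{Frob}_v\right)$. Finiteness of each $\text{Frob}_v$ makes $D$ finite; $D$ is stable under $\text{Im}(\rho)$-conjugation because the generating set is stable under $G$-conjugation, so every $g^{\text{Im}(\rho)}$ is contained in the finite set $\rho(\text{Frob}_v)$; and the subgroup $\langle D \rangle$ is automatically normal, while continuity of $\rho$ together with the main claim makes it dense in $\text{Im}(\rho)$. For (i), suppose instead that every $\text{Frob}_v$ with $v \in T$ is countable. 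Then $\bigcup_{v \in T} \text{Frob}_v$ is countable, and Corollary \ref{countable} shows that the abstract subgroup it generates is a countable dense normal subgroup $N$ of $G$ with only countably many conjugacy classes. Any topologically finitely generated just-infinite quotient $Q$ of $G$ must fail to be virtually abelian, for otherwise Theorem \ref{tamelyvirtually solvable case} would force $Q$ to be finite, contradicting just-infiniteness; Theorem \ref{densenormalsubgroup}(i) applied to $N$ now yields the required contradiction.

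For (ii) I would construct the just-infinite quotient starting from $G_{K,S'}(2)$ with $S' := S \setminus (S_\mathbb{C} \cup S_2(K))$. The inequality
\[
|S'| \;=\; |S|-|S\cap S_\mathbb{C}|-|S\cap S_2(K)| \;\geq\; 2+r_1+r_2+2\sqrt{r_1+r_2}
\]
is precisely the Golod--Shafarevich threshold forcing $G_{K,S'}(2)$ to be infinite, via the standard lower bound on the minimal number of generators of $G_{K,S'}(2)$ combined with Shafarevich's upper bound on its relations in terms of $r_1+r_2$. Because $S' \cap S_2(K) = \emptyset$, every $2$-extension of $K$ unramified outside $S'$ is tamely ramified at $S'$, so $G_{K,S'}(2)$ is a continuous quotient of $G_{K,S'}^{\text{tame}}$, and therefore of $G_{K,S}^{\text{tame}}$; by Theorem \ref{global structure}(iii) it is topologically finitely generated, and Lemma \ref{justinfinitequotient} then supplies a topologically finitely generated just-infinite quotient. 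Part (i) finally delivers the uncountable Frobenius class. I expect the Golod--Shafarevich step to be the main technical hurdle, since matching the exact numerical bound in the statement requires care in the pro-$2$ setting, where real archimedean primes contribute to both sides of the inequality; by contrast, the iteration underlying the main claim is routine once Ihara's estimate is in hand, and (iii) is an immediate consequence of the existing machinery in Section \ref{section2.3}.
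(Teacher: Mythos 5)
Your treatment of the main generation claim and of parts (i) and (iii) is correct and follows essentially the paper's own route. The paper likewise picks primes $\mathfrak{p}_{1},\dots,\mathfrak{p}_{r}\notin S$ violating Ihara's bound, concludes from Theorem \ref{ihara} that the corresponding completely-split field $M$ is finite over $K$, and then uses Chebotarev to adjoin finitely many further Frobenius classes whose images generate $\mathrm{Gal}(M/K)$; your one-prime-at-a-time iteration, terminating because $[M_{T}:K]$ strictly decreases, is just a repackaging of that step. Part (i) is argued in the paper exactly as you argue it (Corollary \ref{countable}, non-virtual-abelianness of the just-infinite quotient via Theorem \ref{tamelyvirtually solvable case}, then Theorem \ref{densenormalsubgroup}), and for (iii) the paper feeds the countable dense normal subgroup into condition (ii) of Theorem \ref{Conjugacy classes and countable dense normal subgroups} where you use condition (iii); both verifications are valid and equivalent in substance.

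The one genuine gap is the Golod--Shafarevich input in part (ii), which you yourself flag as unresolved. The paper does not re-derive this: it observes that the hypothesis together with Theorem 10.10.1 of Neukirch--Schmidt--Wingberg forces the maximal pro-$2$ quotient $G_{K,S}^{\text{tame}}(2)$ (which is your $G_{K,S'}(2)$, by Lemma \ref{redundant}) to be infinite, and then proceeds exactly as you do: finite generation from Theorem \ref{global structure}, a just-infinite quotient from Lemma \ref{justinfinitequotient}, and then part (i). The generic estimates you allude to do not obviously reproduce the stated constant: for $p=2$ one has $-1\in K$, so the naive Shafarevich relation bound carries an extra unit, and combining it with the usual generator lower bound and the inequality $r>d^{2}/4$ for finite pro-$2$ groups yields infinitude only under a threshold of the shape $2+r_{1}+r_{2}+2\sqrt{r_{1}+r_{2}+1}$, which misses the hypothesis $2+r_{1}+r_{2}+2\sqrt{r_{1}+r_{2}}$ in borderline cases. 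So as written your argument for (ii) is incomplete at precisely the point you identified; you should either quote the sharper result (as the paper does) or carry out the refined generator/relation count in the tame pro-$2$ setting rather than appeal to the generic Golod--Shafarevich threshold.
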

\begin{Rem}
	We ask a natural question as follows: If the maximal pro-$p$ quotient $G_{K,S}^{\text{tame}}(p)$ of the group $G_{K,S}^{\text{tame}}$ is finite for every prime $p$, then is it true that $ G_{K,S}^{\text{tame}}$ is finite? Assume the answer is yes. Then the group $G_{K,S}^{\text{tame}}$ always admits a topologically finitely generated just-infinite quotient if it is infinite. 
\end{Rem}

\begin{proof}[Proof of Theorem \ref{topologically generated by a finite number of Frobenius conjugacy classes}]
	The proof of the first part of our theorem is similar to the proof of \cite[Corollary 10.11.15]{MR2392026}. If the group $G_{K,S}^{\text{tame}}$ is finite, then we are done by Chebotarev density theorem. So we assume that $G_{K,S}^{\text{tame}}$ is infinite.
	Since $S$ is a finite set and the sum of $ (\log N(\mathfrak{p}))/(N(\mathfrak{p}-1)) $ over all non-archimedean primes of $ \mathfrak{p} $ of $ K $ is divergent, we can find primes $ \mathfrak{p}_{1},\cdots,\mathfrak{p}_{r} $ such that $ \mathfrak{p}_{i}\notin S $, and 
	\[ \sum_{i=1}^{r}\frac{\log N(\mathfrak{p}_{i})}{N(\mathfrak{p}_{i})-1}>c(K),  \]
	where $ c(K) $ was defined in Theorem \ref{ihara}. Let $ M$ be the maximal tamely ramified extension of $ K $ which is unramified outside $ S $ such that all $ \mathfrak{p}_{i},~i=1,\cdots,r $, completely split. Then the extension $ M/K $ is finite by Theorem \ref{ihara}. It follows that the normal subgroup topologically generated by the decomposition groups $ G_{\mathfrak{p}_{i}}=\left\langle \text{Frob}_{\mathfrak{p}_{i}} \right\rangle  $, $ i=1,\cdots,r $, has finite index in $ G_{K,S}^{\text{tame}} $. By Chebotarev density theorem, we can find some Frobenius conjugacy classes $ \text{Frob}_{\mathfrak{p}_{r+1}},\cdots,\text{Frob}_{\mathfrak{p}_{r+m}} $ in $G_{K,S}^{\text{tame}}$ such that the set of their images in the finite quotient $\text{Gal}(M/K)$ generates the entire group. We conclude that $ G_{K,S}^{\text{tame}} $ is topologically generated by the set $\bigcup_{i=1}^{r+m}\text{Frob}_{\mathfrak{p}_{i}}$. This proves the first part of our theorem.
	
	For the second part of the theorem, if the group $G_{K,S}^{\text{tame}}$ admits a topologically finitely generated just-infinite quotient $H$, then $H$ is not virtually abelian by Theorem \ref{tamelyvirtually solvable case}. Suppose that there are only countably many elements in each conjugacy class $ \text{Frob}_{v} $ for any $v\in T$. Then the normal subgroup $N$ of $G_{K,S}^{\text{tame}}$ generated by the set $\bigcup_{v\in T} \text{Frob}_{v}$ is a countable dense subgroup of $G_{K,S}^{\text{tame}}$ by Corollary \ref{countable}. This contradicts Theorem \ref{densenormalsubgroup}. We conclude that there exists a $v\in T$ such that the Frobenius conjugacy class $\text{Frob}_{v}$ contains uncountably many elements.
	
	For the third part of the theorem, note that the maximal pro-$2$ quotient $G_{K,S}^{\text{tame}}(2)$ of the group $G_{K,S}^{\text{tame}}$ is infinite by our assumption and \cite[Theorem 10.10.1]{MR2392026}. By Theorem \ref{global structure}, we know that the pro-$2$ group $G_{K,S}^{\text{tame}}(2)$ is topologically finitely generated. It follows from Lemma \ref{justinfinitequotient} that $G_{K,S}^{\text{tame}}(2)$ admits a topologically finitely generated just-infinite quotient, and so is $G_{K,S}^{\text{tame}}$.
	
	For the last part of the theorem, let $\rho:G_{K,S}^{\text{tame}}\to {\rm GL}_{n}(A)$ be a continuous homomorphism.
	If there are only finitely many elements in each conjugacy class $ \text{Frob}_{v} $ for any $v\in T$, then the normal subgroup $N$ of $G_{K,S}^{\text{tame}}$ generated by the set $\bigcup_{v\in T} \text{Frob}_{v}$ is a countable dense subgroup of $G_{K,S}^{\text{tame}}$ by Corollary \ref{countable}. It follows that the group $\text{Im}(\rho)$ contains a countable normal dense subgroup. By Theorem \ref{Conjugacy classes and countable dense normal subgroups}, $\text{Im}(\rho)$ is finite. This completes the proof of our theorem.
\end{proof}

\section{The fundamental case of Conjecture \ref{BUFM}}\label{section3.1}

In this section, we study the fundamental case of Conjecture \ref{BUFM}. 
We start with the case of $ \text{Im}(\rho) $ being virtually solvable.

\begin{Thm}\label{virtually solvable case}
	Let $ K $ be a number field and $ S $ a finite set of primes of $ K $ with $ S\cap S_{p}=\emptyset $. Let $ A $ be a complete Noetherian local ring with finite residue field of characteristic $ p $.
	If $ \rho:G_{K,S}\to {\rm GL}_{n}(A) $ is a continuous homomorphism such that $ \text{Im}(\rho) $ is virtually solvable, then $ \rho $ has finite image. In particular, Conjecture \ref{BUFM} holds for $ n=1 $.
\end{Thm}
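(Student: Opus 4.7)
The plan is to combine three ingredients already assembled in Sections 1 and 2: (a) the image of any continuous $\rho:G_{K,S}\to {\rm GL}_{n}(A)$ is topologically finitely generated (Corollary \ref{imageisfinitelygenerated}); (b) the pro-$p$ group $G_{K,S}(p)$ is FAb whenever $S\cap S_{p}=\emptyset$ (Theorem \ref{global structure}(ii)); (c) a topologically finitely generated virtually solvable continuous quotient of a FAb profinite group is finite (Proposition \ref{solvablequotient}). Thus the whole game is to arrange that $\mathrm{Im}(\rho)$ is a continuous quotient of some $G_{K',S'}(p)$ with $S'\cap S_{p}(K')=\emptyset$; the rest is bookkeeping.

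First I would reduce to the case where $\mathrm{Im}(\rho)$ is a pro-$p$ group. By Lemma \ref{pro}(i), the subgroup $H=\mathrm{Im}(\rho)\cap {\rm GL}_{n}^{1}(A)$ is an open pro-$p$ subgroup of $\mathrm{Im}(\rho)$. Set $U=\rho^{-1}(H)$; this is open in $G_{K,S}$, so by Lemma \ref{1} it coincides with $G_{K',S'}$ for some finite extension $K'/K$, where $S'$ is the set of primes of $K'$ above those in $S$. Because $S\cap S_{p}(K)=\emptyset$, the same lemma gives $S'\cap S_{p}(K')=\emptyset$. Since $\mathrm{Im}(\rho)$ is finite over $\rho(G_{K',S'})=H$, it suffices to prove that $H$ is finite; and $H$, being a subgroup of $\mathrm{Im}(\rho)$, inherits virtual solvability.

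Now $\rho|_{G_{K',S'}}\colon G_{K',S'}\to {\rm GL}_{n}(A)$ has pro-$p$ image, so it factors through the maximal pro-$p$ quotient $G_{K',S'}(p)$ (cf. Remark \ref{trivial}(i)). By Theorem \ref{global structure}(ii) the group $G_{K',S'}(p)$ is FAb, and FAb is inherited by continuous quotients, so $H$ is a FAb profinite group. Corollary \ref{imageisfinitelygenerated} applied to $\rho|_{G_{K',S'}}$ shows $H$ is topologically finitely generated, and by hypothesis it is virtually solvable. Proposition \ref{solvablequotient} then forces $H$ to be finite, hence $\mathrm{Im}(\rho)$ is finite.

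For the final sentence, when $n=1$ the image lies in $A^{\times}$, which is abelian and therefore trivially virtually solvable, so the above applies and Conjecture \ref{BUFM} in dimension one is deduced at once. I do not anticipate a genuine obstacle here: the argument is essentially the correct routing through Lemmas \ref{pro} and \ref{1} together with Proposition \ref{solvablequotient}. The only point that requires care is to verify that the passage to $K'$ keeps us inside the hypothesis $S'\cap S_{p}(K')=\emptyset$, which is exactly what Lemma \ref{1} is set up to provide.
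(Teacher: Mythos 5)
Your proof is correct and follows essentially the same route as the paper: reduce to a pro-$p$ image via Lemma \ref{pro} and a finite base change, invoke Corollary \ref{imageisfinitelygenerated} for topological finite generation, and conclude from FAb-ness (Theorem \ref{global structure}) together with Proposition \ref{solvablequotient}. The only cosmetic difference is that you factor the restricted representation through the maximal pro-$p$ quotient $G_{K',S'}(p)$ and use part (ii) of Theorem \ref{global structure}, whereas the paper factors through $G_{K,S}^{\text{tame}}$ and quotes Theorem \ref{tamelyvirtually solvable case} (i.e.\ part (i)); since (ii) is deduced from (i), this is the same argument.
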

\begin{proof}
	By Lemma \ref{pro}, we see that $\text{Im}(\rho)$ contains an open pro-$ p $ group subgroup. Note that any subgroup of virtually solvable group is also virtually solvable. After replacing $ K $ by a finite extension, we can assume that $ \text{Im}(\rho) $ is a virtually solvable pro-$ p $ group. Since $ S\cap S_{p}=\emptyset $, we see that $ \rho $ factors through $ G_{K,S}^{\text{tame}} $ by Lemma \ref{redundant}. By Corollary \ref{imageisfinitelygenerated}, we see that $\text{Im}(\rho)$ is topologically finitely generated. Then our claim follows from Theorem \ref{tamelyvirtually solvable case}.
\end{proof}

The following proposition provides a useful criterion for verifying Conjecture \ref{BUFM}.

\begin{Prop}\label{equi}
	Let $ K $ be a number field, $ S $ a finite set of primes of $ K $ with $ S\cap S_{p}=\emptyset $ and $ F $ a non-Archimedean local field of characteristic zero or of characteristic $ p $. Let $ \rho:G_{K,S}\to {\rm GL}_{n}(F) $ be a continous representation of $ G_{K,S} $.  Then the following are equivalent:
	\begin{enumerate}
		\item $ \text{Im}(\rho) $ is finite.
		\item $ \text{Im}(\rho) $ is virtually solvable.
		\item $ \text{Im}(\rho) $ contains a dense subset $ D $ such that the roots of the characteristic polynomial $ P_{x}(t)=\det(1-t\cdot x) $ of $ x $ are roots of unity for any $x\in D$. When $ F $ has characteristic $ p $, it is equivalent to state that $ \text{Im}(\rho) $ contains a dense subset $ D $ such that the characteristic polynomial of $ x $ has coefficients in a finite subfield of $ F $ for any $ x\in D $.
		\item $ \text{Im}(\rho) $ contains an open unipotent subgroup.
		\item The semisimplification $ \rho^{\text{ss}} $ of $ \rho $ has finite image.
		\item The eigenvalues of all $ \rho(\text{Frob}_{v}),v\notin S, $ are roots of unity.
	\end{enumerate}
\end{Prop}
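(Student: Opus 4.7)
The plan is to establish a cycle of implications. The directions $(i) \Rightarrow (ii), (iii), (iv), (v), (vi)$ are all immediate. The converse $(ii) \Rightarrow (i)$ follows from Theorem \ref{virtually solvable case}: since $\text{Im}(\rho)$ is compact as a continuous image of the profinite group $G_{K,S}$, it can be conjugated into ${\rm GL}_{n}(\mathcal{O}_{F})$, and $\mathcal{O}_{F}$ is a complete Noetherian local ring with finite residue field of characteristic $p$. For $(vi) \Rightarrow (iii)$, Chebotarev density gives that $D := \{\rho(\text{Frob}_{v}) : v \notin S\}$ is dense in $\text{Im}(\rho)$. For $(v) \Rightarrow (iv)$: if $\rho^{\text{ss}}$ has finite image then $N := \ker(\rho^{\text{ss}})$ is open in $G_{K,S}$, and for $g \in N$ the relation $\rho^{\text{ss}}(g) = 1$ forces $\rho(g)$ to act trivially on each factor of a composition series, so that $\rho(g)$ is unipotent; hence $\rho(N)$ is an open unipotent subgroup of $\text{Im}(\rho)$. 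Then $(iv) \Rightarrow (ii)$ is immediate since unipotent groups are nilpotent, hence solvable.

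The principal step is $(iii) \Rightarrow (v)$. Because the semisimplification $\rho^{\text{ss}}$ has the same characteristic polynomial on every element as $\rho$, condition $(iii)$ transfers to $\rho^{\text{ss}}$; it thus suffices to show that $\rho^{\text{ss}}$ has finite image. Replacing $\rho$ by $\rho^{\text{ss}}$, every $x \in D$ is now semisimple with roots-of-unity eigenvalues, hence of finite order. After conjugation, $G := \text{Im}(\rho) \subset {\rm GL}_{n}(\mathcal{O}_{F})$ is a compact group containing the dense torsion subset $D$. The key technical point is to bound the orders of the elements of $D$ uniformly: in characteristic zero, Lazard's Theorem \ref{lazard} furnishes a torsion-free uniform pro-$p$ open subgroup $U \leq G$, so every torsion element $g \in G$ satisfies $g^{[G:U]} \in U$ and hence $g^{[G:U]} = 1$; in characteristic $p$, every root of unity in $\overline{F}$ has prime-to-$p$ order, so the elements of $D$ have prime-to-$p$ order, and prime-to-$p$ torsion in $G$ injects into the finite quotient $G/(G \cap {\rm GL}_{n}^{1}(\mathcal{O}_{F}))$, again bounding orders. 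The set $\{g \in G : g^{N} = 1\}$ (for $N$ the common bound) is closed and contains the dense subset $D$, hence equals $G$, so $G$ has exponent dividing $N$.

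To deduce that $G$ is finite: in characteristic zero the torsion-free open subgroup $U$ is now forced to be trivial, giving $G$ finite. In characteristic $p$ the identity $(1+X)^{p^{a}} = 1 + X^{p^{a}}$ forces every element of the pro-$p$ subgroup $G \cap {\rm GL}_{n}^{1}(\mathcal{O}_{F})$ (of bounded exponent) to be unipotent; hence $G$ is virtually unipotent, in particular virtually solvable, and Theorem \ref{virtually solvable case} applied to the representation $G_{K,S} \to {\rm GL}_{n}(\mathcal{O}_{F})$ yields that $G$ is finite. The main obstacle is precisely this step $(iii) \Rightarrow (v)$, which requires the passage to semisimplification together with the two separate characteristic-dependent torsion arguments; the remaining implications are formal.
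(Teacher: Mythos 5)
The implications you route through $(ii)\Rightarrow(i)$, $(v)\Rightarrow(iv)\Rightarrow(ii)$, $(vi)\Rightarrow(iii)$ and the trivial ones are fine, but your principal step $(iii)\Rightarrow(v)$ has a genuine gap at its very first move: after replacing $\rho$ by $\rho^{\text{ss}}$ you assert that ``every $x\in D$ is now semisimple with roots-of-unity eigenvalues, hence of finite order.'' Semisimplicity of the \emph{representation} $\rho^{\text{ss}}$ (a direct sum of irreducibles) does not make the individual matrices $\rho^{\text{ss}}(g)$ semisimple elements of ${\rm GL}_n(F)$: an irreducible image can perfectly well contain non-semisimple elements (e.g.\ the unipotent $\begin{pmatrix}1&1\\0&1\end{pmatrix}$ inside the irreducible standard representation of ${\rm SL}_2(\mathbb{Z}_p)$). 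In characteristic zero a matrix all of whose eigenvalues are roots of unity can have infinite order, and such quasi-unipotent non-torsion elements are exactly the difficulty this proposition is about (cf.\ the remark after Proposition \ref{tamelyramified} and Theorem \ref{notalocalproblem}). Consequently your uniform exponent bound on $D$ via Lazard, the conclusion that $\{g:g^N=1\}\supseteq D$ is dense, and hence the finiteness of the image all collapse in the characteristic-zero case; $(iii)\Rightarrow(v)$ is not established. (Your characteristic-$p$ branch is repairable — there $x^m$ unipotent does force finite order, though not prime-to-$p$ order as you claim — but the char-$0$ branch cannot be fixed along these lines.)

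The paper's proof avoids any torsion claim: from $(iii)$ and Lemma \ref{bounded} one only extracts that the roots of unity occurring have uniformly bounded order, so only finitely many characteristic polynomials occur on $D$, hence (by density and continuity) on all of $\text{Im}(\rho)$; the characteristic-polynomial map is then locally constant, so the open set where it equals $(1-t)^n$ contains an open subgroup $U$ of $G_{K,S}$ with $\rho(U)$ unipotent. This proves $(iii)\Rightarrow(iv)$ directly, and the cycle closes through $(iv)\Rightarrow(ii)\Rightarrow(i)$ exactly as you do; $(v)$ is reconnected via Brauer--Nesbitt (or your own correct $(v)\Rightarrow(iv)$ argument). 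You should replace your $(iii)\Rightarrow(v)$ step by an argument of this kind.
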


To prove Proposition \ref{equi}, we need the following fact.

\begin{Convention}
	If $F$ is a a non-Archimedean local field, then we put $F_{0}=\mathbb{Q}_{p}$ when $\text{char}(F)=0$ and $F_{0}=\mathbb{F}_{p}((T))$ when $\text{char}(F)=p$.
\end{Convention}

\begin{Lem}\label{bounded}
	Let $n,d$ be two positive integers. Let $r$ be $0$ or $p$. Then there is an integer $ m=m(n,d,r) $ depending on $n,d$ and $r$ such that the order of every torsion element in $ {\rm GL}_{n}(\mathcal{O}_{F}) $ divides $ m $ for any non-Archimedean local field $F$ of characteristic $r$ with degree (over $F_{0}$) less than or equal to $d$.
\end{Lem}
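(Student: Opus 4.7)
The plan is to bound the order of any torsion element $g \in {\rm GL}_n(\mathcal{O}_F)$ by writing it as $p^a m'$ with $\gcd(m',p)=1$ and bounding the two factors separately, uniformly in $F$. The essential uniformity comes from the fact that $[\kappa_F:\mathbb{F}_p]$ divides $[F:F_0]\leq d$, so the residue field satisfies $|\kappa_F|\leq p^d$, and in particular $|{\rm GL}_n(\kappa_F)|$ is bounded by $|{\rm GL}_n(\mathbb{F}_{p^d})|$, a quantity depending only on $n$, $d$ and the fixed prime $p$.

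To bound $m'$, I would pass to the reduction ${\rm GL}_n(\mathcal{O}_F)\twoheadrightarrow {\rm GL}_n(\kappa_F)$, whose kernel ${\rm GL}_n^1(\mathcal{O}_F)$ is a pro-$p$ group by Lemma \ref{pro}(i). A non-trivial element of a pro-$p$ group cannot have order prime to $p$, so the element $g^{p^a}$, which has order exactly $m'$, injects into ${\rm GL}_n(\kappa_F)$. Hence $m'\mid |{\rm GL}_n(\mathbb{F}_{p^d})|$.

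To bound $p^a$, the argument splits on $r$. When $r=0$, any finite-order element of ${\rm GL}_n$ in characteristic zero is semisimple, since its minimal polynomial divides the separable polynomial $x^m-1$. Thus $g$ is diagonalizable over $\overline{F}$, the $p$-part of its order equals the maximum $p$-part of the orders of its eigenvalues, and some eigenvalue $\lambda$ has $p$-part $p^a$; a suitable power of $\lambda$ is then a primitive $p^a$-th root of unity lying in $F(\lambda)$, an extension of $F$ of degree at most $n$. This forces $p^{a-1}(p-1)=[\mathbb{Q}_p(\zeta_{p^a}):\mathbb{Q}_p]\leq [F(\lambda):\mathbb{Q}_p]\leq nd$ for $a\geq 1$, bounding $a$. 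When $r=p$, set $y:=g^{m'}$, which has order $p^a$; the Frobenius identity $(y+(-1))^{p^a}=y^{p^a}+(-1)^{p^a}$ (valid in characteristic $p$ because $y$ and $-1$ commute in $M_n(\mathcal{O}_F)$) gives $(y-1)^{p^a}=0$. So $y-1\in M_n(\mathcal{O}_F)$ is nilpotent with nilpotency index at most $n$ by Cayley-Hamilton, which shows that $p^a$ is at most the smallest power of $p$ that is $\geq n$, a bound depending only on $n$ and $p$.

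There is no serious technical obstacle. The point requiring a little care is ensuring that both estimates are genuinely uniform in $F$: in the $r=0$ case one must verify that $F(\zeta_{p^a})$ really sits inside $F(\lambda)$ so that the bound $[F(\lambda):F]\leq n$ applies, and in the $r=p$ case one must carefully invoke the Frobenius identity only on commuting matrices. Combining the two bounds then gives the desired integer $m(n,d,r)$.
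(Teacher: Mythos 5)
Your argument is correct, and in characteristic $p$ it is essentially the paper's: there the prime-to-$p$ part is likewise handled by reducing modulo the maximal ideal (the kernel ${\rm GL}_{n}^{1}(\mathcal{O}_{F})$ is pro-$p$ by Lemma \ref{pro}, so that part of the order is read off in ${\rm GL}_{n}(\kappa_{F})$), while the paper simply asserts that the order of $g^{p^{n}}$ is prime to $p$ -- your Frobenius plus Cayley--Hamilton computation is precisely the justification of that assertion. In characteristic $0$ your route genuinely differs: the paper invokes Krasner's lemma to reduce to the finitely many extensions $F/\mathbb{Q}_{p}$ of degree at most $d$, and for each such $F$ passes to the compositum $F'$ of all degree-$n$ extensions of $F$ and uses that a $p$-adic field contains only finitely many roots of unity; you instead obtain uniformity directly, bounding the $p$-part via $[\mathbb{Q}_{p}(\zeta_{p^{a}}):\mathbb{Q}_{p}]=p^{a-1}(p-1)\leq [F(\lambda):\mathbb{Q}_{p}]\leq nd$ and the prime-to-$p$ part via the residue field. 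Your version is more explicit and quantitative and avoids Krasner's lemma; the paper's is shorter but purely qualitative. One cosmetic point: $m'$ divides $|{\rm GL}_{n}(\kappa_{F})|=|{\rm GL}_{n}(\mathbb{F}_{p^{f}})|$ with $f\leq d$, which is bounded by, but need not divide, $|{\rm GL}_{n}(\mathbb{F}_{p^{d}})|$ (already for $n=1$, $p^{3}-1\nmid p^{5}-1$); since only finitely many residue fields can occur, or since any uniform bound on orders yields a common multiple, taking $m(n,d,r)$ to be a suitable least common multiple -- as your closing sentence intends -- repairs this without any change to the argument.
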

\begin{proof}
	At first, we consider the case of $r=0$. Using Krasner's lemma, we know that there are only finitely many degree $k$ extensions of $\mathbb{Q}_{p}$ (in a fixed algebraic closure of $\mathbb{Q}_{p}$) for any integer $k$, cf. \cite[Section 1.6, Chapter 3]{MR1760253}. Thus, it suffices to show that the order of torsion elements in ${\rm GL}_{n}(\mathcal{O}_{F})$ is bounded by a constant if $F$ is a finite extension of $\mathbb{Q}_{p}$. Indeed, let $F'$ be a finite extension of $F$ which is the compositum of all degree $n$ extensions of $F$. Then any torsion element in ${\rm GL}_{n}(\mathcal{O}_{F})$ is diagonalizable over $F'$. Since there are only finitely many roots of unity in $F'$, we see that the order of torsion elements in ${\rm GL}_{n}(\mathcal{O}_{F})$ is bounded by a constant. 
	
	Secondly, we consider the case of $r=p$. Let $F$ be a finite extension of $\mathbb{F}_{p}((T))$ of degree $d$. Then $\mathcal{O}_{F}=\mathbb{F}_{q}[[T]]$ where $q=p^{d}$. If $g\in {\rm GL}_{n}(\mathcal{O}_{F})$ is an element of finite order, then the order of $x^{p^{n}}$ is prime to $p$. By Lemma \ref{pro}, the group ${\rm GL}_{n}^{1}(\mathcal{O}_{F})$ is a pro-$p$ group, and hence the order of $x^{p^{n}}$ is the same as the order of $\eta(x^{p^{n}})$ in ${\rm GL}_{n}(\mathbb{F}_{q})$ where $\eta:{\rm GL}_{n}(\mathcal{O}_{F})\to {\rm GL}_{n}(\mathbb{F}_{q})$ is the natural surjection. By \cite[Corollary 2]{MR2181417}, the order of $\eta(x^{p^{n}}) $ is less than or equal to $q^{n}-1$, and so is $x^{p^{n}}$. It follows that the order of $x$ is less than or equal to $p^{n}(q^{n}-1)=p^{n}(p^{nd}-1)$. Since $p^{n}(p^{nd}-1)\geq p^{n}(p^{nk}-1)$ for any positive integer $k\leq d$, the order of torsion elements in ${\rm GL}_{n}(\mathcal{O}_{F})$ is bounded by $p^{n}(p^{nd}-1)$ when $F$ is a finite extension of $\mathbb{F}_{p}[[T]]$ of degree less than or equal to $d$. We are done.
\end{proof}

\begin{proof}[Proof of Proposition \ref{equi}]
	Firstly, when $ F $ has characteristic $ p $, one has $ F=\mathbb{F}((T)) $ for some finite field $ \mathbb{F} $ of characteristic $ p $. Then an element in $ F $ is algebraic (over the prime subfield of $ F $) if and only if it lies in the finite subfield $ \mathbb{F} $ of $ F $. The equivalence in the statement $ 3 $ follows. Now, We prove our claim as follows:
	\begin{itemize}
		\item (ii) $ \implies $ (i): It is Theorem \ref{virtually solvable case}.
		\item (iv) $\implies $ (ii): It follows from the fact that any unipotent group is solvable.
		\item (iii) $ \implies $ (iv): By Lemma \ref{bounded}, we know that these roots of unity must have bounded order uniformly as $x $ varies over $ D $. 
		Since the degree of the characteristic polynomial $ P_{x}(t) $ for some $ x\in D $ is fixed and independent of $x\in D  $, there are only finitely many possibilities for the characteristic polynomial $ P_{x}(t) $ as $ x $ varies over $ D $. Since $ D $ is dense in $ \text{Im}(\rho) $, we see that there are also only finitely many possibilities for the characteristic polynomial of any element in $ \text{Im}(\rho) $, and the characteristic polynomial map $ G_{K,S}\to F[t] $ sending any element of $ G_{K,S} $ to the characteristic polynomial of the image of the element is locally constant. It follows that the set $ \{g\in G_{K,S}~|~\det(1-t\rho(g))=(1-t)^{n}\} $ is open in $ G_{K,S} $, and hence contains an open subgroup $ U\subset G_{K,S} $ such that the image $ \rho(U) $ of $ U $ under the representation $ \rho $ is unipotent.
		\item (i) $\implies $ (iii): If $ \text{Im}(\rho) $ is finite, then there exists a positive integer $ m $ such that $ x $ satisfies the polynomial $ t^{m}-1=0 $ for any $ x\in \text{Im}(\rho) $. Hence the minimal polynomial of $ x $ must divide this polynomial for any $ x\in \text{Im}(\rho) $. Thus every eigenvalue of $x $ must be a root of unity for any $ x\in \text{Im}(\rho)$. Put $ D=\text{Im}(\rho)$.
		\item (v) $\implies $ (iii): It follows from the Brauer–Nesbitt theorem (cf. \cite[Section 30.16]{MR0144979})and the argument in (i) $\implies $ (iii).
		\item (vi) $ \implies $ (iii): It follows from Chebotarev density theorem.
		\item (i) $ \implies $ (v): It is trivial.
		\item (i)$ \implies$ (vi): Using the same argument in (i) $ \implies $ (iii), we see it is true.
	\end{itemize}
\end{proof}

The following corollary is known.
\begin{Cor}\label{dimlessthan3}
	Let $ K $ be a number field and $ S $ a finite set of primes of $ K $ with $ S\cap S_{p}=\emptyset $. Let $ m $ be $ 1 $ or $ 2 $. Then there is no $ p $-adic analytic quotient of $ G_{K,S} $ of dimension $ m $.
\end{Cor}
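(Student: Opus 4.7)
The plan is to argue by contradiction and combine Theorem \ref{virtually solvable case} (the virtually solvable case of Conjecture \ref{BUFM}) with the structural description of low-dimensional $p$-adic analytic groups in Example \ref{dimensionlessthan3}. The key observation is that any $p$-adic analytic group of dimension $1$ or $2$ is virtually solvable, while Theorem \ref{virtually solvable case} forces virtually solvable continuous quotients of $G_{K,S}$ (for $S\cap S_{p}=\emptyset$) valued in $\mathrm{GL}_{n}(\mathbb{Z}_{p})$ to be finite, hence zero-dimensional.

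Concretely, suppose for a contradiction that there exists a continuous surjection $\pi:G_{K,S}\twoheadrightarrow H$ onto a $p$-adic analytic group $H$ with $\dim(H)=m\in\{1,2\}$. By definition of $p$-adic analytic group (Section \ref{padicanalyticgroup}), we may embed $H$ as a closed subgroup of $\mathrm{GL}_{d}(\mathbb{Z}_{p})$ for some $d$, so the composite
\[
\rho:G_{K,S}\xrightarrow{\pi} H\hookrightarrow \mathrm{GL}_{d}(\mathbb{Z}_{p})
\]
is a continuous representation with $\mathrm{Im}(\rho)=H$. By Example \ref{dimensionlessthan3}(ii)--(iii), $H$ contains an open subgroup which is either procyclic (isomorphic to $\mathbb{Z}_{p}$, hence abelian) or meta-procyclic (an extension of a procyclic group by a procyclic normal subgroup, hence solvable of derived length at most $2$). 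In either case $H$ is virtually solvable, so $\mathrm{Im}(\rho)$ is virtually solvable.

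Now apply Theorem \ref{virtually solvable case}, whose hypotheses $S\cap S_{p}=\emptyset$ and $A=\mathbb{Z}_{p}$ (a complete Noetherian local ring with residue field $\mathbb{F}_{p}$) are both satisfied: we conclude that $\rho$ has finite image, i.e.\ $H$ is finite. By Example \ref{dimensionlessthan3}(i), a finite $p$-adic analytic group has dimension $0$, contradicting $m\in\{1,2\}$. Hence no such quotient $H$ exists.

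There is no real obstacle here: once one knows the structure of $p$-adic analytic groups of dimension at most $2$ (which is handed to us by Example \ref{dimensionlessthan3}) the corollary is a direct pullback of the virtually solvable case already established. The only point to be mindful of is to invoke the embedding $H\hookrightarrow \mathrm{GL}_{d}(\mathbb{Z}_{p})$ so that Theorem \ref{virtually solvable case} applies to the composite map $\rho$ rather than to $\pi$ directly; since Theorem \ref{virtually solvable case} is stated for homomorphisms into $\mathrm{GL}_{n}(A)$ with $A$ a pro-$p$ ring, this reformulation is essential.
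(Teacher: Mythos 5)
Your proof is correct and takes essentially the same approach as the paper: both identify low-dimensional $p$-adic analytic groups as virtually solvable via Example \ref{dimensionlessthan3} and then invoke the finiteness of virtually solvable quotients. The only cosmetic difference is that you cite Theorem \ref{virtually solvable case} directly while the paper cites Proposition \ref{equi}, whose implication (ii) $\Rightarrow$ (i) is itself proved by Theorem \ref{virtually solvable case}, so the two routes coincide.
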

\begin{proof}
	By Example \ref{dimensionlessthan3}, we know that any $ p $-adic analytic group of dimension $ m $ is an infinite virtually solvable group. Then our claim follows from Proposition \ref{equi}.
\end{proof}

If Conjecture \ref{BUFM} fails, then we have the following:
\begin{Cor}
	Let $ K $ be a number field and $ S $ a finite set of primes of $ K $ such that $S\cap S_{p}=\emptyset $. Let $ F $ be a non-Archimedean local field of characteristic $ 0 $ or $ p $, and $ \rho:G_{K,S}\to {\rm GL}_{n}(F) $ be a continuous homomorphism. Suppose that the image $ \text{Im}(\rho) $ of $ \rho $ is infinite. Then $ \text{Im}(\rho) $ contains a dense free subgroup $ D $. 
\end{Cor}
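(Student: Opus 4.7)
The plan is to combine Proposition \ref{equi} with the topological Tits alternative of Breuillard--Gelander. The hypothesis that $\text{Im}(\rho)$ is infinite should force it to be very far from solvable, and then a dichotomy result in the spirit of the Tits alternative produces the dense free subgroup.

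First I would observe that $\text{Im}(\rho)$ is a closed subgroup of ${\rm GL}_{n}(F)$ (being the continuous image of the compact group $G_{K,S}$). By Proposition \ref{equi}, the finiteness, virtual solvability, and existence of an open unipotent subgroup of $\text{Im}(\rho)$ are all equivalent; in particular, since $\text{Im}(\rho)$ is assumed infinite, it cannot be virtually solvable.

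Next I would invoke the topological Tits alternative cited in the paper as \cite[Theorem 1.3]{MR2373146} (which was already used in the proof of Proposition \ref{open conjugacy}). For a closed subgroup $H$ of ${\rm GL}_{n}(F)$ over a non-archimedean local field $F$, this alternative asserts that either $H$ is virtually solvable or $H$ contains a dense free subgroup. Applying this to $H=\text{Im}(\rho)$ and using the previous step, we conclude that $\text{Im}(\rho)$ contains a dense free subgroup $D$, which is the desired conclusion.

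The argument contains no real obstacle beyond correctly matching the hypotheses of \cite[Theorem 1.3]{MR2373146}; the only point requiring a brief check is that the cited theorem applies in both residue characteristic settings (char $F = 0$ and char $F = p$), which it does since it is stated for arbitrary non-archimedean local fields. Thus the proof reduces to stringing together Proposition \ref{equi} and this alternative, and no further calculation is needed.
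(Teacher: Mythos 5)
Your proof is correct and follows essentially the same route as the paper's: apply Proposition \ref{equi} to upgrade ``infinite image'' to ``not virtually solvable,'' then invoke the Breuillard--Gelander topological Tits alternative \cite[Theorem 1.3]{MR2373146} to produce the dense free subgroup. The paper's proof additionally records (via Corollary \ref{imageisfinitelygenerated}) that $\text{Im}(\rho)$ is topologically finitely generated, but that observation is not needed for the stated conclusion, so your argument as written is complete.
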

\begin{proof}
	Since $ \text{Im}(\rho) $ is infinite, we see that $ \text{Im}(\rho) $ is not virtually solvable by Proposition \ref{equi}. By the topological Tits alternative in \cite[Theorem 1.3]{MR2373146}, we obtain that $ \text{Im}(\rho) $ contains a dense free subgroup $ D $. By Corollary \ref{imageisfinitelygenerated}, we see that $ \text{Im}(\rho) $ is topologcially finitely generated. This completes the proof.
\end{proof}

Let $ F $ be a field. Recall that a \emph{quasi-unipotent} matrix $ A $ in $ {\rm GL}_{n}(F) $ is a matrix such that some power of $ A $ is unipotent matrix. Note that all the eigenvalues of a quasi-unipotent matrix are roots of unity.
\begin{Cor}
	Let $ K $ be a number field such that it has no non-trivial unramified extensions, $ S $ a finite set of primes of $ K $ with $ S\cap S_{p}=\emptyset $ and $ F $ a non-Archimedean local field of characteristic zero or of characteristic $ p $. If $ \rho:G_{K,S}\to {\rm GL}_{n}(F) $ is a continuous representation such that the quasi-unipotent elements in the image of $ \rho $ form a subgroup, then $ \rho $ has finite image.
\end{Cor}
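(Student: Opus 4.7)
The plan is to apply Proposition \ref{equi}(iii) by producing a dense subgroup of $\text{Im}(\rho)$ whose elements are all quasi-unipotent. Let $H$ denote the subgroup of quasi-unipotent elements of $\text{Im}(\rho)$; it is automatically normal, since conjugation preserves eigenvalues and hence quasi-unipotency. Set $N := \rho^{-1}(H) \subseteq G_{K,S}$, and let $L$ be the fixed field of the topological closure $\overline{N}$. Then $L \subseteq K_{S}$, so $L/K$ is unramified outside $S$ by construction, and it is Galois because $\overline{N}$ is a closed normal subgroup of $G_{K,S}$.

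The crux is to show $L/K$ is in fact unramified at the primes of $S$ as well, i.e., that $\rho(I_{v}) \subseteq H$ for each $v \in S$. For non-archimedean $v$, Theorem \ref{local}(ii) supplies an open unipotent subgroup $U$ of the compact group $\rho(I_{v})$; passing to its normal core one obtains a normal open unipotent subgroup $V$ of $\rho(I_{v})$ of finite index, say $m$. Then every $g \in \rho(I_{v})$ satisfies $g^{m} \in V$, so $g^{m}$ is unipotent and $g$ is quasi-unipotent. Should $S$ contain an archimedean place $v$ that ramifies, the inertia $I_{v}$ is generated by a complex conjugation, whose image has order dividing two and eigenvalues $\pm 1$, so it already lies in $H$. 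Hence $I_{v} \subseteq N \subseteq \overline{N}$ for every $v \in S$.

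Combining these observations, $L/K$ is unramified everywhere, so the hypothesis on $K$ forces $L = K$, i.e., $\overline{N} = G_{K,S}$. Consequently $N$ is dense in $G_{K,S}$, and $H = \rho(N)$ is a dense subgroup of $\text{Im}(\rho)$ all of whose elements have roots of unity as eigenvalues. Proposition \ref{equi}(iii), applied with the dense set $D = H$, then yields that $\text{Im}(\rho)$ is finite. The only nontrivial ingredient is the local analysis above, and it is already packaged in Theorem \ref{local}(ii); once that is in hand, the rest is a formal ramification and density argument.
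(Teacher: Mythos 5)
Your argument is essentially the paper's: in both, the key point is that the inertia images $\rho(I_{v})$, $v\in S$, consist of quasi-unipotent elements, so the hypothesis makes the quasi-unipotent elements a dense subgroup of $\text{Im}(\rho)$, and Proposition \ref{equi}(iii) finishes. Your fixed-field construction of $L$ is just an expanded proof of the fact the paper uses directly, namely that $G_{K,S}$ is topologically generated by the inertia subgroups at $S$ when $K$ has no nontrivial unramified extension, and your normal-core step correctly supplies the detail (glossed over in the paper) that an open unipotent subgroup of $\rho(I_{v})$ forces every element of $\rho(I_{v})$ to be quasi-unipotent. The one omission is the case $\text{char}(F)=p$, which the statement explicitly allows: Theorem \ref{local}(ii) is stated for $\overline{\mathbb{Q}}_{p}$-valued representations, so your local analysis only covers $\text{char}(F)=0$. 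For $\text{char}(F)=p$ invoke Theorem \ref{local}(iii) instead, which gives that $\rho(I_{v})$ is finite, hence every element of it has finite order and is trivially quasi-unipotent; with that substitution the rest of your argument goes through unchanged.
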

\begin{proof}
	Since $K$ has no non-trivial unramified extensions, the group $ G_{K,S} $ is topologically generated by inertia groups $ I_{\mathfrak{p}} $ at primes $ \mathfrak{p} $ at $ S $ and all elements in $ \rho(I_{\mathfrak{p}}) $ are quasi-unipotent by Theorem \ref{local}. By our assumption, this implies that $ \text{Im}(\rho) $ contains a dense subgroup consisting of quasi-unipotent elements. Thus, our claim follows from Proposition \ref{equi}.
\end{proof}

\section{Reductions of Conjecture \ref{BUFM}}\label{section3.2}

\subsection{Statements}
In this section, we study the reductions of Conjecture \ref{BUFM}, and we will explain how to divide Conjecture \ref{BUFM} into several parts. For example, we show that, to verify Conjecture \ref{BUFM}, it suffices to consider the continuous representations of global Galois groups over non-Archimedean local fields. 
Our first main result of this thesis is the following:

\begin{Thm}\label{reduction}
	Let $ K $ be a number field and $ S $ a finite set of primes of $ K $ with $ S\cap S_{p}=\emptyset $. Let $ A $ be a complete Noetherian local ring with finite residue field of characteristic $ p $. Let $ n\geq 1$ be a fixed positive integer. Assume the following:
	\begin{enumerate}
		\item When $\text{char}(A)=0$, suppose that for any non-Archimedean local field $F$ with residue field of characteristic $p$, any continuous representation $\rho:G_{K,S}\to {\rm GL}_{n}(F)$ has finite image.
		\item When $\text{char}(A)>0$, suppose that for any non-Archimedean local field $F$ of characteristic $p$, any continuous representation $\rho:G_{K,S}\to {\rm GL}_{n}(F)$ has finite image.
	\end{enumerate}
	Then any continuous homomorphism $ \rho:G_{K,S}\to {\rm GL}_{n}(A) $ has finite image. 
\end{Thm}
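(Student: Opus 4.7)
The proof plan follows the three-step structure announced immediately below the theorem statement in the excerpt.

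In Step 1, I would reduce to the case that $A$ is reduced. Let $N=\sqrt{0}\subset A$ be the nilradical. Assuming the theorem in the reduced case, the composition $\overline{\rho}\colon G_{K,S}\to {\rm GL}_{n}(A/N)$ has finite image. After replacing $K$ by the finite Galois extension $K'$ of $K$ fixed by $\ker\overline{\rho}$ (and $S$ by the set $S'$ of primes of $K'$ above $S$, which still satisfies $S'\cap S_{p}(K')=\emptyset$), we may assume $\overline{\rho}|_{G_{K',S'}}$ is trivial, so that $\rho|_{G_{K',S'}}$ lands inside the kernel $1+M_{n}(N)\subset {\rm GL}_{n}^{1}(A)$. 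This target is pro-$p$ by Lemma \ref{pro} and nilpotent because $N$ is; hence $\rho|_{G_{K',S'}}$ factors through the maximal pro-$p$ quotient $G_{K',S'}(p)$, which is FAb and topologically finitely generated by Theorem \ref{global structure}. Proposition \ref{solvablequotient} then forces $\text{Im}(\rho|_{G_{K',S'}})$ to be finite, whence $\text{Im}(\rho)$ is finite as well.

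In Step 2, I would reduce to the case that $A$ is a complete Noetherian local domain. When $A$ is reduced, its minimal primes $\mathfrak{p}_{1},\ldots,\mathfrak{p}_{r}$ are finite in number (by the Noetherian hypothesis) and satisfy $\bigcap_{i}\mathfrak{p}_{i}=0$. Each $A/\mathfrak{p}_{i}$ is again a complete Noetherian local domain with finite residue field of characteristic $p$, so by the domain case each $\rho_{i}\colon G_{K,S}\to {\rm GL}_{n}(A/\mathfrak{p}_{i})$ has finite image. The injection $A\hookrightarrow \prod_{i}A/\mathfrak{p}_{i}$ then embeds $\text{Im}(\rho)$ inside the finite group $\prod_{i}\text{Im}(\rho_{i})$.

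In Step 3, $A$ is a complete Noetherian local domain with finite residue field $k$ of characteristic $p$; I would proceed by induction on $d=\dim A$. The base $d=0$ is immediate, as $A$ is then a finite Artinian local ring. For $d=1$, the integral closure $\widetilde{A}$ of $A$ in $\text{Frac}(A)$ is a finite $A$-module (complete Noetherian local rings are excellent, hence Nagata) and is a normal $1$-dimensional semilocal Noetherian ring that is complete and connected; hence $\widetilde{A}$ is local, i.e.\ a single complete DVR $\mathcal{O}_{E}$ whose fraction field $E$ is a non-Archimedean local field of the same characteristic as $A$ with finite residue field, and the hypothesis applied to $G_{K,S}\to {\rm GL}_{n}(A)\hookrightarrow {\rm GL}_{n}(E)$ finishes this case. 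The hard part is the step $d\geq 2$: for each prime $\mathfrak{q}$ with $\dim A/\mathfrak{q}<d$ the induction gives that $\rho_{\mathfrak{q}}\colon G_{K,S}\to {\rm GL}_{n}(A/\mathfrak{q})$ has finite image with open kernel $K_{\mathfrak{q}}$, but because no finite collection of nonzero primes of a Noetherian domain has zero intersection, the $K_{\mathfrak{q}}$'s cannot be pieced together into an open subgroup by pure topology. Here the commutative-algebra input alluded to in the introduction must do the real work; I expect the combination of Cohen's structure theorem, the excellent/Nagata property of $A$, and the topological finite generation of $\text{Im}(\rho)$ from Corollary \ref{imageisfinitelygenerated}, perhaps together with the characteristic-polynomial criterion of Proposition \ref{equi}, to control the eigenvalues of $\rho(\text{Frob}_{v})$ across all DVR-valued points of $A$ and conclude finiteness via Chebotarev density.
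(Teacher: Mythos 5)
Your Steps 1 and 2 are sound and agree in substance with the paper (the paper phrases the nilradical step via the nilpotency of $\ker({\rm GL}_{n}(A)\to{\rm GL}_{n}(A^{\text{red}}))$ and the hypothesis that virtually solvable quotients are finite, rather than by first passing to $K'$, but this is only a rearrangement), and your treatment of dimension $\leq 1$ via normalization is correct. The genuine gap is exactly where you admit it: the domain case in dimension $\geq 2$, which is the heart of the theorem, and the route you gesture at (controlling eigenvalues of $\rho(\text{Frob}_{v})$ at DVR-valued points and invoking Chebotarev together with Proposition \ref{equi}) does not work as stated, because Proposition \ref{equi} is a statement about representations over a single non-Archimedean local field and its proof leans on Theorem \ref{virtually solvable case} and root-of-unity bounds in that field; it gives no handle on a representation valued in a higher-dimensional ring $A$.

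What the paper actually does, and what is missing from your proposal, is the following combination. First, Proposition \ref{propdomainisdeterminendby}: writing $A$ as a finite integral extension of $A_{0}[[T_{1},\dots,T_{r-1}]]$ by Cohen's structure theorem, one evaluates the variables at every tuple in $\mathfrak{m}_{A_{0}}^{(r-1)}$ and lifts the resulting primes to $A$ by going-up; this produces a family of surjections $A\twoheadrightarrow \mathcal{O}_{F}$ onto rings of integers of local fields $F$ of degree bounded by the number of module generators of $A$ over $A_{0}[[T_{1},\dots,T_{r-1}]]$, whose kernels have trivial intersection. Second, Lemma \ref{bounded}: for fixed $n$ and this fixed degree bound there is a single integer $N$ annihilating all torsion in every ${\rm GL}_{n}(\mathcal{O}_{F})$ in the family. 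By the hypothesis of the theorem, each element $g\in\text{Im}(\rho)$ maps to a torsion element in each ${\rm GL}_{n}(\mathcal{O}_{F})$, hence $g^{N}-1$ dies in every quotient and so $g^{N}=1$; thus $\text{Im}(\rho)$ is a torsion group. Third, and crucially, finiteness is then extracted not from Chebotarev but from Zel'manov's theorem that a topologically finitely generated torsion profinite group is finite, applied using Corollary \ref{imageisfinitelygenerated}. None of these three ingredients (the separating family of bounded-degree specializations, the uniform torsion bound, and Zel'manov's theorem) appears in your sketch, and without something playing their role the induction on $\dim A$ cannot be closed, precisely for the reason you yourself identify: finitely many nonzero primes of a domain never intersect in zero, so no finite collection of lower-dimensional quotients can detect finiteness of the image.
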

\begin{Rem}
	\begin{enumerate}
		\item By definition, the characteristic $\text{char}(A)$ of $A$ is $0$ or $p^r$ for some positive integer $r$.
		\item When the reduction $ \overline{\rho} $ of $\rho $ is absolutely irreducible, Allen and Calegari proved a similar result in \cite[Proposition 10]{MR3294389}. 
		\item The theorem also holds for the profinite group $G_{K,S}^{\text{tame}}$ where $S$ is any finite set of primes of $K$.
	\end{enumerate}
\end{Rem}

A special case of Conjecture \ref{UFM} is the following:

\begin{conj}\label{theimageofineritagroupisfinite}
	Let $ K $ be a number field and $ S $ a finite set of primes of $ K $ such that $ S\cap S_{p}=\emptyset $. Suppose that $ \rho:G_{K,S}\to {\rm GL}_{n}(\mathbb{Q}_{p}) $ is a continuous representation. Then the image $ \rho(I_{\mathfrak{q}}) $ of $ I_{\mathfrak{q}} $ under $ \rho $ is finite for any $ \mathfrak{q}\in S $ where $I_{\mathfrak{q}}  $ is the inertia subgroup of $ G_{K,S} $ at $ \mathfrak{q} $ which is defined up to conjugacy.
\end{conj}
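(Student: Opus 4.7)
The plan is to attack Conjecture \ref{theimageofineritagroupisfinite} by first invoking the local structure result Theorem \ref{local}(ii), which already produces an open unipotent subgroup inside $\rho(I_{\mathfrak{q}})$, and then by showing that the unipotent part must be trivial. By Lemma \ref{1}, after replacing $K$ by a suitable finite extension we may assume that $\rho(I_{\mathfrak{q}})$ itself is contained in a unipotent subgroup of ${\rm GL}_{n}(\mathbb{Q}_{p})$, so the task reduces to showing $\rho(I_{\mathfrak{q}})=\{1\}$.

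Assume for contradiction that $\rho(I_{\mathfrak{q}})$ is a non-trivial pro-$p$ unipotent group. Choose a generator $\tau$ of tame inertia at $\mathfrak{q}$ and a Frobenius lift $\sigma$, so that $\sigma\tau\sigma^{-1}=\tau^{q}$ by Theorem \ref{local}(i), with $q=N(\mathfrak{q})$. Writing $N:=\log\rho(\tau)\in \mathfrak{gl}_{n}(\mathbb{Q}_{p})$, which is a non-zero nilpotent matrix, the tame relation translates into ${\rm Ad}(\rho(\sigma))\cdot N = qN$, so $\rho(\sigma)$ admits two eigenvalues $\lambda,\mu$ with $\lambda/\mu=q$. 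One would then seek to push this local constraint to a global contradiction, for instance by using Chebotarev to identify $\rho(\sigma)$ with Frobenius elements at primes outside $S$, and arguing that such an eigenvalue ratio is incompatible either with the boundedness of $\rho(\sigma)$ inside ${\rm GL}_{n}(\mathcal{O}_{L})$ for some finite extension $L/\mathbb{Q}_{p}$, or with the trace/characteristic-polynomial constraints imposed by the global structure of $\rho$ (for example, via Proposition \ref{equi} applied to $\rho$ on a large open subgroup).

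The main obstacle is precisely this last step: converting the local eigenvalue relation $\lambda/\mu=q$ into a genuine global contradiction is essentially the heart of the Unramified Fontaine--Mazur Conjecture \ref{UFM}, and so Conjecture \ref{theimageofineritagroupisfinite} appears to be of comparable depth. In particular, Conjecture \ref{UFM} trivially implies Conjecture \ref{theimageofineritagroupisfinite}, since a finite global image restricts to a finite image on every subgroup. Conversely, a proof of Conjecture \ref{theimageofineritagroupisfinite} would reduce Conjecture \ref{UFM} to Conjecture \ref{WUFM}: if every $\rho(I_{\mathfrak{q}})$ is finite, then, passing to the finite extension $K'/K$ cut out by $\bigcap_{\mathfrak{q}\in S}\ker(\rho|_{I_{\mathfrak{q}}})$, the restriction of $\rho$ to $G_{K'}$ factors through $G_{K',\emptyset}$, and WUFM finishes. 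One therefore expects that the same tools needed for UFM---Chebotarev density, combined with automorphy lifting or $p$-adic Hodge-theoretic input ruling out non-trivial inertia monodromy at primes of $S$---will be necessary to close the argument sketched in the previous paragraph.
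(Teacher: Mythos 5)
There is no proof to compare against: the statement you were asked about is stated in the paper as a \emph{conjecture} (Conjecture \ref{theimageofineritagroupisfinite}), a special case of the Unramified Fontaine--Mazur Conjecture \ref{UFM}, and the paper never proves it --- it only relates it to the other conjectures (Theorem \ref{reductionofBUFM} shows that Conjecture \ref{WUFM} together with Conjecture \ref{theimageofineritagroupisfinite} and Conjecture \ref{WUFMforcharp} implies Conjecture \ref{BUFM}, and conversely). Your proposal, as you yourself concede in the second paragraph, also does not prove it: the entire content of the conjecture is concentrated in the step you leave open, namely turning the local tame relation into a global contradiction. So the verdict is that there is a genuine gap --- the argument is a reduction plus a heuristic, not a proof --- but this is unavoidable, since the target is an open problem; your surrounding observations (that \ref{UFM} trivially implies \ref{theimageofineritagroupisfinite}, and that \ref{theimageofineritagroupisfinite} plus \ref{WUFM} recovers \ref{UFM} after passing to a finite extension killing the inertia images and making the image pro-$p$) are correct and are exactly the ingredients the paper uses in the proof of Theorem \ref{reductionofBUFM}.

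Two concrete remarks on why the specific avenue you sketch cannot close as stated. First, the relation ${\rm Ad}(\rho(\sigma))\cdot N=qN$ forces $q=\lambda/\mu$ for two eigenvalues of $\rho(\sigma)$, but since $S\cap S_{p}=\emptyset$ the integer $q=N(\mathfrak{q})$ is a $p$-adic unit, and all eigenvalues of an element of ${\rm GL}_{n}(\mathcal{O}_{L})$ are $p$-adic units (the characteristic polynomial is monic with unit constant term); so integrality/boundedness in ${\rm GL}_{n}(\mathcal{O}_{L})$ yields no contradiction --- this is precisely why the $\ell\neq p$ situation is hard, in contrast with the crystalline condition at $p$. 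Second, Chebotarev does not let you identify $\rho(\sigma)$ with Frobenius elements at primes outside $S$ in any useful way ($\sigma$ is a Frobenius lift at a \emph{ramified} prime, and Chebotarev only gives density statements about conjugacy classes, not control of a fixed element), and Proposition \ref{equi} would require knowing that Frobenius eigenvalues at unramified primes are roots of unity, which is exactly the unproved global input. If you want unconditional statements in this direction, the paper's Theorem \ref{SL2image} (parts (ii)--(vi)) is the kind of partial result that is actually within reach: it constrains $\rho(I_{q_{i}})$ for two-dimensional representations by an explicit matrix computation with the tame relation, rather than attempting the full finiteness.
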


An $ \mathbb{F}_p[[T]] $-adic version of Conjecture \ref{UFM} is the following:
\begin{conj}\label{UFMforcharp}
	Let $ K $ be a number field and $ S $ a finite set of primes of $ K $ such that $ S\cap S_{p}=\emptyset $. Let $ F $ be a non-Archimedean local field of characteristic $ p $. Then any continuous representation $ \rho:G_{K,S}\to {\rm GL}_{n}(F) $ has finite image.
\end{conj}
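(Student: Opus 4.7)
The plan is to exploit the fact that over a non-Archimedean local field $F$ of characteristic $p$, the group ${\rm GL}_n(\mathcal{O}_F)$ is virtually pro-$p$: the principal congruence subgroup ${\rm GL}_n^1(\mathcal{O}_F) = 1 + \pi_F M_n(\mathcal{O}_F)$ is pro-$p$ by Lemma \ref{pro}. First, since $\text{Im}(\rho)$ is compact, after enlarging $F$ we may assume $\text{Im}(\rho) \subset {\rm GL}_n(\mathcal{O}_F)$. The reduction $\overline{\rho}$ has finite image, and by replacing $K$ with the finite extension fixed by $\ker(\overline{\rho})$ and adjusting $S$ by the primes of the new field lying over $S$ (still disjoint from primes above $p$, by Lemma \ref{1}), we may assume $\text{Im}(\rho) \subset {\rm GL}_n^1(\mathcal{O}_F)$. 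In particular $\rho$ now factors through the pro-$p$ quotient $G_{K,S}(p)$.

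Next, apply Theorem \ref{local}(iii): since the residue characteristic of primes in $S$ differs from $p$, the image $\rho(I_{\mathfrak{p}})$ is finite for each $\mathfrak{p} \in S$. After a further finite extension of $K$, these inertia images become trivial, so $\rho$ factors through $G_{K,\emptyset}(p)$, the Galois group of the maximal everywhere unramified pro-$p$ extension of $K$. The problem is thus reduced to a weak form: every continuous representation $G_{K,\emptyset}(p) \to {\rm GL}_n^1(\mathcal{O}_F)$ has finite image. This is precisely the $\mathbb{F}_p[[T]]$-adic analogue of the Weak Unramified Fontaine--Mazur Conjecture \ref{WUFM}.

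At this stage I would try to apply Proposition \ref{equi}: it suffices to show $\text{Im}(\rho)$ is virtually solvable, or equivalently that the Frobenius characteristic polynomials have roots which are roots of unity. The tools I would bring to bear are: (a) the FAb property of $G_{K,S}(p)$ from Theorem \ref{global structure}(ii), combined with Proposition \ref{solvablequotient}, which collapses the image as soon as virtual solvability is established; (b) developing a $p$-valuation on pro-$p$ subgroups of ${\rm GL}_n(\mathbb{F}_q[[T]])$, analogous to Example \ref{padicvaluedexample}, so as to attach a ``dimension'' to $\text{Im}(\rho)$ and attempt to squeeze it into a solvable subgroup via the structure theory analogous to Section \ref{padicanalyticgroup}; (c) Chebotarev plus the local behaviour of decomposition groups to argue that Frobenius eigenvalues, being units of $\mathcal{O}_F$ very close to $1$, lie in a finite set.

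The main obstacle is the final step. By Golod--Shafarevich, $G_{K,\emptyset}(p)$ can be genuinely infinite, so nothing on the source side is available, and in equicharacteristic $p$ one loses Lazard's strong rigidity: the pro-$p$ group ${\rm GL}_n^1(\mathbb{F}_q[[T]])$ is very large, its torsion-free closed subgroups are far from uniform in the Lazard sense, and there is no analogue of potential semi-stability to force Frobenius eigenvalues to be roots of unity. Accordingly I expect only conditional results at first, imposing restrictions on $n$, on $K$, or on the ramification pattern of $S$, in the spirit of Theorem \ref{positivecharcase}, rather than a complete proof.
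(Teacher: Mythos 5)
You have not proved the statement, and indeed you could not have: Conjecture \ref{UFMforcharp} is an open conjecture in the paper, not a theorem, so there is no complete proof on the paper's side to compare against. What you have actually done is reproduce, correctly, the paper's reduction step: your argument that after conjugating into ${\rm GL}_{n}(\mathcal{O}_{F})$, passing to the finite extension cut out by $\ker(\overline{\rho})$, and killing the finite inertia images supplied by Theorem \ref{local}(iii), the representation factors through $G_{K,\emptyset}(p)$, is exactly the paper's proof that Conjecture \ref{UFMforcharp} is equivalent to Conjecture \ref{WUFMforcharp} (Theorem \ref{reductionofBUFM}(i)). Likewise, the conditional cases you anticipate are the ones the paper actually establishes: when $\text{Im}(\rho)$ is $p$-adic analytic it is virtually abelian by Theorem \ref{virtuallyabelian} and hence finite by Theorem \ref{virtually solvable case}, and when $\text{Im}(\rho)$ is torsion-free and the class number of $K$ is prime to $p$ one gets triviality via $G_{K,\emptyset}(p)=\{1\}$ (Theorem \ref{positivecharcase}); the large-residual-image case is Allen--Calegari (Theorem \ref{abllen}). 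So your self-assessment that only conditional results are available is accurate and matches the paper.

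The genuine gap, then, is the final step you flag yourself: nothing in your toolkit (or the paper's) forces virtual solvability of an arbitrary closed subgroup of ${\rm GL}_{n}^{1}(\mathcal{O}_{F})$ that is a quotient of $G_{K,\emptyset}(p)$. One concrete point where your sketch would break down if pushed: item (b), attempting a $p$-valuation on ${\rm GL}_{n}^{1}(\mathcal{O}_{F})$ in equicharacteristic $p$ analogous to Example \ref{padicvaluedexample}, cannot work as stated, because in characteristic $p$ one has $(1+A)^{p^{k}}=1+A^{p^{k}}$, so ${\rm GL}_{n}^{1}(\mathcal{O}_{F})$ contains plenty of torsion (e.g.\ $1+\pi_{F}N$ with $N$ nilpotent), whereas any $p$-valued group is torsion-free; there is no Lazard-type rigidity to exploit here, which is precisely why the paper treats the characteristic-$p$ case only under the extra hypotheses of Theorem \ref{positivecharcase}. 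Item (c) is also too vague to carry weight: Frobenius eigenvalues being principal units does not by itself bound them in a finite set, and the criterion of Proposition \ref{equi} requires roots of unity (equivalently, algebraicity over $\mathbb{F}_{p}$), which is exactly the arithmetic input nobody knows how to produce unconditionally.
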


Also, we state an $ \mathbb{F}_p[[T]] $-adic version of Conjecture \ref{WUFM} as follows:
\begin{conj}\label{WUFMforcharp}
	Let $ K $ be a number field and let $ F $ be a non-Archimedean local field of characteristic $ p $. Then any continuous representation $ \rho:G_{K,\emptyset}(p)\to {\rm GL}_{n}(F) $ of the pro-$ p $ group $ G_{K,\emptyset}(p) $ has finite image.
\end{conj}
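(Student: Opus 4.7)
The plan is to reduce to the case where the image lies in the pro-$p$ congruence subgroup ${\rm GL}_{n}^{1}(\mathcal{O}_{F}) = 1 + TM_{n}(\mathcal{O}_{F})$ with $\mathcal{O}_{F} = \mathbb{F}_{q}[[T]]$, and then to exploit the FAb property of $G_{K,\emptyset}(p)$ together with the topological Tits alternative in positive characteristic to force virtual solvability of the image, from which finiteness follows by Proposition \ref{solvablequotient}.

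First, by compactness of $\text{Im}(\rho)$ and conjugation we may assume $\text{Im}(\rho) \subseteq {\rm GL}_{n}(\mathcal{O}_{F})$. The reduction $\overline{\rho}:G_{K,\emptyset}(p) \to {\rm GL}_{n}(\mathbb{F}_{q})$ has finite image, being a $p$-subgroup of ${\rm GL}_{n}(\mathbb{F}_{q})$ with pro-$p$ source. Letting $K'$ be the fixed field of $\ker(\overline{\rho})$---a finite everywhere-unramified $p$-extension of $K$---and restricting $\rho$ to the corresponding open subgroup, we may assume $\rho$ is a continuous homomorphism $G_{K',\emptyset}(p) \to {\rm GL}_{n}^{1}(\mathcal{O}_{F})$. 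Set $H := \text{Im}(\rho)$. Then $H$ is topologically finitely generated (Corollary \ref{imageisfinitelygenerated}), pro-$p$ (Lemma \ref{pro}), and FAb as a continuous quotient of the FAb group $G_{K',\emptyset}(p)$ (Theorem \ref{global structure}(ii) with $S=\emptyset$).

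Second, by Proposition \ref{solvablequotient} any virtually solvable topologically finitely generated FAb group is finite, so it suffices to prove $H$ virtually solvable; equivalently, by Proposition \ref{equi} applied to the lift $G_{K',\emptyset} \twoheadrightarrow G_{K',\emptyset}(p) \xrightarrow{\rho} {\rm GL}_{n}(F)$ (which has the same image $H$), it suffices to show that $H$ contains an open unipotent subgroup. Suppose for contradiction that $H$ is not virtually solvable. Then the topological Tits alternative of Breuillard--Gelander \cite[Theorem 1.3]{MR2373146}, valid for closed subgroups of ${\rm GL}_{n}$ over a local field of any characteristic, produces a dense free abstract subgroup $D \subseteq H$ on two generators. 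Apply Lemma \ref{justinfinitequotient} to the infinite topologically finitely generated pro-$p$ group $H$ to obtain a topologically finitely generated just-infinite pro-$p$ quotient $\overline{H}$; by Proposition \ref{solvablequotient} this $\overline{H}$ cannot be virtually abelian, else it would be finite, contradicting just-infinitude. The image of $D$ in $\overline{H}$ is a countable dense subgroup, and the aim is to promote it to a countable dense normal subgroup with countably many conjugacy classes so as to invoke Theorem \ref{densenormalsubgroup}.

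The main obstacle is precisely this promotion step. Although $D$ is countable, its normal closure in $H$ (or in $\overline{H}$) is in general uncountable since the ambient group is uncountable, and the $H$-conjugacy classes of generators of $D$ need not be finite, so neither hypothesis of Theorem \ref{densenormalsubgroup}(ii)--(iii) is automatic. The argument must genuinely use that $H$ is a closed pro-$p$ subgroup of ${\rm GL}_{n}^{1}(\mathbb{F}_{q}[[T]])$: one can attempt to analyze the induced filtration $H \cap {\rm GL}_{n}^{i}(\mathcal{O}_{F})$, whose graded pieces are finite-dimensional $\mathbb{F}_{q}$-vector spaces with controlled adjoint action, and to bound the conjugacy-class growth of elements of $D$ level by level. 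Purely group-theoretically the claim must fail---topologically finitely generated FAb pro-$p$ groups such as $G_{K,\emptyset}(p)$ itself can be infinite by Golod--Shafarevich---so the true arithmetic content of the conjecture is exactly that such infinite pro-$p$ groups admit no faithful continuous representation into ${\rm GL}_{n}(\mathbb{F}_{q}((T)))$, and any complete proof will have to extract information from the linear embedding that goes beyond the FAb structure of $G_{K,\emptyset}(p)$.
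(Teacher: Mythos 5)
There is no proof to compare against: the statement you were asked to prove is Conjecture \ref{WUFMforcharp}, which the paper states and leaves open. The paper only (a) proves it is equivalent to Conjecture \ref{UFMforcharp} and shows its role in the reduction of Conjecture \ref{BUFM} (Theorem \ref{reductionofBUFM}), and (b) establishes special cases under extra hypotheses: Theorem \ref{positivecharcase} (image $p$-adic analytic, via Theorem \ref{virtuallyabelian}, or image torsion-free with class number prime to $p$) and Theorem \ref{abllen} (large residual image, totally real $K$, following Allen--Calegari). So a complete blind proof was never on the table, and your proposal is, as you yourself say, not a proof.

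Your reductions up to the point of failure are sound and match the paper's toolkit: conjugating into ${\rm GL}_{n}(\mathcal{O}_{F})$, passing to $K'$ so that the image lands in the pro-$p$ group ${\rm GL}_{n}^{1}(\mathcal{O}_{F})$, and recording that $H=\mathrm{Im}(\rho)$ is topologically finitely generated, pro-$p$ and FAb (Corollary \ref{imageisfinitelygenerated}, Lemma \ref{pro}, Theorem \ref{global structure}), so that by Proposition \ref{solvablequotient} it suffices to rule out the non-virtually-solvable case. The genuine gap is exactly where you locate it: the topological Tits alternative hands you a dense free subgroup $D$, but Theorem \ref{densenormalsubgroup} needs either a countable dense \emph{normal} subgroup or a topological normal generating set whose elements have finite conjugacy classes, and $D$ supplies neither (its normal closure is typically uncountable, and nothing bounds the conjugacy classes of its generators). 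Moreover, no argument using only the properties you retain about $H$ can close this gap: ${\rm SL}_{n}^{1}(\mathbb{F}_{p}[[T]])$ is itself an infinite, topologically finitely generated, FAb pro-$p$ group sitting closed inside ${\rm GL}_{n}(\mathbb{F}_{p}((T)))$ (it is just-infinite by the reference in Section \ref{section1.3}, and the closure of the derived subgroup of each congruence subgroup is again open, so all open subgroups have finite abelianization), and it certainly contains dense free subgroups; hence the configuration you are trying to contradict actually occurs group-theoretically, and any proof must use arithmetic input about $G_{K,\emptyset}(p)$ beyond FAb and finite generation --- which is precisely why the statement remains a conjecture. If you want unconditional results in the characteristic-$p$ setting, the paper's provable cases go through structural restrictions on the image (Theorem \ref{virtuallyabelian}: a $p$-adic analytic subgroup of ${\rm GL}_{n}(F)$, $\mathrm{char}(F)=p$, is virtually abelian), not through the conjugacy-class/dense-normal-subgroup machinery of Theorem \ref{densenormalsubgroup}.
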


Applying Theorem \ref{reduction}, we prove that
\begin{Thm}\label{reductionofBUFM}
	\begin{enumerate}
		\item 	Conjecture \ref{UFMforcharp} is equivalent to Conjecture \ref{WUFMforcharp}.
		\item The following are equivalent:
		\begin{enumerate}
			\item Conjecture \ref{BUFM} holds.
			\item Both Conjecture \ref{UFM} and \ref{WUFMforcharp} hold.
			\item Conjecture \ref{WUFM}, Conjecture \ref{theimageofineritagroupisfinite} and Conjecture \ref{WUFMforcharp} are all true. 
		\end{enumerate}
	\end{enumerate}
\end{Thm}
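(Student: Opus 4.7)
The plan is to prove the two parts separately, with part (1) handled by a direct reduction using Theorem \ref{local}, and part (2) relying on Theorem \ref{reduction} (already stated as the main tool of this section) for the nontrivial implications.

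For part (1), the direction Conjecture \ref{UFMforcharp} $\Rightarrow$ Conjecture \ref{WUFMforcharp} is immediate, since $G_{K,\emptyset}(p)$ is the maximal pro-$p$ quotient of $G_{K,\emptyset}$ (so any representation of the former inflates to one of the latter, with the same image). For the converse, start with $\rho:G_{K,S}\to {\rm GL}_{n}(F)$ with $F$ of characteristic $p$. By Lemma \ref{pro}, $\mathrm{Im}(\rho)$ contains an open pro-$p$ subgroup, so after replacing $K$ by a finite extension (Lemma \ref{1}), assume $\mathrm{Im}(\rho)$ is pro-$p$; then $\rho$ factors through $G_{K,S}(p)$, which by Lemma \ref{redundant} equals $G_{K,S^{\text{tame}}}(p)$ after removing redundant primes. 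Now invoke Theorem \ref{local}(iii) at each $v\in S$ (noting $v\notin S_{p}$ so the residue characteristic differs from $p$): the image $\rho(I_{v})$ is finite. Since $S$ is finite, a further finite extension $K'/K$ can be chosen so that $\rho|_{G_{K'}}$ is everywhere unramified and still pro-$p$, hence factors through $G_{K',\emptyset}(p)$. Conjecture \ref{WUFMforcharp} applied over $K'$ forces $\rho(G_{K'})$, and therefore $\rho(G_{K,S})$, to be finite.

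For part (2), the implications (a)$\Rightarrow$(b) and (a)$\Rightarrow$(c) are easy: for (a)$\Rightarrow$(b), a representation $G_{K,S}\to {\rm GL}_{n}(\mathbb{Q}_{p})$ has compact image, hence lies in ${\rm GL}_{n}(\mathcal{O}_{L})$ for some finite $L/\mathbb{Q}_{p}$, so Conjecture \ref{BUFM} applied to $A=\mathcal{O}_{L}$ gives Conjecture \ref{UFM}; similarly, a representation $G_{K,\emptyset}(p)\to {\rm GL}_{n}(F)$ with $F$ of characteristic $p$ lifts to $G_{K,\emptyset}$ with image in ${\rm GL}_{n}(\mathbb{F}_{q}[[T]])$, giving Conjecture \ref{WUFMforcharp} from Conjecture \ref{BUFM}. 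The implication (b)$\Rightarrow$(c) follows because $G_{K,\emptyset}(p)$ is a quotient of $G_{K,\emptyset}$ (so \ref{UFM} specializes to \ref{WUFM}), and any representation with finite image automatically has finite inertia images (so \ref{UFM} subsumes \ref{theimageofineritagroupisfinite}).

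The non-trivial implication (b)$\Rightarrow$(a) is essentially the content of Theorem \ref{reduction}. Given $\rho:G_{K,S}\to {\rm GL}_{n}(A)$, I apply Theorem \ref{reduction} and verify its two hypotheses: in the characteristic zero case, a representation into ${\rm GL}_{n}(F)$ for $F/\mathbb{Q}_{p}$ finite can be viewed as a representation into ${\rm GL}_{nd}(\mathbb{Q}_{p})$ by restriction of scalars ($d=[F:\mathbb{Q}_{p}]$), reducing to Conjecture \ref{UFM}; in the positive characteristic case, Conjecture \ref{WUFMforcharp} combined with part (1) of this theorem already proved above yields Conjecture \ref{UFMforcharp}. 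Finally, for (c)$\Rightarrow$(b), only \ref{UFM} needs to be extracted. Given $\rho:G_{K,S}\to {\rm GL}_{n}(\mathbb{Q}_{p})$ with $S\cap S_{p}=\emptyset$, one applies Lemma \ref{pro} and Lemma \ref{1} to pass, after a finite extension of $K$, to a pro-$p$ representation factoring through $G_{K,S}(p)$; then Conjecture \ref{theimageofineritagroupisfinite} makes each $\rho(I_{v})$ finite, and a further finite extension trivializes all inertia simultaneously, reducing to a representation of $G_{K',\emptyset}(p)$ to which Conjecture \ref{WUFM} applies.

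The main obstacle is the implication (b)$\Rightarrow$(a), but this is already isolated as Theorem \ref{reduction}, which is proved elsewhere in Section \ref{section3.2}. Given that input, the remaining content of Theorem \ref{reductionofBUFM} is the bookkeeping above: part (1) boils down to reducing to the unramified-everywhere pro-$p$ case using Theorem \ref{local}(iii), while part (2) is a web of reductions in which the only piece not handled by Theorem \ref{reduction} or Theorem \ref{local} is the classical passage from \ref{WUFM}${+}$\ref{theimageofineritagroupisfinite} to \ref{UFM}.
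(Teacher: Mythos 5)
Your proposal is correct and follows essentially the same route as the paper: part (1) by reducing to a pro-$p$ image via Lemma \ref{pro}, killing inertia using Theorem \ref{local}(iii) after a further finite extension, and invoking Conjecture \ref{WUFMforcharp}; part (2) with the easy implications, (b)$\Rightarrow$(a) via part (1) plus Theorem \ref{reduction}, and (c)$\Rightarrow$(b) by combining finiteness of inertia images with Conjecture \ref{WUFM} after a finite base change. Your explicit restriction-of-scalars remark for checking the characteristic-zero hypothesis of Theorem \ref{reduction} is a small elaboration the paper leaves implicit, not a different argument.
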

\begin{Rem}
	The equivalence between (ii)(a) and (ii)(b) of the above theorem was stated in \cite[Section 2]{MR1932455} without giving a proof.
\end{Rem}

\subsection{Proof of Theorem \ref{reduction} and Theorem \ref{reductionofBUFM}}

\begin{Convention}
	Let $ A $ be a complete Noetherian local ring with finite residue field of characteristic $ p $. We say that $A$ is a \emph{pro-$p$ domain} if it is an integral domain. For a pro-$p$ domain $(A,\mathfrak{m}_{A})$, we put $A_{0}:=\mathbb{Z}_{p}$ when $\text{char}(A)=0$ and $A_{0}:=\mathbb{F}_{p}[[T]]$ when $\text{char}(A)=p$. The field of fractions of the integral domain $ A_{0}$ is denoted by $\text{Frac}(A_{0})$.
	For any positive integer $d$, we write $\mathfrak{m}_{A}^{(d)}$ for the set of $d$-tuples of elements of $\mathfrak{m}_{A}$.
\end{Convention}

To prove Theorem \ref{reduction}, we need the following lemmas.

\begin{Lem}[Cohen's structure theorem]\label{conhenstructuretheorem}
	Let $A$ be a pro-$p$ domain. Then $A$ is a finite integral extension of $A_{0}[[T_{1},\cdots,T_{r-1}]]$, where $r$ is the Krull dimension of $A$. Moreover, if $r=1$, then $A$ is a finitely generated free module over $A_{0}$, i.e. $A=\mathcal{O}_{F}$ for some non-Archimedean local field $F$.
\end{Lem}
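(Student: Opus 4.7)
The plan is to invoke the classical Cohen structure theorem for complete Noetherian local rings. I would proceed in three steps: first, construct an embedding of the coefficient subring $A_0$ into $A$; second, extend this to an injection $A_0[[T_1,\ldots,T_{r-1}]]\hookrightarrow A$ that realises $A$ as a finite integral extension; and third, specialise to the case $r=1$.

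For the first step, I distinguish by the characteristic of $A$. In the mixed-characteristic case ($\text{char}(A)=0$), the structural morphism $\mathbb{Z}\to A$ sends $p$ into $\mathfrak{m}_A$, so by $\mathfrak{m}_A$-adic completeness it extends uniquely to a continuous ring homomorphism $\mathbb{Z}_p\to A$, which is injective since $A$ has characteristic $0$. Note that this also forces $r=\dim A\geq 1$, since otherwise $p$ would be nilpotent in $A$. In the equal-characteristic case ($\text{char}(A)=p$), assuming $r\geq 1$ so that $\mathfrak{m}_A\neq 0$, pick any $T\in\mathfrak{m}_A\setminus\{0\}$. If $T$ satisfied a polynomial relation $f(T)=0$ with $f\in\mathbb{F}_p[X]$ nonzero, write $f(X)=X^k g(X)$ with $g(0)\neq 0$; then $g(T)=0$ in the domain $A$, while reducing modulo $\mathfrak{m}_A$ gives $g(0)=0$, a contradiction. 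Hence $\mathbb{F}_p[T]\hookrightarrow A$, and by $\mathfrak{m}_A$-adic completeness this extends to a ring homomorphism $\mathbb{F}_p[[T]]\to A$, which is again injective: any nonzero element of $\mathbb{F}_p[[T]]$ has the form $T^m u(T)$ with $u(0)\in\mathbb{F}_p^{\times}$, mapping to a unit multiple of the nonzero element $T^m$.

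The heart of the proof is the Noether-normalisation step. Using that $\dim A=r$ and $\dim A_0=1$, I would choose a system of parameters for $A$ of the form $(\pi,t_1,\ldots,t_{r-1})$, where $\pi$ is a uniformiser of $A_0$ (namely $p$ or the chosen $T$) and $t_i\in\mathfrak{m}_A$, and define $A_0[[T_1,\ldots,T_{r-1}]]\to A$ by $T_i\mapsto t_i$. The crucial claim is that $A$ is finitely generated as a module over $A_0[[T_1,\ldots,T_{r-1}]]$; this is the complete analogue of Noether normalisation, and follows from the fact that the quotient $A/(\pi,t_1,\ldots,t_{r-1})A$ is Artinian combined with Nakayama's lemma for complete local rings, exactly as in the classical proof of Cohen's theorem. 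Injectivity of the map is then a dimension count: both rings have Krull dimension $r$, finite integral extensions preserve dimension, and a non-trivial kernel would produce a quotient of $A_0[[T_1,\ldots,T_{r-1}]]$ of strictly smaller dimension that still maps finitely onto the integral domain $A$, which is impossible.

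For the final assertion, when $r=1$ the embedding $A_0\hookrightarrow A$ is finite and integral with $A_0$ a complete discrete valuation ring. Since $A$ is a domain it is torsion-free as an $A_0$-module, and being a finitely generated module over the PID $A_0$, the structure theorem for modules over a PID gives that $A$ is free of finite rank. Consequently $A$ is a one-dimensional complete Noetherian local domain with finite residue field which is integral over a DVR, hence itself a complete DVR with finite residue field, i.e.\ $A=\mathcal{O}_F$ for some non-Archimedean local field $F$. The main obstacle throughout the argument is the finiteness assertion in the Noether-normalisation step: this is where the real content of Cohen's theorem sits, and in a fully rigorous write-up I would quote the classical statement rather than re-derive it.
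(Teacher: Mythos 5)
The paper's own ``proof'' of this lemma is a one-line citation of \cite[Theorem 6.42 and Corollary 6.43]{MR1720368}, so there is nothing to compare step by step; your proposal instead reconstructs the classical Cohen/Noether-normalisation argument behind the cited result, and most of it is sound: the embedding of $A_{0}$ (working directly with $\mathbb{Z}_{p}$ rather than $W(\mathbb{F})$ is exactly what is wanted here, and your choice of an arbitrary nonzero $T\in\mathfrak{m}_{A}$ in equal characteristic is legitimate because a nonzero element of a local domain is part of a system of parameters), module-finiteness via the Artinian quotient $A/(\pi,t_{1},\cdots,t_{r-1})A$ plus topological Nakayama, injectivity by the dimension count, and freeness over $A_{0}$ when $r=1$ (finitely generated torsion-free over a DVR).

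The genuine gap is the very last inference: ``$A$ is integral over a DVR, hence itself a complete DVR, i.e. $A=\mathcal{O}_{F}$.'' This implication is false, because normality is not inherited under finite extensions: a one-dimensional Noetherian local domain is a DVR if and only if it is integrally closed. Concretely, $A=\mathbb{Z}_{p}[X]/(X^{2}-p^{3})\cong \mathbb{Z}_{p}[p^{3/2}]$ is a pro-$p$ domain of Krull dimension $1$, free of rank $2$ over $A_{0}=\mathbb{Z}_{p}$, with residue field $\mathbb{F}_{p}$, yet $p^{1/2}=p^{3/2}/p$ is integral over $A$ and does not lie in $A$; so $A$ is a proper order in $\mathcal{O}_{F}=\mathbb{Z}_{p}[p^{1/2}]$, not the ring of integers of a local field (the equal-characteristic analogue is $\mathbb{F}_{p}[[t^{2},t^{3}]]\subset \mathbb{F}_{p}[[t]]$). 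What your argument actually establishes is the correct part of the statement: $A$ is finite and free over $A_{0}$, hence $F=\text{Frac}(A)$ is a local field of degree at most $\text{rank}_{A_{0}}(A)$ over $\text{Frac}(A_{0})$ and $A\subseteq\mathcal{O}_{F}$, with equality only when $A$ is integrally closed. Strictly speaking the clause ``i.e. $A=\mathcal{O}_{F}$'' in the lemma suffers from the same defect; note, though, that the later applications (Proposition \ref{propdomainisdeterminendby} and Proposition \ref{true}) only use embeddings of $A$ into rings $\mathcal{O}_{F}$ of bounded degree, which your corrected conclusion does deliver. So either prove that weaker statement, or finish by passing to the integral closure of $A$ in $F$; the step ``finite over a DVR $\Rightarrow$ DVR'' cannot be repaired as written.
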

\begin{proof}
	It is \cite[Theorem 6.42 and Corollary 6.43]{MR1720368}.
\end{proof}

\begin{Lem}\label{evaluation}
	Let $A$ be a pro-$p$ domain. Suppose that $F=\sum  a_{m_{1},\cdots,m_{d}} T_{1}^{m_{1}}\cdots T_{d}^{m_{d}} \in A[[T_{1},\cdots,T_{d}]]$. Then 
	\begin{enumerate}
		\item $F$ can be evaluated at $t=(t_{1},\cdots,t_{d})$ for every $t=(t_{1},\cdots,t_{d})\in \mathfrak{m}_{A}^{(d)}$, i.e. the series $\sum  a_{m_{1},\cdots,m_{d}} t_{1}^{m_{1}}\cdots t_{d}^{m_{d}}$ converges in $A$.
		\item If $F(t)=0$ for any $t=(t_{1},\cdots,t_{d})\in \mathfrak{m}_{A}^{(d)} $, then $F=0$.
	\end{enumerate}
\end{Lem}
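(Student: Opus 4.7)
Part (i) is routine from the $\mathfrak{m}_{A}$-adic completeness of $A$: for $t=(t_{1},\ldots,t_{d})\in\mathfrak{m}_{A}^{(d)}$, every monomial $a_{m_{1},\ldots,m_{d}}t_{1}^{m_{1}}\cdots t_{d}^{m_{d}}$ with $m_{1}+\cdots+m_{d}\geq N$ lies in $\mathfrak{m}_{A}^{N}$, so the partial sums of $F(t)$ form a Cauchy sequence in the complete local ring $A$ and therefore converge.

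For part (ii) we plan to induct on $d$. The base case $d=1$ is the crux. Let $F(T)=\sum_{n\geq 0}a_{n}T^{n}\in A[[T]]$ be nonzero and let $k$ be the smallest index with $a_{k}\neq 0$. By Lemma \ref{conhenstructuretheorem} the ring $A$ contains $A_{0}$, so $\mathfrak{m}_{A}$ is nonzero and we may pick some nonzero $\pi\in\mathfrak{m}_{A}$. Evaluating $F$ at $t=\pi^{n}$ for each $n\geq 1$ gives $F(\pi^{n})=\pi^{kn}(a_{k}+\pi^{n}r_{n})$ for some $r_{n}\in A$. If $F(\pi^{n})=0$ for every $n\geq 1$, then since $A$ is a domain and $\pi^{kn}\neq 0$ we cancel to obtain $a_{k}\in\pi^{n}A$ for every $n$. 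Krull's intersection theorem applied to the Noetherian local ring $A$ and the proper ideal $(\pi)\subseteq\mathfrak{m}_{A}$ forces $\bigcap_{n\geq 1}\pi^{n}A=0$, so $a_{k}=0$, contradicting the choice of $k$. Hence some $F(\pi^{n})$ is nonzero, and contraposition gives the base case.

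For the inductive step $d>1$, write $F=\sum_{k\geq 0}F_{k}(T_{1},\ldots,T_{d-1})T_{d}^{k}$ with $F_{k}\in A[[T_{1},\ldots,T_{d-1}]]$. Fixing any $(t_{1},\ldots,t_{d-1})\in\mathfrak{m}_{A}^{(d-1)}$, the one-variable series $\sum_{k}F_{k}(t_{1},\ldots,t_{d-1})T_{d}^{k}\in A[[T_{d}]]$ vanishes on $\mathfrak{m}_{A}$, so the base case forces $F_{k}(t_{1},\ldots,t_{d-1})=0$ for every $k$. Since this holds for every tuple in $\mathfrak{m}_{A}^{(d-1)}$, the inductive hypothesis applied to each $F_{k}$ yields $F_{k}=0$, whence $F=0$. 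The main obstacle is concentrated in the base case: Cohen's structure theorem supplies a nonzero $\pi\in\mathfrak{m}_{A}$ so that $\{\pi^{n}\}_{n\geq 1}$ provides an infinite family of test points in $\mathfrak{m}_{A}$, and the domain hypothesis on $A$ is what lets us strip the factor $\pi^{kn}$ so that Krull's intersection theorem can close the argument.
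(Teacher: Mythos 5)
Your proposal is correct, but note that the paper does not actually argue this lemma: it simply cites \cite[Lemma 6.44]{MR1720368} for part (i) and \cite[Lemma 9]{MR2218704} for part (ii), so your contribution is a self-contained proof rather than a variant of one given in the text. Your route is the natural elementary one: (i) is the standard completeness/Cauchy argument in the $\mathfrak{m}_A$-adic topology, and for (ii) you induct on $d$, settling the one-variable case by evaluating at the powers $\pi^{n}$ of a single nonzero $\pi\in\mathfrak{m}_{A}$, cancelling $\pi^{kn}$ using that $A$ is a domain, and finishing with Krull's intersection theorem; this makes transparent exactly which hypotheses (domain, Noetherian, local, complete) are used. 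Two points deserve tightening. First, in the inductive step you implicitly rearrange the convergent multi-series $F(t_{1},\ldots,t_{d})$ into a series in $t_{d}$ with coefficients $F_{k}(t_{1},\ldots,t_{d-1})$; this is legitimate because for each $N$ all but finitely many terms lie in $\mathfrak{m}_{A}^{N}$, so the family is summable and may be regrouped, but a sentence saying so should be added. Second, your appeal to Lemma \ref{conhenstructuretheorem} to produce a nonzero $\pi\in\mathfrak{m}_{A}$ tacitly assumes $A$ has Krull dimension at least $1$: if $A$ is a finite field (which the paper's definition of pro-$p$ domain does not literally exclude), then $\mathfrak{m}_{A}=0$, there is no such $\pi$, and indeed statement (ii) fails as written (take $F=T_{1}$ over $\mathbb{F}_{p}$). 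So the lemma, and your proof, should be read under the implicit assumption $\dim A\geq 1$; this is harmless for the paper, since in Proposition \ref{propdomainisdeterminendby} the lemma is only applied with $A=A_{0}\in\{\mathbb{Z}_{p},\mathbb{F}_{p}[[T]]\}$, but it is worth flagging.
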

\begin{proof}
	This first claim is \cite[Lemma 6.44]{MR1720368}, and the second claim is \cite[Lemma 9]{MR2218704}.
\end{proof}

\begin{Lem}\label{goingup}
	Let $A\subset B$ be commutative rings with $B$ integral over $A$. 
	\begin{enumerate}
		\item Let $\mathfrak{p}$ be a prime ideal of $A$. Then there exists a prime ideal $\mathfrak{q}$ of $B$ such that $\mathfrak{p}=\mathfrak{q}\cap A$.
		\item Suppose also that $B$ is an integral domain. If $I$ is an ideal of $B$ such that $I\cap A=\{0\}$, then $I=\{0\}$. 
	\end{enumerate}
\end{Lem}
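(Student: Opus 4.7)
The plan is to split the proof into its two natural halves, both of which are standard commutative-algebra arguments. For part (i), I would prove the classical \emph{lying over} theorem by localization. Let $S = A \setminus \mathfrak{p}$ and consider the induced ring map $A_{\mathfrak{p}} \to S^{-1}B$. This map is injective, since $A \hookrightarrow B$ together with $S \subset A$ implies that $as = 0$ in $B$ for $a \in A$, $s \in S$ forces $a = 0$ in $A$. Because $B$ is integral over $A$, the extension $A_{\mathfrak{p}} \hookrightarrow S^{-1}B$ is integral, and in particular $S^{-1}B \neq 0$. Choose any maximal ideal $\mathfrak{n}$ of $S^{-1}B$. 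By the well-known fact that in an integral extension the contraction of a maximal ideal is maximal, $\mathfrak{n} \cap A_{\mathfrak{p}}$ is a maximal ideal of the local ring $A_{\mathfrak{p}}$ and must therefore equal $\mathfrak{p}A_{\mathfrak{p}}$. Taking $\mathfrak{q}$ to be the contraction of $\mathfrak{n}$ to $B$ yields a prime ideal with $\mathfrak{q} \cap A = \mathfrak{p}$, as desired.

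For part (ii), I would argue by contradiction using a minimal integral relation. Assume $I \neq \{0\}$ and pick a nonzero element $b \in I$. Since $B$ is integral over $A$, there is an equation
\[
b^n + a_{n-1}b^{n-1} + \cdots + a_1 b + a_0 = 0
\]
with $a_i \in A$, and we may take $n$ as small as possible. If $a_0 = 0$, then
\[
b\bigl(b^{n-1} + a_{n-1}b^{n-2} + \cdots + a_1\bigr) = 0,
\]
and since $B$ is a domain and $b \neq 0$ the second factor vanishes, contradicting the minimality of $n$. Hence $a_0 \neq 0$. But then
\[
a_0 = -b\bigl(b^{n-1} + a_{n-1}b^{n-2} + \cdots + a_1\bigr) \in I,
\]
so $a_0$ is a nonzero element of $I \cap A$, contradicting $I \cap A = \{0\}$.

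Neither step presents a real obstacle: (i) is the textbook lying-over theorem, and (ii) reduces to the standard minimal-polynomial trick that is the backbone of most proofs in integral-extension theory. If one wanted to avoid localization in (i), an alternative route is to apply Zorn's lemma to the set of ideals of $B$ disjoint from $S = A \setminus \mathfrak{p}$, obtain a prime $\mathfrak{q}$ maximal with this property, and then verify that $\mathfrak{q} \cap A = \mathfrak{p}$ by combining the containment $\mathfrak{q} \cap A \subseteq \mathfrak{p}$ with an integrality argument analogous to the one used for (ii); the version presented above is cleaner and is the one I would adopt.
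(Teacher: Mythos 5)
Your proposal is correct. For part (ii) your argument is essentially the paper's: the paper takes a nonzero $x\in I$, writes an integral equation, lets $r$ be the least index with $a_r\neq 0$, factors out $x^r$ (using that $B$ is a domain) and concludes $a_r\in xB\cap A\subseteq I\cap A$; your minimal-degree version with the constant term $a_0$ is the same trick in an equivalent normalization, and you even avoid the paper's (unnecessary) case split over whether $I\subseteq A$. For part (i) the paper simply cites the lying-over/going-up theorem from Matsumura, whereas you supply the standard localization proof; that is a legitimate and self-contained alternative. One small imprecision there: your justification of injectivity of $A_{\mathfrak{p}}\to S^{-1}B$ claims that $as=0$ in $B$ with $s\in S$ forces $a=0$ in $A$, which is false in general since $A$ need not be a domain (elements of $S$ can be zero divisors). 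What injectivity actually requires, and what does hold, is that $ta=0$ in $B$ for some $t\in S$ gives $ta=0$ in $A$ (because $A\subseteq B$), hence $a/s'=0$ already in $A_{\mathfrak{p}}$. With that one-line correction (or by noting that injectivity is not even needed if one applies the maximal-contraction fact to the image of $A_{\mathfrak{p}}$ in $S^{-1}B$), your proof of (i) is complete; this is a cosmetic fix rather than a genuine gap.
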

\begin{proof}
	The first claim is the well-known going-up theorem, cf. \cite[Theorem 9.3]{MR0879273}. For the second claim, we need to show that if $I$ is a non-zero ideal of $B$, then $I\cap A\neq \{0\}$. If $I\subset A$, then the claim is trivial and hence we can assume that $I$ is not contained in $A$. We fix an element $x\neq 0$ in $I$ such that $x\notin A$. Then we have an equation $x^{n}+a_{n-1}x^{n-1}+\cdots +a_{0}=0$ of $x$ over $A$ where $a_{i}\in A$ and $n\geq 2$. Since $B$ is an integral domain, we can find some $a_{i}\neq 0$. Let $r\geq 0$ be the minimal integer such that $a_{r}\neq 0$. Then we have $r\leq n-1$ and $x^{r}(x^{n-r}+\cdots +a_{r})=0$. Since $B$ is an integral domain and $x\neq 0$, we see that $x^{n-r}+\cdots +a_{r}=0$. This implies that $a_{r}\in (xB)\cap A\subset I\cap A$, i.e. $I\cap A\neq \{0\}$. We are done.
\end{proof}

The following proposition shows that a pro-$p$ domain is determined by its values in local fields.

\begin{Prop}\label{propdomainisdeterminendby}
	Let $A$ be a pro-$p$ domain. Then there exists a positive integer $d$ such that the intersection of kernels of all morphisms $ A\to \mathcal{O}_{F} $ is the zero ideal where $ F $ runs over all finite extensions of $\text{Frac}(A_{0})$ with degree less than or equal to $d$.
\end{Prop}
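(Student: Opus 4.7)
The plan is to take $d := n$ where $n := [\text{Frac}(A) : \text{Frac}(B)]$ is the generic degree of the finite integral extension $B := A_{0}[[T_{1},\ldots,T_{r-1}]] \hookrightarrow A$ supplied by Cohen's structure theorem (Lemma~\ref{conhenstructuretheorem}). Equivalently, I will show that for every nonzero $a \in A$ there exist a finite extension $F/\text{Frac}(A_{0})$ of degree at most $n$ and a continuous morphism $\phi \colon A \to \mathcal{O}_{F}$ with $\phi(a) \neq 0$.

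First I would pull $a$ back to the base ring $B$. Since $aA$ is a nonzero ideal of the domain $A$, which is integral over $B$, Lemma~\ref{goingup}(ii) (applied with the roles of $A$ and $B$ in the statement interchanged) forces $aA \cap B \neq \{0\}$; pick a nonzero element $b = ac \in aA \cap B$. By Lemma~\ref{evaluation}(ii), some tuple $(t_{1},\ldots,t_{r-1}) \in \mathfrak{m}_{A_{0}}^{(r-1)}$ satisfies $b(t_{1},\ldots,t_{r-1}) \neq 0$, and by Lemma~\ref{evaluation}(i) the assignment $T_{i} \mapsto t_{i}$ defines a continuous $A_{0}$-algebra homomorphism $\psi \colon B \to A_{0}$ whose kernel $\mathfrak{q} := \ker(\psi)$ does not contain $b$. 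Then I would lift $\psi$ across $B \hookrightarrow A$ using the going-up theorem (Lemma~\ref{goingup}(i)): pick a prime $\mathfrak{p} \subset A$ with $\mathfrak{p} \cap B = \mathfrak{q}$, and note that $b = ac \notin \mathfrak{q}$ forces $a \notin \mathfrak{p}$. The induced finite integral extension of domains $B/\mathfrak{q} \hookrightarrow A/\mathfrak{p}$ yields, on fraction fields, a finite algebraic field extension $L/K$ of degree at most $n$.

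To package this into an $\mathcal{O}_{F}$-valued point, identify $K$ with its image in $\text{Frac}(A_{0})$ via $\psi$, extend the embedding $K \hookrightarrow \overline{\text{Frac}(A_{0})}$ to an embedding $L \hookrightarrow \overline{\text{Frac}(A_{0})}$ (possible because $L/K$ is algebraic), and set $F := L \cdot \text{Frac}(A_{0})$ to be the compositum inside $\overline{\text{Frac}(A_{0})}$. Then $[F : \text{Frac}(A_{0})] \leq [L:K] \leq n$; the image of $A/\mathfrak{p}$ in $F$ is integral over $\psi(B/\mathfrak{q}) \subset A_{0}$, hence over $A_{0}$, and therefore lies in $\mathcal{O}_{F}$; and the composite $\phi \colon A \twoheadrightarrow A/\mathfrak{p} \hookrightarrow \mathcal{O}_{F}$ is the desired morphism with $\phi(a) \neq 0$. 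The main obstacle, in my view, is this last step: one must choose the extension of embeddings to be compatible with the fixed $\psi$, which is what simultaneously bounds $[F:\text{Frac}(A_{0})]$ by $n$ \emph{uniformly in $a$} and guarantees that the image of $A/\mathfrak{p}$ actually lands inside $\mathcal{O}_{F}$ rather than merely in $F$. Both properties hinge on this single compatibility, and it must be unpacked with some care.
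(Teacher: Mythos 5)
Your construction follows the paper's skeleton (Cohen's structure theorem, evaluation maps, lying over, then realizing $A/\mathfrak{p}$ inside some $\mathcal{O}_F$), and most individual steps are fine: $aA\cap B\neq\{0\}$ via Lemma~\ref{goingup}(ii), the choice of an evaluation point with $b(t)\neq 0$ via Lemma~\ref{evaluation}, the implication $b\notin\mathfrak{q}\Rightarrow a\notin\mathfrak{p}$, and the observation that the image of $A/\mathfrak{p}$ is integral over $A_0$ and hence lands in $\mathcal{O}_F$. (Incidentally, the step you single out as the main obstacle is a non-issue: $\psi$ identifies $B/\mathfrak{q}$ with $A_0$ on the nose, so $L=\text{Frac}(A/\mathfrak{p})$ is already a finite extension of $\text{Frac}(A_0)$ and no compositum or choice of embedding is needed.)

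The genuine gap is the uniform degree bound: you set $d:=n=[\text{Frac}(A):\text{Frac}(B)]$ and assert that $B/\mathfrak{q}\hookrightarrow A/\mathfrak{p}$ has fraction-field degree at most $n$, with no justification. This is exactly the statement that residue degrees in fibers of the finite map $\mathrm{Spec}(A)\to\mathrm{Spec}(B)$ are bounded by the \emph{generic} degree, and it does not follow from "finite integral extension of domains": what is automatic is only a bound by the number of $B$-module generators of $A$, and since a complete local domain need not be Cohen--Macaulay, $A$ need not be flat (free) over the regular ring $B$, so fiber dimensions can exceed $n$. One can try to rescue your bound (normalize $A$, use excellence to get module-finiteness of the normalization, and invoke the classical fact that fibers of a finite dominant map onto a \emph{normal} base have at most $\deg$-many points, or argue via completions and Cohen coefficient fields), but this is a substantial extra argument, delicate in the equicharacteristic-$p$ case because of possible inseparability of $\kappa(\mathfrak{p})/\mathbb{F}_p((T))$, and none of it is available from the lemmas you cite. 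The paper sidesteps the issue entirely by taking $d$ to be the number of generators of $A$ as an $A_0[[T_1,\cdots,T_{r-1}]]$-module: then $A/\mathfrak{p}$ is generated by $d$ elements over $B/\mathfrak{q}\cong A_0$, and being a finite torsion-free module over the discrete valuation ring $A_0$ it is free of rank at most $d$, which gives the required degree bound trivially. With that single change (replace your $n$ by this $d$) your argument goes through and essentially coincides with the paper's proof.
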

\begin{proof}
	By Lemma \ref{conhenstructuretheorem}, we see that $A$ is a finite integral extension of $A_{0}[[T_{1},\cdots,T_{r-1}]]$, where $r$ is the Krull dimension of $A$. If $r=1$, then the claim is trivial, and we can assume that $r\geq 2$.
	For any $a=(a_{1},\cdots,a_{r-1})\in \mathfrak{m}_{A_{0}}^{(r-1)}$, by Lemma \ref{evaluation} we have the following well-defined continuous evaluation map:
	\[ s_a: A_{0}[[T_1,..,T_{r-1}]]\to A_{0},~F\mapsto F(a).\]
	Moreover, we have $\bigcap_{a\in \mathfrak{m}_{A_{0}}^{(r-1)}}\ker(s_{a})=\{0\}$.
	
	Set $\mathfrak{p}_{a}:=\ker(s_{a})$. Since $A$ is a finite integral extension of $A_{0}[[T_{1},\cdots,T_{r-1}]]$, there is a prime ideal $\mathfrak{q}_{a}$ of $A$ such that $\mathfrak{q}_{a}\cap A_{0}[[T_{1},\cdots,T_{r-1}]]=\mathfrak{p}_{a} $ by Lemma \ref{goingup}. Set $B_{a}:=A/\mathfrak{q}_{a}$, and define $\tilde{s}_{a}:A\twoheadrightarrow B_{a}$ to be the natural projection. Then we have the obvious injection $A_{0}\to B_{a}$, and we have the following commutative diagram:
	\[ \xymatrix{
		A_{0}[[T_1,..,T_{r-1}]] \ar@{^{(}->}[d] \ar@{->>}[r]^{\qquad s_a} &A_{0}\ar@{^{(}->}[d]\\
		A  \ar@{->>}[r]^{\tilde{s}_a} &B_{a}} \]
	That is, the ring homomorphism $s_a$ extends to a continuous ring epimorphism $\tilde{s}_a: A \to B_{a}$ where $B_{a}$ is a pro-$p$ domain of Krull dimension $1$. Note that we have
	\[ A_{0}[[T_1,..,T_{r-1}]]\cap\left( \bigcap_{a\in \mathfrak{m}_{A_{0}}^{(r-1)}}\mathfrak{q}_{a}\right) =\bigcap_{a\in \mathfrak{m}_{A_{0}}^{(r-1)}}\mathfrak{p}_{a}=\{0\} .\]
	Since $A$ is a finite integral extension of $A_{0}[[T_{1},\cdots,T_{r-1}]]$, by Lemma \ref{goingup} we have
	\[ \bigcap_{a\in \mathfrak{m}_{A_{0}}^{(r-1)}}\ker(\tilde{s}_{a})=\bigcap_{a\in \mathfrak{m}_{A_{0}}^{(r-1)}}\mathfrak{q}_{a}=\{0\}.\]
	
	If $\alpha_{1},\cdots,\alpha_{d}$ is a system of generators of the finite $A_{0}[[T_{1},\cdots,T_{r-1}]]$-module $A$, then $\tilde{s}_{a}(\alpha_{1}),\cdots,\tilde{s}_{a}(\alpha_{d})$ is a system of generators of the finite free $A_{0}$-module $B_{a}$. It follows that each $B_{a}$ is the ring of integers $\mathcal{O}_{F}$ for some finite extension $F$ of $\text{Frac}(A_{0})$ with degree less than or equal to $d$ by Lemma \ref{conhenstructuretheorem}. The theorem is proved.
\end{proof}

Note that if $R_{1},R_{2}$ are commutative rings and there exists a ring homomorphism $R_{1}\to R_{2}$, then the characteristic of $R_{2}$ divides the characteristic of $R_{1}$.

\begin{Prop}\label{true}
	Let $ G $ be a topologically finitely generated profinite group and let $ A $ be a reduced complete Noetherian local ring with finite residue field of characteristic $ p $. Let $ n\geq 1$ be a fixed positive integer. Assume the following:
	\begin{enumerate}
		\item When $\text{char}(A)=0$, suppose that for any non-Archimedean local field $F$ with residue field of characteristic $p$, any continuous representation $\rho:G\to {\rm GL}_{n}(F)$ has finite image.
		\item When $\text{char}(A)>0$, suppose that for any non-Archimedean local field $F$ of characteristic $p$, any continuous representation $\rho:G\to {\rm GL}_{n}(F)$ has finite image.
	\end{enumerate}
	Then any continuous homomorphism $ \rho:G\to {\rm GL}_{n}(A) $ has finite image. 
\end{Prop}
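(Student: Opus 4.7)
The plan is to proceed in three reduction steps: pass from the reduced ring $A$ to an integral domain, use Proposition \ref{propdomainisdeterminendby} to replace $A$ by rings of integers of non-Archimedean local fields, and conclude via a uniform bounded-exponent argument combined with Zelmanov's solution to the Restricted Burnside Problem.

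First I would use that $A$, being a reduced Noetherian ring, has finitely many minimal primes $\mathfrak{p}_{1},\ldots,\mathfrak{p}_{k}$ whose intersection is zero, so the diagonal map $A\hookrightarrow \prod_{i} A/\mathfrak{p}_{i}$ is injective. This induces an injective group homomorphism ${\rm GL}_{n}(A)\hookrightarrow \prod_{i}{\rm GL}_{n}(A/\mathfrak{p}_{i})$, reducing the problem to showing each composition $\rho_{i}:G\to {\rm GL}_{n}(A/\mathfrak{p}_{i})$ has finite image. Each $A/\mathfrak{p}_{i}$ is a pro-$p$ domain, and since $A$ is reduced, if $\text{char}(A)>0$ then necessarily $\text{char}(A)=p$ (otherwise $p$ would be a nonzero nilpotent). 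When $\text{char}(A)=0$ some $A/\mathfrak{p}_{i}$ may acquire characteristic $p$, but hypothesis (i) already allows $F$ to be any local field of residue characteristic $p$, which covers local fields of both characteristic $0$ and characteristic $p$; so the assumption transfers cleanly to each quotient. We may therefore assume $A$ is itself a pro-$p$ domain.

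Next, Proposition \ref{propdomainisdeterminendby} supplies a positive integer $d$ and a family of continuous ring homomorphisms $\alpha:A\to \mathcal{O}_{F}$, with $F$ ranging over finite extensions of $\text{Frac}(A_{0})$ of degree at most $d$, such that $\bigcap_{\alpha}\ker(\alpha)=\{0\}$. For each such $\alpha$ the composition $\rho_{\alpha}:G\xrightarrow{\rho}{\rm GL}_{n}(A)\xrightarrow{\alpha}{\rm GL}_{n}(\mathcal{O}_{F})\hookrightarrow {\rm GL}_{n}(F)$ is continuous and thus has finite image by the assumption. By Lemma \ref{bounded} there exists a single integer $m=m(n,d,r)$, with $r=\text{char}(A)\in\{0,p\}$, such that every torsion element of every relevant ${\rm GL}_{n}(\mathcal{O}_{F})$ has order dividing $m$. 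Consequently $\alpha(\rho(g)^{m}-I_{n})=\rho_{\alpha}(g)^{m}-I_{n}=0$ for every $g\in G$ and every $\alpha$; since $\bigcap_{\alpha}\ker(\alpha)=\{0\}$, applied entrywise in $M_{n}$, we conclude $\rho(g)^{m}=I_{n}$ in ${\rm GL}_{n}(A)$ for all $g\in G$.

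Finally, $\text{Im}(\rho)$ is a topologically finitely generated profinite group (as $G$ is) of exponent dividing $m$, and Zelmanov's solution to the Restricted Burnside Problem then forces it to be finite. I expect the main subtlety to lie in the second step: extracting the uniform degree bound $d$ from Proposition \ref{propdomainisdeterminendby} is what makes Lemma \ref{bounded} applicable with a single exponent $m$, and without this uniformity the pointwise finiteness of each $\rho_{\alpha}(G)$ could not be upgraded to a global finite exponent on $\text{Im}(\rho)$. Once that uniformity is secured, the appeal to Zelmanov's theorem closes the argument.
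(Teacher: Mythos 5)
Your proposal is correct and follows essentially the same route as the paper: reduction to integral domain quotients via the finitely many minimal primes, then Proposition \ref{propdomainisdeterminendby} together with the uniform bound of Lemma \ref{bounded} to force a finite exponent on $\text{Im}(\rho)$, and finally a Zelmanov-type finiteness theorem for topologically finitely generated profinite groups. The only (immaterial) differences are that you perform the reduced-to-domain reduction first rather than last, and you invoke the Restricted Burnside Problem for bounded exponent where the paper cites Zelmanov's theorem that a topologically finitely generated profinite torsion group is finite.
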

\begin{proof}	
	We prove our proposition as follows:
	\begin{itemize}
		\item Step 1: Assume that $ A $ is an integral domain. Let $ \rho:G\to {\rm GL}_{n}(A) $ be a continuous homomorphism. 
		Since $G$ is topologically finitely generated, so is $\text{Im}(\rho)$. We need to show that $\text{Im}(\rho)$ is finite. By \cite[Theorem 1]{MR1194787}, we know that every topologically finitely generated profinite torsion group is finite. Thus, it suffices to show that each element $g$ in the group $\text{Im}(\rho)$ has finite order. So let $g\in \text{Im}(\rho)$. We need to show that $g$ has finite order.
		
		By Proposition \ref{propdomainisdeterminendby}, there exists a positive integer $d$ such that the intersection of kernels of all morphisms $ A\to \mathcal{O}_{F} $ is the zero ideal where $ F $ runs over all finite extensions of $\text{Frac}(A_{0})$ with degree less than or equal to $d$. By Lemma \ref{bounded}, there is an integer $ N $ such that the order of every torsion element in these $ {\rm GL}_{n}(\mathcal{O}_{F}) $ divides $N $. Since the image of $g$ in these ${\rm GL}_{n}(\mathcal{O}_{F})$ are elements of finite order by our assumption, we see that 
		\[ g^{N}\equiv 1 ~\text{mod}~ \ker(A\to \mathcal{O}_{F}), \]
		for all such $F$, and hence we have $g^{N}=1$. Therefore, the group $ \text{Im}(\rho) $ is finite.
		
		\item Step $ 2 $: In general, let $ A $ be a reduced complete Noetherian local ring, and $ \rho:G\to {\rm GL}_{n}(A) $ be a continuous homomorphism. Consider the natural ring homomorphism
		\[ \varphi: A\to \prod_{\mathfrak{p}\in \text{Minspec}(A)}A/\mathfrak{p}_{i}, \]
		where $ A\in \text{Minspec}(A) $ runs over all minimal prime ideals of $ A $. Since $ A $ is Noetherian and reduced, there are only finitely many minimal prime ideals of $ A $, and the ring homomorphism $ \varphi $ is injective. In particular, the ring homomorphism $ \varphi $ induces the following group homomorphism
		\[ {\rm GL}_{n}(A) \hookrightarrow {\rm GL}_{n}\left(\prod_{\mathfrak{p}\in \text{Minspec}(A)}A/\mathfrak{p}_{i} \right)=\prod_{\mathfrak{p}\in \text{Minspec}(A)}{\rm GL}_{n}(A/\mathfrak{p}_{i}) .\]
		Since each $ A/\mathfrak{p}_{i} $ is a domain, we see that $ \text{Im}(\rho) $ is a finite subgroup of $ \prod_{\mathfrak{p}\in \text{Minspec}(A)}{\rm GL}_{n}(A/\mathfrak{p}_{i}) $ by Step $ 1 $. This completes the proof of our proposition.
	\end{itemize}
\end{proof}

\begin{Lem}\label{nilpotentkernel}
	Let $ A $ be a complete Noetherian local ring with finite residue field of characteristic $ p $, and $A^{\text{red}}$ the quotient of $A$ by the nilradical of $A$. Then the kernel of the group homomorphism $ {\rm GL}_{n}(A)\twoheadrightarrow {\rm GL}_{n}(A^{\text{red}}) $ induced by the ring homomorphism $ A\twoheadrightarrow A^{\text{red}} $ is a nilpotent group.
\end{Lem}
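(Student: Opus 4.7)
The plan is to exhibit an explicit descending central filtration on the kernel. Let $I = \mathrm{Nil}(A)$ be the nilradical of $A$, so by definition $A^{\text{red}} = A/I$ and the kernel in question is
\[ K = \{1 + M : M \in M_n(I)\} \subset {\rm GL}_n(A). \]
Since $A$ is Noetherian, the nilradical $I$ is in fact a nilpotent ideal: there exists $N \geq 1$ with $I^N = 0$. This is the crucial input, without which one would only get a pro-nilpotent kernel.

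For each $k \geq 1$, set
\[ U_k := 1 + M_n(I^k). \]
Each $U_k$ is a subgroup of ${\rm GL}_n(A)$: if $M \in M_n(I^k)$, then $M$ is nilpotent (because entries of $M^m$ lie in $I^{km}$, which is zero for $km \geq N$), so the geometric series $\sum_{j \geq 0} (-M)^j$ is a finite sum giving an explicit inverse for $1 + M$ inside $U_k$. Moreover $U_1 = K$ and $U_N = \{1\}$, so $\{U_k\}_{k=1}^{N}$ gives a finite descending chain from $K$ to the identity.

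The main step is to verify that this chain is a central series, i.e.,
\[ [U_i, U_j] \subseteq U_{i+j} \quad \text{for all } i, j \geq 1. \]
For $g = 1 + M \in U_i$ and $h = 1 + N \in U_j$, a direct expansion gives $gh - hg = MN - NM \in M_n(I^{i+j})$, since $I^i \cdot I^j \subseteq I^{i+j}$. Writing $ghg^{-1}h^{-1} - 1 = (gh - hg)\cdot (hg)^{-1}$ and noting that $I^{i+j}$ is an ideal (so right-multiplication by any matrix over $A$ preserves $M_n(I^{i+j})$), we conclude that the group commutator $[g,h]$ lies in $U_{i+j}$.

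Once this commutator bound is established, the lower central series of $U_1$ satisfies $\gamma_k(U_1) \subseteq U_k$ by an easy induction, and therefore $\gamma_N(U_1) \subseteq U_N = \{1\}$, which is exactly the statement that $K = U_1$ is a nilpotent group. There is no real obstacle here; the only subtlety is making sure to invoke Noetherianity of $A$ to get nilpotence (not merely nil-ness) of the ideal $I$, since without it the filtration would be infinite and one would only be able to conclude residual nilpotence.
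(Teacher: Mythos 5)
Your proof is correct and follows essentially the same route as the paper's: both use Noetherianity to get $\mathrm{Nil}(A)^{N}=0$, filter the kernel by the subgroups $1+M_{n}(\mathrm{Nil}(A)^{k})$, verify $[U_{i},U_{j}]\subseteq U_{i+j}$, and conclude via the lower central series. The only cosmetic differences are that you quote the nilpotence of the nilradical as a standard fact where the paper proves it from a finite generating set, and your commutator bound uses the identity $[g,h]-1=(gh-hg)(hg)^{-1}$ instead of the paper's geometric-series expansion; both verifications are fine.
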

\begin{proof}
	Let $ \text{Nil}(A) $ denote the nilradical of $ A $. Then $ A^{\text{red}}=A/\text{Nil}(A) $, and the kernel of $ {\rm GL}_{n}(A)\twoheadrightarrow {\rm GL}_{n}(A^{\text{red}}) $ is $ I_{n}+M_{n}(\text{Nil}(A)):=\{1+x~|~x\in M_n(\text{Nil}(A))\} $. We need to show that the group $ I_{n}+M_{n}(\text{Nil}(A)) $ is nilpotent.
	
	Since $ A $ is Noetherian, the ideal $ \text{Nil}(A) $ is finitely generated. Let $ a_{1},\cdots,a_{m} $ generate $ \text{Nil}(A) $. For each $ i $, we have $ a_{i}^{r_{i}}=0 $ for some integer $ r_{i} $. By definition, every element $ a\in \text{Nil}(A) $ can be written as 
	\[ c_{1}a_{1}+\cdots +c_{m}a_{m},~c_{i}\in A .\]
	Note that any product of $ (r_{1}+\cdots+ r_{m}) $ elements of $ \text{Nil}(A) $ is equal to $ 0 $. Indeed, for $ b_{j}=c_{1,j}a_{1}+\cdots +c_{m,j}a_{m}\in A,~j=1,\cdots,r_{1}+\cdots+r_{m} $, every term of the product
	\[ \prod_{j=1}^{r_{1}+\cdots+ r_{m}}b_{j}=\prod_{j=1}^{r_{1}+\cdots+ r_{m}}(c_{1,j}a_{1}+\cdots +c_{m,j}a_{m}) \]
	has a factor of the form $ a_{i}^{r_{i}} $ for some $ i $, and hence it is $ 0 $.
	
	It follows that there exists a positive integer $ N $ such that any product of $ N $ elements of $M_{n}(\text{Nil}(A)) $ is equal to $ 0 $. Write $ M $ for $ M_{n}(\text{Nil}(A)) $, and $ M^{i} $ for the set of all sums of product of $ i $ elements of $ M $ where $ i $ is any positive integer. So $ M^{N}=0 $. Put $ U_{i}:=\{1+x~|~x\in M^{i}\} $ for all $ i\geq 1 $. So $ U_{1}=1+M $, and $U_{N}=\{1\}$. Since $M=M_{n}(\text{Nil}(A)) $ is an ideal of the ring $ M_{n}(A) $, $ U_{i} $ is a semigroup with respect to the matrix multiplication, and $ U_{i}\supset U_{i+1} $ for all $ i\geq 1 $.

	If $ 1+x\in U_{r} $ and $ 1+y\in U_{s} $ where $ r,s\geq 1 $, then we have
	\begin{align*}
		[1+x,1+y]=((1+y)(1+x))^{-1}(1+x)(1+y)=(1+y+x+yx)^{-1}(1+x+y+xy).
	\end{align*}
	Put $ u=x+y+xy $ and $ v=y+x+yx $. Using the fact that $M^{N}=0$, we have
	\begin{align*}
		[1+x,1+y]&=(1+v)^{-1}(1+u)  \\
		&=(1-v+v^{2}-\cdots +(-1)^{N-1}v^{N-1})(1+u)\\
		&=1+(1-v+v^{2}-\cdots +(-1)^{N-2}v^{N-2})(u-v)+(-1)^{N-1}v^{N-1}u,
	\end{align*}
	where $ v^{N-1}u=0 $ and $ u-v=xy-yx\in M^{r+s} $. Thus, we see that $ [1+x,1+y] $ lies in $ U_{r+s} $. In other words, $ [U_{r},U_{s}] \subset U_{r+s}$. 
	
	Finally, let $ G=G_{0}\supset G_{1}\supset \cdots G_{i}\supset \cdots $ be the lower central series of $ G:=U_{1} $ where $ G_{i+1}:=[G_{i},G] $. Then we have $ G_{1}=[G,G]=[U_{1},U_{1}]\subset U_{2} $. By induction, we see that
	\[ G_{N-1}=[G_{N-2},G]\subset [U_{N-1},U_{1}]\subset U_{N}=\{1\}. \]
	Thus, $G_{N-1}=\{1\}$ and $ G $ is nilpotent. The lemma is proved.
\end{proof}

\begin{Thm}\label{doctoral}
	Let $ G $ be a topologically finitely generated profinite group such that any virtually solvable continuous quotient of $G$ is finite. Let $ A $ be a complete Noetherian local ring with finite residue field of characteristic $ p $. Let $ n\geq 1$ be a fixed positive integer. Assume the following:
	\begin{enumerate}
		\item When $\text{char}(A)=0$, suppose that for any non-Archimedean local field $F$ with residue field of characteristic $p$, any continuous representation $\rho:G\to {\rm GL}_{n}(F)$ has finite image.
		\item When $\text{char}(A)>0$, suppose that for any non-Archimedean local field $F$ of characteristic $p$, any continuous representation $\rho:G\to {\rm GL}_{n}(F)$ has finite image.
	\end{enumerate}
	Then any continuous homomorphism $ \rho:G\to {\rm GL}_{n}(A) $ has finite image. 
\end{Thm}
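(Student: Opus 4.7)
The plan is to reduce to the reduced case handled by Proposition \ref{true} and then exploit the nilpotency of the kernel given by Lemma \ref{nilpotentkernel} together with the hypothesis that virtually solvable continuous quotients of $G$ are finite.

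First, let $\rho:G\to {\rm GL}_{n}(A)$ be a continuous homomorphism. Let $A^{\text{red}}=A/\text{Nil}(A)$ and consider the composition
\[
\rho^{\text{red}}:G\xrightarrow{\rho}{\rm GL}_{n}(A)\twoheadrightarrow {\rm GL}_{n}(A^{\text{red}}).
\]
Since $A^{\text{red}}$ is a reduced complete Noetherian local ring with the same finite residue field as $A$, and since $G$ is topologically finitely generated by hypothesis, we may apply Proposition \ref{true} to $\rho^{\text{red}}$. (Note that $\text{char}(A^{\text{red}})$ equals $\text{char}(A)$ when the latter is $0$, and is a power of $p$ when $\text{char}(A)>0$; either way the local-field hypothesis on representations of $G$ transfers directly.) This yields that $\text{Im}(\rho^{\text{red}})$ is finite.

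Next, let $K:=\ker({\rm GL}_{n}(A)\twoheadrightarrow {\rm GL}_{n}(A^{\text{red}}))$. By Lemma \ref{nilpotentkernel}, the group $K$ is nilpotent, hence solvable. Put $H:=\text{Im}(\rho)\cap K$. Since $\text{Im}(\rho)/H$ embeds into the finite group $\text{Im}(\rho^{\text{red}})$, we see that $H$ is an open (finite-index) normal subgroup of $\text{Im}(\rho)$, and $H$ is solvable as a subgroup of $K$. Therefore $\text{Im}(\rho)$ is virtually solvable.

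Finally, $\text{Im}(\rho)$ is a continuous quotient of $G$, so by the standing hypothesis on $G$ (every virtually solvable continuous quotient of $G$ is finite), we conclude that $\text{Im}(\rho)$ is finite, as required. The main conceptual point, and essentially the only non-formal step, is Proposition \ref{true} for the reduced case; the reduction from general $A$ to $A^{\text{red}}$ is purely formal once Lemma \ref{nilpotentkernel} is available. No serious obstacle arises here beyond verifying that the characteristic hypotheses on $A$ descend correctly to $A^{\text{red}}$, which is immediate.
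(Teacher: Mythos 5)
Your proposal is correct and follows essentially the same route as the paper's own proof: pass to $A^{\text{red}}$, apply Proposition \ref{true} to the reduced representation, use Lemma \ref{nilpotentkernel} to see that $\text{Im}(\rho)$ is virtually nilpotent (hence virtually solvable), and conclude by the hypothesis that virtually solvable continuous quotients of $G$ are finite. Your added check that the characteristic hypotheses descend from $A$ to $A^{\text{red}}$ is a correct (and welcome) detail that the paper leaves implicit.
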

\begin{Rem}
	By Proposition \ref{solvablequotient}, any FAb profinite group satisfies the assumption of the theorem.
\end{Rem}

\begin{proof}
	Let $ A $ be a complete Noetherian local ring with finite residue field of characteristic $ p $, and $ \rho:G\to {\rm GL}_{n}(A) $ a continuous homomorphism. We consider the following composition of group homomorphisms:
	\[ \widetilde{\rho}:G\xrightarrow{\rho} {\rm GL}_{n}(A)\xrightarrow{\pi}{\rm GL}_{n}(A^{\text{red}}), \]
	where $ A^{\text{red}} $ is the quotient of $ A $ by the nilradical of $ A $, and the second group homomorphism $ \pi:{\rm GL}_{n}(A)\to {\rm GL}_{n}(A^{\text{red}}) $ is induced by the natural ring homomorphism $ A \twoheadrightarrow A^{\text{red}} $. 
	By our assumption and Proposition \ref{true}, we know that the group $\text{Im}(\widetilde{\rho})$ is finite. It follows that $ \text{Im}(\rho) $ contains an open nilpotent group by Lemma \ref{nilpotentkernel}.  By our assumption, $ \text{Im}(\rho) $ must be finite. This proves the theorem.
\end{proof}

Finally, we prove Theorem \ref{reduction} and Theorem \ref{reductionofBUFM} as follows.

\begin{proof}[Proof of Theorem \ref{reduction}]
	Let $ A $ be a complete Noetherian local ring with finite residue field of characteristic $ p $, and $ \rho:G_{K,S}\to {\rm GL}_{n}(A) $ a continuous homomorphism. By Corollary \ref{imageisfinitelygenerated}, we see that the image of $\rho$ is topologically finitely generated. Then our claim follows from Theorem \ref{doctoral} and Theorem \ref{virtually solvable case}.
\end{proof}

\begin{proof}[Proof of Theorem \ref{reductionofBUFM}]
	We prove our theorem as follows:
	\begin{enumerate}
		\item It is clear that Conjecture \ref{UFMforcharp} implies Conjecture \ref{WUFMforcharp}. 
		Conversely, let $ \rho:G_{K,S}\to {\rm GL}_{n}(F) $ be a continuous homomorphism where $ S=\{\mathfrak{p}_{1},\cdots,\mathfrak{p}_{d}\} $. We may assume that $ \text{Im}(\rho)\subset {\rm GL}_{n}(\mathcal{O}_{F}) $. By Lemma \ref{pro}, we know that $ {\rm GL}_{n}^{1}(\mathcal{O}_{F}) $ is an open pro-$ p $ subgroup of $ {\rm GL}_{n}(\mathcal{O}_{F}) $. After taking a finite extension of $ K $, we can assume that $ \text{Im}(\rho)\subset {\rm GL}_{n}^{1}(\mathcal{O}_{F}) $. By Theorem \ref{local}, we see that $ \rho(I_{\mathfrak{p}_{i}}) $ has finite image where $ I_{\mathfrak{p}_{i}} $ is the inertia subgroup at $ \mathfrak{p}_{i} $ for all $ i $. After taking another finite extension of $ K $, we can assume that $ \rho(I_{\mathfrak{p}_{i}}) $ is trivial for all $ i $. It follows that $ \rho $ factors through $ G_{K,\emptyset}(p) $. Assuming Conjecture \ref{WUFMforcharp}, we see that $ \rho $ has finite image. The claim is proved.
		\item It is clear that $ (a)\implies (b) $ and $ (b)\implies (c) $. To see $ (b)\implies (a) $, assume that both Conjecture \ref{UFM} and Conjecture \ref{WUFMforcharp} hold. By $ 1 $, we see that Conjecture \ref{UFMforcharp} holds. By Theorem \ref{reduction}, we see that Conjecture \ref{BUFM} holds. To see $ (c)\implies (b) $, it is enough to show that Conjecture \ref{UFM} holds assuming Conjecture \ref{WUFM} and \ref{theimageofineritagroupisfinite}. Let $ \rho:G_{K,S}\to {\rm GL}_{n}(\mathbb{Q}_{p}) $ be a continuous homomorphism. By Lemma \ref{pro}, we see that $ \text{Im}(\rho) $ contains an open pro-$ p $ group. Moreover, $ \rho(I_{\mathfrak{q}}) $ is finite for all $ \mathfrak{q}\in S $ by our assumption. After taking a finite extension of $ K $, we can assume that $ \rho $ factors through $ G_{K,\emptyset}(p) $. By our assumption, we see that $ \rho $ has finite image. This finishes the proof of the theorem.
		
	\end{enumerate}
\end{proof}

\section{Some cases of Conjecture \ref{UFMforcharp}}\label{section3.5}

In this section, we verify some cases of Conjecture \ref{UFMforcharp}. Our main theorem in this section is the following:
\begin{Thm}\label{positivecharcase}
	Let $ K $ be a number field and $ S $ a finite set of primes of $ K $ such that $ S\cap S_{p}=\emptyset $. Let $ F $ be a non-Archimedean local field of characteristic $ p$, and $ \rho:G_{K,S}\to {\rm GL}_{n}(F) $ a continuous homomorphism. Assume further that at least one of the following conditions holds.
	\begin{enumerate}
		\item The group $ \text{Im}(\rho) $ is $ p $-adic analytic.
		\item The class number of $ K $ is prime to $ p $, and the group $ \text{Im}(\rho) $ is torsion-free.
	\end{enumerate}
	Then $ \rho $ has finite image.
\end{Thm}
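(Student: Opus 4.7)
The common strategy in both cases is to invoke Theorem \ref{local}(iii) -- which in characteristic $p$ forces each inertia image to be finite -- together with torsion-freeness of $\text{Im}(\rho)$, to kill inertia at $S$ and reduce $\rho$ to an everywhere-unramified pro-$p$ representation. After conjugation we may assume $\text{Im}(\rho)\subset {\rm GL}_n(\mathcal{O}_F)$.

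\emph{Case (ii)} is immediate. Since $\mathcal{O}_F$ is a pro-$p$ ring and $\text{Im}(\rho)$ is torsion-free, Lemma \ref{pro}(iii) forces $\text{Im}(\rho)$ to be pro-$p$, so $\rho$ factors through $G_{K,S}(p)$. For each $\mathfrak{q}\in S$, Theorem \ref{local}(iii) gives that $\rho(I_\mathfrak{q})$ is finite, hence trivial by torsion-freeness. Thus $\rho$ factors through $G_{K,\emptyset}(p)$, which is trivial by Theorem \ref{global structure}(iv) (with $|S|=0$, under the class-number hypothesis). So $\rho$ is trivial.

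\emph{Case (i)} is the hard one. I would first reduce to torsion-freeness by passing to a finite extension: by Lazard's theorem, the $p$-adic analytic group $\text{Im}(\rho)$ contains an open uniform (in particular torsion-free) pro-$p$ subgroup $U$, and choosing the finite extension $K'/K$ with $G_{K',S'}=\rho^{-1}(U)$ (where $S'$ consists of the primes above $S$, still disjoint from $S_p(K')$), the restriction $\rho|_{G_{K',S'}}$ has image $U$. Applying Theorem \ref{local}(iii) to each $\mathfrak{q}'\in S'$ gives $\rho(I_{\mathfrak{q}'})=1$, so $\rho|_{G_{K',S'}}$ factors through $G_{K',\emptyset}(p)$, which is FAb by Theorem \ref{global structure}(ii). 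Hence $U$ is FAb; since $U$ is uniform, its abelianization corresponds to that of its $\mathbb{Q}_p$-Lie algebra and is isomorphic to $\mathbb{Z}_p^r$ for some $r\geq 0$, and the FAb condition combined with torsion-freeness of $\mathbb{Z}_p^r$ forces $r=0$. Therefore $U=\overline{[U,U]}$, i.e., $U$ is topologically perfect.

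The remaining step, which genuinely uses characteristic $p$, is a commutator-filtration argument inside $1+TM_n(\mathcal{O}_F)$. For $g=1+x$ and $h=1+y$ with $x,y\in T^k M_n(\mathcal{O}_F)$, a direct computation gives $[g,h]-1\in T^{2k}M_n(\mathcal{O}_F)$; hence if $U\subset 1+T^k M_n(\mathcal{O}_F)$, then $U=\overline{[U,U]}\subset 1+T^{2k}M_n(\mathcal{O}_F)$. Starting from $k=1$ and iterating, one obtains $U\subset 1+T^{2^k}M_n(\mathcal{O}_F)$ for every $k\geq 1$, and taking the intersection forces $U=\{1\}$. Consequently $\rho|_{G_{K',S'}}$ is trivial, so $\text{Im}(\rho)$ is a quotient of the finite group $\text{Gal}(K'/K)$ and is finite. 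The main obstacle is precisely this last collapse, where the characteristic-zero Lie-theoretic perfectness of $U$ is matched against the multiplicative $T$-adic filtration coming from characteristic $p$.
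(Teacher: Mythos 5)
Your Case (ii) is correct and is essentially the paper's own argument: torsion-freeness plus Lemma \ref{pro} and Theorem \ref{local} kill the ramification, and Theorem \ref{global structure} makes $G_{K,\emptyset}(p)$ trivial under the class number hypothesis.

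Case (i), however, has a genuine gap at the step where you conclude that $U$ is topologically perfect. First, the auxiliary claim that a uniform pro-$p$ group has torsion-free abelianization is false: the uniform group attached via the Lazard correspondence to the $\mathbb{Z}_p$-Lie algebra with basis $e,f,g$ and $[e,f]=pg$, $[e,g]=[f,g]=0$ has abelianization $\mathbb{Z}_p^{2}\oplus \mathbb{Z}/p\mathbb{Z}$. Second, and more fundamentally, what FAb actually gives is only that $U^{\mathrm{ab}}$ is \emph{finite}, and this can never be upgraded to $U=\overline{[U,U]}$: no nontrivial pro-$p$ group is topologically perfect, since a nontrivial pro-$p$ group has a nontrivial finite $p$-quotient and hence a nontrivial abelian quotient. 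Concretely, $U={\rm SL}_2^{1}(\mathbb{Z}_p)$ (for $p$ odd) is uniform and FAb, yet $\overline{[U,U]}$ is a proper open subgroup; nothing in your argument up to the perfectness claim uses the characteristic-$p$ embedding, so the same reasoning would "prove" that this group is perfect. As a result your $T$-adic doubling step, while correct as a computation, does no work: for pro-$p$ groups topological perfectness already forces triviality, so the whole weight of the proof rests on the perfectness claim, which is unattainable. What your argument legitimately yields is only that the $\mathbb{Q}_p$-Lie algebra of $U$ is perfect (equivalently, that $\overline{[U,U]}$ is open in $U$), and from that the filtration does not collapse. The genuinely hard input, which your route omits and cannot replace by this elementary argument, is the structure theorem the paper invokes as Theorem \ref{virtuallyabelian} (Pink; Barnea--Larsen): a $p$-adic analytic pro-$p$ group that embeds as a closed subgroup of ${\rm GL}_n(F)$ with $F$ local of characteristic $p$ is virtually abelian; combined with Theorem \ref{virtually solvable case} this finishes case (i). (A further, minor, issue: to start the doubling at $k=1$ you need $U\subset 1+TM_n(\mathcal{O}_F)$, which is not automatic for a pro-$p$ subgroup of ${\rm GL}_n(\mathcal{O}_F)$; but this is secondary to the main gap.)
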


To prove Theorem \ref{positivecharcase}, we need the following theorem.

\begin{Thm}\cite[Proposition 5.6]{MR1765116}\cite[Theorem 1.8]{MR1935507}\label{virtuallyabelian}
	Let $ G $ be a $ p $-adic analytic pro-$ p $ group. Suppose that $ G $ is isomorphic to a closed subgroup of $ {\rm GL}_{n}(F) $ where $ F $ is a non-Archimedean local field of characteristic $ p $. Then $ G $ is virtually abelian.
\end{Thm}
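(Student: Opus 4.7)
The plan is to exploit the tension, in characteristic $p$, between the linear growth of $p$-th powers in a $p$-adic analytic group (prescribed by its intrinsic $p$-valuation) and the exponential growth dictated by the identity $(1+x)^{p}=1+x^{p}$ that holds in any associative ring of characteristic $p$.

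First I would reduce to the case $G$ is uniform: by Lazard's theorem~\ref{lazard}, $G$ contains an open $p$-saturated, hence (after a further shrinking) uniform, subgroup $U$, and virtual abelianness of $U$ implies the same for $G$. After intersecting with the open subgroup ${\rm GL}_{n}^{1}(\mathcal{O}_{F})$, I may also assume $U\subseteq{\rm GL}_{n}^{1}(\mathcal{O}_{F})$. Let $v$ denote the additive valuation on $F$ with $v(\pi_{F})=1$, and set $w(g):=v(g-1)\in\mathbb{Z}_{\geq 1}$ for $g\in {\rm GL}_{n}^{1}(\mathcal{O}_{F})\setminus\{1\}$. Because $\mathrm{char}(F)=p$, every $g=1+a$ satisfies $g^{p^{k}}=1+a^{p^{k}}$ (intermediate binomial coefficients vanish in characteristic~$p$), and hence $w(g^{p^{k}})\geq p^{k}w(g)$; by contrast, the intrinsic $p$-valuation $\omega$ on the uniform group $U$ satisfies the linear identity $\omega(g^{p^{k}})=\omega(g)+k$.

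The heart of the argument is to convert this mismatch into a commutativity statement. Fix $x,y\in U$. Since $U$ is uniform, the commutator $c:=[x^{p^{k}},y^{p^{k}}]$ has $\omega(c)\geq 2\omega(x)+2\omega(y)+2k$, which makes $c$ a $p^{2k}$-th power in $U$: write $c=c_{k}^{p^{2k}}$ with $c_{k}\in U$. Since $c_{k}\in{\rm GL}_{n}^{1}(\mathcal{O}_{F})$, the characteristic-$p$ identity then gives $w(c)=p^{2k}\,w(c_{k})\geq p^{2k}$. On the other hand, writing $x=1+a$ and $y=1+b$ with $a,b\in\pi_{F}M_{n}(\mathcal{O}_{F})$, a direct matrix expansion yields
\[ [x^{p^{k}},y^{p^{k}}]-1\;=\;[a^{p^{k}},b^{p^{k}}]+(\text{terms of valuation}\geq 3p^{k}). \]
For $k$ large enough that $3p^{k}<p^{2k}$, the lower bound $w(c)\geq p^{2k}$ forces $[a^{p^{k}},b^{p^{k}}]\equiv 0\pmod{\pi_{F}^{p^{2k}}}$; unwinding leading $\pi_{F}$-coefficients, the reductions $\bar{a},\bar{b}$ of $a/\pi_{F},b/\pi_{F}$ in $M_{n}$ over the residue field of $F$ satisfy $[\bar{a}^{p^{k}},\bar{b}^{p^{k}}]=0$.

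Finally, since this vanishing holds for all large $k$ and the Frobenius $A\mapsto A^{p}$ on matrices over a finite field annihilates the nilpotent part of the Jordan decomposition once $k$ exceeds the nilpotency index and eventually stabilizes the semisimple part, the semisimple parts of $\bar{a}$ and $\bar{b}$ must commute. Iterating the same mismatch argument on the deeper coefficients of the $\pi_{F}$-adic expansions of $a,b$, and applying it to every pair of elements of $U$, propagates the vanishing through every order of the filtration and yields the vanishing of the Lazard Lie bracket on $U$. The Lazard correspondence between uniform pro-$p$ groups and their associated $\mathbb{Z}_{p}$-Lie algebras then gives that $U$ is abelian, so the original $G$ in the statement is virtually abelian. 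The main obstacle is this last propagation step: the mod-$\pi_{F}$ analysis above only controls the reductions of the leading coefficients, whereas the Lazard correspondence demands the vanishing of the full Lie bracket on $U$. Carefully tracking the higher-order corrections in the commutator expansion of $[x^{p^{k}},y^{p^{k}}]$ and re-running the inequality $3p^{k}<p^{2k}$ at each level of the congruence filtration ${\rm GL}_{n}^{i}(\mathcal{O}_{F})\cap U$ is the technical heart of the argument.
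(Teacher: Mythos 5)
The paper offers no proof of this statement at all---it is imported wholesale from the two cited references---so your attempt has to be judged on its own. The first half of your plan is essentially sound, with fixable slips: after passing to a uniform open subgroup $U\subseteq {\rm GL}_{n}^{1}(\mathcal{O}_{F})$, the correct estimate is $\omega([x^{p^{k}},y^{p^{k}}])\geq \omega(x)+\omega(y)+2k$ (not $2\omega(x)+2\omega(y)+2k$); the extraction $c=c_{k}^{p^{2k}}$ is legitimate because in a uniform ($p$-saturated) group an element of sufficiently large $\omega$-value is a $p^{2k}$-th power; and $w(c)\geq p^{2k}w(c_{k})$ should be an inequality, since matrix powers can gain valuation. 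To pass from $w(c)\geq p^{2k}$ to $v([a^{p^{k}},b^{p^{k}}])\geq p^{2k}$ you should not use your "error terms of valuation $\geq 3p^{k}$" bookkeeping (which only yields the bound $3p^{k}$), but rather the exact identity $[x^{p^{k}},y^{p^{k}}]-1=(1+a^{p^{k}})^{-1}(1+b^{p^{k}})^{-1}\bigl(a^{p^{k}}b^{p^{k}}-b^{p^{k}}a^{p^{k}}\bigr)$, in which the unit factor preserves the valuation. Granting these repairs, your intermediate conclusion is correct: for every pair $x=1+a$, $y=1+b$ in $U$ and suitable large $k$, the semisimple parts of the leading $\pi_{F}$-coefficients of $a$ and $b$ commute over the residue field.

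The genuine problem is the final step, which you yourself flag as "the technical heart": it is not a routine verification, and as stated the approach does not close. Three concrete obstructions. First, raising to $p^{k}$-th powers annihilates nilpotent parts, so your constraint is vacuous whenever the leading coefficients are nilpotent; a hypothetical embedding of a non-abelian uniform group whose non-commutativity lives entirely in nilpotent directions (e.g.\ a Heisenberg uniform group, which the theorem must also exclude, since nilpotent $\neq$ virtually abelian) produces no contradiction from the leading-coefficient analysis. Second, the proposed induction on deeper coefficients does not decouple: in $M_{n}(\mathbb{F}_{q}[[T]])$ the identity $(u+v)^{p}=u^{p}+v^{p}$ requires $u$ and $v$ to commute, so the coefficients of $a^{p^{k}}$ beyond the leading one are complicated noncommutative polynomials in \emph{all} coefficients of $a$, not $p^{k}$-th powers of the deeper coefficients; "re-running the inequality at each level of the congruence filtration" therefore does not reduce to the same computation. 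Third, even full commutation of semisimple parts of all coefficients would still fall short of the vanishing of the Lazard bracket of $U$, which is what abelianness requires. So what you have proved is a genuine but weak necessary condition on such embeddings, and the gap you acknowledge is essential rather than technical; the proofs in the cited literature rely on structural input (analysis of the analytic structure and of compact linear groups in characteristic $p$) that goes beyond this power-growth mismatch.
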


\begin{proof}[Proof of Theorem \ref{positivecharcase}]
	If $ \text{Im}(\rho) $ is a $ p $-adic analytic group, then it contains an open pro-$ p $ group $ H $. By Theorem \ref{virtuallyabelian}, we know that $ H $ is virtually abelian, and so is $  \text{Im}(\rho) $. By Theorem \ref{virtually solvable case}, we obtain that $  \text{Im}(\rho) $ is finite.
	
	If the group $ \text{Im}(\rho) $ is torsion-free, then $ \rho $ factors through $ G_{K,\emptyset}(p) $ by Lemma \ref{pro} and Theorem \ref{local}. Moreover, if the class number of $ K $ is prime to $ p $, then by Theorem \ref{global structure} we have $ G_{K,\emptyset}(p)=\{1\} $, and hence $ \rho $ is trivial. The theorem is proved.
\end{proof}

Finally, we point out that Allen and Calegari have also confirmed many cases of Conjecture \ref{UFMforcharp} as follows:

\begin{Thm}\label{abllen}
	Let $ K $ be a totally real number field and $ S $ a finite set of primes of $ K $ with $ S\cap S_{p}=\emptyset $. Let $ \mathbb{F} $ be a finite field of characteristic $ p $ and $ \mathbb{F}[[T]] $ the ring of formal power series over $ \mathbb{F} $. Let $ \rho:G_{K,S}\to {\rm GL}_{n}(\mathbb{F}[[T]]) $ be a continuous homomorphism. Suppose that $ p>2n^{2}-1 $ and $ \text{Im}(\overline{\rho})\supset {\rm SL}_{n}(\mathbb{F}_{p}) $ where $ \overline{\rho} $ the reduction of $ \rho $ modulo $ T $. Then $ \rho $ has finite image.
\end{Thm}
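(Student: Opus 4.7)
The plan is to deduce the theorem via automorphy/modularity lifting theorems for totally real fields, combined with the finiteness of Hecke algebras of mod $p$ automorphic forms. This is the strategy of Allen and Calegari in \cite{MR3294389}.

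First, I would reinterpret the hypothesis in the language of deformation theory. Since $S\cap S_{p}=\emptyset$, the representation $\rho$ is unramified at every prime above $p$, so the natural deformation problem for $\overline{\rho}$ is the one allowing ramification only inside $S$, with no $p$-adic Hodge theoretic conditions to impose at $S_{p}$. The bound $p>2n^{2}-1$ together with the big image hypothesis $\text{Im}(\overline{\rho})\supset {\rm SL}_{n}(\mathbb{F}_{p})$ should guarantee that $\overline{\rho}$ is \emph{adequate} in the sense of Thorne; in particular, $\overline{\rho}$ is absolutely irreducible, $H^{1}(\text{Im}(\overline{\rho}),\text{ad}^{0}\overline{\rho})=0$, and the adjoint is semisimple, so the modularity-lifting machine can be run.

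Second, since $K$ is totally real, one invokes the residual automorphy results of Barnet-Lamb--Gee--Geraghty--Taylor and Thorne. After possibly passing to an auxiliary self-dual construction (such as a tensor induction, or the "doubled" representation $\overline{\rho}\oplus\overline{\rho}^{\vee}$, suitably twisted) to bring $\overline{\rho}$ within the scope of the existing polarizable automorphy theorems, one concludes that $\overline{\rho}$ is automorphic. A corresponding $R=T$ theorem then identifies the universal deformation ring $R_{\overline{\rho}}$ of the unramified-outside-$S$ deformation problem with a localization $\mathbb{T}_{\mathfrak{m}}$ of a Hecke algebra of mod $p$ automorphic forms of a fixed level and weight.

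Third, one applies the classical finiteness principle: the space of mod $p$ automorphic forms of fixed level and weight over a totally real field is finite-dimensional over $\mathbb{F}_{p}$. Therefore $\mathbb{T}_{\mathfrak{m}}$, and hence $R_{\overline{\rho}}$, is a finite $\mathbb{F}_{p}$-algebra. The representation $\rho$ corresponds to a continuous $\mathbb{F}_{p}$-algebra homomorphism $R_{\overline{\rho}}\to \mathbb{F}[[T]]$, so its image lies in a finite subring of $\mathbb{F}[[T]]$. Consequently $\rho$ factors through ${\rm GL}_{n}$ of a finite ring, and $\text{Im}(\rho)$ is finite by Lemma \ref{pro}.

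The main obstacle is the second step: establishing automorphy of $\overline{\rho}$ in the required generality. For $n\geq 3$ this is the deep part, since existing modularity theorems over totally real fields demand self-duality or polarizability of the residual representation, which is not given to us. Extracting finiteness of $\rho$ from a polarized auxiliary representation—while controlling the deformation problem to match the original one—is where the technical care is concentrated; the numerical constraint $p>2n^{2}-1$ is almost certainly calibrated precisely to make this step go through while preserving adequacy of the auxiliary residual representation.
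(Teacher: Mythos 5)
Your plan does not match how the paper proves Theorem \ref{abllen}, and as written it has a genuine hole. The paper does not re-derive any automorphy machinery: it quotes \cite[Theorem 1]{MR3294389} as a black box, and the entire content of its proof is verifying that theorem's hypothesis, namely that $\text{ad}^{0}(\overline{\rho}|_{G_{K(\zeta_{p})}})$ is absolutely irreducible. You instead propose to reprove the Allen--Calegari input via residual automorphy, an $R=\mathbb{T}$ theorem, and finiteness of mod $p$ Hecke algebras, and that is exactly where the argument breaks. For $n\geq 3$ there is no residual automorphy theorem for a general, non-polarizable $\overline{\rho}$ over a totally real field (the generalized Serre conjecture for ${\rm GL}_{n}$ is open), and your proposed workaround (tensor induction, or $\overline{\rho}\oplus\overline{\rho}^{\vee}$ suitably twisted) is not carried out: no known theorem yields automorphy of such an auxiliary representation in this generality, and even if it did, an $R=\mathbb{T}$ statement for the auxiliary polarized deformation problem would not obviously control the original unramified-outside-$S$ deformation ring of $\overline{\rho}$. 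You concede this is ``the deep part,'' but conceding it does not fill it; the load-bearing step is missing. A second overstatement: even granting automorphy, identifying the characteristic-$p$, unramified-at-$p$ deformation ring with a finite Hecke algebra is not a ``classical finiteness principle'' --- the deformation problem imposes no condition at $p$, its naive expected dimension is not zero, and dealing with this is precisely the hard content of \cite{MR3294389}, which the paper cites rather than reproves.

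By contrast, the one thing that actually needs an argument once \cite[Theorem 1]{MR3294389} is invoked is only gestured at in your proposal (``should guarantee that $\overline{\rho}$ is adequate''). The paper proves it: since $p\geq 5$, the group ${\rm SL}_{n}(\mathbb{F}_{p})$ is perfect (\cite[Proposition 2.2]{MR3542491}), and since $K(\zeta_{p})/K$ is abelian, the image of $\overline{\rho}|_{G_{K(\zeta_{p})}}$ still contains ${\rm SL}_{n}(\mathbb{F}_{p})$; then, because $p>2n^{2}-1$ (so $p>5$ and $p\nmid n$), the action of ${\rm SL}_{n}(\mathbb{F}_{p})$ on trace-zero matrices is absolutely irreducible by \cite[Lemma 3.3]{MR3336600}, which gives the required absolute irreducibility of $\text{ad}^{0}(\overline{\rho}|_{G_{K(\zeta_{p})}})$ (the case $n=1$ being handled separately via Theorem \ref{virtually solvable case}). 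Note in particular that the restriction to $G_{K(\zeta_{p})}$, which is where perfectness enters, is not addressed in your sketch at all. If you restructure your proof as: cite \cite[Theorem 1]{MR3294389} and supply this perfectness/irreducibility check, you obtain a complete short proof; as it stands, your proposal rests on steps that are not available.
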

\begin{proof}
	By \cite[Theorem 1]{MR3294389}, it suffices to show that $ \text{ad}^{0}(\overline{\rho}|_{G_{K(\zeta_{p})}}) $ is absolutely irreducible where $ K(\zeta_{p}) $ is the $ p $-th cyclotomoic field of $ K $, $ \overline{\rho}|_{G_{K(\zeta_{p})}} $ is the restriction of the representation $\overline{\rho} $ on the open subgroup $ G_{K(\zeta_{p})} $ of $ G_{K} $ and $ \text{ad}^{0}(\overline{\rho}|_{G_{K(\zeta_{p})}}) $ is the subrepresentation of the adjoint representation of $ \overline{\rho}|_{G_{K(\zeta_{p})}} $ on trace zero matrices.
	
	We can assume that $n\geq 2$ by Theorem \ref{virtually solvable case}. Since $p>5$ and $p $ does not divide $n$, we know that the action of $ {\rm SL}_{n}(\mathbb{F}_{p}) $ on the $ n\times n $ matrices with trace $ 0 $ over $ \mathbb{F}_{p} $ is absolutely irreducible by \cite[Lemma 3.3]{MR3336600}. Thus, it suffices to show that the image of $ \bar{\rho}|_{G_{K(\zeta_{p})}} $ contains $ {\rm SL}_{n}(\mathbb{F}_{p}) $. Since $p\geq 5$, by \cite[Proposition 2.2]{MR3542491}, we know that the group $ {\rm SL}_{n}(\mathbb{F}_{p}) $ is perfect, i.e. the derived subgroup of $ {\rm SL}_{n}(\mathbb{F}_{p}) $ is itself. Since $ \text{Im}(\overline{\rho})\supset {\rm SL}_{n}(\mathbb{F}_{p}) $ and the extension $ K(\zeta_{p})/K $ is abelian, we see that the image of $ \overline{\rho}|_{G_{K(\zeta_{p})}} $ also contains $ {\rm SL}_{n}(\mathbb{F}_{p}) $. We are done.
\end{proof}

\section{Some cases of Conjecture \ref{UFM}}\label{chapter4}

Following \cite{MR3249411}, we introduce the following notion.  

\begin{Def}
	We say that a pro-$ p $ group $ G $ is a \textit{Fontaine-Mazur group} if any $ p $-adic analytic quotient of $ G $ is finite.
\end{Def}

Note that Conjecture \ref{WUFM} is equivalent to the claim that the pro-$ p $ group $ G_{K,\emptyset}(p) $ is a Fontaine-Mazur group for any number field $ K $. For example, if $ K $ is a number field with class number prime to $ p $, then by Theorem \ref{global structure} we have $ G_{K,\emptyset}(p)=\{1\} $, and hence it is a Fontaine-Mazur group. 

\subsection{When $ \text{Im}(\rho) $ is a $p$-valuable pro-$p$ group}\label{section4.3}

In this subsection, inspired by the work of \cite{MR3249411}, we consider those representations of global Galois group such that their image are $p$-valuable pro-$p$ groups. First of all, note that we can reformulate Conjecture \ref{UFM} as follows:

\begin{conj}\label{UFMusingpadicvaluedgroup}
	Let $ K $ be a number field and $ S $ a finite set of primes of $ K $ such that $ S\cap S_{p}=\emptyset $. If $ (\Gamma,\omega) $ is a $p$-valuable pro-$p$ group, then any continuous homomorphism $ \rho:G_{K,S}(p)\to \Gamma $ is trivial.
\end{conj}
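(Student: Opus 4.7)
The final statement is presented as a reformulation of Conjecture \ref{UFM}, so the natural plan is to establish the equivalence Conjecture \ref{UFM} $\Leftrightarrow$ Conjecture \ref{UFMusingpadicvaluedgroup}. The argument pivots on two facts: $p$-valuable groups are $p$-adic analytic (Theorem \ref{lazard}), and they are torsion-free (immediate from axiom (v) of a $p$-valuation).

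For the direction Conjecture \ref{UFM} $\Rightarrow$ Conjecture \ref{UFMusingpadicvaluedgroup}, let $\rho:G_{K,S}(p)\to \Gamma$ be continuous with $(\Gamma,\omega)$ a $p$-valuable pro-$p$ group. Since $\Gamma$ is $p$-valuable, it is $p$-adic analytic by Theorem \ref{lazard}, so it admits a continuous faithful embedding $\iota:\Gamma\hookrightarrow {\rm GL}_d(\mathbb{Z}_p)\subset {\rm GL}_d(\mathbb{Q}_p)$ for some $d$. Precomposing with the canonical quotient $G_{K,S}\twoheadrightarrow G_{K,S}(p)$ yields a continuous representation of $G_{K,S}$ to which Conjecture \ref{UFM} applies, giving that $\iota\circ\rho$ has finite image. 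But $\Gamma$ is torsion-free, so its only finite subgroup is trivial, and hence $\rho$ is trivial.

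For the direction Conjecture \ref{UFMusingpadicvaluedgroup} $\Rightarrow$ Conjecture \ref{UFM}, let $\rho:G_{K,S}\to {\rm GL}_n(\mathbb{Q}_p)$ be continuous with $S\cap S_p=\emptyset$. By compactness (and conjugation) I may assume $\mathrm{Im}(\rho)\subset {\rm GL}_n(\mathcal{O}_L)$ for some finite extension $L/\mathbb{Q}_p$. The reduction $\overline\rho$ has finite image in the finite group ${\rm GL}_n(\kappa_L)$, so via Lemma \ref{1} I replace $(K,S)$ by $(K',S')$ with $S'\cap S_p(K')=\emptyset$ so that $\mathrm{Im}(\rho)\subset {\rm GL}_n^1(\mathcal{O}_L)$, a pro-$p$ subgroup by Lemma \ref{pro}. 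Using the relation $\mathcal{G}_n(L)_{\nu+}={\rm GL}_n^{\eta(\nu)}(\mathcal{O}_L)$ from Convention \ref{definitiona}, a further finite extension brings $\mathrm{Im}(\rho)$ inside the open $p$-valued subgroup $\mathcal{G}_n(L)$ of Example \ref{padicvaluedexample}. At this stage $\rho$ factors through $G_{K',S'}(p)$ (being pro-$p$-valued on $G_{K',S'}$), and its image is a closed subgroup of the $p$-valued group $\mathcal{G}_n(L)$; as closed subgroups of $p$-valued groups inherit a $p$-valuation and $\mathcal{G}_n(L)$ has finite rank (being open in the compact $p$-adic analytic group ${\rm GL}_n(\mathcal{O}_L)$), this image is itself $p$-valuable. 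Conjecture \ref{UFMusingpadicvaluedgroup} then forces the restricted $\rho$ to be trivial, so the original $\rho$ vanishes on an open subgroup of $G_{K,S}$ and has finite image.

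The main obstacle I foresee is not any single deep step but the bookkeeping in the second direction: one must carefully arrange a tower of finite extensions of $K$ at which the representation successively (a) takes values in the pro-$p$ congruence subgroup, (b) is absorbed into the $p$-valued open subgroup $\mathcal{G}_n(L)$, and (c) factors through $G_{K',S'}(p)$, all while maintaining the hypothesis $S'\cap S_p(K')=\emptyset$. The tools for each step are available (Lemma \ref{1}, Lemma \ref{pro}, Example \ref{padicvaluedexample}, Convention \ref{definitiona}), and once the restriction is $p$-valuable the torsion-free conclusion from the reformulation converts triviality on an open subgroup into finiteness of the original image.
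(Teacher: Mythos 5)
Your proposal is correct and follows essentially the same route as the paper: the paper also treats the conjecture by proving its equivalence with Conjecture \ref{UFM}, using Theorem \ref{lazard} together with torsion-freeness of a $p$-valuable group in one direction, and in the converse direction passing to a finite extension of $K$ so that the image becomes an open $p$-valuable subgroup before applying the conjecture. The only cosmetic difference is that in the converse direction you exhibit the $p$-valuable open subgroup explicitly via the congruence subgroup $\mathcal{G}_{n}(L)$ of Example \ref{padicvaluedexample}, whereas the paper simply invokes Theorem \ref{lazard}; both versions are valid and your bookkeeping of the finite extensions (keeping $S'\cap S_{p}(K')=\emptyset$ and factoring through $G_{K',S'}(p)$) is exactly what the paper's shorter proof leaves implicit.
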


\begin{Prop}
	The following are equivalent:
	\begin{enumerate}
		\item Conjecture \ref{UFM}.
		\item Conjecture \ref{UFMusingpadicvaluedgroup}
	\end{enumerate}
\end{Prop}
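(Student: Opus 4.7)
The plan is to transfer between the two formulations by using Example \ref{padicvaluedexample}, which exhibits a natural open $p$-valuable pro-$p$ subgroup $\mathcal{G}_{n}(L)\subset {\rm GL}_{n}(\mathcal{O}_{L})$ for any finite extension $L/\mathbb{Q}_{p}$. Once that bridge is in place, each implication is a short reduction.

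For (ii) $\implies$ (i), given a continuous $\rho:G_{K,S}\to {\rm GL}_{n}(\mathbb{Q}_{p})$, compactness of $G_{K,S}$ forces $\text{Im}(\rho)\subset {\rm GL}_{n}(\mathcal{O}_{L})$ for some finite extension $L/\mathbb{Q}_{p}$. I would then work with the open subgroup $\mathcal{G}_{n}(L)$ of Example \ref{padicvaluedexample}: it is a torsion-free $p$-valued group with $\omega(g)=w(g-1)$ defining its topology, and it has finite rank as an open subgroup of the $p$-adic analytic group ${\rm GL}_{n}(\mathcal{O}_{L})$, so it is $p$-valuable in the sense of the paper. By Lemma \ref{pro}(iii), $\mathcal{G}_{n}(L)$ is pro-$p$. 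Setting $U:=\rho^{-1}(\mathcal{G}_{n}(L))$, Lemma \ref{1} identifies $U$ with $G_{K',S'}$ for a finite extension $K'/K$ and a set $S'$ of primes with $S'\cap S_{p}(K')=\emptyset$. Since $\rho(U)$ is pro-$p$, the restriction $\rho|_{U}$ factors as a continuous map $G_{K',S'}(p)\to \mathcal{G}_{n}(L)$. Conjecture \ref{UFMusingpadicvaluedgroup} applied to $(K',S')$ forces this map to be trivial, so $\rho(U)=\{1\}$ and $\text{Im}(\rho)$ is finite.

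For (i) $\implies$ (ii), let $\rho:G_{K,S}(p)\to \Gamma$ be continuous with $\Gamma$ a $p$-valuable pro-$p$ group. A $p$-valuable group contains an open $p$-saturated subgroup, so by Theorem \ref{lazard} it is $p$-adic analytic, and by definition it embeds as a closed subgroup of some ${\rm GL}_{d}(\mathbb{Z}_{p})\subset {\rm GL}_{d}(\mathbb{Q}_{p})$. Precomposing with the natural surjection $G_{K,S}\twoheadrightarrow G_{K,S}(p)$ gives a continuous representation $\widetilde{\rho}:G_{K,S}\to {\rm GL}_{d}(\mathbb{Q}_{p})$. Conjecture \ref{UFM} forces $\text{Im}(\widetilde{\rho})$, and hence the isomorphic group $\text{Im}(\rho)\subset \Gamma$, to be finite. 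Since any $p$-valued group is torsion-free, this finite subgroup must be trivial.

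The only real subtlety is the verification that $\mathcal{G}_{n}(L)$ satisfies the formal axioms of a $p$-valuable pro-$p$ group and that the restriction $\rho|_{U}$ really factors through the maximal pro-$p$ quotient $G_{K',S'}(p)$; both reduce to routine invocations of Lemma \ref{pro}, Lemma \ref{1}, and Example \ref{padicvaluedexample}. Apart from this bookkeeping, the equivalence is a direct translation between the $p$-adic-analytic and $p$-valuable formulations.
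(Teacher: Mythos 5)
Your proof is correct and follows essentially the same route as the paper: both directions are a direct translation via Lazard's theory, with torsion-freeness of $p$-valued groups upgrading "finite image" to "trivial image", and Lemma \ref{1} handling the passage to an open subgroup $G_{K',S'}$. The only (harmless) difference is that for (ii) $\Rightarrow$ (i) you pull back the explicit open $p$-valuable subgroup $\mathcal{G}_{n}(L)$ of Example \ref{padicvaluedexample}, whereas the paper invokes Theorem \ref{lazard} to produce an open $p$-saturated subgroup of $\text{Im}(\rho)$.
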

\begin{proof}
	We prove our claim as follows:
	\begin{itemize}
		\item (i) $\implies $ (ii): Let $\Gamma$ be a $p$-valuable pro-$p$ group and $\rho:G_{K,S}(p)\to \Gamma$ a continuous homomorphism. By Theorem \ref{lazard}, $\Gamma$ is $p$-adic analytic. That is, $\Gamma$ is isomorphic to a closed subgroup of ${\rm GL}_{n}(\mathbb{Z}_{p})$ for some integer $n\geq 1$. Note that $\Gamma$ is torsion-free. Conjecture \ref{UFM} implies that $\text{Im}(\rho)\subset \Gamma$ is finite, and hence $\text{Im}(\rho) $ is trivial. 
		\item (ii) $\implies $ (i): Let $\rho:G_{K,S}\to {\rm GL}_{n}(\mathbb{Q}_{p})$ be a continuous representation. We can assume that $\text{Im}(\rho)\subset {\rm GL}_{n}(\mathbb{Z}_{p})$. By Theorem \ref{lazard}, $\text{Im}(\rho)$ contains an open $p$-valuable pro-$p$ group $\Gamma$. After taking a finite extension of $K$, we can assume that $\text{Im}(\rho)=\Gamma$. Conjecture \ref{UFMusingpadicvaluedgroup} implies that $\text{Im}(\rho)$ is trivial. We are done. 
	\end{itemize}
\end{proof}

We confirm Conjecture \ref{UFMusingpadicvaluedgroup} in many cases as follows.
\begin{Thm}\label{simple}
	Let $ K $ be a number field such that the pro-$ p $ group $ G_{K,\emptyset}(p) $ is a Fontaine-Mazur group, and $ S=\{\mathfrak{p}_{1},\cdots,\mathfrak{p}_{d}\} $ a finite set of non-Archimedean primes of $ K $ such that $S\cap S_{p}=\emptyset$. If $ (\Gamma,\omega) $ is a $p$-valuable pro-$p$ group such that $ \omega(\gamma)>\max_{1\leq i\leq d} v_{p}(N(\mathfrak{p}_{i})-1) $ for all $ \gamma\in \Gamma $ where $ v_{p} $ denotes the usual $ p $-adic valuation of $ \mathbb{Q}_{p} $, then any continuous homomorphism $\rho: G_{K,S}(p)\to \Gamma $ is trivial.
\end{Thm}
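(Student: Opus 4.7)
The plan is to use the tame local relation at each ramified prime, together with the $p$-valuation axioms, to force the image of every inertia generator to be trivial; then to reduce to a $p$-adic analytic quotient of $G_{K,\emptyset}(p)$ and invoke the Fontaine-Mazur hypothesis.

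First, by Lemma \ref{redundant} and Remark \ref{trivial}, I may discard any prime $\mathfrak{p}_{i}\in S$ with $N(\mathfrak{p}_{i})\not\equiv 1~\text{mod}~p$, since such primes are unramified in every pro-$p$ extension and therefore do not contribute to $G_{K,S}(p)$. So I assume $q_{i}:=N(\mathfrak{p}_{i})\equiv 1~\text{mod}~p$, and put $f_{i}:=v_{p}(q_{i}-1)\geq 1$. For each $i$, fix a prime of $K_{S}(p)$ above $\mathfrak{p}_{i}$, and let $\tau_{i}\in G_{K,S}(p)$ be a generator of the corresponding inertia group and $\sigma_{i}$ a Frobenius lift, so that by Theorem \ref{local}(i) they satisfy the tame relation
\[ \sigma_{i}\tau_{i}\sigma_{i}^{-1}=\tau_{i}^{q_{i}}, \qquad \text{equivalently}\qquad \sigma_{i}\tau_{i}\sigma_{i}^{-1}\tau_{i}^{-1}=\tau_{i}^{q_{i}-1}. \]

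Now set $\tau:=\rho(\tau_{i})$, $\sigma:=\rho(\sigma_{i})\in\Gamma$. Under the paper's convention $[g,h]=g^{-1}h^{-1}gh$, one has $\sigma\tau\sigma^{-1}\tau^{-1}=[\sigma^{-1},\tau^{-1}]$, so axiom (iv) of the $p$-valuation and axiom (vi) give, assuming $\tau\neq 1$,
\[ \omega(\tau)+f_{i}\;=\;\omega(\tau^{q_{i}-1})\;=\;\omega([\sigma^{-1},\tau^{-1}])\;\geq\;\omega(\sigma)+\omega(\tau), \]
so $\omega(\sigma)\leq f_{i}\leq \max_{j}f_{j}$. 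If $\sigma\neq 1$ this contradicts the standing hypothesis $\omega(\sigma)>\max_{j}f_{j}$, so $\sigma=1$; but then the relation degenerates to $\tau=\tau^{q_{i}}$, i.e.\ $\tau^{q_{i}-1}=1$, which forces $\tau=1$ by torsion-freeness of the $p$-valuable group $\Gamma$, contradicting $\tau\neq 1$. Hence $\rho(\tau_{i})=1$ for every $i$.

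Therefore $\rho$ kills the normal closure of all inertia subgroups $I_{\mathfrak{p}_{i}}(p)$ in $G_{K,S}(p)$, so it factors through the maximal unramified pro-$p$ quotient, giving a continuous map $\bar{\rho}:G_{K,\emptyset}(p)\to\Gamma$. Since $\Gamma$ is $p$-valuable of finite rank, Lazard's theorem (Theorem \ref{lazard}) ensures that $\Gamma$ is $p$-adic analytic, and the closed subgroup $\mathrm{Im}(\bar{\rho})\subset\Gamma$ is again $p$-adic analytic. By the assumption that $G_{K,\emptyset}(p)$ is a Fontaine-Mazur group, this image must be finite; since $\Gamma$ is torsion-free, the image is trivial, and $\rho$ is trivial as claimed.

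The one place that requires mild care is the very first manipulation of the $p$-valuation: one must keep the sign of the commutator straight under the paper's convention and check that axiom (vi) applies only to nonzero $x\in\mathbb{Z}_{p}$, which is why the case split on $\tau=1$ versus $\tau\neq 1$ is needed. Everything else is formal, and I do not anticipate a genuine obstacle.
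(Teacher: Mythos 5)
Your proposal is correct and follows essentially the same route as the paper: use the tame relation $\tilde{\tau}_i^{q_i-1}=[\tilde{\sigma}_i^{-1},\tilde{\tau}_i^{-1}]$ together with axioms (iv) and (vi) of the $p$-valuation to force $\omega(\tilde{\sigma}_i)\leq v_p(q_i-1)$, contradicting the hypothesis unless all inertia images vanish, then factor through $G_{K,\emptyset}(p)$ and conclude by the Fontaine--Mazur group assumption and torsion-freeness of $\Gamma$. The only difference is organizational (the paper argues by contradiction after reducing to "exactly ramified at $S$", while you kill each inertia generator directly), which is immaterial.
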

\begin{proof}
	After removing redundant primes from $ S $, we can assume that $ \rho $ is exactly ramified at $ S $. By Lemma \ref{redundant}, we have $ N(\mathfrak{p}_{i})\equiv 1~\text{mod}~p $ for all $ i $.
	If $ S=\emptyset $, then we are done since $ G_{K,\emptyset}(p) $ is a Fontaine-Mazur group and $ \Gamma $ is torsion-free. Thus, we may assume that $ S\neq \emptyset $ (hence $ d\geq 1 $), and $ \rho $ is non-trivial. We want to deduce a contradiction.
	
	For each $ i $ with $ 1\leq i\leq d $, let $ \tau_{i} $ denote a generator of the inertia group at $ \mathfrak{p}_{i} $ and $ \sigma_{i} $ denote a lifting of the Frobenius automorphism at $ \mathfrak{p}_{i} $.
	Put $ \tilde{\tau_{i}}:=\rho(\tau_{i}) $ and $ \tilde{\sigma_{i}}=\rho(\sigma_{i}) $. By Theorem \ref{local}, we have
	\begin{align}\label{relation1}
		\tilde{\tau_{i}}^{N(\mathfrak{p}_{i})-1}=[\tilde{\sigma_{i}}^{-1},\tilde{\tau_{i}}^{-1}]
	\end{align}
	for each $ i $.
	By our assumption, $ \tilde{\tau_{i}}\neq 1 $. We claim that $ \tilde{\sigma_{i}}\neq 1 $ for each $ i $. Indeed, if $ \tilde{\sigma}_{i}=1 $, then we have $ \tilde{\tau_{i}}^{N(\mathfrak{p}_{i})}=\tilde{\tau_{i}} $ by (\ref{relation1}). But $ \text{Im}(\rho)$ is torsion-free which implies that $ \tilde{\tau_{i}}=1 $. It is a contradiction.
	
	In the following, we will use the properties (iv) and (vi) of the valuation $ \omega $ defined in Section \ref{padicanalyticgroup}. For each $ i $, by (\ref{relation1}), we have
	\[ \omega(	\tilde{\tau_{i}}^{N(\mathfrak{p}_{i})-1})=\omega(\tilde{\tau_{i}})+v_{p}(N(\mathfrak{p}_{i})-1)\geq \omega(\tilde{\tau_{i}})+\omega(\tilde{\sigma_{i}}). \]
	This implies that
	\[ \omega(\tilde{\sigma_{i}})\leq v_{p}(N(\mathfrak{p}_{i})-1) .\]
	But by our assumption, we have
	\[ \omega(\tilde{\sigma_{i}})> \max_{1\leq i\leq d} v_{p}(N(\mathfrak{p}_{i})-1)\geq v_{p}(N(\mathfrak{p}_{i})-1), \]
	which is a contradiction. This completes the proof of our theorem.
\end{proof}

When $ p=2 $, it is always true that $ \omega>1/(2-1)=1 $ by definition, and hence we have the following result. 

\begin{Cor}
	Let $ K $ be a number field such that $ G_{K,\emptyset}(2) $ is a Fontaine-Mazur group, and $ S=\{\mathfrak{p}_{1},\cdots,\mathfrak{p}_{d}\} $ a finite set of primes of $ K $ such that $ N(\mathfrak{p}_{i})\equiv 3~\text{mod}~4 $ for all $ i $. Then for any $2$-valuable pro-$2$ group $ \Gamma $, any continuous homomorphism $ \rho:G_{K,S}(2)\to \Gamma $ is trivial.
\end{Cor}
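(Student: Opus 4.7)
The plan is to derive this as a direct consequence of Theorem \ref{simple} by checking that all of its hypotheses are satisfied under the weaker assumption $N(\mathfrak{p}_i)\equiv 3~\text{mod}~4$.

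First I would verify the side condition $S\cap S_p=\emptyset$: since $p=2$ and $N(\mathfrak{p}_i)\equiv 3~\text{mod}~4$ is odd, the residue field at $\mathfrak{p}_i$ has odd characteristic, so no prime in $S$ lies above $2$. Next I would compute $v_2(N(\mathfrak{p}_i)-1)$ for each $i$: the hypothesis $N(\mathfrak{p}_i)\equiv 3~\text{mod}~4$ means $N(\mathfrak{p}_i)-1\equiv 2~\text{mod}~4$, and hence $v_2(N(\mathfrak{p}_i)-1)=1$ for every $i$. Therefore
\[
\max_{1\leq i\leq d} v_2(N(\mathfrak{p}_i)-1) \;=\; 1.
\]

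Now let $(\Gamma,\omega)$ be any $2$-valuable pro-$2$ group. By axiom (i) of a $p$-valuation applied with $p=2$, every $\gamma\in\Gamma$ satisfies
\[
\omega(\gamma) > \frac{1}{p-1} = \frac{1}{2-1} = 1,
\]
so in particular $\omega(\gamma) > \max_{1\leq i\leq d} v_2(N(\mathfrak{p}_i)-1)$ holds for all $\gamma\in\Gamma$. Thus the numerical inequality required by Theorem \ref{simple} is automatically verified for $p=2$ under the congruence hypothesis, with no further assumption on $\Gamma$.

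Since $K$ is assumed to have $G_{K,\emptyset}(2)$ Fontaine-Mazur and all remaining hypotheses of Theorem \ref{simple} are satisfied, I would apply Theorem \ref{simple} directly with $p=2$ to conclude that any continuous homomorphism $\rho:G_{K,S}(2)\to\Gamma$ is trivial. There is essentially no obstacle here; the content of the corollary is the arithmetic observation that the congruence $N(\mathfrak{p}_i)\equiv 3~\text{mod}~4$ pins down $v_2(N(\mathfrak{p}_i)-1)=1$, which is exactly the threshold that axiom (i) of a $2$-valuation forces $\omega$ to exceed for free.
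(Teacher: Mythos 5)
Your proposal is correct and is essentially the paper's own argument: the remark preceding the corollary notes that for $p=2$ every $2$-valuation satisfies $\omega>1/(2-1)=1$ by axiom (i), while $N(\mathfrak{p}_i)\equiv 3~\text{mod}~4$ gives $v_2(N(\mathfrak{p}_i)-1)=1$, so Theorem \ref{simple} applies directly. Your additional check that $S\cap S_2=\emptyset$ (since the norms are odd) is a fine, if minor, piece of bookkeeping that the paper leaves implicit.
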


Furthermore, if $ L/\mathbb{Q}_{p} $ is a finite extension and $ m_{0} $ is the smallest integer which is larger than
\[ e_{L/\mathbb{Q}_{p}}\cdot  (\max_{1\leq i\leq d} v_{p}(N(\mathfrak{p}_{i})-1)), \]
then the $p$-valuable pro-$p$ group
\[ \Gamma:={\rm GL}_{n}^{m_{0}}(\mathcal{O}_{L})=\mathcal{G}_{n}(L)_{(\max_{1\leq i\leq d} v_{p}(N(\mathfrak{p}_{i})-1))+}\]
satisfies the condition in Theorem \ref{simple}. 

\begin{Convention}\label{linknumber}
	Recall that if $ S=\{q_{1},\cdots,q_{d}\} $ is a finite set of primes of $ \mathbb{Q} $ and we choose a primitive root $ \text{mod}~q_{i} $ for any $ i $, then we have defined the link numbers $ \ell_{ij}\in \mathbb{Z}/p\mathbb{Z} $ for all $i,j\in \{1,\cdots,d\} $ with $ i\neq j $ in Theorem \ref{global structure}. Note that $\ell_{ij}=0$ if and only if $q_{i}$ is a $p$-th power modulo $q_{j}$. 
\end{Convention}

\begin{Thm}\label{lequalto}
	Let $p$ be an odd prime. Let $ S=\{q_{1},\cdots,q_{d}\} $ be a finite set of rational primes of $ \mathbb{Q} $ not containing $p$ such that $ q_{i}\not \equiv 1~\text{mod}~p^{2} $ for all $ i $.  Assume that $ \ell_{ij}=0 $ for all $ i,j\in \{1,\cdots,d\} $ with $ i\neq j$. If $ (\Gamma,\omega) $ is a $p$-valuable pro-$p$ group such that $ \omega(\gamma)>1/2 $ for all $ \gamma\in \Gamma $, then any continuous homomorphism $ \rho:G_{\mathbb{Q},S}(p)\to \Gamma $ is trivial. 
\end{Thm}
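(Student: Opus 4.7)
The plan is to refine the valuation argument used in the proof of Theorem \ref{simple}, exploiting the vanishing of link numbers to force $\rho(\sigma_i)$ into the Frattini subgroup of $\text{Im}(\rho)$. The hypothesis $q_i \not\equiv 1 \pmod{p^2}$ makes $v_p(q_i-1) = 1$, which is exactly what lets the threshold drop from Theorem \ref{simple}'s bound $\omega > 1$ down to $\omega > 1/2$ once we gain a factor of $2$ from the extra commutator.

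First I would carry out standard reductions. By Lemma \ref{redundant} and Remark \ref{trivial}, after dropping redundant primes I may assume $\rho$ is exactly ramified at $S$ and every $q_i \equiv 1 \pmod p$; combined with $q_i \not\equiv 1 \pmod{p^2}$, this yields $v_p(q_i - 1) = 1$. Replacing $\Gamma$ by the closed subgroup $\text{Im}(\rho)$ (which inherits the $p$-valuation and the assumption $\omega > 1/2$), I may also assume $\rho$ is surjective. Suppose for contradiction that $\rho \neq 1$; since the $\tilde\tau_i := \rho(\tau_i)$ topologically generate $\Gamma$, some $\tilde\tau_{i_0}$ is non-trivial.

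Next I would invoke the Koch presentation of Theorem \ref{global structure}(vi). Writing $\tilde\sigma_i := \rho(\sigma_i)$ and using the relation $\tilde\tau_{i_0}^{q_{i_0} - 1} = [\tilde\sigma_{i_0}^{-1}, \tilde\tau_{i_0}^{-1}]$ together with properties (iv) and (vi) of the $p$-valuation,
$$\omega(\tilde\tau_{i_0}) + 1 \;=\; \omega(\tilde\tau_{i_0}^{q_{i_0}-1}) \;=\; \omega\bigl([\tilde\sigma_{i_0}^{-1}, \tilde\tau_{i_0}^{-1}]\bigr) \;\geq\; \omega(\tilde\sigma_{i_0}) + \omega(\tilde\tau_{i_0}),$$
so $\omega(\tilde\sigma_{i_0}) \leq 1$. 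For the opposite inequality I would use $\ell_{i_0 j} = 0$ for all $j \neq i_0$: from (vi) of Theorem \ref{global structure} this gives $y_{i_0} \in F^p[F, F]$, so $\tilde\sigma_{i_0}$ lies in the closed subgroup $\Phi(\Gamma) = \overline{\Gamma^p[\Gamma,\Gamma]}$ generated by $p$-th powers and commutators. The hypothesis $\omega > 1/2$ forces every $p$-th power to satisfy $\omega(g^p) = \omega(g) + 1 > 3/2$ by (v), and every commutator to satisfy $\omega([g, h]) \geq \omega(g) + \omega(h) > 1$ by (iv), so that every element of the \emph{algebraic} subgroup generated by these has $\omega > 1$ strictly (using $\omega(ab) \geq \min(\omega(a), \omega(b))$).

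The delicate step is to pass to the closure. I would invoke Lazard's structural theorem that on a $p$-valuable group of finite rank the set $\omega(\Gamma \setminus \{1\})$ is a well-ordered (in particular discrete) subset of $\mathbb{R}_{>1/(p-1)}$; consequently there is a smallest value $\nu_{*} > 1$ of $\omega$ above $1$, the subgroup $\Gamma_{\nu_*} = \{g : \omega(g) \geq \nu_*\}$ is closed, and it contains the dense algebraic subgroup from the previous paragraph. Hence $\omega(\tilde\sigma_{i_0}) \geq \nu_* > 1$, contradicting $\omega(\tilde\sigma_{i_0}) \leq 1$. The main obstacle is precisely this closure step: the strict inequality $\omega > 1$ is not automatically preserved under limits, so the argument genuinely relies on the finite-rank / discreteness input from Lazard's theory; once that is in place the remaining computations are direct applications of the $p$-valuation axioms and the Koch presentation.
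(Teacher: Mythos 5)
Your argument is correct and follows essentially the same route as the paper: after the same reductions, the Koch presentation gives $\omega(\tilde{\sigma}_{i_0})\le 1$ from the local relation and $\omega(\tilde{\sigma}_{i_0})>1$ from $\ell_{i_0 j}=0$ (the paper keeps the factors $\tilde{\tau}_j^{L_{i_0 j}}$ explicit and bounds them by $\omega(\tilde{\tau}_j)+v_p(L_{i_0 j})>3/2$ rather than absorbing them into the Frattini subgroup, but this is the same computation). The closure step you flag as delicate is in fact immediate and needs no appeal to discreteness of the value set: $\{\gamma\in\Gamma:\omega(\gamma)>1\}$ is a union of the open subgroups $\Gamma_\nu$ with $\nu>1$, hence an open and therefore closed subgroup, so it automatically contains the closed subgroup generated by $p$-th powers and commutators.
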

\begin{proof}
	By Lemma \ref{redundant}, we can assume that $q_{i}\equiv 1~\text{mod}~p$ for all $i$. As in the proof of Theorem \ref{simple}, we may assume that $ \rho $ is exactly ramified at $ S $ and $ |S|\geq 1 $. Let $ \tau_{i} $ denote a generator of inertia group at $ q_{i} $ and $ \sigma_{i} $ denote a lifting of the Frobenius automorphism at $ q_{i} $ for each $ i $. Put $ \tilde{\tau}_{i}:=\rho(\tau_{i}) $ and $ \tilde{\sigma_{i}}:=\rho(\sigma_{i}) $. By Theorem \ref{global structure}, for each $ i $, we have 
	\begin{align}\label{jiayouaaa}
		\tilde{\tau_{i}}^{q_{i}-1}=[\tilde{\sigma_{i}}^{-1},\tilde{\tau_{i}}^{-1}],
	\end{align}
	and 
	\begin{align}\label{2}
		\tilde{\sigma_{i}}=\left( \prod_{j\neq i}\tilde{\tau}_{j}^{L_{ij}}\right) \cdot \gamma_{i}
	\end{align}
	where $ L_{ij}\in \mathbb{Z} $ satisfying its image in $\mathbb{Z}/p\mathbb{Z}$ is $\ell_{ij}$, and $\gamma_{i}$ is an element in $ \text{Im}(\rho)^{p}[\text{Im}(\rho),\text{Im}(\rho)] $. Since $\ell_{ij}=0$, we see that $ v_{p}(L_{ij})\geq 1 $. In the following, we will use the properties (iii), (iv) and (vi) of the valuation $ \omega $ defined in Section \ref{padicanalyticgroup}. Then we have
	\begin{align}\label{3}
		\omega(\tau_{i})>1/2,~\text{and}~~\omega(\gamma_{i})> 1.
	\end{align}
	
	By our assumption, we have $ v_{p}(q_{i}-1)=1 $ for each $ i $. The relation (\ref{jiayouaaa}) yields
	\[ \omega(\tilde{\sigma_{i}})\leq v_{p}(q_{i}-1)=1 .\]
	Together with (\ref{2}), (\ref{3}),	we have
	\[1= v_{p}(q_{i}-1)\geq \omega(\tilde{\sigma_{i}})=\omega(\prod_{j\neq i}\tilde{\tau}_{j}^{L_{ij}}\cdot \gamma_{i})\geq \min_{j\neq i}\{\omega(\tau_{j})+v_{p}(L_{ij}),\omega(\gamma_{i})\}>1, \]
	which is a contradiction. Therefore, we have $ S=\emptyset $ and $ \rho $ factors through $ G_{\mathbb{Q},\emptyset}=\{1\} $. This completes the proof of our theorem.
\end{proof}

\begin{Rem}
	By definition, we always have $\omega>1/2$ if $p=3$.
\end{Rem}

For example, if $ (\Gamma,\omega) $ has integer values, then $ \Gamma $ satisfies the condition of Theorem \ref{lequalto}. Moreover, we have:
\begin{Cor}\label{gl2}
	Let $ p $ be an odd prime. Let $ S=\{q_{1},\cdots,q_{d}\} $ be a finite set of rational primes of $ \mathbb{Q} $ not containing $p$ such that $ q_{i}\not \equiv 1~\text{mod}~p^{2} $ for all $ i $. Then we have the following:
	\begin{enumerate}
		\item Any continuous homomorphism $ \rho:G_{\mathbb{Q},S}(p)\to {\rm GL}_{n}^{2}(\mathbb{Z}_{p}) $ is trivial.
		\item Assume further that $ \ell_{ij}=0 $ for all $ i,j\in \{1,\cdots,d\}$ with $ i\neq j$. Then any continuous homomorphism $ \rho:G_{\mathbb{Q},S}(p)\to {\rm GL}_{n}^{1}(\mathbb{Z}_{p}) $ is trivial.  
	\end{enumerate}
\end{Cor}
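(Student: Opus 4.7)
\medskip
\noindent\textbf{Proof proposal.} The plan is to derive both parts as direct applications of the two structural theorems just proved: Theorem \ref{simple} for (i) and Theorem \ref{lequalto} for (ii). In each case the only work is to exhibit the target subgroup of $\mathrm{GL}_n(\mathbb{Z}_p)$ as a $p$-valuable pro-$p$ group whose valuation satisfies the required lower bound; the relevant $p$-valuation is the one furnished by Example \ref{padicvaluedexample}, namely $\omega(g)=w(g-1)=\min_{i,j}v_p((g-1)_{ij})$ on $\mathcal{G}_n(\mathbb{Q}_p)=\mathrm{GL}_n^1(\mathbb{Z}_p)$.

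First I would reduce to the case $q_i\equiv 1\pmod p$ for every $i$. Indeed, by Lemma \ref{redundant}, any prime $q_i\not\equiv 0,1\pmod p$ is redundant for a $p$-extension, so after removing such primes from $S$ the pro-$p$ group $G_{\mathbb{Q},S}(p)$ is unchanged, and no $q_i$ is divisible by $p$ by hypothesis. Under the additional assumption $q_i\not\equiv 1\pmod{p^2}$, this reduction guarantees $v_p(q_i-1)=1$ for every $i$ remaining in $S$. Note also that $\mathbb{Q}$ has class number one, so $G_{\mathbb{Q},\emptyset}(p)=\{1\}$ by Theorem \ref{global structure}(iv), in particular $G_{\mathbb{Q},\emptyset}(p)$ is a Fontaine-Mazur group.

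For part (i), take $\Gamma=\mathrm{GL}_n^2(\mathbb{Z}_p)=\mathcal{G}_n(\mathbb{Q}_p)_{1+}$ equipped with the restriction of $\omega$. Since $p$ is odd, Example \ref{padicvaluedexample} shows that $(\Gamma,\omega)$ is a $p$-valuable pro-$p$ group, and by construction every $\gamma\in\Gamma$ satisfies $\omega(\gamma)\geq 2>1=\max_{1\leq i\leq d}v_p(q_i-1)$. Theorem \ref{simple} applied to $K=\mathbb{Q}$ then forces any continuous homomorphism $\rho:G_{\mathbb{Q},S}(p)\to\mathrm{GL}_n^2(\mathbb{Z}_p)$ to be trivial. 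For part (ii), take $\Gamma=\mathrm{GL}_n^1(\mathbb{Z}_p)=\mathcal{G}_n(\mathbb{Q}_p)$; again Example \ref{padicvaluedexample} makes this a $p$-valuable pro-$p$ group, and every element has $\omega(\gamma)\geq 1>1/2$. With the further hypothesis $\ell_{ij}=0$ for $i\neq j$, Theorem \ref{lequalto} applies verbatim and gives the triviality of any continuous $\rho:G_{\mathbb{Q},S}(p)\to\mathrm{GL}_n^1(\mathbb{Z}_p)$.

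There is essentially no obstacle here: the corollary is exactly the specialization of the two preceding theorems to the $p$-valuable groups $\mathrm{GL}_n^1(\mathbb{Z}_p)$ and $\mathrm{GL}_n^2(\mathbb{Z}_p)$. The only point to be a little careful about is checking the strict inequality in Theorem \ref{simple}, which uses that $q_i\not\equiv 1\pmod{p^2}$ to pin down $v_p(q_i-1)=1$ after the preliminary reduction via Lemma \ref{redundant}; this is why one moves up from $\mathrm{GL}_n^1(\mathbb{Z}_p)$ to $\mathrm{GL}_n^2(\mathbb{Z}_p)$ in (i) in the absence of the link-number hypothesis, while in (ii) the stronger information $\ell_{ij}=0$ allows Theorem \ref{lequalto} to work at the boundary level $\omega>1/2$.
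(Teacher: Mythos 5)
Your proposal is correct and follows essentially the same route as the paper: reduce via Lemma \ref{redundant} to $q_i\equiv 1\pmod p$ (so $v_p(q_i-1)=1$), use the $p$-valuation $\omega$ on ${\rm GL}_n^1(\mathbb{Z}_p)$ from Example \ref{padicvaluedexample}, and apply Theorem \ref{simple} with $\Gamma={\rm GL}_n^2(\mathbb{Z}_p)$ for (i) and Theorem \ref{lequalto} with $\Gamma={\rm GL}_n^1(\mathbb{Z}_p)$ for (ii). Your explicit check that $G_{\mathbb{Q},\emptyset}(p)=\{1\}$ is a Fontaine-Mazur group is a hypothesis of Theorem \ref{simple} that the paper leaves implicit, but this is only a minor difference in presentation.
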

\begin{proof}
	By Lemma \ref{redundant}, we can assume that $q_{i}\equiv 1~\text{mod}~p$ for all $i$.  
	In Example \ref{padicvaluedexample}, we have defined a valuation $ \omega $ on $ {\rm GL}_{n}^{1}(\mathbb{Z}_{p}) $ such that $ g\in {\rm GL}_{n}^{i}(\mathbb{Z}_{p})  $ if and only if $ \omega(g)\geq i$ for any $ g\in  {\rm GL}_{n}^{1}(\mathbb{Z}_{p}) $ and any positive integer $ i $. By our assumption, we have $ \max_{1\leq i\leq d} v_{p}(q_{i}-1)=1 $. Thus, the first claim follows from Theorem \ref{simple}, and the second claim follows from Theorem \ref{lequalto}. 
\end{proof}

\subsection{When $ K=\mathbb{Q} $ and $ |S|\leq 3 $}\label{section4.4}
In this section, we will keep the notation in Convention \ref{linknumber}. Recall that the pro-$ p $ group $ G_{\mathbb{Q},S}(p) $ is a finite cyclic group if $ |S|=1 $ by Theorem \ref{global structure}. For $ |S|=2 $, we have the following:
\begin{Thm}\label{S=2}
	Let $ p $ be an odd prime, and $ S=\{q_{1},q_{2}\} $ a set of primes of $ \mathbb{Q} $ such that $ q_{i}\equiv 1~\text{mod}~p $ for $ i=1,2 $. Assume that $ \ell_{12}\neq 0 $ or $\ell_{21}\neq 0$. Then the pro-$ p $ group $ G_{\mathbb{Q},S}(p) $ is finite.
\end{Thm}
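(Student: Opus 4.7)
The plan is to apply Koch's presentation of $G := G_{\mathbb{Q},S}(p)$ from Theorem \ref{global structure}(vi) and exhibit $G$ as a finite metacyclic $p$-group. By the evident symmetry $1 \leftrightarrow 2$, I may assume without loss of generality that $\ell_{12} \neq 0$, so $L_{12}$ is a $p$-adic unit. A direct rewriting of the Koch relation $r_1 = \tau_1^{q_1 - 1}[\tau_1^{-1}, \sigma_1^{-1}] = 1$ gives the tame Frobenius relation $\sigma_1 \tau_1 \sigma_1^{-1} = \tau_1^{q_1}$ in $G$, where $\sigma_i$ is the image of $y_i$ and satisfies $\sigma_i \equiv \tau_{3-i}^{L_{i,3-i}} \pmod{\Phi(G)}$.

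Since $\tau_1$ and $\sigma_1$ have linearly independent images in $G/\Phi(G) \cong \mathbb{F}_p^2$ (using $\ell_{12} \neq 0$), the pro-$p$ Burnside basis theorem gives $G = \overline{\langle \tau_1, \sigma_1 \rangle}$. I would then let $M := \overline{\langle \tau_1 \rangle}$, a closed procyclic (hence abelian) subgroup of $G$. The relation $\sigma_1 \tau_1 \sigma_1^{-1} = \tau_1^{q_1} \in M$ shows that $\sigma_1$ normalizes $M$, and $\tau_1$ trivially does; as $N_G(M)$ is closed and contains both topological generators, $M$ is normal in $G$. The quotient $G/M$ is procyclic (generated by the image of $\tau_2$), hence abelian, hence a quotient of $G^{\mathrm{ab}}/\langle \bar\tau_1 \rangle \cong \mathbb{Z}/p^{f_2}$; since the image of $\tau_2$ already has order $p^{f_2}$ there, this gives $G/M \cong \mathbb{Z}/p^{f_2}$.

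The main remaining step is to show $|M| < \infty$. Since $|G/M| = p^{f_2}$ we have $\sigma_1^{p^{f_2}} \in M$, and $M$ being abelian forces conjugation by $\sigma_1^{p^{f_2}}$ to act trivially on $M$. On the other hand, iterating the Frobenius relation gives $\sigma_1^{p^{f_2}}\tau_1\sigma_1^{-p^{f_2}} = \tau_1^{q_1^{p^{f_2}}}$, so $\tau_1^{q_1^{p^{f_2}} - 1} = 1$. Writing $q_1 = 1 + p^{f_1}u$ with $(u,p)=1$ and using that $p$ is odd, the standard identity $v_p(q_1^{p^{f_2}} - 1) = f_1 + f_2$ forces $\tau_1$ to have order dividing $p^{f_1 + f_2}$, so $M$ is finite and hence $|G| \leq p^{f_1 + 2f_2}$.

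The conceptual core of the argument is the change of topological generators from $(\tau_1, \tau_2)$ to $(\tau_1, \sigma_1)$, which is precisely what the hypothesis $\ell_{12} \neq 0$ enables; everything else is an automatic consequence of the Koch relations together with the $p$-adic valuation computation, which is where the odd-$p$ hypothesis enters essentially. I expect no serious obstacle beyond making sure the Frobenius relation really holds with equality in $G$ (so that the iteration to $\sigma_1^{p^{f_2}}$ is unambiguous), which is exactly what a direct expansion of $r_1$ gives.
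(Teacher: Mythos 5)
Your proposal is correct, and while its first half coincides with the paper's argument --- both exploit $\ell_{12}\neq 0$ to replace the topological generating set $\{\tau_1,\tau_2\}$ by $\{\tau_1,\sigma_1\}$ via the Burnside basis theorem --- the way you finish is genuinely different. The paper observes that $G$ is then a quotient of the local group $\mathbb{Z}_p\ltimes\mathbb{Z}_p$ (Proposition \ref{tamelyramified}), hence solvable, and concludes by Theorem \ref{virtually solvable case}, which ultimately rests on the FAb property of $G_{\mathbb{Q},S}(p)$, i.e.\ on class field theory applied to all open subgroups (Proposition \ref{solvablequotient}). You instead make the metacyclic structure explicit: $M=\overline{\langle\tau_1\rangle}$ is normal, $G/M$ has exponent dividing $p^{f_2}$ because $[G,G]\subseteq M$ and $G^{\mathrm{ab}}\cong \mathbb{Z}/p^{f_1}\times\mathbb{Z}/p^{f_2}$, and then the iterated Frobenius relation together with $v_p\bigl(q_1^{p^{f_2}}-1\bigr)=f_1+f_2$ (valid since $p$ is odd) forces $\tau_1$ to have order dividing $p^{f_1+f_2}$. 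This buys an explicit bound $|G|\le p^{f_1+2f_2}$ and uses only the finiteness of $G^{\mathrm{ab}}$ already encoded in Koch's presentation, rather than the general solvable-quotient machinery; the paper's route is shorter because it reuses that machinery, which it needs anyway elsewhere.

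Two small points to tighten. For normality of $M$ you need $\sigma_1 M\sigma_1^{-1}=M$, not merely $\sigma_1 M\sigma_1^{-1}\subseteq M$: this does hold, because $\sigma_1 M\sigma_1^{-1}=\overline{\langle\tau_1^{q_1}\rangle}$ and $q_1$ is prime to $p$, so $\tau_1^{q_1}$ topologically generates the procyclic pro-$p$ group $M$; say this explicitly. Also the identification $G/M\cong\mathbb{Z}/p^{f_2}$ via the order of the image of $\tau_2$ is argued loosely, but all you actually need (and what is immediate) is that $G/M$ is a quotient of $G^{\mathrm{ab}}/\langle\overline{\tau_1}\rangle\cong\mathbb{Z}/p^{f_2}$, so that $\sigma_1^{p^{f_2}}\in M$.
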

\begin{proof}
	Without loss of generality, assume that $ \ell_{12}\neq 0 $. Let $ \tau_{i} $ denote a generator of the inertia group at $ q_{i} $ and $ \sigma_{i} $ denote a lifting of the Frobenius automorphism at $ q_{i} $ for each $ i $. We write $ H $ for $ G_{\mathbb{Q},S}(p) $. By Theorem \ref{global structure}, we know that the pro-$ p $ group $ H $ is topologically generated by $ \tau_{1},\tau_{2} $, and we have 
	\[ \sigma_{1}\equiv \tau_{2}^{\ell_{12}}~\text{mod}~\Phi(H), \]
	where $ \Phi(H)=H^{p}[H,H] $ is the Frattini subgroup of $ H $. Since $ H $ is topologically generated by $ \tau_{1},\tau_{2} $, we find that the set $ \{\overline{\tau_{1}},\overline{\tau_{2}} \}$ is a basis of the two dimensional $ \mathbb{F}_{p} $-vector space $ H/\Phi(H) $ where $ \overline{\tau_{i}} $ is the image of $ \tau_{i} $ in $ H/\Phi(H) $ for $ i=1,2 $. Since $ \ell_{12}\neq 0 $, we know that the set $ \{\overline{\tau_{1}},\overline{\sigma_{1}} \}$ is also a basis of the $ \mathbb{F}_{p} $-vector space $ H/\Phi(H) $ where $ \overline{\sigma_{1}} $ is the image of $ \sigma_{1} $ in $ H/\Phi(H) $. It follows that the set $ \{\tau_{1},\sigma_{1}\} $ topologically generates $ H $. But the decomposition group at $ q_{1} $ generated by $ \tau_{1},\sigma_{1} $ is isomorphic to a quotient of a semi-direct product $ \mathbb{Z}_{p}\ltimes \mathbb{Z}_{p} $ which is a solvable group by Proposition \ref{tamelyramified}. By Theorem \ref{virtually solvable case}, $ H $ must be finite.
\end{proof}
\begin{Rem}
	\begin{enumerate}
		\item Assume further that $ q_{i}\not \equiv 1~\text{mod}~p^{2} $ for $i=1,2$. Then $ G_{\mathbb{Q},S}(p) $ is isomorphic to the nonabelian group with order $ p^{3} $ and exponent $ p^{2} $, see \cite[Example 11.15]{MR1930372}.
		\item In the general case, it has been shown that $ G_{\mathbb{Q},S}(p) $ can be infinite when $ |S|=2 $ in \cite[Theorem 5.9]{MR1901171}.
	\end{enumerate}
\end{Rem}

Next, we consider the case of $ |S|=3 $. First, we need an important lemma as follows:

\begin{Lem}\label{sl21}
	Let $ p $ be an odd prime. Let $ G $ be a powerful pro-$ p $ group defined by $ 3 $ generators and $ 3 $ relations such that $ G/[G,G]\simeq (\mathbb{Z}/p\mathbb{Z})^{3} $. Then either $ G $ is finite or $ G $ is isomorphic to $ {\rm SL}_{2}^{1}(\mathbb{Z}_{p}) $.
\end{Lem}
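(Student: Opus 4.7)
The plan is to apply Lazard's correspondence between uniform pro-$p$ groups and powerful $\mathbb{Z}_{p}$-Lie algebras, thereby reducing the classification of $G$ to that of a 3-dimensional $\mathbb{Z}_{p}$-Lie algebra. Since $G$ is a finitely generated powerful pro-$p$ group, the Dixon--du Sautoy--Mann--Segal structure theorem shows $G$ is $p$-adic analytic with $\dim G \le d(G)=3$; the equality $d(G)=3$ follows from $G^{\text{ab}}\cong(\mathbb{Z}/p\mathbb{Z})^{3}$. If $G$ is finite we are done, so assume $G$ is infinite. Powerfulness (for $p$ odd) gives $[G,G]\subseteq \overline{G^{p}}$, while $G^{\text{ab}}$ having exponent $p$ gives the reverse inclusion, so $[G,G]=\overline{G^{p}}$ has index $p^{3}$ in $G$.

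Next I would verify that $G$ is torsion-free, hence uniform. The torsion subgroup $T(G)$ is a finite characteristic subgroup and $G/T(G)$ is uniform; an analysis of $H^{2}(G,\mathbb{F}_{p})$ via the Lyndon--Hochschild--Serre spectral sequence for $1\to T(G)\to G\to G/T(G)\to 1$, combined with the hypothesis that $r(G)\le 3$, forces $T(G)=1$. From $|G:\overline{G^{p}}|=p^{\dim G}$ for uniform $G$ and $|G:\overline{G^{p}}|=p^{3}$, one then reads off $\dim G=3$. Under Lazard's correspondence, $G$ is associated to a $\mathbb{Z}_{p}$-Lie algebra $L=L(G)$ free of rank 3 with $[L,L]\subseteq pL$, and the isomorphism $L/[L,L]\cong G^{\text{ab}}\cong(\mathbb{Z}/p\mathbb{Z})^{3}$ upgrades this to $[L,L]=pL$. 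In particular, $L\otimes_{\mathbb{Z}_{p}}\mathbb{Q}_{p}$ is a 3-dimensional perfect $\mathbb{Q}_{p}$-Lie algebra, hence isomorphic to a form of $\mathfrak{sl}_{2}$ over $\mathbb{Q}_{p}$.

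Over $\mathbb{Q}_{p}$ there are exactly two such forms: the split $\mathfrak{sl}_{2}(\mathbb{Q}_{p})$ and the anisotropic form $\mathfrak{sl}_{1}(D)$ attached to the quaternion division algebra $D/\mathbb{Q}_{p}$. The main obstacle will be to exclude the anisotropic form and then to pin down the lattice inside the split one. For the anisotropic case, a direct computation with a uniformizer $\pi$ of the maximal order $\mathcal{O}_{D}$, satisfying $\pi^{2}=p$, shows that for any $\mathbb{Z}_{p}$-Lie subalgebra lattice $L\subset \mathfrak{sl}_{1}(D)$ with $[L,L]\subseteq pL$, one in fact has $[L,L]\subsetneq pL$: commutators of elements of $\pi$-valuation $1$ land in $\pi$-valuation $2$ (i.e.\ in $p\cdot\mathcal{O}_{D}^{0}$), while $pL$ occupies strictly higher $\pi$-valuation once $L$ is normalized to be a Lie subalgebra of the required kind. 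Equivalently, the corresponding pro-$p$ subgroup of ${\rm SL}_{1}(\mathcal{O}_{D})$ is not powerful in the sense of the paper. Hence $L\otimes\mathbb{Q}_{p}\cong\mathfrak{sl}_{2}(\mathbb{Q}_{p})$, and the condition $[L,L]=pL$ pins $L$ down, up to conjugation, to $p\cdot\mathfrak{sl}_{2}(\mathbb{Z}_{p})$, which is precisely the Lie algebra of ${\rm SL}_{2}^{1}(\mathbb{Z}_{p})$. Lazard's equivalence then yields $G\cong {\rm SL}_{2}^{1}(\mathbb{Z}_{p})$.
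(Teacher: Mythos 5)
Your strategy (Lazard correspondence, classify the $\mathbb{Q}_p$-form of the associated Lie lattice, then pin down the lattice) is genuinely different from the paper's argument, which is essentially two lines: powerfulness together with $G^{\mathrm{ab}}\simeq(\mathbb{Z}/p\mathbb{Z})^{3}$ give $\overline{G^{p}}=[G,G]$, and the finite-or-${\rm SL}_2^1(\mathbb{Z}_p)$ dichotomy is then quoted from Andozhskii's theorem on powerful pro-$p$ groups with $3$ generators and $3$ relations. So you are in effect trying to reprove that cited classification; that is legitimate, but as written several crucial steps are assertions rather than arguments, and at least two of them are genuine gaps.

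First, torsion-freeness. You claim that an ``analysis of $H^{2}(G,\mathbb{F}_{p})$ via the Lyndon--Hochschild--Serre spectral sequence, combined with $r(G)\le 3$, forces $T(G)=1$'', but no such analysis is given, and this is precisely where the relation-rank hypothesis has to do its work: without it, an infinite powerful group with nontrivial torsion, or of dimension $1$ or $2$ (hence virtually abelian), is not obviously excluded, and since the lemma is pure group theory you cannot invoke FAb-type arguments from the Galois setting. Until you know that an infinite $G$ is uniform of dimension $3$, the Lazard correspondence is not available and the rest of the proof does not start. Second, the exclusion of the anisotropic form is not a proof. What must be shown is that no $\mathbb{Z}_p$-Lie lattice $L\subset\mathfrak{sl}_{1}(D)$ satisfies $[L,L]=pL$ (equivalently, after rescaling the bracket by $1/p$, that $\mathfrak{sl}_{1}(D)$ contains no perfect Lie lattice). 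Your valuation computation only addresses lattices adapted to the $\pi$-adic filtration (``once $L$ is normalized''), whereas an arbitrary rank-$3$ lattice can scale the three directions independently, so the phrase ``$pL$ occupies strictly higher $\pi$-valuation'' has no meaning for a general $L$; indeed the second congruence filtration step of ${\rm SL}_1(\mathcal{O}_D)$ is powerful, so the exclusion must really use the condition $[L,L]=pL$, not powerfulness alone. A correct argument exists --- for instance, if $[M,M]=M$ then $M/pM$ is a perfect $3$-dimensional $\mathbb{F}_p$-Lie algebra, hence isomorphic to $\mathfrak{sl}_2(\mathbb{F}_p)$, so the Killing form of $M$ is unimodular and in particular isotropic over $\mathbb{Q}_p$, contradicting the anisotropy of the reduced norm form on $\mathfrak{sl}_1(D)$ --- but nothing of this kind appears in the proposal. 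Third, the final claim that $[L,L]=pL$ pins $L$ down, up to conjugation, to $p\cdot\mathfrak{sl}_2(\mathbb{Z}_p)$ is likewise only asserted; it needs, say, a Hensel lifting of an $\mathfrak{sl}_2$-triple from $L/pL$ plus a unimodularity argument to force equality, with extra care at $p=3$. As it stands the proposal is a plausible programme, not a proof, and the paper's citation of Andozhskii is doing exactly the work you have left out.
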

\begin{proof}
	Since the pro-$ p $ group $ G $ is powerful and $ G/[G,G] $ is an elementary abelian $ p $-group, we see that $ G^{p}=[G,G] $. Then our claim follows from \cite[Theorem 1.1 and Corollary 1]{andozhskii1974series}.
\end{proof}

Based on the work of Labute in \cite{MR3249411}, we prove the following theorem. 
\begin{Thm}\label{powerfulisfinite}
	Let $ p $ be an odd prime, and $ S=\{q_{1},q_{2},q_{3}\} $ a finite set of primes of $ \mathbb{Q} $ such that $ q_{i}\equiv 1~\text{mod}~p $ but $ q_{i}\not \equiv 1~\text{mod}~p^{2} $ for each $ i\in \{1,2,3\} $. Suppose that the pro-$ p $ group $ G_{\mathbb{Q},S}(p) $ is powerful. If the group $ G_{\mathbb{Q},S}(p) $ is infinite, then it is isomorphic to ${\rm SL}_{2}^{1}(\mathbb{Z}_{p})$. Moreover, in this case, we must have $ \ell_{ij}\neq 0 $ for all $ i,j $ and $ \ell_{13}/c_{1}=-\ell_{23}/c_{2},~\ell_{21}/c_{2}=-\ell_{31}/c_{3},~\ell_{12}/c_{1}=-\ell_{32}/c_{3} $ where $ c_{i}:=(q_{i}-1)/p $ for each $ i $.
\end{Thm}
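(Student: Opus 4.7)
The plan is to combine Koch's presentation from Theorem \ref{global structure}(vi) with Lemma \ref{sl21} and an elementary analysis inside the graded pieces of ${\rm SL}_{2}^{1}(\mathbb{Z}_{p})$. Put $G := G_{\mathbb{Q},S}(p)$, with generators $\tau_{i}$ and Frobenius lifts $\sigma_{i}$ as in that theorem. Since $q_{i}\equiv 1\pmod p$ but $q_{i}\not\equiv 1\pmod{p^{2}}$, one has $c_{i}:=(q_{i}-1)/p\in \mathbb{Z}_{p}^{\times}$ and $f_{i}=1$, so $G^{\mathrm{ab}}\simeq (\mathbb{Z}/p\mathbb{Z})^{3}$. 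Because $G$ is powerful, $G^{p}=[G,G]$, hence $G/[G,G]\simeq (\mathbb{Z}/p\mathbb{Z})^{3}$. Lemma \ref{sl21} then forces $G$ to be either finite or isomorphic to ${\rm SL}_{2}^{1}(\mathbb{Z}_{p})$, and we may assume $G\simeq {\rm SL}_{2}^{1}(\mathbb{Z}_{p})$ from now on.

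The congruence subgroups ${\rm SL}_{2}^{n}(\mathbb{Z}_{p})$ form a central filtration of $G$ with $[{\rm SL}_{2}^{n},{\rm SL}_{2}^{m}]\subseteq {\rm SL}_{2}^{n+m}$, and each quotient ${\rm SL}_{2}^{n}/{\rm SL}_{2}^{n+1}$ is identified with $\mathfrak{sl}_{2}(\mathbb{F}_{p})$ via $1+p^{n}M\mapsto M\bmod p$. Since $p$ is odd, $(1+p^{n}M)^{p}\equiv 1+p^{n+1}M\pmod{p^{n+2}}$, so the $p$-th power map induces the identity on graded pieces, while the group commutator induces the Lie bracket. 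Writing $\bar{A}_{i},\bar{B}_{i}\in \mathfrak{sl}_{2}(\mathbb{F}_{p})$ for the classes of $\tau_{i},\sigma_{i}$ in ${\rm SL}_{2}^{1}/{\rm SL}_{2}^{2}$, the Frattini relation $\sigma_{i}\equiv \prod_{j\neq i}\tau_{j}^{\ell_{ij}}\pmod{\Phi(G)}$ becomes $\bar{B}_{i}=\sum_{j\neq i}\ell_{ij}\bar{A}_{j}$, and reducing Koch's relation $\tau_{i}^{q_{i}-1}=[\sigma_{i}^{-1},\tau_{i}^{-1}]$ inside ${\rm SL}_{2}^{2}/{\rm SL}_{2}^{3}$ yields, for $i=1,2,3$,
\[
c_{i}\,\bar{A}_{i}\;=\;[\bar{B}_{i},\bar{A}_{i}]\;=\;\sum_{j\neq i}\ell_{ij}\,[\bar{A}_{j},\bar{A}_{i}]\quad\text{in }\mathfrak{sl}_{2}(\mathbb{F}_{p}).
\]

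To extract the conclusions, use the non-degenerate invariant trace form $\kappa(X,Y):=\text{tr}(XY)$ on $\mathfrak{sl}_{2}(\mathbb{F}_{p})$. Pairing the $i$-th equation with $\bar{A}_{i}$ kills every bracket term by invariance, so $c_{i}\kappa(\bar{A}_{i},\bar{A}_{i})=0$; since $c_{i}\neq 0$ in $\mathbb{F}_{p}$, each $\bar{A}_{i}$ is isotropic, hence nilpotent. Since $\bar{A}_{1},\bar{A}_{2},\bar{A}_{3}$ form a basis of $\mathfrak{sl}_{2}(\mathbb{F}_{p})$, the structure constant $V:=\kappa(\bar{A}_{1},[\bar{A}_{2},\bar{A}_{3}])$ is nonzero. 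Pairing each of the three equations with the two remaining basis vectors and simplifying via invariance yields six identities of the shape $c_{i}\kappa(\bar{A}_{i},\bar{A}_{k})=\pm \ell_{im}\,V$; equating the two identities sharing the same pair $\kappa(\bar{A}_{i},\bar{A}_{k})$ gives the three ratios $\ell_{13}/c_{1}=-\ell_{23}/c_{2}$, $\ell_{21}/c_{2}=-\ell_{31}/c_{3}$, and $\ell_{12}/c_{1}=-\ell_{32}/c_{3}$. If some $\ell_{ij}$ vanished, these same identities would force $\kappa(\bar{A}_{k},\bar{A}_{\ell})=0$ for a pair of nonzero isotropic vectors, producing a $2$-dimensional totally isotropic subspace of $\mathfrak{sl}_{2}(\mathbb{F}_{p})$; this contradicts the fact that for odd $p$ the trace form on $\mathfrak{sl}_{2}(\mathbb{F}_{p})$ is non-degenerate of Witt index $1$, and hence $\ell_{ij}\neq 0$ for all $i\neq j$. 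The main obstacle is setting up the dictionary between $G$ and the graded Lie algebra in the second paragraph; once it is in place, the invariant-form argument is routine, and the $p$-odd hypothesis is used precisely to make the $p$-th power map well-behaved on graded pieces and to ensure $\kappa$ is non-degenerate of Witt index $1$.
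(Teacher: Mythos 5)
Your proof is correct, and its second half takes a genuinely different route from the paper. The first step is the same: Koch's presentation (Theorem \ref{global structure}(vi)) gives three generators, three relations and $G^{\mathrm{ab}}\simeq(\mathbb{Z}/p\mathbb{Z})^{3}$ (each $f_{i}=1$), so Lemma \ref{sl21} plus powerfulness yields ``finite or ${\rm SL}_{2}^{1}(\mathbb{Z}_{p})$''. For the constraints on the $\ell_{ij}$, however, the paper simply cites Labute's Theorems 1.5 and 1.7, whereas you reprove them: you reduce Koch's relations along the congruence filtration of ${\rm SL}_{2}^{1}(\mathbb{Z}_{p})$ --- exactly the computation the paper performs later in the proof of Theorem \ref{sl21p} to get (\ref{modp3}) --- and then exploit the invariant trace form on $\mathfrak{sl}_{2}(\mathbb{F}_{p})$. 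I checked the six pairings: they give $c_{1}\kappa(\bar{A}_{1},\bar{A}_{2})=\ell_{13}V$, $c_{2}\kappa(\bar{A}_{1},\bar{A}_{2})=-\ell_{23}V$, $c_{2}\kappa(\bar{A}_{2},\bar{A}_{3})=\ell_{21}V$, $c_{3}\kappa(\bar{A}_{2},\bar{A}_{3})=-\ell_{31}V$, $c_{1}\kappa(\bar{A}_{1},\bar{A}_{3})=-\ell_{12}V$, $c_{3}\kappa(\bar{A}_{1},\bar{A}_{3})=\ell_{32}V$, which are exactly the stated ratios, and if some $\ell_{ij}$ vanished then two independent isotropic basis vectors would span a totally isotropic plane, impossible for a nondegenerate symmetric form in dimension $3$. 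What your route buys is a self-contained proof of the ``moreover'' clause, making transparent where the congruences come from; what the citation buys is brevity and Labute's more general framework. Two small points you should make explicit to be airtight: the classes $\bar{A}_{1},\bar{A}_{2},\bar{A}_{3}$ form a basis because the $\tau_{i}$ topologically generate $G$ and $\Phi({\rm SL}_{2}^{1}(\mathbb{Z}_{p}))={\rm SL}_{2}^{2}(\mathbb{Z}_{p})$ (Lemma \ref{sl2}); and $V\neq 0$ because the alternating trilinear form $(X,Y,Z)\mapsto \mathrm{tr}(X[Y,Z])$ on the three-dimensional space $\mathfrak{sl}_{2}(\mathbb{F}_{p})$ is a nonzero multiple of the determinant form when $p$ is odd (it takes the value $\mathrm{tr}(h^{2})=2$ on the standard basis $h,e,f$), hence is nonzero on any basis.
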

\begin{proof}
	By Theorem \ref{global structure}, we see that the pro-$ p $ group $ G_{\mathbb{Q},S}(p) $ is defined by $ 3 $ generators and $ 3 $ relations, and the abelianization $ G/[G,G] $ of $G $ is isomorphic to $(\mathbb{Z}/p\mathbb{Z})^{3} $ since $  q_{i}\not \equiv 1~\text{mod}~p^{2} $ for all $ i $. By Lemma \ref{sl21}, we see that either $ G_{\mathbb{Q},S}(p) $ is finite or $ G_{\mathbb{Q},S}(p) $ is isomorphic to $ {\rm SL}_{2}^{1}(\mathbb{Z}_{p}) $. Thus, if $G_{\mathbb{Q},S}(p)$ is infinite, then it is isomorphic to ${\rm SL}_{2}^{1}(\mathbb{Z}_{p})$. Our second claim follows from \cite[Theorem 1.5 and 1.7]{MR3249411}. The theorem is proved.
\end{proof}

Labute gave the following example in \cite{MR3249411}: if $ p=3,~S=\{7,31,229\}$ or $ p=5,~S=\{11,31,1021\} $, then the condition of Theorem \ref{powerfulisfinite} is satisfied, and it is not known whether $G_{\mathbb{Q},S}(p)$ is infinite or not.

\subsection{Conjecture \ref{UFM} in the two-dimensional case}\label{chapter5}
\subsubsection{Statements} In this subsection, we will keep the notation in Convention \ref{linknumber}. Our second main result of this paper is the following theorems.

\begin{Thm}\label{bigimage}
	Let $ p$ be an odd prime. Let $ K $ be a number field, $ S $ a finite set of primes of $ K $ such that $ S\cap S_{p}=\emptyset $, and $ \rho:G_{K,S}\to {\rm GL}_{2}(\mathbb{Z}_{p}) $ a continuous homomorphism. Then either $ \rho $ has finite image or there exists an integer $ i\geq 1 $ such that $ {\rm SL}_{2}^{i}(\mathbb{Z}_{p}) $ is an open subgroup of $ \text{Im}(\rho) $.
\end{Thm}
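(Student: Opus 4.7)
The plan is to determine the $p$-adic Lie dimension of $\text{Im}(\rho)$ by a two-stage application of Corollary \ref{dimlessthan3}, and then to use Lemma \ref{closedsubgrouphavingsamediemsnion} to identify an open piece of ${\rm SL}_{2}(\mathbb{Z}_{p})$ sitting inside $\text{Im}(\rho)$.

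First I would analyze the determinant. The composition $\det\circ\rho:G_{K,S}\to \mathbb{Z}_{p}^{\ast}$ realizes its image as a closed subgroup of the $1$-dimensional $p$-adic analytic group $\mathbb{Z}_{p}^{\ast}$, hence as a $p$-adic analytic quotient of $G_{K,S}$ of dimension $0$ or $1$. By Corollary \ref{dimlessthan3} dimension $1$ is impossible, so $\det(\text{Im}(\rho))$ is finite. Consequently $H:=\text{Im}(\rho)\cap {\rm SL}_{2}(\mathbb{Z}_{p})$ is an open subgroup of finite index in $\text{Im}(\rho)$, and in particular $\dim(H)=\dim(\text{Im}(\rho))$.

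Next, assume $\text{Im}(\rho)$ is infinite (otherwise the theorem already holds). Then $H$ is an infinite closed subgroup of the $3$-dimensional $p$-adic analytic group ${\rm SL}_{2}(\mathbb{Z}_{p})$, so $\dim(H)\in\{1,2,3\}$. The openness of $H$ in $\text{Im}(\rho)$ means that its preimage in $G_{K,S}$ is an open subgroup, which by Lemma \ref{1} equals $G_{K',S'}$ for some finite extension $K'/K$ and finite set $S'$ of primes of $K'$ with $S'\cap S_{p}(K')=\emptyset$; thus $H$ is a $p$-adic analytic quotient of $G_{K',S'}$. Applying Corollary \ref{dimlessthan3} to the pair $(K',S')$ rules out $\dim(H)=1,2$, forcing $\dim(H)=3$.

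Finally, Lemma \ref{closedsubgrouphavingsamediemsnion} implies that $H$ is open in ${\rm SL}_{2}(\mathbb{Z}_{p})$, so it contains some ${\rm SL}_{2}^{i}(\mathbb{Z}_{p})$ (these form a fundamental system of neighborhoods of $1$). Since $\dim({\rm SL}_{2}^{i}(\mathbb{Z}_{p}))=3=\dim(\text{Im}(\rho))$, a second application of Lemma \ref{closedsubgrouphavingsamediemsnion} shows that ${\rm SL}_{2}^{i}(\mathbb{Z}_{p})$ is open in $\text{Im}(\rho)$, which is the desired conclusion. The essential input is Corollary \ref{dimlessthan3}, which is where the arithmetic content (via Proposition \ref{equi} and Theorem \ref{virtually solvable case}) enters; I do not expect any serious obstacle beyond this, as the rest of the argument is pure dimension counting in $p$-adic Lie theory.
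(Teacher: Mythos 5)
Your proof is correct and follows the same strategy as the paper: establish finite determinant, trap the dimension of the special-linear part between $1$ and $3$, exclude dimensions $1$ and $2$ via Corollary~\ref{dimlessthan3}, and conclude openness via Lemma~\ref{closedsubgrouphavingsamediemsnion}. The only difference is that you work with $\text{Im}(\rho)\cap {\rm SL}_{2}(\mathbb{Z}_{p})$ directly, whereas the paper works with $\rho(\ker(\overline{\rho}))$ and uses $\mu(\mathbb{Z}_{p})=\mathbb{F}_{p}^{\times}$ (for $p>2$) to place it inside ${\rm SL}_{2}^{1}(\mathbb{Z}_{p})$ --- a cosmetic variant of the same argument.
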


In other words, if Conjecture \ref{UFM} fails, then the corresponding representation will have large image. Furthermore, inspired by the work of \cite{MR3249411}, we prove:
\begin{Thm}\label{sl21p}
	Let $ p$ be an odd prime, and $ S=\{q_{1},\cdots,q_{d}\} $ a finite set of primes of $ \mathbb{Q} $ not containing $p$ such that $ q_{i}\not \equiv 1~\text{mod}~p^{2} $ for each $ i\in \{1,\cdots,d\} $. Let $\rho:G_{\mathbb{Q},S}\to {\rm GL}_{2}(\mathbb{Z}_{p})$ be a continuous homomorphism. Suppose that the image $\text{Im}(\overline{\rho})$ of the reduction $\overline{\rho}$ of $\rho$ is trivial. Then either $\rho$ is trivial or $\text{Im}(\rho)={\rm SL}_{2}^{1}(\mathbb{Z}_{p})$. Assume further that $ \ell_{ij}=0 $ for all $ i,j\in \{1,\cdots,d\} $ with $ i\neq j$. Then $\rho$ is trivial.
\end{Thm}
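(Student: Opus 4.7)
The plan is to combine the ``big image'' dichotomy of Theorem \ref{bigimage} with a Lie-algebraic analysis of $\text{Im}(\rho)$ modulo the Frattini subgroup of ${\rm SL}_{2}^{1}(\mathbb{Z}_{p})$, exploiting the Koch presentation of $G_{\mathbb{Q},S}(p)$ (Theorem \ref{global structure}(vi)). After removing redundant primes via Lemma \ref{redundant} we may assume $q_{i}\equiv 1\pmod{p}$ for every $i$, so $c_{i}:=(q_{i}-1)/p$ is a unit modulo $p$. Since $\overline{\rho}$ is trivial, $\rho$ factors through $G_{\mathbb{Q},S}(p)$ and lands in ${\rm GL}_{2}^{1}(\mathbb{Z}_{p})$. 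The abelianization of $G_{\mathbb{Q},S}(p)$ is $(\mathbb{Z}/p\mathbb{Z})^{d}$ by Theorem \ref{global structure}(vi)(a), while $1+p\mathbb{Z}_{p}$ is torsion-free, so $\det\circ\rho=1$ and $\text{Im}(\rho)\subseteq {\rm SL}_{2}^{1}(\mathbb{Z}_{p})$. The second assertion, in the case $\ell_{ij}=0$ for all $i\neq j$, is then immediate from Corollary \ref{gl2}(ii).

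For the first assertion, assume $\rho$ is nontrivial. By Theorem \ref{bigimage}, $\text{Im}(\rho)$ is open in ${\rm SL}_{2}^{1}(\mathbb{Z}_{p})$. Using the standard fact that ${\rm SL}_{2}^{1}(\mathbb{Z}_{p})$ is uniform for odd $p$ with Frattini subgroup ${\rm SL}_{2}^{2}(\mathbb{Z}_{p})$, Burnside's basis theorem for pro-$p$ groups reduces the claim $\text{Im}(\rho)={\rm SL}_{2}^{1}(\mathbb{Z}_{p})$ to showing that $\rho$ surjects onto ${\rm SL}_{2}^{1}(\mathbb{Z}_{p})/{\rm SL}_{2}^{2}(\mathbb{Z}_{p})\cong \mathfrak{sl}_{2}(\mathbb{F}_{p})$. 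Writing Koch's generators as $\tau_{i}=1+pA_{i}$ and $\sigma_{i}=1+pB_{i}$ with reductions $\overline{A_{i}},\overline{B_{i}}\in \mathfrak{sl}_{2}(\mathbb{F}_{p})$, the Frattini-level congruence $y_{i}\equiv \prod_{j\neq i}x_{j}^{L_{ij}}\pmod{F^{p}[F,F]}$ yields $\overline{B_{i}}=\sum_{j\neq i}\ell_{ij}\overline{A_{j}}$, while a direct expansion of $\tau_{i}^{q_{i}-1}=[\sigma_{i}^{-1},\tau_{i}^{-1}]$ modulo $p^{3}$ (using $(1+pA)^{p}\equiv 1+p^{2}A\pmod{p^{3}}$ and $(1+pB)(1+pA)(1+pB)^{-1}(1+pA)^{-1}\equiv 1+p^{2}[B,A]\pmod{p^{3}}$, both valid for odd $p$) produces the key Lie identity
\[
c_{i}\,\overline{A_{i}}\;=\;\sum_{j\neq i}\ell_{ij}\,[\overline{A_{j}},\overline{A_{i}}]\qquad\text{in }\mathfrak{sl}_{2}(\mathbb{F}_{p}).
\]

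The main obstacle I expect is the dimension analysis of $V:=\text{span}_{\mathbb{F}_{p}}(\overline{A_{i}})\subseteq \mathfrak{sl}_{2}(\mathbb{F}_{p})$. Because $\overline{B_{i}}\in V$ and $c_{i}\in \mathbb{F}_{p}^{\times}$, the identity above forces $\overline{A_{i}}\in [V,V]$ for every $i$. If $V=0$, then $\text{Im}(\rho)\subseteq {\rm GL}_{2}^{2}(\mathbb{Z}_{p})$, which is trivial by Corollary \ref{gl2}(i), contradicting the assumption. If $\dim V=1$, then $[V,V]=0$ forces all $\overline{A_{i}}=0$, again a contradiction. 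If $\dim V=2$, then $[V,V]$ is at most one-dimensional, so $V=\text{span}(\overline{A_{i}})\subseteq [V,V]$ would be at most one-dimensional as well, the final contradiction. Hence $V=\mathfrak{sl}_{2}(\mathbb{F}_{p})$, and Burnside's basis theorem yields $\text{Im}(\rho)={\rm SL}_{2}^{1}(\mathbb{Z}_{p})$, which is the kernel of ${\rm SL}_{2}(\mathbb{Z}_{p})\twoheadrightarrow {\rm SL}_{2}(\mathbb{F}_{p})$.
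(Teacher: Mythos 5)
Your proof is correct and follows essentially the same route as the paper's: reduce to $G_{\mathbb{Q},S}(p)$ with image in ${\rm SL}_{2}^{1}(\mathbb{Z}_{p})$, expand Koch's relations modulo $p^{3}$ to get $c_{i}\overline{A_{i}}=\sum_{j\neq i}\ell_{ij}[\overline{A_{j}},\overline{A_{i}}]$, show the $\overline{A_{i}}$ must span $\mathfrak{sl}_{2}(\mathbb{F}_{p})$ (with Corollary \ref{gl2} eliminating the degenerate case), and conclude by the Frattini quotient description of Lemma \ref{sl2}; the second assertion via Corollary \ref{gl2}(ii) is also exactly the paper's step. The only cosmetic difference is your appeal to Theorem \ref{bigimage}, which is superfluous (and as phrased would need torsion-freeness of ${\rm SL}_{2}^{1}(\mathbb{Z}_{p})$ to pass from nontrivial to infinite image), but since your Burnside-basis step applies to any closed subgroup surjecting onto the Frattini quotient, this does not affect correctness.
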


The following theorem shows that Conjecture \ref{theimageofineritagroupisfinite} (or Conjecture \ref{UFM}) cannot be simply reduced to a local problem.
\begin{Thm}\label{notalocalproblem}
	Let $p$ be an odd prime and let $q$ be a prime such that $q\equiv 1~\text{mod}~p$. Let $G$ be the pro-$ p $ group topologically generated by two generators $ \sigma,\tau $ subject to the only relation $ \sigma\tau\sigma^{-1}=\tau^{q} $. Then there exists a continous homomorphism $\rho:G\to {\rm SL}_{2}^{1}(\mathbb{Z}_{p})$ such that $\rho(\tau)$ has infinite order. 
\end{Thm}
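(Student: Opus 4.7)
The plan is to construct $\rho$ explicitly by exhibiting matrices $A, B \in {\rm SL}_2^1(\mathbb{Z}_p)$ satisfying $ABA^{-1} = B^q$ with $B$ of infinite order, and then setting $\rho(\sigma) := A$, $\rho(\tau) := B$. Since ${\rm SL}_2^1(\mathbb{Z}_p)$ is a pro-$p$ group by Lemma \ref{pro}, the universal property of $G$ viewed as a pro-$p$ group on generators $\sigma, \tau$ with the single relation $\sigma \tau \sigma^{-1} = \tau^q$ then gives that this assignment extends uniquely to a continuous homomorphism $\rho: G \to {\rm SL}_2^1(\mathbb{Z}_p)$.

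The key observation is to take $B$ unipotent and $A$ diagonal. Set
\[
B := \begin{pmatrix} 1 & p \\ 0 & 1 \end{pmatrix}, \qquad A := \begin{pmatrix} a & 0 \\ 0 & a^{-1} \end{pmatrix},
\]
where $a \in 1 + p\mathbb{Z}_p$ is to be chosen. A direct computation yields $ABA^{-1} = \begin{pmatrix} 1 & a^2 p \\ 0 & 1 \end{pmatrix}$, while $B^q = \begin{pmatrix} 1 & qp \\ 0 & 1 \end{pmatrix}$. The required relation therefore reduces to the single equation $a^2 = q$ in $\mathbb{Z}_p$.

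The only nontrivial point is producing such an $a$. Since $q \equiv 1 \pmod p$ and $p$ is odd, Hensel's lemma applied to $f(X) = X^2 - q$ at the approximate root $X = 1$ gives a unique $a \in 1 + p\mathbb{Z}_p$ with $a^2 = q$ (one has $f(1) = 1 - q \in p\mathbb{Z}_p$ and $f'(1) = 2 \in \mathbb{Z}_p^{\times}$, exactly where the hypothesis $p \neq 2$ enters). With this $a$, both $A$ and $B$ reduce to the identity modulo $p$, hence lie in ${\rm SL}_2^1(\mathbb{Z}_p)$. Finally, $\rho(\tau) = B$ has infinite order because $B^n = \begin{pmatrix} 1 & np \\ 0 & 1 \end{pmatrix}$ equals the identity in ${\rm SL}_2(\mathbb{Z}_p)$ only when $n = 0$. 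There is no substantial obstacle in this argument; the construction is entirely explicit once one notices that the unipotent-times-diagonal ansatz converts the commutation relation into the algebraic equation $a^2 = q$, which is solvable precisely because $q$ is a principal unit in $\mathbb{Z}_p$.
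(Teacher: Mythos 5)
Your proof is correct and takes essentially the same approach as the paper: both set $\rho(\tau)$ to the unipotent matrix $\bigl(\begin{smallmatrix}1 & p\\ 0 & 1\end{smallmatrix}\bigr)$ and $\rho(\sigma)$ to the diagonal matrix with entries $\sqrt{q}$, $\sqrt{q}^{-1}$, where $\sqrt{q}\in 1+p\mathbb{Z}_p$ is produced by Hensel's lemma, and both invoke the universal property of the presented pro-$p$ group (Lemma \ref{extend}) to extend the assignment to a continuous homomorphism. The only cosmetic difference is that you verify the relation $ABA^{-1}=B^q$ directly, whereas the paper writes the single relator as $r_1=x^{q-1}[x^{-1},y^{-1}]$ and checks $\rho(r_1)=1$; these are the same equation.
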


When $\text{Im}(\rho)\subset {\rm SL}_{2}(\mathbb{Z}_{p})$, we prove the following:

\begin{Thm}\label{SL2image}
	Let $ p>3 $ be a prime. Let $ S=\{q_{1},\cdots,q_{d}\} $ be a finite set of primes of $ \mathbb{Q} $ not containing $ p $ and $ \rho:G_{\mathbb{Q},S}\to {\rm SL}_{2}(\mathbb{Z}_{p}) $ a continuous homomorphism. Suppose that $ q_{i} $ does not divide $ p^{2}-1 $ for every $ i \in \{1,\cdots,d\}$. Then we have the following:
	\begin{enumerate}
		\item Either $ \text{Im}(\rho) $ is finite or $ \text{Im}(\rho) $ is an open subgroup of $ {\rm SL}_{2}(\mathbb{Z}_{p}) $.
		\item $ \rho $ is tamely ramified at each $ q_{i} $. In particular, $\rho(I_{q_{i}})$ is a cyclic group.
		\item Let $i\in \{1,\cdots,d\}$. Assume that $x^{2}\not \equiv q_{i}~\text{mod}~p$ for any $x\in \mathbb{Z}$. Then the group $\rho(I_{q_{i}})$ is finite where $ I_{q_{i}} $ is the inertia group at $ q_{i} $.
		\item Suppose that the group $\rho(I_{q_{i}})$ is finite for some $i$. Then the order of the cyclic group $ \rho(I_{q_{i}}) $ divides $ p-1 $ or $ p+1$.
		\item  Suppose that the group $\rho(I_{q_{i}})$ is finite for some $i$. Then the order of the cyclic group $ \rho(I_{q_{i}}) $ divides $ q_{i}-1 $ or $ q_{i}+1 $.
		\item Suppose that the group $\rho(I_{q_{i}})$ is finite for any $i\in \{1,\cdots,d\}$. Assume that at least one of the following conditions holds. Let $\gcd$ denote the greatest common divisor.
		\begin{enumerate}
			\item One has $ \gcd(p-1,q_{i}-1)=2 $, and the order of the cyclic group $ \rho(I_{q_{i}}) $ divides $ p-1 $ and $ q_{i}-1 $ for all $ i $.
			\item One has $ \gcd(p-1,q_{i}+1)=2 $, and the order of the cyclic group $ \rho(I_{q_{i}}) $ divides $ p-1 $ and $ q_{i}+1 $ for all $ i $.
			\item One has $ \gcd(p+1,q_{i}-1)=2 $, and the order of the cyclic group $ \rho(I_{q_{i}}) $ divides $ p+1 $ and $ q_{i}-1 $ for all $ i $.
			\item One has $ \gcd(p+1,q_{i}+1)=2 $, and the order of the cyclic group $ \rho(I_{q_{i}}) $ divides $ p+1 $ and $ q_{i}+1 $ for all $ i $.
		\end{enumerate}
		Then the order $ |\text{Im}(\rho)| $ of the group $ \text{Im}(\rho) $ satisfies $ |\text{Im}(\rho)|\leq 2 $. 
	\end{enumerate}
\end{Thm}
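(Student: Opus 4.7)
My plan is to treat the six parts in sequence, making essential use of the local relation $\sigma_i \tau_i \sigma_i^{-1} = \tau_i^{q_i}$ of Proposition \ref{tamelyramified}(i) together with the analytic structure of ${\rm SL}_2(\mathbb{Z}_p)$. For (i), the group ${\rm SL}_2(\mathbb{Z}_p)$ is $p$-adic analytic of dimension three; if $\dim \text{Im}(\rho) = 3$ then Lemma \ref{closedsubgrouphavingsamediemsnion} makes $\text{Im}(\rho)$ open, and since the ${\rm SL}_2^{i}(\mathbb{Z}_p)$ form a neighborhood base at the identity the image must contain some ${\rm SL}_2^{i}(\mathbb{Z}_p)$; otherwise $\dim \text{Im}(\rho) \leq 2$ and Example \ref{dimensionlessthan3} makes $\text{Im}(\rho)$ virtually solvable, so it is finite by Theorem \ref{virtually solvable case}. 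For (ii), the wild inertia $P_{q_i}$ is pro-$q_i$ and meets the pro-$p$ group ${\rm SL}_2^{1}(\mathbb{Z}_p)$ trivially, so $\rho(P_{q_i})$ embeds as a $q_i$-group into ${\rm SL}_2(\mathbb{F}_p)$; its order divides $|{\rm SL}_2(\mathbb{F}_p)| = p(p^2-1)$ but is coprime to $p$, hence divides $p^2-1$, and $q_i \nmid p^2-1$ forces $\rho(P_{q_i}) = 1$. The image of $I_{q_i}$ then factors through the procyclic tame quotient and is therefore cyclic (i.e.\ procyclic).

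For (iii), suppose $T := \rho(\tau_i)$ has infinite order. By Proposition \ref{tamelyramified}(ii) its eigenvalues are roots of unity; since $T$ has infinite order it cannot be semisimple, forcing both eigenvalues to equal $\pm 1$, and after replacing $T$ by a suitable power (which preserves $F T F^{-1} = T^{q_i}$ with $F := \rho(\sigma_i)$) I may assume $T = I + N$ with $N \in M_2(\mathbb{Q}_p)$ nonzero and $N^{2} = 0$. Expanding gives $T^{q_i} = I + q_i N$, so the relation becomes $F N F^{-1} = q_i N$, exhibiting $q_i$ as an eigenvalue of $\operatorname{ad}(F)$ on $M_2(\mathbb{Q}_p)$. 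The eigenvalues of $\operatorname{ad}(F)$ over $\bar{\mathbb{Q}}_p$ are $1, \mu^{2}, \mu^{-2}$, where $\mu, \mu^{-1}$ are the eigenvalues of $F$, so $q_i \neq 1$ forces $\mu^{2} = q_i^{\pm 1}$. Combined with $\mu + \mu^{-1} = \operatorname{tr}(F) \in \mathbb{Q}_p$ and $\mu\mu^{-1} = 1$, a short computation shows $\mu \in \mathbb{Q}_p$ (the alternative would require $(q_i + 1)/\sqrt{q_i} \in \mathbb{Q}_p$ with $\sqrt{q_i} \notin \mathbb{Q}_p$, forcing $q_i = -1$, which is absurd). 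Hence $\sqrt{q_i} \in \mathbb{Q}_p^\times$, and since $q_i \in \mathbb{Z}_p^{\times}$, Hensel's lemma yields that $q_i$ is a square modulo $p$, contradicting the hypothesis of (iii).

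For (iv) and (v), I use that ${\rm SL}_2^{1}(\mathbb{Z}_p)$ carries a $p$-valuation by Example \ref{padicvaluedexample}, hence is torsion-free, so a finite-order $T \in {\rm SL}_2(\mathbb{Z}_p)$ has order equal to that of its reduction and has eigenvalues $\lambda, \lambda^{-1}$ in $\bar{\mathbb{Q}}_p$ that are roots of unity with $\lambda + \lambda^{-1} \in \mathbb{Z}_p$. The torsion subgroup of $\mathbb{Q}_p^{\times}$ is $\mu_{p-1}$, and among quadratic extensions of $\mathbb{Q}_p$ only the unramified one $\mathbb{Q}_{p^2}$ introduces new roots of unity (ramified quadratic extensions have residue field $\mathbb{F}_p$; $p$-power torsion is excluded because $p > 3$ makes $[\mathbb{Q}_p(\zeta_p + \zeta_p^{-1}) : \mathbb{Q}_p] = (p-1)/2 > 1$). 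So either $\lambda \in \mathbb{Q}_p$ with order dividing $p - 1$, or $\lambda \in \mathbb{Q}_{p^2} \setminus \mathbb{Q}_p$ with $\lambda^{p} = \lambda^{-1}$, giving order dividing $p + 1$; this is (iv). For (v), $F T F^{-1} = T^{q_i}$ forces $\{\lambda, \lambda^{-1}\} = \{\lambda^{q_i}, \lambda^{-q_i}\}$, so $\lambda^{q_i - 1} = 1$ or $\lambda^{q_i + 1} = 1$, and the order of $T$ (equal to the order of $\lambda$ unless $T = \pm I$, for which the claim is trivial) divides $q_i - 1$ or $q_i + 1$.

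For (vi), each of the four hypotheses combines a divisibility from (iv) with one from (v) to force $|\rho(I_{q_i})|$ to divide a gcd equal to $2$, so $\rho(I_{q_i}) \subseteq \{\pm I\}$ for every $i$. Because $\{\pm I\}$ is central in ${\rm SL}_2(\mathbb{Z}_p)$, the normal subgroup of $\text{Im}(\rho)$ generated by the inertia images lies inside $\{\pm I\}$; and because $\mathbb{Q}$ has trivial ideal class group, Theorem \ref{global structure}(v) gives $G_{\mathbb{Q},\emptyset} = 1$, so the quotient of $\text{Im}(\rho)$ by the normal closure of the inertia subgroups is trivial and those subgroups normally generate the whole image. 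Combining the two yields $|\text{Im}(\rho)| \leq 2$. I expect the most delicate step to be the eigenvalue calculation in (iii) --- specifically the verification that $\mu$ lies in $\mathbb{Q}_p$ rather than in a quadratic extension --- while the remaining parts are direct applications of the results collected in Sections \ref{chapter1} and \ref{chapter2}.
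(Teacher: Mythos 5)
Your proposal is correct, and its skeleton coincides with the paper's: you prove (i) by the dimension dichotomy for $p$-adic analytic groups (the paper packages this as Theorem \ref{bigimage}), (ii) by the ``wild inertia is pro-$q_i$, kernel of reduction is pro-$p$'' argument, and (vi) by centrality of $\{\pm I\}$ plus the fact that $G_{\mathbb{Q},S}$ is topologically generated by inertia, which the paper also deduces from Theorem \ref{global structure}(v) (note this is Minkowski's theorem $G_{\mathbb{Q},S_{\infty}}=\{1\}$, not a class-group statement; your attribution is harmless). Where you genuinely diverge is in (iii)--(v). For (iii) the paper puts $\rho(\tau_i)$ in upper-triangular form and solves the matrix relation entrywise, ending at $a^{2}=q_i$ in $\mathbb{Z}_p$; you instead read the relation as $FNF^{-1}=q_iN$ and locate $q_i$ among the eigenvalues $1,\mu^{2},\mu^{-2}$ of $\operatorname{ad}(F)$, reaching the same conclusion $\sqrt{q_i}\in\mathbb{Q}_p$ (your appeal to Hensel is unnecessary---reduction mod $p$ alone contradicts the hypothesis---but costs nothing). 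For (iv) the paper reduces mod $p$ and quotes the order statement for ${\rm SL}_2(\mathbb{F}_p)$ (Lemma \ref{easylemma}), whereas you classify roots of unity in $\mathbb{Q}_p$ and its quadratic extensions; both use $p>3$ in the same place and both are valid. For (v) the paper diagonalizes over a finite extension and does a case-by-case matrix computation, obtaining the extra possibility ``order divides $2$'' and then observing $q_i\pm1$ is even; your eigenvalue-multiset comparison $\{\lambda,\lambda^{-1}\}=\{\lambda^{q_i},\lambda^{-q_i}\}$ is cleaner and gives the same conclusion---just record, as the paper does, that $q_i$ is odd (indeed $q_i\neq 2$ since $2\mid p^{2}-1$), so the $T=\pm I$ case you call trivial really is covered, and in (vi) add the one-line remark that $-I$ is the unique element of order $2$ in ${\rm SL}_2(\mathbb{Z}_p)$, which is what converts ``$|\rho(I_{q_i})|$ divides $2$'' into $\rho(I_{q_i})\subseteq\{\pm I\}$. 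With these one-line additions the argument is complete and, in parts (iii)--(v), somewhat slicker than the paper's explicit computations.
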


\subsubsection{Proofs}

\begin{proof}[Proof of Theorem \ref{bigimage}]
	Suppose that $ \text{Im}(\rho) $ is infinite. Then we consider the following extension of profinite groups
	\begin{align*}
		1\to \rho(\ker(\overline{\rho}))\to \text{Im}(\rho)\to \text{Im}(\overline{\rho})\to 1.
	\end{align*}
	By Theorem \ref{virtually solvable case}, we see that $ \rho $ has finite determinant. Since $ p>2 $, we know that $ \mu(\mathbb{Z}_{p})=\mathbb{F}_{p}^{\times} $ where $ \mu(\mathbb{Z}_{p}) $ is the multiplicative group consisting of the roots of unity in $ \mathbb{Z}_{p} $.
	Since the group $ \rho(\ker(\overline{\rho})) $ is pro-$ p $, we see that any matrix in $ \rho(\ker(\overline{\rho}))\subset {\rm GL}_{2}^{1}(\mathbb{Z}_{p}) $ has determinant of order $ p^{m} $ for some positive integer $ m $ and hence has determinant $ 1 $. That is, we have
	\begin{align}\label{lessthandim}
		\rho(\ker(\overline{\rho}))\subset {\rm SL}_{2}^{1}(\mathbb{Z}_{p}).
	\end{align}
	By \cite[Theorem 5.2]{MR1720368}, we see that the dimension of $ {\rm SL}_{2}^{1}(\mathbb{Z}_{p}) $ is $ 3 $ as a $ p $-adic analytic group. By \cite[Exercise 4, Chapter 8]{MR1720368} and (\ref{lessthandim}), we have 
	\[ \dim(\rho(\ker(\overline{\rho})))\leq 3. \]
	Since $ \text{Im}(\rho) $ is infinite and $ \text{Im}(\bar{\rho}) $ is finite, we see that $ \rho(\ker(\overline{\rho})) $ is infinite. This implies that $ \dim(\rho(\ker(\overline{\rho})))=3 $ by Corollary \ref{dimlessthan3}. By Lemma \ref{closedsubgrouphavingsamediemsnion}, we know that $  \rho(\ker(\overline{\rho})) $ is an open subgroup of $ {\rm SL}_{2}^{1}(\mathbb{Z}_{p}) $. By \cite[Exercise 10, Chapter 1]{MR1720368}, we obtain that $  \rho(\ker(\overline{\rho})) $ contains $ {\rm SL}_{2}^{i}(\mathbb{Z}_{p}) $ for some $ i\geq 1 $. It is clear that $ {\rm SL}_{2}^{i}(\mathbb{Z}_{p}) $ is an open subgroup of $ \text{Im}(\rho) $. This completes the proof of our theorem.
\end{proof}

To prove Theorem \ref{sl21p}, we need the following lemma.

\begin{Lem}\label{sl2}\cite[Lemma 3.13]{abdellatif2022fontaine}
	Let $ M_{2}^{0}(\mathbb{F}_{p})$ denote the three-dimensional $\mathbb{F}_{p}$-linear space of $2\times2 $ matrices with trace zero over $ \mathbb{F}_{p} $. Then the Frattini subgroup of ${\rm SL}_{2}^{1}(\mathbb{Z}_{p})$ is ${\rm SL}_{2}^{2}(\mathbb{Z}_{p})$, and the group homomorphism from the group ${\rm SL}_{2}^{1}(\mathbb{Z}_{p})/{\rm SL}_{2}^{2}(\mathbb{Z}_{p})$ to $  M_{2}^{0}(\mathbb{F}_{p})$ given by 
	\[ 1+pA\mapsto \overline{A}, \]
	is an isomorphism where $\overline{A}$ is the image of $A\in M_{2}(\mathbb{Z}_{p})$ in $M_{2}(\mathbb{F}_{p})$.
\end{Lem}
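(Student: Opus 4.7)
The plan is to construct the map $\phi: {\rm SL}_{2}^{1}(\mathbb{Z}_{p}) \to M_{2}^{0}(\mathbb{F}_{p})$ defined by $1+pA \mapsto \overline{A}$ directly as a surjective group homomorphism whose kernel is exactly ${\rm SL}_{2}^{2}(\mathbb{Z}_{p})$, and then to identify this kernel with the Frattini subgroup by exploiting the uniform pro-$p$ structure of ${\rm SL}_{2}^{1}(\mathbb{Z}_{p})$ for odd $p$.

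First I would verify $\phi$ is well defined. Any $M = 1+pA \in {\rm SL}_{2}^{1}(\mathbb{Z}_{p})$ satisfies
\[
\det M = 1 + p\,\mathrm{tr}(A) + p^{2}\det(A) = 1,
\]
so $\mathrm{tr}(A) = -p\det(A) \equiv 0 \pmod p$, whence $\overline{A} \in M_{2}^{0}(\mathbb{F}_{p})$. The expansion $(1+pA)(1+pB) = 1 + p(A+B) + p^{2}AB$ then shows that $\phi$ is a group homomorphism into the additive group $M_{2}^{0}(\mathbb{F}_{p})$, and its kernel is $(1+p^{2}M_{2}(\mathbb{Z}_{p})) \cap {\rm SL}_{2}(\mathbb{Z}_{p}) = {\rm SL}_{2}^{2}(\mathbb{Z}_{p})$. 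For surjectivity, given a trace-zero matrix $\overline{X}$ with a lift $\begin{pmatrix} a & b \\ c & -a \end{pmatrix}$, I would exhibit a preimage as
\[
M = \begin{pmatrix} 1+pa & pb \\ pc & d \end{pmatrix}, \qquad d = (1 + p^{2}bc)(1+pa)^{-1} \in \mathbb{Z}_{p}^{\times},
\]
where $d$ is forced by $\det M = 1$; the expansion $d \equiv 1 - pa \pmod{p^{2}}$ ensures $\phi(M) = \overline{X}$. This yields the isomorphism ${\rm SL}_{2}^{1}(\mathbb{Z}_{p})/{\rm SL}_{2}^{2}(\mathbb{Z}_{p}) \cong M_{2}^{0}(\mathbb{F}_{p})$.

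To conclude that $\Phi({\rm SL}_{2}^{1}(\mathbb{Z}_{p})) = {\rm SL}_{2}^{2}(\mathbb{Z}_{p})$, the inclusion $\Phi(G) \subseteq {\rm SL}_{2}^{2}(\mathbb{Z}_{p})$ is immediate: the quotient $M_{2}^{0}(\mathbb{F}_{p}) \cong (\mathbb{Z}/p\mathbb{Z})^{3}$ is elementary abelian of exponent $p$, so it factors through the Frattini quotient. For the reverse inclusion I would invoke the fact that $G:={\rm SL}_{2}^{1}(\mathbb{Z}_{p})$ is a uniform pro-$p$ group of dimension $3$ for odd $p$ (cf.\ \cite[Chapter 5]{MR1720368}). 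It follows that the minimal number of topological generators satisfies $d(G) = \dim(G) = 3$, so $[G:\Phi(G)] = p^{3} = [G : {\rm SL}_{2}^{2}(\mathbb{Z}_{p})]$, which forces the two subgroups to coincide.

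The main obstacle is this last step, namely the inclusion $\Phi(G) \supseteq {\rm SL}_{2}^{2}(\mathbb{Z}_{p})$, i.e.\ showing that every element of ${\rm SL}_{2}^{2}(\mathbb{Z}_{p})$ can be written as a product of $p$-th powers and commutators in ${\rm SL}_{2}^{1}(\mathbb{Z}_{p})$. The uniform-group argument handles this cleanly through the rank equality above. If a self-contained alternative is desired, one can expand $(1+pA)^{p} = 1 + p^{2}A + O(p^{3})$, using that $\binom{p}{k}p^{k}$ has $p$-adic valuation at least $k+1$ for $1 \leq k \leq p-1$ when $p$ is odd, which shows that the $p$-th power map already surjects onto ${\rm SL}_{2}^{2}(\mathbb{Z}_{p})/{\rm SL}_{2}^{3}(\mathbb{Z}_{p})$; a standard Hensel-type iteration together with completeness then delivers the full inclusion $G^{p} \supseteq {\rm SL}_{2}^{2}(\mathbb{Z}_{p})$.
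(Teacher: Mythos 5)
Your proof is correct. Note that the paper itself gives no argument for this lemma -- it simply cites \cite[Lemma 3.13]{abdellatif2022fontaine} -- so your write-up is a genuine self-contained substitute: the computation $\det(1+pA)=1+p\,\mathrm{tr}(A)+p^{2}\det(A)$ gives well-definedness, $(1+pA)(1+pB)=1+p(A+B)+p^{2}AB$ gives the homomorphism property with kernel ${\rm SL}_{2}^{2}(\mathbb{Z}_{p})$, your explicit preimage with $d=(1+p^{2}bc)(1+pa)^{-1}$ gives surjectivity, and the Frattini identification follows since $\Phi(G)\subseteq {\rm SL}_{2}^{2}(\mathbb{Z}_{p})$ (elementary abelian quotient) while uniformity of $G={\rm SL}_{2}^{1}(\mathbb{Z}_{p})$ for odd $p$ gives $[G:\Phi(G)]=p^{d(G)}=p^{3}=[G:{\rm SL}_{2}^{2}(\mathbb{Z}_{p})]$, forcing equality. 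Two small remarks: the statement tacitly assumes $p$ odd (as in the paper's application in Theorem \ref{sl21p}), which your uniformity argument correctly requires; and in your sketched alternative, the congruence $(1+pA)^{p}\equiv 1+p^{2}A \pmod{p^{3}}$ plus iteration literally yields ${\rm SL}_{2}^{2}(\mathbb{Z}_{p})\subseteq \overline{\langle G^{p}\rangle}$ (a convergent product of $p$-th powers) rather than the stated set-theoretic inclusion $G^{p}\supseteq {\rm SL}_{2}^{2}(\mathbb{Z}_{p})$; the weaker statement is all the Frattini argument needs, so this is only a matter of phrasing.
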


\begin{proof}[Proof of Theorem \ref{sl21p}]
	First of all, we prove our first assertion as follows. By our assumption, we have $\text{Im}(\rho)\subset {\rm GL}_{2}^{1}(\mathbb{Z}_{p})$.
	Since ${\rm GL}_{2}^{1}(\mathbb{Z}_{p})$ is a pro-$p$ group, the group $\text{Im}(\rho)$ is also pro-$p$, and hence $\rho$ factors through the maximal pro-$p$ quotient $G_{\mathbb{Q},S}(p)$ of $G_{\mathbb{Q},S}$. By Lemma \ref{redundant}, we can assume that $q_{i}\equiv 1~\text{mod}~p$ for all $i$.  
	Set $ c_{i}:=(q_{i}-1)/p $ for each $ i $. By Theorem \ref{global structure}, we see that the pro-$ p $ group $ G:=G_{\mathbb{Q},S}(p) $ is defined by $ d $ generators $\tau_{1},\cdots,\tau_{d}$ and $ d $ relations
	\[ r_{i}=\tau_{i}^{pc_{i}}[\tau_{i}^{-1},\sigma_{i}^{-1}], \]
	\[ \sigma_{i}\equiv \prod_{j\neq i}\tau_{j}^{\ell_{ij}},~\text{mod}~G_{2},\] 
	where $G_{i}$ is the lower $p$-central series of $G$, cf. \cite[Def. 1.15]{MR1720368}. It follows that 
	\begin{align}\label{moduloF3}
		r_{i}\equiv \tau_{i}^{pc_{i}}\prod_{j\neq i}[\tau_{i},\tau_{j}]^{\ell_{ij}}~\text{mod}~G_{3}.
	\end{align}
	Since the abelianization $G^{\text{ab}}$ of $G$ is finite and $p>2$, we see that $\text{Im}(\rho)\subseteq {\rm SL}_{2}^{1}(\mathbb{Z}_{p})$. Since the group ${\rm SL}_{2}^{1}(\mathbb{Z}_{p})$ is torsion-free, we know that the group $\text{Im}(\rho) $ is torsion-free. Thus, if the group $\text{Im}(\rho) $ is finite, then it must be trivial. 
	
	Suppose now that $\text{Im}(\rho)$ is infinite. In Example \ref{padicvaluedexample}, we have shown that ${\rm GL}_{2}^{1}(\mathbb{Z}_{p})$ is a $p$-valued group where the valuation $\omega$ on ${\rm GL}_{2}^{1}(\mathbb{Z}_{p})$ is defined by 
	\[ \omega(A)=w(A-I_{2}), \]
	where $w(A)=\min_{i,j}v_{p}(a_{ij})$ for any $A=(a_{ij})\in {\rm GL}_{2}^{1}(\mathbb{Z}_{p})$. Moreover, we have ${\rm GL}_{2}^{i}(\mathbb{Z}_{p})=\{A\in {\rm GL}_{2}^{1}(\mathbb{Z}_{p})~|~\omega(A)\geq i\}$ for any positive integer $i$.
	Using the properties of the valuation $\omega$ on ${\rm GL}_{2}^{1}(\mathbb{Z}_{p})$, we see that the image of any element of $G_{3}$ under $\rho$ is contained in ${\rm GL}_{2}^{3}(\mathbb{Z}_{p})=1+p^{3}\mathbb{Z}_{p}$. Let $\rho(\tau_{i})=1+pA_{i}$ for every $i$. By (\ref{moduloF3}), we have
	\[ 1=\rho(r_{i})\equiv 1+p^{2}\left( c_{i}A_{i}+\sum_{j\neq i}\ell_{ij}[A_{i},A_{j}]\right),~\text{mod}~p^{3}.  \]
	where $[A_{i},A_{j}]:=A_{i}A_{j}-A_{j}A_{i}$.
	Let $\overline{A_{i}}$ denote the image of $A_{i}$ in $M_{2}(\mathbb{F}_{p})$. Then we have
	\begin{align}\label{modp3}
		c_{i}\overline{A_{i}}+\sum_{j\neq i}\ell_{ij}[\overline{A_{i}},\overline{A_{j}}]=0.
	\end{align}
	We claim that the set $\{\overline{A_{i}}\}_{1\leq i\leq d}$ generates $  M_{2}^{0}(\mathbb{F}_{p})$. If not, then the rank of the set $\{\overline{A_{i}}\}_{1\leq i\leq d}$ in the linear space $ M_{2}^{0}(\mathbb{F}_{p})$ is less than $3$, i.e. there exist $j,k$ such that each $\overline{A_{i}}$ is a linear combination $\overline{A_{j}},\overline{A_{k}}$. By (\ref{modp3}), we see that $\overline{A_{j}}$ and $\overline{A_{k}}$ are both multiples of $[\overline{A_{j}},\overline{A_{k}}]$. This implies that either $\overline{A_{k}}=0$ or $\overline{A_{k}}$ is a multiple of $\overline{A_{j}}$. In either case, we always have $ \overline{A_{j}}=\overline{A_{k}}=0$, and hence all $\overline{A_{i}}$ are zero. Since the group $\text{Im}(\rho)$ is topologically generated by $1+pA_{1},\cdots,1+pA_{d}$, we see that $\text{Im}(\rho)\subset {\rm GL}_{2}^{2}(\mathbb{Z}_{p})$. But by Corollary \ref{gl2}, we must have $\text{Im}(\rho)$ is trivial which is a contradiction. We conclude that the set $\{\overline{A_{i}}\}_{1\leq i\leq d}$ generates $  M_{2}^{0}(\mathbb{F}_{p})$. By Lemma \ref{sl2}, we see that $\text{Im}(\rho)={\rm SL}_{2}^{1}(\mathbb{Z}_{p})$. The first assertion of our theorem is proved.
	
	Finally, the second assertion follows from Corollary \ref{gl2}. This completes the proof of our theorem.
	
\end{proof}

To prove Theorem \ref{notalocalproblem}, we need the following simple fact.

\begin{Lem}\label{extend}
	Suppose that a pro-$ p $ group $ G $ has a presentation $ F(x_{1},\cdots,x_{d})/R $ where $ F(x_{1},\cdots,x_{d}) $ is the free pro-$ p $ group on $ x_{1},\cdots,x_{d} $ and $ R=(r_{1},\cdots,r_{s}) $. Let $ H $ be another pro-$ p $ group and $ f:\{x_{1},\cdots,x_{d}\}\to H  $ a funciton. In an obvious way, $ f $ extends to a function $ F(x_{1},\cdots,x_{d})\to H $. That is, if $ g=x_{i_{1}}\cdots x_{i_{r}} $, then we put $ f(g)=f(x_{i_{1}})\cdots f(x_{i_{r}}) $.
	Suppose that $ f(r_{i})=1 $ for all $ i\in \{1,\cdots,s\} $. Then the function $ f$ extends to a continuous homomorphism $ G\to H $.
\end{Lem}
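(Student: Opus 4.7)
The plan is to exploit the universal property of the free pro-$p$ group together with the fact that kernels of continuous homomorphisms between pro-$p$ groups are closed normal subgroups. First I would invoke the universal property: since $H$ is a pro-$p$ group and $\{x_1,\dots,x_d\}$ is a finite (hence, trivially, a convergent to $1$) subset of $F=F(x_1,\dots,x_d)$, the set-theoretic map $f:\{x_1,\dots,x_d\}\to H$ extends uniquely to a continuous group homomorphism $\tilde f:F\to H$. The extension of $f$ to words in $F$ described in the statement is then literally $\tilde f$ on the dense subgroup generated by $x_1,\dots,x_d$, and agrees with $\tilde f$ everywhere by continuity.

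Next I would observe that the kernel $\ker(\tilde f)$ is a closed normal subgroup of $F$. Since $\tilde f(r_i)=f(r_i)=1$ for every $i\in\{1,\dots,s\}$ by hypothesis, we have $\{r_1,\dots,r_s\}\subset \ker(\tilde f)$. Let $R$ denote the closed normal subgroup of $F$ generated by $\{r_1,\dots,r_s\}$; the presentation $G\simeq F/R$ means precisely that $R$ is this closed normal closure. Because $\ker(\tilde f)$ is closed and normal and contains the generating set, it contains $R$.

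Consequently, $\tilde f$ factors through the quotient $F/R\simeq G$, yielding a unique continuous homomorphism $\bar f:G\to H$ making the obvious diagram commute. By construction $\bar f$ sends the image of $x_i$ in $G$ to $f(x_i)$, so $\bar f$ is the desired continuous extension. There is no genuine obstacle here; the only point worth emphasizing is the use of the topology, namely that one must take the \emph{closed} normal closure of the relations and use continuity of $\tilde f$ both to obtain it on all of $F$ and to ensure that its kernel contains this closure.
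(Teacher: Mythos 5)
Your proposal is correct and follows essentially the same route as the paper's proof: invoke the universal property of the free pro-$p$ group to get a continuous homomorphism $\tilde f:F\to H$, note that its (closed) kernel contains the relations $r_{1},\cdots,r_{s}$ and hence the closed normal closure $R$, and conclude that $\tilde f$ factors through $G=F/R$. Your added emphasis on taking the \emph{closed} normal closure and on continuity is a welcome clarification but not a different argument.
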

\begin{proof}
	By the universal property of free pro-$ p $ group (cf. \cite[Theorem 4.6]{MR1930372}), the function $ f:F(x_{1},\cdots,x_{d})\to H $ is a continuous homomorphism. Let $ K:=\ker(f) $. Since $ f(r_{i})=1 $ for all $ i $, we have $ \{r_{1},\cdots,r_{s}\}\subset K $. By definition of $ R $, we find that $ R $ is a closed subgroup of $ K $. It follows that the homomorphism $ f $ factors through $ F(x_{1},\cdots,x_{d})/R=G $. In this way, we get a continuous homomorphism $ G\to H $ that extends $ f $.
\end{proof}

\begin{proof}[Proof of Theorem \ref{notalocalproblem}]
	By our assumption, the pro-$p$ group $G$ has a presentation $F(x,y)/(r_{1})$ where $F(x,y)$ is the free pro-$p$ group on $x,y$ and $r_{1}=x^{q-1}[x^{-1},y^{-1}]$. Since $q\equiv 1~\text{mod}~p$ and $p>2$, there is an element $\sqrt{q}\in 1+p\mathbb{Z}_{p}$ such that $ (\sqrt{q})^{2}=q$ by Hensel's lemma. Then we consider the function $\rho:\{x,y\}\to {\rm SL}_{2}^{1}(\mathbb{Z}_{p})$ as follows:
	\[ \rho(x)=\begin{pmatrix}
		1& p \\
		0	& 1
	\end{pmatrix},\qquad \rho(y)=\begin{pmatrix}
		\sqrt{q}& 0 \\
		0& \sqrt{q}^{-1}
	\end{pmatrix} .\]
	A simple calculation shows that $f(r_{1})=1$. Note that $\rho(x)$ has infinite order. Then our claim follows from Lemma \ref{extend}.
\end{proof}

To prove Theorem \ref{SL2image}, we need the following simple lemma.

\begin{Lem}\cite[Section 8, Chapter 2]{MR569209} \label{easylemma}
	The order of the group $ {\rm SL}_{2}(\mathbb{F}_{p}) $ is $ p(p^{2}-1) $. Moreover, if $ x $ is an element of the group $ {\rm SL}_{2}(\mathbb{F}_{p}) $ such that its order is relatively prime to $ p $, then the order of $ x $ divides $ p-1 $ or $ p+1 $.
\end{Lem}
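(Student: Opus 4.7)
The plan is to split the lemma into its two assertions and handle them by elementary linear-algebra counting and an eigenvalue analysis. For the order formula, I would count ordered bases of $\mathbb{F}_p^2$: the first vector can be any of the $p^2-1$ nonzero vectors, and the second can be any of the $p^2-p$ vectors not in the line spanned by the first. This gives $|{\rm GL}_2(\mathbb{F}_p)|=(p^2-1)(p^2-p)$. Then, since the determinant map yields a short exact sequence $1\to {\rm SL}_2(\mathbb{F}_p)\to {\rm GL}_2(\mathbb{F}_p)\xrightarrow{\det}\mathbb{F}_p^\times\to 1$, I divide by $p-1$ to obtain $|{\rm SL}_2(\mathbb{F}_p)|=(p^2-1)(p^2-p)/(p-1)=p(p^2-1)$.

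For the second assertion, let $x\in {\rm SL}_2(\mathbb{F}_p)$ have order $n$ with $\gcd(n,p)=1$. Then the minimal polynomial of $x$ divides $t^n-1$, which is separable over $\mathbb{F}_p$ because $\gcd(n,p)=1$. Hence $x$ is diagonalizable over an algebraic closure $\overline{\mathbb{F}_p}$. Its characteristic polynomial is $t^2-\text{tr}(x)\,t+1$ since $\det(x)=1$, so the two eigenvalues have the form $\lambda,\lambda^{-1}\in\overline{\mathbb{F}_p}^\times$, and both lie in $\mathbb{F}_{p^2}$ because they are roots of a degree-two polynomial over $\mathbb{F}_p$. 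The order of $x$ coincides with the order of $\lambda$ (as $x$ is conjugate to $\operatorname{diag}(\lambda,\lambda^{-1})$).

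I then split into two cases. If $\lambda\in\mathbb{F}_p^\times$, then the order of $\lambda$ divides $|\mathbb{F}_p^\times|=p-1$, hence so does $n$. If $\lambda\in\mathbb{F}_{p^2}\setminus\mathbb{F}_p$, then the nontrivial Frobenius $\sigma:\alpha\mapsto\alpha^p$ of $\mathbb{F}_{p^2}/\mathbb{F}_p$ must send $\lambda$ to its Galois conjugate, which is the other root $\lambda^{-1}$ of the characteristic polynomial. Thus $\lambda^p=\lambda^{-1}$, i.e.\ $\lambda^{p+1}=1$, so the order of $\lambda$ divides $p+1$, and hence $n$ divides $p+1$.

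No serious obstacle is expected; the only subtlety worth flagging is the semisimplicity step, where one must invoke that an element of order coprime to $p$ in characteristic $p$ is diagonalizable, to guarantee that the order of $x$ equals the order of the eigenvalue $\lambda$ rather than merely dividing a power of it.
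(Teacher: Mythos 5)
Your proof is correct and complete: the orbit-count of ordered bases gives $|{\rm GL}_2(\mathbb{F}_p)|=(p^2-1)(p^2-p)$, the determinant sequence gives $|{\rm SL}_2(\mathbb{F}_p)|=p(p^2-1)$, and the eigenvalue argument (diagonalizability from $\gcd(n,p)=1$, eigenvalues $\lambda,\lambda^{-1}$ in $\mathbb{F}_{p^2}$, then the dichotomy $\lambda\in\mathbb{F}_p^\times$ versus $\lambda^p=\lambda^{-1}$) correctly yields $n\mid p-1$ or $n\mid p+1$; you also rightly flag the semisimplicity step, which is exactly what ensures the order of $x$ equals the order of $\lambda$. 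The paper gives no argument of its own here — it simply cites the group-theory literature for this standard fact — so your self-contained elementary proof is an entirely adequate substitute and is the standard route one would find in that reference.
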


\begin{proof}[Proof of Theorem \ref{SL2image}]
	The claim (i) follows from Theorem \ref{bigimage}. For the claim (ii), note that the composition $ \rho(P_{q_{i}})\hookrightarrow {\rm SL}_{2}(\mathbb{Z}_{p})\twoheadrightarrow {\rm SL}_{2}(\mathbb{F}_{p}) $ of morphisms is injective since the wild inertia subgroup $ P_{q_{i}} $ at $ q_{i} $ is a pro-$ q_{i} $ group and the kernel $ {\rm SL}_{2}^{1}(\mathbb{Z}_{p}) $ of the reduction map $ \eta:{\rm SL}_{2}(\mathbb{Z}_{p})\twoheadrightarrow {\rm SL}_{2}(\mathbb{F}_{p}) $ is a pro-$ p $ group. Since the order of the group $ {\rm SL}_{2}(\mathbb{F}_{p}) $ is $ p(p^{2}-1) $ and $ q_{i} $ does not divide $ p^{2}-1 $, we see that $ \rho(P_{q_{i}}) $ is trivial. That is, $ \rho $ is tamely ramified at each $ q_{i} $. The claim (ii) is proved.
	
	For the claim (iii), note that $\rho$ will factor through $G_{\mathbb{Q},S}^{\text{tame}}$ by the claim (ii). By abuse of notation, we still write $\rho:G_{\mathbb{Q},S}^{\text{tame}}\to {\rm SL}_{2}(\mathbb{Z}_{p})$ for the corresponding representation. Let $ \tau_{i} $ be a generator of the inertia subgroup at $ q_{i} $ of the Galois group $ G_{\mathbb{Q},S}^{\text{tame}} $, and let $ \sigma_{i} $ be a lifting of the Frobenius automorphism at $ q_{i} $. By Proposition \ref{tamelyramified}, we have the relation 
	\begin{align}\label{tamerelation}
		\sigma_{i}\tau_{i}\sigma_{i}^{-1}=\tau_{i}^{q_{i}}.
	\end{align}
	If $ \rho(\tau_{i}) $ has distinct eigenvalues, then $ \rho(\tau) $ is diagonalizable over the algebraic closure $ \overline{\mathbb{Q}}_{p} $ of $ \mathbb{Q}_{p}$.
	By Proposition \ref{tamelyramified}, we know that $ \rho(\tau_{i}) $ has finite order. Thus, we can assume that $ \rho(\tau_{i}) $ has the repeated eigenvalues $ \lambda $ which is equal to $1$ or $-1$.  After conjugation we can assume that $\rho(\tau_{i})= \begin{pmatrix}
		\lambda	&  \beta \\
		0	&  \lambda
	\end{pmatrix} $ where $ \beta \in \mathbb{Z}_{p} $. If $\beta=0$, then $\rho(\tau_{i})^{2}=1$, and we are done. Otherwise, we assume that $\beta\neq 0$. We write $ \rho(\sigma_{i})=\begin{pmatrix}
		a&b  \\
		c& d
	\end{pmatrix} \in {\rm SL}_{2}(\mathbb{Z}_{p}) $. Using the relation $ \rho(\sigma_{i})\rho(\tau_{i})\rho(\sigma_{i})^{-1}=\rho(\tau_{i})^{q_{i}} $, we see that
	\[ \begin{pmatrix}
		a	& b \\
		c	& d
	\end{pmatrix}\begin{pmatrix}
		\lambda & \beta \\
		0& \lambda
	\end{pmatrix}\begin{pmatrix}
		a	& b \\
		c	& d
	\end{pmatrix}^{-1}=\begin{pmatrix}
		\lambda^{q_{i}}& q_{i}\beta\lambda^{q_{i}-1} \\
		0& \lambda^{q_{i}}
	\end{pmatrix} .\]
	That is,
	\[\begin{pmatrix}
		\lambda-ac\beta	&a^{2}\beta  \\
		-c^{2}\beta	&  \lambda+ac\beta
	\end{pmatrix}=\begin{pmatrix}
		\lambda^{q_{i}}& q_i\beta\lambda^{q_{i}-1} \\
		0& \lambda^{q_{i}}
	\end{pmatrix}. \]
	Since $ \beta\neq 0 $, we know that
	\begin{align*}
		c=0,~\lambda^{q_{i}-1}=1,~a=q_{i}d.
	\end{align*}
	Since $\rho(\sigma_{i})\in {\rm SL}_{2}(\mathbb{Z}_{p})$, we have $ad=a\cdot (a/q_{i})=a^{2}/q_{i}=1$. That is, we have $a^{2}=q_{i}$ in $\mathbb{Z}_{p}$. But this contradicts our assumption. We conclude that $\rho(\tau_{i})$ has finite order. The claim (iii) is proved.
	
	For the claim (iv), note that $ {\rm SL}_{2}(\mathbb{Z}_{p}) $ is $ p $-torsion-free. It follows that the order of each $ \rho(I_{q_{i}}) $ is prime to $ p $. Since $ {\rm SL}_{2}^{1}(\mathbb{Z}_{p}) $ is a pro-$ p $ group, we see that $ \eta $ induces an isomorphism $ \rho(I_{q_{i}})\simeq \bar{\rho}(I_{q_{i}}) $. By Lemma \ref{easylemma}, we obtain that the order of $  \overline{\rho}(I_{q_{i}}) $ divides $ p-1 $ or $ p+1 $, and so does $ \rho(I_{q_{i}}) $. The claim (iv) is proved.
	
	For the claim (v), after taking a finite integral extension $ \mathcal{O} $ of $ \mathbb{Z}_{p} $ and a change of basis, we can assume that 
	\[ \rho(I_{q_{i}})=\begin{pmatrix}
		\omega	&0  \\
		0&\omega^{-1} 
	\end{pmatrix},~\omega \in \mathcal{O}, \]
	where $ \omega $ satisfies $ \omega^{p-1}=1 $ or $ \omega^{p+1}=1 $. Let $ \sigma_{i} $ be a lifting of the Frobenius automorphism at $ q_{i} $, and put $ \rho(\sigma_{i})=\begin{pmatrix}
		a	& b \\
		c	& d
	\end{pmatrix} \in {\rm SL}_{2}(\mathbb{Z}_{p})$. By (\ref{tamerelation}), we have 
	\[ \begin{pmatrix}
		a	& b \\
		c	& d
	\end{pmatrix} \begin{pmatrix}
		\omega	&0  \\
		0&\omega^{-1} 
	\end{pmatrix} \begin{pmatrix}
		a	& b \\
		c	& d
	\end{pmatrix}^{-1}=\begin{pmatrix}
		\omega	&0  \\
		0&\omega^{-1} 
	\end{pmatrix}^{q_{i}}. \]
	That is,
	\begin{align}\label{computation}
		\begin{pmatrix}
			ad\omega-bc\omega^{-1}	& ab(-\omega+\omega^{-1}) \\
			cd(\omega-\omega^{-1})	& ad\omega^{-1}-bc\omega
		\end{pmatrix} =\begin{pmatrix}
			\omega^{q_{i}}&0  \\
			0& \omega^{-q_{i}} 
		\end{pmatrix}.
	\end{align}
	It follows that $ (ab-cd)(-\omega+\omega^{-1})=0 $. We divide the discussion into four cases as follows.
	\begin{enumerate}
		\item[Case $ 1 $]: If $ ab-cd\neq 0 $, then $ \omega^{2}=1 $, and hence the order of $ \rho(I_{q_{i}}) $ divides $ 2 $. 
		\item[Case $ 2 $]: If $ ab=cd $ and $ abcd\neq 0 $, then by (\ref{computation}) we have 
		\[(	ad\omega-bc\omega^{-1})(ad\omega^{-1}-bc\omega)=\omega^{q_{i}}\omega^{-q_{i}}=1=(ad-bc)^{2} . \]
		A simple calculation shows that $ abcd(\omega^{2}+\omega^{-2})=abcd\cdot 2 $. Since $ abcd\neq 0 $, we have $ \omega^{2}=1 $, and hence the order of $ \rho(I_{q_{i}}) $ divides $ 2 $.
		\item[Case $ 3 $]: If $ ab=cd $ and $ a=d=0 $, then (\ref{computation}) becomes
		\[ \begin{pmatrix}
			\omega^{-1}& 0 \\
			0& \omega
		\end{pmatrix}=\begin{pmatrix}
			\omega^{q_{i}}&0  \\
			0& \omega^{-q_{i}} 
		\end{pmatrix}. \]
		That is, we have $ \omega^{q_{i}+1}=1 $, and hence the order of $ \rho(I_{q_{i}}) $ divides $ q_{i}+1 $.
		\item[Case $ 4 $]: If $ ab=cd $ and $ b=c=0 $, then (\ref{computation}) becomes
		\[ \begin{pmatrix}
			\omega& 0 \\
			0& \omega^{-1}
		\end{pmatrix}=\begin{pmatrix}
			\omega^{q_{i}}&0  \\
			0& \omega^{-q_{i}} 
		\end{pmatrix}. \]
		That is, we have $ \omega^{q_{i}-1}=1 $, and hence the order of $ \rho(I_{q_{i}}) $ divides $ q_{i}-1 $.
	\end{enumerate}
	
	Since $ q_{i} $ does not divide $ p^{2}-1 $, we see that $ q_{i}>4 $, and hence both $ q_{i}+1 $ and $ q_{i}-1 $ are even. The claim (v) is proved.
	
	For the claim (vi), note that $ -I_{2} $ is the unique element of order $ 2 $ in $ {\rm SL}_{2}(\mathbb{Z}_{p}) $ because $ p>2 $ where $ I_{2} $ is the identity matrix of size $ 2 $. By our assumption, we have $ \rho(I_{q_{i}})\subset \{\pm I_{2}\}\subset {\rm SL}_{2}(\mathbb{Z}_{p}) $ for all $ i $. Since the group $ G_{\mathbb{Q},S} $ is topologically generated by all $ I_{q_{i}} $, we see that $ \text{Im}(\rho)\subset \{\pm I_{2}\} $. Therefore, the order $ |\text{Im}(\rho)| $ of $ \text{Im}(\rho) $ satisfies $ |\text{Im}(\rho)|\leq 2 $. The theorem is proved.
\end{proof}

\section{Universal deformation rings}\label{chapter6}

\subsection{Preliminaries on deformations of representations of profinite groups}\label{section6.1}

In this subsection, we recall some basic theory of deformations of representations of profinite groups. Our main references are \cite{MR3184335}, \cite{MR1860043} and \cite{MR1012172}.
\begin{Convention}\label{notai}
	For the rest of this section,
	\begin{itemize}
		\item $ \mathbb{F} $ a finite field of characteristic $ p $.
		\item $ W(\mathbb{F}) $ the ring of Witt vectors of $ \mathbb{F} $.
		\item $ G $ a profinite group.
		\item $ V_{\mathbb{F}} $ a finite $ \mathbb{F}[G] $-module on which $ G $ acts continuously; set $ d=\text{dim}_{\mathbb{F}}V_{\mathbb{F}} $.
		\item $ \widehat{\mathcal{C}}_{W(\mathbb{F})} $: the category whose objects are complete Noetherian local $W(\mathbb{F}) $-algebras, with a fixed isomorphism of the residue field to $ \mathbb{F} $, and whose morphisms are local $ W(\mathbb{F}) $-algebra homomorphisms. 
		\item If $ A\in \widehat{\mathcal{C}}_{W(\mathbb{F})} $, then we will denote by $\mathfrak{m}_{A}  $ the maximal ideal of $ A $. 
		\item $ \mathcal{C}_{W(\mathbb{F})} $: the full subcategory of $ \widehat{\mathcal{C}}_{W(\mathbb{F})} $ of finite local Artin $ W(\mathbb{F}) $-algebras.
		\item $ \text{ad}=\text{End}_{\mathbb{F}}(V_{\mathbb{F}})\cong V_{\mathbb{F}}\otimes_{\mathbb{F}}V_{\mathbb{F}}^{\ast} $ is the adjoint representation of $ V_{\mathbb{F}} $ where $ V^{\ast} $ is the dual of $ V $; it is again a $ G $-module.
	\end{itemize}
\end{Convention}

\begin{Def}\cite[Section 1.1]{MR3184335}
	Let $ A\in \mathcal{C}_{W(\mathbb{F})} $. A \emph{deformation} of $ V_{\mathbb{F}} $ to $ A $ is a pair $ (V_{A},\iota_{A}) $ such that
	\begin{enumerate}
		\item $ V_{A} $ is an $ A[G] $-module which is finite free over $ A $ and on which $ G $ acts continuously;
		\item $ \iota_{A} $ is a $ G $-equivariant isomorphism $ \iota_{A}:V_{A}\otimes_{A}\mathbb{F}\cong V_{\mathbb{F}} $.
	\end{enumerate}
	Let $\text{(Sets)} $ be the category of sets. One defines functor $ D_{V_{\mathbb{F}}}:\mathcal{C}_{W(\mathbb{F})}\to \text{(Sets)} $ by setting, for $ A\in \mathcal{C}_{W(\mathbb{F})} $,
	\[ D_{V_{\mathbb{F}}}(A):=\{\text{isomorphism classes of deformations of $ V_{\mathbb{F}} $ to $ A $}\}, \]
	and with the obvious extension to morphism.
\end{Def}

We say that a profinite group $ G $ satisfies the \emph{finiteness condition} $ \Phi_{p} $, if for all open subgroup $ G'\subset G $, the $ \mathbb{F}_{p} $-vector space $ \text{Hom}_{\text{cont}}(G',\mathbb{F}_{p}) $ of continuous group homomorphisms is finite-dimensional. For example, the Galois group $ G_{K,S} $ satisfies $ \Phi_{p} $ for any number field $ K $ and any finite set of primes of $ K $.

\begin{Prop}\cite[Proposition 1.3.1 and Theorem 2.4.1]{MR3184335}\label{existenceofdeformationring}
	Assume that $ G $ satisfies condition $ \Phi_{p} $ and $ V_{\mathbb{F}} $ is absolutely irreducible. Then $ D_{V_{\mathbb{F}}} $ is prorepresentable by some $ R_{V_{\mathbb{F}}}\in \widehat{\mathcal{C}}_{W(\mathbb{F})} $. Moreover, the ring $ R_{V_{\mathbb{F}}} $ is topologically generated over $ W(\mathbb{F}) $ by the values $ \text{Tr}(\rho^{\text{univ}}(g)) $ as $ g $ runs over any dense subset of $ G $ where $ \rho^{\text{univ}}:G\to {\rm GL}_{d}(R_{V_{\mathbb{F}}}) $ is the universal deformation and $ \text{Tr}(\rho^{\text{univ}}(g)) $ denotes the trace of $ \rho^{\text{univ}}(g) $.
\end{Prop}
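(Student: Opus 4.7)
The plan is to establish the two assertions in the standard Mazur-Ramakrishna style: first prove prorepresentability via Schlessinger's criteria, and then use the fact that an absolutely irreducible representation over a henselian local ring is determined by its trace to show topological generation by traces.

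For the first assertion, I would check that $D_{V_{\mathbb{F}}}$ satisfies Schlessinger's four conditions H1--H4 from the classical criterion, so that it is prorepresentable by a complete Noetherian local $W(\mathbb{F})$-algebra $R_{V_{\mathbb{F}}}$. The properties H1 and H2, concerning the behavior on fibered products $A \times_C B$ in $\mathcal{C}_{W(\mathbb{F})}$, follow in a routine way by gluing deformations along their common reduction. The finiteness condition $\Phi_p$ enters to verify H3: the tangent space $t_{D_{V_{\mathbb{F}}}} = D_{V_{\mathbb{F}}}(\mathbb{F}[\epsilon]/(\epsilon^2))$ is naturally identified with the continuous cohomology $H^1(G,\mathrm{ad})$, and one shows this is finite-dimensional over $\mathbb{F}$ by applying $\Phi_p$ to a finite-index open subgroup $G'\subset G$ that acts trivially on $\mathrm{ad}$ (so that cocycles restrict to homomorphisms $G' \to \mathrm{ad}$, which form a finite-dimensional space by $\Phi_p$ applied coordinate-wise).

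The absolute irreducibility of $V_{\mathbb{F}}$ enters precisely in verifying H4, which upgrades H1 from surjectivity to bijectivity and is what gives prorepresentability rather than merely a versal hull. Here the input is Schur's lemma: since $V_{\mathbb{F}}$ is absolutely irreducible, $\mathrm{End}_{\mathbb{F}[G]}(V_{\mathbb{F}}) = \mathbb{F}$, so any automorphism of a deformation $V_A$ reducing to the identity is a scalar. Combined with the explicit description of the framing ambiguity (conjugation by $1 + M_d(\mathfrak{m}_A)$), this rigidity shows that the map $D_{V_{\mathbb{F}}}(A\times_C B) \to D_{V_{\mathbb{F}}}(A)\times_{D_{V_{\mathbb{F}}}(C)} D_{V_{\mathbb{F}}}(B)$ is injective when $A = \mathbb{F}[\epsilon]$ and $C=\mathbb{F}$, yielding H4.

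For the second assertion, let $R_0$ denote the closed $W(\mathbb{F})$-subalgebra of $R_{V_{\mathbb{F}}}$ topologically generated by $\{\mathrm{Tr}(\rho^{\mathrm{univ}}(g))\mid g\in D\}$ for a dense subset $D\subset G$; by continuity of $\mathrm{Tr}\circ \rho^{\mathrm{univ}}$ and density, $R_0$ is the same for any dense $D$. I plan to show $R_0 = R_{V_{\mathbb{F}}}$ by using the Carayol--Serre lemma: if $A$ is a henselian local ring with residue field $\mathbb{F}$ and $\rho_1,\rho_2: G \to \mathrm{GL}_d(A)$ are two continuous lifts of an absolutely irreducible $\bar\rho$ with $\mathrm{Tr}\rho_1 = \mathrm{Tr}\rho_2$, then $\rho_1$ and $\rho_2$ are conjugate by an element of $1 + M_d(\mathfrak{m}_A)$. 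Applying this after showing that the pseudorepresentation $g\mapsto \mathrm{Tr}(\rho^{\mathrm{univ}}(g))$ with values in $R_0$ lifts uniquely to a genuine representation $\rho_0: G\to \mathrm{GL}_d(R_0)$ deforming $\bar\rho$ (using that $R_0$, as a closed subring of the complete local ring $R_{V_{\mathbb{F}}}$, is itself complete local with residue field $\mathbb{F}$), the universal property of $R_{V_{\mathbb{F}}}$ yields a map $R_{V_{\mathbb{F}}}\to R_0$ whose composition with the inclusion $R_0\hookrightarrow R_{V_{\mathbb{F}}}$ is a deformation with the same trace as $\rho^{\mathrm{univ}}$, hence equals $\rho^{\mathrm{univ}}$, forcing $R_0 = R_{V_{\mathbb{F}}}$.

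The main obstacle I anticipate is the careful invocation of the Carayol--Serre trace lemma in the complete local setting and the verification that the pseudorepresentation really does lift to a true representation valued in $R_0$ (not merely in some larger subring). The rest of the argument is essentially bookkeeping with Schlessinger's criteria and continuity.
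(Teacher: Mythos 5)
The paper does not prove this proposition at all: it is quoted verbatim from the literature, with the proof delegated to \cite[Proposition 1.3.1 and Theorem 2.4.1]{MR3184335}. Your outline reconstructs exactly the argument given there (and in Mazur's original treatment): Schlessinger's criteria with $\Phi_{p}$ supplying finiteness of the tangent space $H^{1}(G,\mathrm{ad})$ and absolute irreducibility (via Schur's lemma, $\mathrm{End}_{\mathbb{F}[G]}(V_{\mathbb{F}})=\mathbb{F}$) supplying the rigidity that upgrades a hull to a prorepresenting object, followed by the Carayol--Serre trace results for generation by traces. So in substance you are following the same route as the cited source, and the plan is sound.

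Two points need tightening. First, your identification of the Schlessinger conditions is off: bijectivity of the gluing map for $\mathbb{F}[\epsilon]\times_{\mathbb{F}}B$ is condition H2, whereas H4 demands bijectivity for $A'\times_{A}A'$ over small extensions $A'\to A$; this is harmless because Schur rigidity in fact gives bijectivity whenever one of the two maps to $C$ is surjective, which covers H1, H2 and H4 simultaneously, but you should verify that stronger statement rather than the special case you wrote. Second, and more seriously, your final step applies the universal property of $R_{V_{\mathbb{F}}}$ to the closed trace subalgebra $R_{0}$, but membership in $\widehat{\mathcal{C}}_{W(\mathbb{F})}$ requires $R_{0}$ to be \emph{Noetherian}, which is not automatic for a closed subring; completeness and residue field $\mathbb{F}$ alone are not enough. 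The standard repair uses that $\mathbb{F}$ is finite: $R_{0}=\varprojlim_{n}R_{0}/(R_{0}\cap\mathfrak{m}^{n})$ is an inverse limit of finite local $W(\mathbb{F})$-algebras, so prorepresentability applies by passage to the limit; alternatively one can bypass descent to $R_{0}$ entirely by a cotangent-space argument, using only the trace-uniqueness lemma over $\mathbb{F}[\epsilon]$ to show that the elements $\mathrm{Tr}(\rho^{\mathrm{univ}}(g))$ minus lifts of $\mathrm{Tr}(\overline{\rho}(g))$ generate $\mathfrak{m}_{R}/(\mathfrak{m}_{R}^{2}+\mathfrak{m}_{W(\mathbb{F})}R)$. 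Relatedly, the descent of $\rho^{\mathrm{univ}}$ into $\mathrm{GL}_{d}(R_{0})$ is precisely Carayol's second theorem and should be cited as such, rather than re-derived through general pseudorepresentation-lifting machinery, which in other formulations carries invertibility hypotheses on $d!$ that are not available here.
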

We will say that $ R_{V_{\mathbb{F}}} $ is the \emph{universal deformation ring} of $ V_{\mathbb{F}} $.

\begin{Prop}\cite[Proposition 1.5.1]{MR3184335}\cite[Theorem 4.2]{MR1860043}\label{presentation of universal ring}
	Suppose that $ G $ satisfies condition $ \Phi_{p} $ and $ V_{\mathbb{F}} $ is absolutely irreducible. Then the universal deformation ring $ R_{V_{\mathbb{F}}} $ of $ V_{\mathbb{F}} $ has a presentation 
	\[ W(\mathbb{F})[[X_{1},\cdots,X_{h^{1}(G,\text{ad})}]]/(f_{1},\cdots,f_{h^{2}(G,\text{ad})}) \]
	where $h^{i}(\cdots):=\dim_{\mathbb{F}}H^{i}(\cdots) $ for $ i=1,2 $. Moreover, one has
	\begin{equation}\label{dimen}
		\text{Krulldim}(R_{V_{\mathbb{F}}}/(p))\geq h^{1}(G,\text{ad})-h^{2}(G,\text{ad}),
	\end{equation}
	where $ 	\text{Krulldim}(R_{V_{\mathbb{F}}}/(p)) $ denotes the Krull dimension of the ring $ R_{V_{\mathbb{F}}}/(p) $.
\end{Prop}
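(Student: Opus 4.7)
The strategy is Mazur's classical one: first identify the mod-$p$ tangent space with $H^{1}(G,\text{ad})$ to produce an explicit surjection from a formal power series ring in $h^{1}(G,\text{ad})$ variables onto $R_{V_{\mathbb{F}}}$, then bound the size of the kernel by $h^{2}(G,\text{ad})$ using obstruction theory, and finally extract the Krull dimension bound from Krull's Hauptidealsatz.

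The first step is to compute the relative cotangent space $\mathfrak{m}_{R_{V_{\mathbb{F}}}}/(\mathfrak{m}_{R_{V_{\mathbb{F}}}}^{2},p)$. By the universal property of $R_{V_{\mathbb{F}}}$ established in Proposition \ref{existenceofdeformationring}, its $\mathbb{F}$-dual is in canonical bijection with $D_{V_{\mathbb{F}}}(\mathbb{F}[\epsilon]/(\epsilon^{2}))$, and a direct cocycle computation identifies the latter with $H^{1}(G,\text{ad})$: a deformation of the form $(1+\epsilon c)\bar\rho$ corresponds to a continuous $1$-cocycle $c:G\to \text{ad}$, and conjugation by $1+\epsilon m$ with $m\in\text{ad}$ corresponds to adding the coboundary $\partial m$. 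Set $n:=h^{1}(G,\text{ad})$. Lifting an $\mathbb{F}$-basis of the tangent space and invoking completeness plus the topological Nakayama lemma then yields a surjection $\pi:R:=W(\mathbb{F})[[X_{1},\ldots,X_{n}]]\twoheadrightarrow R_{V_{\mathbb{F}}}$ in $\widehat{\mathcal{C}}_{W(\mathbb{F})}$.

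The next step, which is the heart of the argument, is to show that the kernel $J:=\ker(\pi)$ satisfies $\dim_{\mathbb{F}}J/\mathfrak{m}_{R}J\leq h^{2}(G,\text{ad})$; combined with Nakayama this produces the desired presentation with at most $h^{2}(G,\text{ad})$ relations. I would construct an $\mathbb{F}$-linear injection
\[ \iota:\mathrm{Hom}_{\mathbb{F}}(J/\mathfrak{m}_{R}J,\mathbb{F})\hookrightarrow H^{2}(G,\text{ad}). \]
Given a nonzero functional $\lambda$, pushing the small extension $0\to J/\mathfrak{m}_{R}J\to R/\mathfrak{m}_{R}J\to R_{V_{\mathbb{F}}}\to 0$ forward along $\lambda$ yields a small extension $0\to \mathbb{F}\to A_{\lambda}\to R_{V_{\mathbb{F}}}\to 0$ in $\mathcal{C}_{W(\mathbb{F})}$. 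Choose a continuous set-theoretic lift $\widetilde{\rho}:G\to \mathrm{GL}_{d}(A_{\lambda})$ of $\rho^{\text{univ}}$; the failure of $\widetilde{\rho}$ to be multiplicative is measured by a continuous $2$-cocycle with values in $\ker(\mathrm{GL}_{d}(A_{\lambda})\to \mathrm{GL}_{d}(R_{V_{\mathbb{F}}}))\cong \text{ad}$, and its class $\iota(\lambda)\in H^{2}(G,\text{ad})$ is the standard obstruction to existence of a deformation lifting $\rho^{\text{univ}}$ to $A_{\lambda}$. If $\iota(\lambda)=0$, such a lift exists, so by universality of $R_{V_{\mathbb{F}}}$ the surjection $A_{\lambda}\twoheadrightarrow R_{V_{\mathbb{F}}}$ splits, forcing $\lambda=0$; this gives injectivity of $\iota$.

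Finally, reducing the presentation mod $p$ yields
\[ R_{V_{\mathbb{F}}}/(p)\cong \mathbb{F}[[X_{1},\ldots,X_{n}]]/(\bar f_{1},\ldots,\bar f_{r}) \]
with $n=h^{1}(G,\text{ad})$ and $r\leq h^{2}(G,\text{ad})$. Since $\mathbb{F}[[X_{1},\ldots,X_{n}]]$ is a regular local ring of Krull dimension $n$, successive application of Krull's Hauptidealsatz gives $\mathrm{Krulldim}(R_{V_{\mathbb{F}}}/(p))\geq n-r\geq h^{1}(G,\text{ad})-h^{2}(G,\text{ad})$, which is the inequality \eqref{dimen}. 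The main obstacle is the injectivity of the obstruction map $\iota$: one must check that the class $\iota(\lambda)$ is well defined independently of the chosen set-theoretic lift of $\rho^{\text{univ}}$, that the assignment $\lambda\mapsto \iota(\lambda)$ is $\mathbb{F}$-linear, and that vanishing of $\iota(\lambda)$ really produces a splitting of $A_{\lambda}\twoheadrightarrow R_{V_{\mathbb{F}}}$ as $W(\mathbb{F})$-algebras and not merely as $G$-representation carriers; each of these is a careful but standard unwinding of the functor $D_{V_{\mathbb{F}}}$.
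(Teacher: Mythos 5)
The paper gives no proof of its own here (it simply cites B\"ockle, Prop.\ 1.5.1 and Gouv\^ea, Thm.\ 4.2), and your plan is exactly the standard Mazur-style argument of those sources: identify the mod-$p$ tangent space with $H^{1}(G,\text{ad})$ to get a minimal surjection from $W(\mathbb{F})[[X_{1},\dots,X_{h^{1}}]]$, bound the relations by an obstruction-theoretic injection of $\mathrm{Hom}_{\mathbb{F}}(J/\mathfrak{m}_{R}J,\mathbb{F})$ into $H^{2}(G,\text{ad})$, and deduce the dimension bound from the Hauptidealsatz — so it is correct and follows the same route. When writing it up, make two points explicit: the pushout $A_{\lambda}$ is in general only an object of $\widehat{\mathcal{C}}_{W(\mathbb{F})}$, not of $\mathcal{C}_{W(\mathbb{F})}$, so invoking universality requires the canonical extension of $D_{V_{\mathbb{F}}}$ and of the universal property to complete Noetherian local rings (or a passage to Artinian quotients); and the step ``a splitting forces $\lambda=0$'' is precisely where minimality of the presentation enters — since the number of variables equals $h^{1}$, the kernel $J$ lies in $\mathfrak{m}_{R}^{2}+pR$, so its image in $A_{\lambda}$ lies in $\mathfrak{m}_{A_{\lambda}}^{2}+pA_{\lambda}$, while a $W(\mathbb{F})$-algebra section of $A_{\lambda}\twoheadrightarrow R_{V_{\mathbb{F}}}$ would force that image to meet $\mathfrak{m}_{A_{\lambda}}^{2}+pA_{\lambda}$ only in $0$, which is a contradiction unless $\lambda=0$.
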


\begin{Def}\label{delta}
	We define
	\[ \delta(\text{ad}(V_{\mathbb{F}})):=h^{2}(G,\text{ad})-h^{1}(G,\text{ad}). \]
\end{Def}

In \cite[Lecture 4]{MR1860043}, Gouvea conjectured that:
\begin{conj}[Dimension Conjecture]\label{dimensionconjecture}
	In Proposition \ref{presentation of universal ring},  we always have equality in (\ref{dimen}). That is, we have 
	\[ -\delta(\text{ad}(V_{\mathbb{F}}))=	\text{Krulldim}(R_{V_{\mathbb{F}}}/(p)). \]
\end{conj}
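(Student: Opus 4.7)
The only tool available is the presentation from Proposition \ref{presentation of universal ring}, which already gives the inequality $\text{Krulldim}(R_{V_{\mathbb{F}}}/(p)) \geq h^{1}(G,\text{ad}) - h^{2}(G,\text{ad})$. To upgrade this to equality amounts to showing that the presentation
\[ R_{V_{\mathbb{F}}}/(p) = \mathbb{F}[[X_{1},\ldots,X_{h^{1}(G,\text{ad})}]]/(\bar f_{1},\ldots,\bar f_{h^{2}(G,\text{ad})}) \]
realises $R_{V_{\mathbb{F}}}/(p)$ as a complete intersection of the expected codimension $h^{2}(G,\text{ad})$. My plan is therefore to prove that $\bar f_{1},\ldots,\bar f_{h^{2}}$ is a regular sequence in the regular local ring $\mathbb{F}[[X_{1},\ldots,X_{h^{1}}]]$, since this would immediately force the Krull dimension to equal $h^{1}-h^{2} = -\delta(\text{ad}(V_{\mathbb{F}}))$.

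First I would pin down the leading terms of the $\bar f_{i}$ via obstruction theory. The standard construction identifies the degree-two part of each $\bar f_{i}$ with the dual of a component of the cup product pairing $\cup : H^{1}(G,\text{ad}) \otimes H^{1}(G,\text{ad}) \to H^{2}(G,\text{ad})$, with the higher-degree parts governed by successive Massey products. If this cup product is surjective and the associated quadratic forms $\bar f_{i}^{(2)}$ are a regular sequence of quadrics in the graded ring $\text{Sym}^{\bullet} H^{1}(G,\text{ad})^{\vee}$, then a standard graded-to-local argument for the $\mathfrak{m}$-adic filtration on $\mathbb{F}[[X_{1},\ldots,X_{h^{1}}]]$ lifts this to a regular sequence structure on the $\bar f_{i}$ themselves. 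Second, to actually produce this non-degeneracy when $G = G_{K,S}$, I would invoke Tate's global Euler characteristic formula together with Poitou--Tate duality to express $h^{1}-h^{2}$ as a sum of local dimensions, which in the unramified setting of the paper collapses to an explicit combinatorial expression at the tame primes in $S$. Matching this expression against the dimension of the generic fibre predicted by local deformation theory would certify that the $\bar f_{i}$ are "as independent as possible", closing the gap.

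The main obstacle, and in fact the reason this remains a conjecture, is the regular-sequence step. There is no a priori reason for the obstruction elements $\bar f_{i}$ to be in sufficiently general position: the cup product $H^{1} \otimes H^{1} \to H^{2}$ may fail to be surjective, and higher Massey products can conspire so that some $\bar f_{i}$ lies in the ideal generated by the others. Indeed Theorem \ref{dimensionalconjecturefails} of this paper shows that, conditionally on Conjecture \ref{BUFM} and for residual images containing $\text{SL}_{n}(\mathbb{F}_{p})$, the ring $R_{V_{\mathbb{F}}}$ can be finite (so of Krull dimension $0$) while $h^{1}-h^{2}$ remains strictly positive, which is an honest failure of regularity of the $\bar f_{i}$. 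Accordingly, any viable attempt in this direction would have to add hypotheses ruling out such collapse. I would therefore restrict attention to settings where the cup product $H^{1} \otimes H^{1} \to H^{2}$ can be shown to be surjective using Chebotarev together with local computations at primes of $S$, and even there the higher Massey products would require additional hypotheses on the image of $V_{\mathbb{F}}$ or on the splitting behaviour of the primes in $S$.
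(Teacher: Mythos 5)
The statement you are trying to prove is not a theorem of the paper at all: it is Gouvea's Dimension Conjecture, recorded as Conjecture \ref{dimensionconjecture}, and the paper offers no proof of it. On the contrary, the paper's remark following the conjecture notes that it is already known to fail unconditionally for finite groups and for $G_{K,\emptyset}$ with $K$ real quadratic (citing \cite{MR2285736}), and the whole point of Section \ref{section6.3} is that in the unramified setting $S\cap S_{p}=\emptyset$ --- precisely the setting your sketch works in --- Theorem \ref{dimensionalconjecturefails} and Corollary \ref{counterexample} show, conditionally on Conjectures \ref{UFM} and \ref{UFMforcharp}, that $\delta(\text{ad}(\overline{\rho}))>0$ while $\text{Krulldim}(R_{\overline{\rho}}/(p))=0$ for any full residual representation, so equality in (\ref{dimen}) fails. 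A proof attempt should therefore not exist, and yours in fact does not constitute one: everything hinges on the claim that the obstruction elements $\bar f_{1},\ldots,\bar f_{h^{2}}$ form a regular sequence in $\mathbb{F}[[X_{1},\ldots,X_{h^{1}}]]$, and you never establish this --- you only describe what would have to be true (surjectivity of the cup product, genericity of the quadric parts, control of higher Massey products) and then concede, correctly, that these can fail and that the paper itself exhibits the failure.

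Concretely, the gap is this: Proposition \ref{presentation of universal ring} only bounds the number of relations by $h^{2}(G,\text{ad})$; it gives no information on whether those relations cut down the dimension by the full amount, and no general mechanism (Chebotarev, Poitou--Tate, or Euler characteristic computations) can supply that, since the conjectural equality is simply false in the regime you restrict to. If you want a statement in this circle of ideas that can actually be proved, the paper's results point the other way: one proves upper bounds on $\text{Krulldim}(R_{\overline{\rho}}/(p))$ (finiteness of the deformation ring, via Theorem \ref{finitenessofunramifieddeforamtionring} and Theorem \ref{dimensionalconjecturefails}) and deduces that the inequality in (\ref{dimen}) must be strict, rather than attempting to force the relations into a regular sequence. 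As written, your proposal is a research plan for a statement the paper argues is false, not a proof.
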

\begin{Rem}
	\begin{enumerate}
		\item It is known that the conjecture fails for finite groups and $ G=G_{K,S} $ for real quadratic fields $ K $ and $ S=\emptyset $, see \cite[Theorem 1.1]{MR2285736}.
		\item 	If $ G=G_{K,S} $ for a number field $ K $ and a finite set of primes $ S $ with $ S\supset S_{p}\cup S_{\infty} $, then the conjecture is still open. 
	\end{enumerate}
\end{Rem}

\subsection{Finiteness of unramified deformation rings}\label{section6.2}
\subsubsection{Statements}
In this subsection, we prove the following theorems.
\begin{Thm}\label{finitenessofunramifieddeforamtionring}
	Suppose that Conjecture \ref{UFMforcharp} holds. Let $ K $ be a number field and $ S $ a finite set of primes of $ K $ such that $ S\cap S_{p}=\emptyset $. If $ \overline{\rho}:G_{K,S}\to {\rm GL}_{n}(\mathbb{F}) $ is a continuous absolutely irreducible representation, then the universal deformation ring $ R_{\overline{\rho}} $ is finite over $ W(\mathbb{F}) $.
\end{Thm}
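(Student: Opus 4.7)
The plan is to deduce the theorem by reducing modulo $p$ and invoking Theorem \ref{reduction}. Because $R_{\overline{\rho}}$ is a complete Noetherian local $W(\mathbb{F})$-algebra with residue field $\mathbb{F}$, topological Nakayama implies that $R_{\overline{\rho}}$ is finite over $W(\mathbb{F})$ as soon as $\overline{R} := R_{\overline{\rho}}/(p)$ is finite as an $\mathbb{F}$-algebra: in that case $\overline{R}$ is Artinian, so $\mathfrak{m}_{R_{\overline{\rho}}}^{N} \subseteq (p)$ for some $N$, making the $\mathfrak{m}$-adic and $p$-adic topologies on $R_{\overline{\rho}}$ coincide, and lifts of an $\mathbb{F}$-basis of $\overline{R}$ then generate $R_{\overline{\rho}}$ as a $W(\mathbb{F})$-module by Nakayama. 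Thus it suffices to prove finiteness of $\overline{R}$ over $\mathbb{F}$.

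Consider the reduction $\overline{\rho^{\text{univ}}} := \rho^{\text{univ}} \bmod p : G_{K,S} \to {\rm GL}_{n}(\overline{R})$. The ring $\overline{R}$ is a complete Noetherian local ring of characteristic $p$ with finite residue field $\mathbb{F}$, so Conjecture \ref{UFMforcharp} is exactly hypothesis (ii) of Theorem \ref{reduction}; applying that theorem, I conclude that $\overline{\rho^{\text{univ}}}$ has finite image. By Proposition \ref{existenceofdeformationring}, $R_{\overline{\rho}}$ is topologically generated over $W(\mathbb{F})$ by the traces $\text{Tr}(\rho^{\text{univ}}(g))$ as $g$ runs over a dense subset of $G_{K,S}$; reducing mod $p$, the ring $\overline{R}$ is topologically generated over $\mathbb{F}$ by the traces of $\overline{\rho^{\text{univ}}}(g)$, of which only finitely many distinct values $t_{1},\ldots,t_{m} \in \overline{R}$ occur.

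The crux -- and the expected main obstacle -- is to show each $t_{i}$ is algebraic over $\mathbb{F}_{p}$. Granted this, the subring $\mathbb{F}[t_{1},\ldots,t_{m}]$ is generated over the finite field $\mathbb{F}$ by finitely many algebraic elements, hence is a finite set, hence closed in the Hausdorff ring $\overline{R}$, hence equal to $\overline{R}$ by topological generation; so $\overline{R}$ is finite. For the algebraicity, I will exploit the finitely many minimal primes $\mathfrak{p}_{1},\ldots,\mathfrak{p}_{r}$ of the Noetherian ring $\overline{R}$. Each quotient $\overline{R}/\mathfrak{p}_{j}$ is a pro-$p$ domain; the image of $\overline{\rho^{\text{univ}}}(g)$ in ${\rm GL}_{n}(\overline{R}/\mathfrak{p}_{j})$ still has some finite order $N_{g}$, so in an algebraic closure of $\text{Frac}(\overline{R}/\mathfrak{p}_{j})$ its eigenvalues are $N_{g}$-th roots of unity and the corresponding trace is algebraic over $\mathbb{F}_{p}$. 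Since only finitely many $t_{i}$'s and finitely many $\mathfrak{p}_{j}$'s appear, a single polynomial $f \in \mathbb{F}_{p}[x]$ annihilates the image of each $t_{i}$ in every $\overline{R}/\mathfrak{p}_{j}$, so $f(t_{i}) \in \bigcap_{j}\mathfrak{p}_{j} = \sqrt{(0)}$; nilpotence of the nilradical in a Noetherian ring then gives $f(t_{i})^{k} = 0$ for some $k$, completing the argument. The subtlety here is that $\overline{R}$ need not be reduced, let alone a domain, so eigenvalues are unavailable on the nose over $\overline{R}$ itself; the detour through the minimal primes, combined with nilpotence of $\sqrt{(0)}$, is what bridges the gap.
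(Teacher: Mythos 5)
Your proposal is correct and follows essentially the same route as the paper's proof: reduce $\rho^{\text{univ}}$ modulo $p$, apply Theorem \ref{reduction} under Conjecture \ref{UFMforcharp} to get finiteness of the image, use the trace-generation statement of Proposition \ref{existenceofdeformationring} to conclude $R_{\overline{\rho}}/(p)$ is finite, and finish by topological Nakayama (the paper's Lemma \ref{nakayama}). The only difference is that you spell out the step the paper leaves implicit -- why finitely many trace values generate a finite $\mathbb{F}$-algebra -- via the minimal primes, roots of unity in the quotient domains, and nilpotence of the nilradical, which is a correct and welcome justification.
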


\begin{Thm}\label{main}
	Let $ K $ be a number field and $ S $ a finite set of primes of $ K $ such that $S\cap S_p=\emptyset$. Let $\overline{\rho}:G_{K,S}\to {\rm GL}_{n}(\mathbb{F})$ be a continuous absolutely irreducible representation. Suppose that the Galois representation associated to any $\overline{\mathbb{Q}}_{p}$-points of the universal deformation ring $R_{\overline{\rho}}$ of $\overline{\rho}$ has finite image. Then the ring $R_{\overline{\rho}}[1/p]=\prod_{x}E_{x}$ is the finite direct product of fields $E_{x}$ where $E_{x}$ is a finite extension of $\mathbb{Q}_{p}$ indexed by $\overline{\mathbb{Q}}_{p}$-points of $R_{\overline{\rho}}$. In particular, there are only finitely many $\overline{\mathbb{Q}}_{p}$-points of $R_{\overline{\rho}}$, i.e. the set $\text{Hom}_{W(\mathbb{F})}(R_{\overline{\rho}},\overline{\mathbb{Q}}_{p})$  of continuous $W(\mathbb{F})$-algebra homomorphisms is finite. Moreover, assume further that $R_{\overline{\rho}}$ is $p$-torsion-free, then $R_{\overline{\rho}}$ is finite over $W(\mathbb{F})$, and the universal deformation $\rho^{\text{univ}}:G_{K,S}\to {\rm GL}_{n}(R_{\overline{\rho}})$ has finite image.
\end{Thm}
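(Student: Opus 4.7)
The plan is to reduce everything to the claim that $R_{\overline{\rho}}[1/p]$ is a finite direct product of fields, and then derive the remaining assertions. The starting point is Kisin's theorem on generic fibers of deformation rings, which asserts that $R_{\overline{\rho}}[1/p]$ is a Noetherian Jacobson $\mathbb{Q}_p$-algebra whose closed points correspond (up to Galois conjugacy) to $\overline{\mathbb{Q}}_p$-points of $R_{\overline{\rho}}$, and that the completion $R_{\overline{\rho}}[1/p]^{\wedge}_{\mathfrak{m}_x}$ at a maximal ideal $\mathfrak{m}_x$ with residue field $E_x$ pro-represents the deformation functor of the associated representation $\rho_x : G_{K,S} \to {\rm GL}_n(E_x)$ over complete Noetherian local $E_x$-algebras. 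A Noetherian Jacobson ring in which every closed point has trivial Zariski tangent space is Artinian of Krull dimension zero and decomposes as a finite direct product of local Artin rings, each of which must be a field by Nakayama's lemma. The heart of the proof is therefore to show that the tangent space vanishes at every closed point of $R_{\overline{\rho}}[1/p]$ under the global hypothesis.

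Suppose for contradiction that the tangent space at some $\mathfrak{m}_x$ is non-zero. Kisin's structural description then produces, from a non-trivial tangent direction, a non-constant formal deformation of $\rho_x$ over $E_x'[[T]]$ for a finite extension $E_x'/E_x$. Specialising $T \mapsto c$ for $c$ ranging over a small disc in the maximal ideal of $\overline{\mathbb{Z}}_p$, where convergence of the resulting series follows from the fact that the map $R_{\overline{\rho}}[1/p] \to E_x'[[T]]$ factors through a suitable affinoid neighbourhood of $\mathfrak{m}_x$, produces an uncountable family of pairwise distinct $\overline{\mathbb{Q}}_p$-points $\rho_c$ of $R_{\overline{\rho}}$. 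By hypothesis each $\rho_c$ has finite image, and by Corollary \ref{imageisfinitelygenerated} the group $G_{K,S}$ is topologically finitely generated and so has only countably many open normal subgroups. The pigeonhole principle then yields uncountably many $c$ for which $\rho_c$ factors through a common finite quotient $G_{K,S}/N$; but a finite group admits only finitely many isomorphism classes of $n$-dimensional $\overline{\mathbb{Q}}_p$-representations, and by absolute irreducibility of $\overline{\rho}$ distinct $\overline{\mathbb{Q}}_p$-points of $R_{\overline{\rho}}$ parametrise non-isomorphic lifts of $\overline{\rho}$. The contradiction forces the tangent space at each $\mathfrak{m}_x$ to vanish.

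For the remaining assertions, the decomposition $R_{\overline{\rho}}[1/p] = \prod_x E_x$ and the finiteness of the set of $\overline{\mathbb{Q}}_p$-points follow directly from the commutative-algebra observations above. When $R_{\overline{\rho}}$ is $p$-torsion-free it embeds into $R_{\overline{\rho}}[1/p] = \prod_x E_x$; compactness of $R_{\overline{\rho}}$ (a complete Noetherian local ring with finite residue field is compact) then forces its image to lie in the finite $W(\mathbb{F})$-submodule $\prod_x \mathcal{O}_{E_x}$, so $R_{\overline{\rho}}$ is itself finite over $W(\mathbb{F})$; decomposing $\rho^{\text{univ}} = (\rho_x)_x$ along this embedding and applying the finite-image hypothesis to each $\rho_x$ yields finite image for $\rho^{\text{univ}}$. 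The main obstacle is the algebraisation step in the previous paragraph---passing from a formal tangent direction to an actual one-parameter family of $\overline{\mathbb{Q}}_p$-points---where reducedness or an appropriate regularity property of $R_{\overline{\rho}}[1/p]^{\wedge}_{\mathfrak{m}_x}$ must be invoked carefully from Kisin's theory in order to rule out ``fat'' Artinian completions and trigger the pigeonhole argument.
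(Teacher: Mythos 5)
Your overall reduction is fine: granting Kisin's description of the generic fiber, trivial tangent space at every maximal ideal of $R_{\overline{\rho}}[1/p]$ gives that each localization is a field, hence finitely many maximal ideals and the product-of-fields decomposition; and your handling of the $p$-torsion-free case (embed $R_{\overline{\rho}}\hookrightarrow\prod_x E_x$, use compactness, project $\rho^{\text{univ}}$ onto the $\rho_x$) is correct and even a bit more self-contained than the paper's appeal to Allen--Calegari. The genuine gap is exactly where you place the whole weight of the proof: the vanishing of the tangent space. Your contradiction-plus-pigeonhole strategy cannot rule out the case where $R_{\overline{\rho}}[1/p]^{\wedge}_{\mathfrak{m}_x}$ is a non-reduced Artinian local ring (a ``fat point''): there the tangent space is non-zero, yet every $E_x$-algebra map to a domain such as $E_x'[[T]]$ kills the nilpotent maximal ideal, so no non-constant one-parameter family and no new $\overline{\mathbb{Q}}_p$-points are produced, and the counting argument never starts. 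Kisin's theory supplies no reducedness or regularity statement excluding this case --- excluding it is part of what the theorem asserts, since $R_{\overline{\rho}}[1/p]$ being a product of fields includes its reducedness --- so the argument is circular at the decisive step, as your own closing sentence concedes. A secondary error: you justify the countability of open normal subgroups of $G_{K,S}$ by claiming, via Corollary \ref{imageisfinitelygenerated}, that $G_{K,S}$ is topologically finitely generated; that corollary only concerns images of representations over pro-$p$ rings, and finite generation of $G_{K,S}$ is an open problem (Remark \ref{remark}). The countability you need is true, but because there are only countably many finite extensions of $K$ inside $\overline{\mathbb{Q}}$, not for the reason you give.

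What your proof is missing, and what the paper does, is a direct cohomological computation of the tangent space. By Kisin, $R_{\overline{\rho}}[1/p]^{\wedge}_{\mathfrak{m}_x}$ is the universal deformation ring of $\rho_x$ over $E_x$ and is a quotient of a power series ring in $\dim H^{1}(G_{K,S},\rho_x\otimes_{E_x}\rho_x^{\ast})$ variables. Since $\rho_x$ has finite image by hypothesis, it factors through a finite quotient $G_x=G_{K,S}/H_x$; inflation--restriction gives $H^{1}(G_{K,S},\rho_x\otimes\rho_x^{\ast})\simeq H^{1}(H_x,\rho_x\otimes\rho_x^{\ast})^{G_x}=\mathrm{Hom}(H_x,\rho_x\otimes\rho_x^{\ast})^{G_x}$ ($H_x$ acting trivially), and this vanishes because $\mathrm{Hom}(H_x^{\mathrm{ab}},\mathbb{Q}_p)=0$, a consequence of the finiteness of the relevant abelianizations for extensions unramified outside $S$ with $S\cap S_p=\emptyset$ (Theorem \ref{global structure}); this is also exactly where the hypothesis $S\cap S_p=\emptyset$ enters. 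Hence $R_{\overline{\rho}}[1/p]^{\wedge}_{\mathfrak{m}_x}=E_x$ for every $x$, which simultaneously kills positive-dimensional components and nilpotents, and the rest of your first paragraph then goes through. Without this (or an equivalent) input, your argument does not close.
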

\begin{Rem}
	\begin{enumerate}
		\item Conjecture \ref{UFM} predicts that the Galois representation associated to any $\overline{\mathbb{Q}}_{p}$-points of the universal deformation ring $R_{\overline{\rho}}$ of $\overline{\rho}$ always has finite image, and Conjecture \ref{BUFM} predicts that the universal deformation has finite image.
		\item Without the assumption that $R_{\overline{\rho}}$ is $p$-torsion-free, our result above cannot give any information about the ring $R_{\overline{\rho}}$ being finite over $W(\mathbb{F})$, as can be seen from the example $W(\mathbb{F})[[X]]/(pX)$, which satisfies our conclusion but fails to be finite over $W(\mathbb{F})$.
	\end{enumerate}
\end{Rem}

We also consider the generic fiber of the unramified deformation ring associated to a tame Galois representation. In this direction, we prove the following theorem.

\begin{Thm}\label{tame}
	Let $ K $ be a number field and $ S $ a finite set of primes of $ K $ such that $S\cap S_p=\emptyset$. Suppose that $\overline{\rho}:G_{K,S}\to {\rm GL}_{n}(\mathbb{F})$ is a continuous absolutely irreducible representation which is tame, i.e. the order of $\text{Im}(\overline{\rho})$ is prime to $p$. Assume that the generic fiber $R_{\overline{\rho}}[1/p]$ of the universal deformation ring $R_{\overline{\rho}}$ has no nontrivial idempotents. Then $R_{\overline{\rho}}[1/p]=W(\mathbb{F})[1/p]$. In particular, $R_{\overline{\rho}}$ has a unique $\overline{\mathbb{Q}}_{p}$-point, i.e. the trivial lift $\rho_{0}$ of $\overline{\rho}$ and hence the Galois representation associated to it has finite image, i.e. Conjecture \ref{UFM} holds in this case.
\end{Thm}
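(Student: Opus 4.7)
The plan is to exhibit a distinguished closed point $x_0\in\text{Spec}(R_{\overline{\rho}}[1/p])$ corresponding to the canonical lift of $\overline{\rho}$ to $W(\mathbb{F})$ that factors through the residual image, show that the Zariski tangent space at $x_0$ vanishes, and conclude that $\{x_0\}$ is a clopen subset of $\text{Spec}(R_{\overline{\rho}}[1/p])$; the idempotent hypothesis will then force the whole spectrum to consist of $x_0$ alone. To construct $\rho_0$, set $H:=\text{Im}(\overline{\rho})$, a finite group of order prime to $p$. Since any $W(\mathbb{F})$-module annihilated by a power of $p$ has vanishing $H^i(H,-)$ for $i\geq 1$, the standard obstruction argument (equivalently, Schur--Zassenhaus applied to the pro-$p$ kernel of ${\rm GL}_n(W(\mathbb{F}))\twoheadrightarrow{\rm GL}_n(\mathbb{F})$) shows that the inclusion $H\hookrightarrow{\rm GL}_n(\mathbb{F})$ lifts uniquely up to conjugation to an injection $H\hookrightarrow{\rm GL}_n(W(\mathbb{F}))$. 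Precomposing with $G_{K,S}\twoheadrightarrow H$ yields $\rho_0:G_{K,S}\to{\rm GL}_n(W(\mathbb{F}))$, hence a surjective $W(\mathbb{F})$-algebra map $R_{\overline{\rho}}\to W(\mathbb{F})$, giving the point $x_0$ with residue field $W(\mathbb{F})[1/p]$ after inverting $p$.

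The main technical step is the computation of the Zariski tangent space at $x_0$, which by Kisin's deformation theory of the generic fiber equals $H^1(G_{K,S},\text{ad}(\rho_0))$ with $\text{ad}(\rho_0)=M_n(W(\mathbb{F})[1/p])$ carrying the adjoint action through $\rho_0$. Writing $\ker(\overline{\rho})=G_{K',S'}$ for the corresponding finite extension $K'/K$ (so that $S'\cap S_p(K')=\emptyset$), and noting that the subgroup $G_{K',S'}$ acts trivially on $\text{ad}(\rho_0)$ because $\rho_0$ factors through $H$, the Hochschild--Serre spectral sequence for $1\to G_{K',S'}\to G_{K,S}\to H\to 1$ collapses (as $|H|$ is invertible in $W(\mathbb{F})[1/p]$) to yield
\[ H^1(G_{K,S},\text{ad}(\rho_0))\cong H^1(G_{K',S'},\text{ad}(\rho_0))^H. \]
It therefore suffices to verify that $\text{Hom}_{\text{cont}}(G_{K',S'}^{\text{ab}},\mathbb{Q}_p)=0$. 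This is a global class field theory input: when $S'\cap S_p(K')=\emptyset$, the pro-$p$ part of $G_{K',S'}^{\text{ab}}$ is finite, since each $v\in S'$ contributes only the finite $p$-part of $\kappa(v)^{\times}$ (the wild inertia at $v$ being pro-$\ell$ with $\ell\neq p$), the class group of $K'$ is finite, and no nontrivial $\mathbb{Z}_p$-extension of $K'$ can be unramified at every prime above $p$.

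Once the tangent space vanishes, the Noetherian localization $R_{\overline{\rho}}[1/p]_{\mathfrak{m}_{x_0}}$ coincides with its residue field, so $\mathfrak{m}_{x_0}$ is simultaneously minimal and maximal. Minimality precludes $\mathfrak{m}_{x_0}$ from lying in any other irreducible component of $\text{Spec}(R_{\overline{\rho}}[1/p])$, so $\{x_0\}$ is clopen, and the corresponding primitive idempotent produces a decomposition $R_{\overline{\rho}}[1/p]\cong W(\mathbb{F})[1/p]\times R'$. The no-nontrivial-idempotents hypothesis then forces $R'=0$, so $R_{\overline{\rho}}[1/p]=W(\mathbb{F})[1/p]$. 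The unique $\overline{\mathbb{Q}}_p$-point of $R_{\overline{\rho}}$ is then (the Galois orbit of) $\rho_0$, which factors through the finite group $H$ and has finite image, establishing Conjecture \ref{UFM} in this case. The main obstacle is the tangent space calculation, whose vanishing crucially exploits the hypothesis $S\cap S_p=\emptyset$ through class field theory; the remaining arguments are essentially formal Hochschild--Serre together with the standard structure theory of Noetherian rings with isolated closed points.
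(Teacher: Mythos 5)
Your proposal is correct and follows essentially the same route as the paper: construct the trivial (Schur--Zassenhaus) lift $\rho_{0}$, use Kisin's generic-fibre theory together with the vanishing of $H^{1}(G_{K,S},\mathrm{ad}(\rho_{0}))$ (the FAb/class-field-theory input, which you unwind via Hochschild--Serre for $\ker(\overline{\rho})$) to see that the localization of $R_{\overline{\rho}}[1/p]$ at $x_{0}$ is a field, and then invoke the no-nontrivial-idempotents hypothesis. The only cosmetic difference is the final step: the paper applies its ring-theoretic Lemma (Noetherian, no nontrivial idempotents, field localization at a maximal ideal $\Rightarrow$ that ideal is zero), whereas you argue that $\{x_{0}\}$ is clopen and split off the corresponding factor $W(\mathbb{F})[1/p]$ — the same content in a slightly different packaging.
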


\subsubsection{Proof of Theorem \ref{finitenessofunramifieddeforamtionring}}
\begin{Lem}\label{nakayama}
	We keep the notation as in Convention \ref{notai}. Let $ A\in \widehat{\mathcal{C}}_{W(\mathbb{F})} $. Suppose that the ring $A/(p)$ is finite. Then $A$ is a finite $W(\mathbb{F})$-algebra.
\end{Lem}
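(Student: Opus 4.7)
Since the structure map $W(\mathbb{F}) \to A$ lands in a local ring whose residue field has characteristic $p$, we have $p \in \mathfrak{m}_A$. My plan is to show $A$ is finitely generated as a $W(\mathbb{F})$-module via a topological Nakayama argument, after first verifying that the $\mathfrak{m}_A$-adic and $p$-adic topologies on $A$ coincide.

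The first step is to compare the two topologies. Since $A/pA$ is finite it is an Artinian local ring with maximal ideal $\mathfrak{m}_A/pA$, so this maximal ideal is nilpotent, giving $\mathfrak{m}_A^N \subseteq pA$ for some $N \geq 1$; together with the evident inclusion $pA \subseteq \mathfrak{m}_A$, this shows the two filtrations are cofinal and hence define the same topology on $A$. Because $A$ is $\mathfrak{m}_A$-adically complete and separated by hypothesis, it is then also $p$-adically complete and separated.

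The second step is to lift generators. Since $A/pA$ is finite (hence a finite-dimensional $\mathbb{F}$-vector space), I pick $a_1, \ldots, a_n \in A$ whose residues form an $\mathbb{F}$-basis of $A/pA$, which yields
\[ A = \sum_{i=1}^{n} W(\mathbb{F}) \, a_i + pA. \]
Iterating this identity gives $A = \sum_i W(\mathbb{F}) a_i + p^k A$ for every $k \geq 1$. A standard successive-approximation argument, using $p$-adic completeness of $W(\mathbb{F})$ to form convergent series of coefficients and $p$-adic separatedness of $A$ to identify the limit, then shows that every $x \in A$ has the form $\sum_i w_i a_i$ with $w_i \in W(\mathbb{F})$. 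Hence $A$ is generated over $W(\mathbb{F})$ by $a_1, \ldots, a_n$ and the lemma follows.

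No serious obstacle is anticipated: the only point that needs care is the first step, since $\mathfrak{m}_A$-adic completeness of $A$ does not in general imply $p$-adic completeness, and one must genuinely use the finiteness of $A/pA$ (through the nilpotence of its maximal ideal) to equate the two topologies before invoking completeness.
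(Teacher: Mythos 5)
Your proof is correct and follows essentially the same route as the paper: the paper simply notes that $A$ is separated for the $p$-adic topology and cites \cite[Theorem 8.4]{MR0879273} (the complete/separated form of Nakayama's lemma), which is exactly the successive-approximation lifting of generators that you carry out by hand. One minor remark: your first step is stronger than necessary, since the argument needs only $p$-adic completeness of $W(\mathbb{F})$ and $p$-adic separatedness of $A$ (immediate from $pA\subseteq\mathfrak{m}_{A}$ and $\bigcap_{n}\mathfrak{m}_{A}^{n}=0$), not $p$-adic completeness of $A$ itself, so the comparison of the $\mathfrak{m}_{A}$-adic and $p$-adic topologies can be omitted.
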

\begin{proof}
	Note that $A$ is separated for the $p$-adic topology. Then our claim follows from \cite[Theorem 8.4]{MR0879273}.
\end{proof}

\begin{proof}[Proof of Theorem \ref{finitenessofunramifieddeforamtionring}]
	By Proposition \ref{existenceofdeformationring}, we see that the ring $ R_{\overline{\rho}} $ is topologically generated over $ W(\mathbb{F}) $ by the traces $ \text{Tr}(\rho^{\text{univ}}(g)),~g\in G_{K,S}, $ where $ \rho^{\text{univ}}:G_{K,S}\to {\rm GL}_{n}(R_{\overline{\rho}}) $ is the universal deformation. It follows that the ring $ R_{\overline{\rho}}/(p) $ is generated over $ \mathbb{F} $ by the traces $ \text{Tr}(\widetilde{\rho^{\text{univ}}}(g)),~g\in G_{K,S},$ where
	\[ \widetilde{\rho^{\text{univ}}}:G_{K,S}\xrightarrow{\rho^{\text{univ}}}{\rm GL}_{n}(R_{\overline{\rho}})\xrightarrow{\pi}{\rm GL}_{n}(R_{\overline{\rho}}/(p)) \]
	is the composition of $ \rho^{\text{univ}} $ and the surjective homomorphism $ \pi: {\rm GL}_{n}(R_{\overline{\rho}}) \to {\rm GL}_{n}(R_{\overline{\rho}}/(p)) $ induced by the natural surjective ring homomorphism $ R_{\overline{\rho}} \twoheadrightarrow R_{\overline{\rho}}/(p) $. Note that the ring $ R_{\overline{\rho}}/(p) $ is the universal deformation ring for characteristic $ p $ deformation of $ R_{\overline{\rho}} $. Assuming Conjecture \ref{UFMforcharp}, we know that $ \widetilde{\rho^{\text{univ}}} $ has finite image by Theorem \ref{reduction}. This implies that $ R_{\overline{\rho}}/(p) $ is finite, and hence $ R_{\overline{\rho}} $ is finite over $ W(\mathbb{F}) $ by Lemma \ref{nakayama}. This completes the proof of our theorem.
\end{proof}

\begin{Rem}
	Allen and Calegari proved the finiteness of unramified deformation rings in many cases without assuming Conjecture \ref{UFMforcharp}. See \cite[Theorem 1]{MR3294389}.
\end{Rem}

\subsubsection{Proof of Theorem \ref{main} and Theorem \ref{tame}}
Let $G$ be a profinite group satisfing the condition $ \Phi_{p} $ and let $\overline{\rho}:G\to {\rm GL}_{n}(\mathbb{F})$ be a continous absolutely irreducible representation. Let $R_{\overline{\rho}}$ denote the universal deformation ring of $\overline{\rho}$ and $\rho^{\text{univ}}:G\to {\rm GL}_{n}(R_{\overline{\rho}})$ denote the universal deformation. In the following, for simplicity, we will write $R$ for $R_{\overline{\rho}}$, and $\mathcal{O}$ for $W(\mathbb{F})$.

\begin{Lem}\cite[10.5.7]{MR238860}\cite[p. 78-79]{de1995crystalline}
	If $R$ is a complete noetherian local $\mathcal{O}$-algebra with finite residue field $\mathbb{F}$, then $R[1/p]$ is a Jacobson ring, and for all maximal ideal $\mathfrak{m}$ of $R[1/p]$, the quotient $R[1/p]/\mathfrak{m}$ is a finite extension of the fraction field $E:=\mathcal{O}[1/p]$ of $\mathcal{O}$. 
\end{Lem}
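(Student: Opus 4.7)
The plan is to reduce to a formal power series ring over $\mathcal{O}$ using Cohen's structure theorem, and then run an induction on the number of variables using the Weierstrass preparation theorem. Since $R$ is a complete Noetherian local $\mathcal{O}$-algebra with residue field $\mathbb{F}$, Cohen's structure theorem (compare Lemma \ref{conhenstructuretheorem}) provides a surjection of $\mathcal{O}$-algebras $\mathcal{O}[[T_{1},\ldots,T_{d}]]\twoheadrightarrow R$ for some $d\geq 0$, which after inverting $p$ yields a surjection $\mathcal{O}[[T_{1},\ldots,T_{d}]][1/p]\twoheadrightarrow R[1/p]$. Since any quotient of a Jacobson ring is Jacobson, and since a surjection identifies residue fields at maximal ideals with those of their preimages, it suffices to prove both claims when $R=\mathcal{O}[[T_{1},\ldots,T_{d}]]$.

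For this special case I would induct on $d$. The base case $d=0$ is trivial since $\mathcal{O}[1/p]=E$ is already a field. For the inductive step, let $\mathfrak{m}\subset R[1/p]$ be a maximal ideal with contraction $\mathfrak{p}=\mathfrak{m}\cap R$, a prime avoiding $p$. Pick a nonzero $f\in\mathfrak{p}$; after an $\mathcal{O}$-algebra automorphism of $R$ of the form $T_{i}\mapsto T_{i}+T_{d}^{N_{i}}$ (with the $N_{i}$ chosen sufficiently large), one can arrange that the image of $f$ modulo $(p,T_{1},\ldots,T_{d-1})$ is a nonzero element of $\mathbb{F}[[T_{d}]]$, i.e.\ $f$ is $T_{d}$-regular. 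Weierstrass preparation then writes $f=u\cdot P$ with $u\in R^{\times}$ and $P\in R'[T_{d}]$ a distinguished polynomial, where $R':=\mathcal{O}[[T_{1},\ldots,T_{d-1}]]$. Hence $R/(P)$ is finite free over $R'$, and consequently $R[1/p]/(P)$ is module-finite over $R'[1/p]$. The ideal $\mathfrak{m}$ descends to a maximal ideal of $R[1/p]/(P)$, which by going-down contracts to a maximal ideal $\mathfrak{n}$ of $R'[1/p]$; by the inductive hypothesis $R'[1/p]/\mathfrak{n}$ is a finite extension of $E$, and the module-finiteness gives the same for $R[1/p]/\mathfrak{m}$.

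The Jacobson property is obtained by a parallel argument: given a non-maximal prime $\mathfrak{q}\subset R[1/p]$ and an element $f\notin\mathfrak{q}$, rescale $f$ into $R$, pick a nonzero $g\in\mathfrak{q}\cap R$, apply the substitution and Weierstrass preparation to an element of $\mathfrak{q}\cap R$ to obtain a distinguished polynomial $P\in\mathfrak{q}\cap R$, and invoke the inductive hypothesis in $R'[1/p]$ to locate a maximal ideal of $R[1/p]/(P)$ that contains the image of $\mathfrak{q}$ but misses the image of $f$. The main technical obstacle I foresee is the change-of-variables step needed before Weierstrass preparation applies: one has to ensure that some element of $\mathfrak{p}$ becomes $T_{d}$-regular modulo $(p,T_{1},\ldots,T_{d-1})$. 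Once that classical trick (the $T_{i}\mapsto T_{i}+T_{d}^{N_{i}}$ substitution applied to any nonzero lift) is in place, the rest of the induction is essentially formal, and the lemma follows.
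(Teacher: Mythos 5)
Your overall strategy (reduce to $\mathcal{O}[[T_{1},\ldots,T_{d}]]$ by Cohen's theorem, then induct on $d$ via Weierstrass preparation) is sound and is essentially the standard argument behind the references the paper cites (the paper itself gives no proof). The residue-field half of your induction works, modulo one normalization you omitted: before the substitution $T_{i}\mapsto T_{i}+T_{d}^{N_{i}}$ you must replace $f$ by an element of $\mathfrak{p}$ not divisible by $p$. If $f\in pR$ (say $f=pT_{1}$), no substitution of that form can make $f$ nonzero modulo $(p,T_{1},\ldots,T_{d-1})$, since the substitution commutes with reduction mod $p$; the fix is immediate because $p\notin\mathfrak{p}$, $\mathfrak{p}$ is prime and $R$ is $p$-adically separated, so stripping the power of $p$ from $f$ keeps it in $\mathfrak{p}$. (Also, the fact you invoke as ``going-down'' is really lying-over/incomparability for the module-finite map $R'[1/p]\to R[1/p]/(P)$: for an integral extension a prime is maximal iff its contraction is; the assertion you use is correct, only the name is wrong.)

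The genuine gap is in the Jacobson half. Your recipe begins by choosing a nonzero $g\in\mathfrak{q}\cap R$, but for $R=\mathcal{O}[[T_{1},\ldots,T_{d}]]$ the ring $R[1/p]$ is a domain, and for $d\geq 1$ the zero ideal is a non-maximal prime with $\mathfrak{q}\cap R=(0)$, so the argument cannot start. This case is not a technicality: for $d=1$ every nonzero prime of $\mathcal{O}[[T]][1/p]$ is already maximal, so your induction proves nothing there, and the entire content of Jacobsonness is precisely that the intersection of all maximal ideals is zero (a discrete valuation ring fails to be Jacobson exactly at its zero ideal, so this cannot follow formally from the nonzero-prime case). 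You therefore need a separate argument that for every nonzero $f\in R[1/p]$ some maximal ideal avoids $f$: for instance, normalize $f$ into $R$ with $f$ not divisible by $p$, apply your substitution and Weierstrass preparation to write $f=uP$ with $P$ distinguished over $R'$, reduce modulo a maximal ideal of $R'[1/p]$ furnished by induction, and observe that the resulting monic polynomial has only finitely many roots while there are infinitely many admissible evaluations of $T_{d}$; alternatively, use that $R$ is a UFD to produce a nonzero prime avoiding $f$ and then quote your nonzero-$\mathfrak{q}$ case. With such a supplement (and the standard fact, tacit in your inductive step, that a finite algebra over a Jacobson ring is Jacobson), the proof closes; as written, the Jacobson statement is not established.
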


Fix a $E$-algebra map $x:R[1/p]\twoheadrightarrow R[1/p]/\mathfrak{m}_{x}=E_{x}$ for a finite extension $E_{x}/E$ of fields where $\mathfrak{m}_{x}$ is a maximal ideal of $R[1/p]$. Let
\[\rho_{x}:G\xrightarrow{\rho^{\text{univ}}}{\rm GL}_{n}(R)\to {\rm GL}_{n}(R[1/p])\twoheadrightarrow{\rm GL}_{n}(E_{x})\]
be the specialized representation.

\begin{Thm}\cite[Section 9]{kisin2003overconvergent}
	Let $ R[1/p]_{\mathfrak{m}_{x}}^{\wedge}$ denote completion of the ring $R[1/p]_{\mathfrak{m}_{x}}$ at the maximal ideal $\mathfrak{m}_{x}$. Let $\rho_{x}^{\text{univ}}:G\to {\rm GL}_{n}(R[1/p]_{\mathfrak{m}_{x}}^{\wedge})$ be induced from $\rho^{\text{univ}}$ by the natural map $R\to R[1/p]_{\mathfrak{m}_{x}}^{\wedge}$. Then we have the following commutative diagram
	\[ 
	\begin{tikzcd}
		G \arrow[rr] \arrow[rrdd] & \rho_{x}^{\text{univ}}& {\rm GL}_{n}(R[1/p]_{\mathfrak{m}_{x}}^{\wedge}) \arrow[dd] \\
		& \rho_{x}&               \\
		&   & {\rm GL}_{n}(E_{x})           
	\end{tikzcd} \]
	and $\rho_{x}^{\text{univ}}$ is the universal continuous deformation of $\rho_{x}$. 
	
	Moreover, $R[1/p]_{\mathfrak{m}_{x}}^{\wedge}$ is the quotient of a power series over $E_{x}$ in $H^{1}(G,\rho_{x}\otimes_{E_{x}}\rho_{x}^{\ast})$ variables, by an ideal generated by at most $H^{2}(G,\rho_{x}\otimes_{E_{x}}\rho_{x}^{\ast}) $ elements where $\rho_{x}^{\ast}$ is the dual of $\rho_{x}$.
\end{Thm}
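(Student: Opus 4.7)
The plan is to identify $R[1/p]_{\mathfrak{m}_{x}}^{\wedge}$ with the universal deformation ring of $\rho_{x}$ in a suitable category of complete Noetherian local $E_{x}$-algebras with residue field $E_{x}$, and then to apply Mazur's tangent-obstruction formalism in this $E_{x}$-linear setting to read off the presentation. The commutativity of the triangle is automatic: by the definition of $\mathfrak{m}_{x}$, reducing $\rho_{x}^{\text{univ}}$ modulo $\mathfrak{m}_{x}R[1/p]_{\mathfrak{m}_{x}}^{\wedge}$ recovers the $E_{x}$-algebra quotient $R[1/p] \twoheadrightarrow E_{x}$ used to define $x$, which is by construction the map giving $\rho_{x}$. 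In particular $\rho_{x}^{\text{univ}}$ is tautologically a deformation of $\rho_{x}$ to $R[1/p]_{\mathfrak{m}_{x}}^{\wedge}$.

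The substantive step is the universal property. Given a complete Noetherian local $E_{x}$-algebra $A$ with residue field $E_{x}$ and a continuous lift $\rho_{A}: G \to {\rm GL}_{n}(A)$ of $\rho_{x}$, the task is to produce a unique $E_{x}$-algebra map $R[1/p]_{\mathfrak{m}_{x}}^{\wedge} \to A$ realizing $\rho_{A}$. The key move is a descent to the integral level: since $G$ is compact and $\rho_{A}$ is continuous, the image of $\rho_{A}$ is a compact subgroup of ${\rm GL}_{n}(A)$, and hence lies in ${\rm GL}_{n}(A^{\circ})$ for a suitable open bounded $\mathcal{O}_{E_{x}}$-subalgebra $A^{\circ} \subset A$ (equivalently, it stabilizes a $G$-stable $\mathcal{O}_{E_{x}}$-lattice in the underlying module). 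This $A^{\circ}$ is a complete Noetherian local $\mathcal{O}_{E_{x}}$-algebra with finite residue field $\kappa$ of characteristic $p$, and satisfies $A^{\circ}[1/p] = A$. Absolute irreducibility of $\overline{\rho}$ forces the residue representation of $\rho_{A}$ modulo $\mathfrak{m}_{A^{\circ}}$ to be isomorphic, after conjugation, to $\overline{\rho} \otimes_{\mathbb{F}} \kappa$. The universal property of $R$, base-changed from $W(\mathbb{F})$ to $W(\kappa)$, then supplies a local $W(\kappa)$-algebra map $R \to A^{\circ}$; inverting $p$ and completing at the preimage of $\mathfrak{m}_{x}$ yields the desired map $R[1/p]_{\mathfrak{m}_{x}}^{\wedge} \to A$. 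Uniqueness up to strict equivalence follows from the fact that $G$-stable lattices in an absolutely irreducible representation are unique up to homothety, so the construction is independent of the choice of $A^{\circ}$.

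With the universal property in hand, the presentation of $R[1/p]_{\mathfrak{m}_{x}}^{\wedge}$ follows from the standard Mazur-Schlessinger machinery transplanted to the $E_{x}$-linear setting. The tangent space $D_{\rho_{x}}(E_{x}[\epsilon]/(\epsilon^{2}))$ is canonically identified with $H^{1}(G, \rho_{x} \otimes_{E_{x}} \rho_{x}^{\ast})$, and Nakayama produces a surjection from $E_{x}[[X_{1}, \ldots, X_{h^{1}}]]$ onto $R[1/p]_{\mathfrak{m}_{x}}^{\wedge}$, where $h^{1}$ is the dimension of this cohomology group. Obstructions to lifting a deformation through a small extension with one-dimensional kernel live in $H^{2}(G, \rho_{x} \otimes_{E_{x}} \rho_{x}^{\ast})$, and the classical inductive argument bounds the minimal number of relations by $h^{2} := \dim_{E_{x}} H^{2}(G, \rho_{x} \otimes_{E_{x}} \rho_{x}^{\ast})$. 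The main obstacle is the lattice descent in the second paragraph: one must verify that $A^{\circ}$ is Noetherian and local with the expected residue field, control the ambiguity of the lattice choice, and arrange the residual representation to match $\overline{\rho} \otimes_{\mathbb{F}} \kappa$. These are precisely the rigid-analytic subtleties that make Kisin's framework in \cite{kisin2003overconvergent} the natural setting, and both the compactness of $G$ and the absolute irreducibility of $\overline{\rho}$ are essential for pushing the argument through.
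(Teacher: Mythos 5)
You should first note that the paper contains no internal proof of this statement: it is quoted with a citation to Section 9 of \cite{kisin2003overconvergent}, so the only comparison available is with Kisin's own argument, and your sketch follows essentially that route (descend a continuous lift to a bounded integral model $A^{\circ}$, match the residual representation with $\overline{\rho}\otimes_{\mathbb{F}}\kappa$ using absolute irreducibility, invoke the universal property of $R$, then run Mazur's tangent--obstruction calculus over $E_{x}$). Two points in your write-up need repair, though neither changes the strategy. First, the uniqueness step is misjustified: uniqueness of $G$-stable lattices up to homothety is a statement about representations over a field with coefficients in a discrete valuation ring, and it is not the relevant assertion over an Artinian coefficient ring; the independence of the induced map $R\to A$ from the choice of $A^{\circ}$, and its uniqueness, should instead be extracted from the fact that, since $\overline{\rho}$ is absolutely irreducible, a deformation is determined by its traces and $R$ is topologically generated over $W(\mathbb{F})$ by the universal traces (Proposition \ref{existenceofdeformationring}), so any two maps realizing $\rho_{A}$ must coincide. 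Second, the compactness/boundedness argument producing $A^{\circ}$ is clean only when the test algebra is finite-dimensional over $E_{x}$, so you should formulate the universal property on Artinian local $E_{x}$-algebras with residue field $E_{x}$ (which is all that prorepresentability of the completed local ring requires) rather than on arbitrary complete Noetherian local $E_{x}$-algebras, where the continuity and boundedness issues are exactly the subtleties that the cited source treats with care. With these adjustments, the identification of the tangent space with $H^{1}(G,\rho_{x}\otimes_{E_{x}}\rho_{x}^{\ast})$ and the obstruction bound by $\dim_{E_{x}}H^{2}(G,\rho_{x}\otimes_{E_{x}}\rho_{x}^{\ast})$ give the stated presentation, in line with Kisin's argument.
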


Below, we will apply this to our case where $G=G_{K,S}$ is the Galois group of the maximal extension of $ K $ unramified outside $ S $, and $ S $ a finite set of primes of $ K $ not containing any prime above $ p $. 

\begin{proof}[Proof of Theorem \ref{main}]
	First of all, we may assume that the ring $R[1/p]$ is non-zero, since otherwise the statement is trivial. For any maximal ideal $\mathfrak{m}_{x}$ of $R[1/p]$, the continuous representation $\rho_{x}:G_{K,S}\to {\rm GL}_{n}(E_{x})$ will factor through a finite quotient $G_{x}$ by our assumption. If $H_{x}$ denotes the kernel of $G\to G_x$, then we have
	\[ H^{1}(G,\rho_{x}\otimes_{E_{x}}\rho_{x}^{\ast})\simeq H^{1}(H_{x},\rho_{x}\otimes_{E_{x}}\rho_{x}^{\ast})^{G_x}=0, \]
	where the last equality follows from the fact that $H^1(H_{x},\mathbb{Q}_{p})=\text{Hom}(H_{x}^{\text{ab}},\mathbb{Z}_{p})=0$, since the Galois group $G_{K,S}$ is FAb by Theorem \ref{global structure}. In other words, we have $R[1/p]_{\mathfrak{m}_{x}}^{\wedge}=E_{x}$ for each $\mathfrak{m}_{x}$. In particular, we see that $R[1/p]_{\mathfrak{m}_{x}}\subset E_{x}$ with Krull dimension
	\[ \text{Krulldim}(R[1/p]_{\mathfrak{m}_{x}})=\text{Krulldim}(R[1/p]_{\mathfrak{m}_{x}}^{\wedge})=\text{Krulldim}(E_{x})=0. \]
	Thus, every $R[1/p]_{\mathfrak{m}_{x}}$ is actually a field. It follows that each maximal ideal of $R[1/p]$ is also a minimal prime ideal, and hence $R[1/p]$ has only finitely many maximal ideals because the ring $R[1/p]$ is noetherian. Since $R[1/p]$ is Jacobson, the minimal prime ideals of $R[1/p]$ are exactly the maximal ideals of $R[1/p]$. Moreover, we have 
	\[ E_{x}=R[1/p]/\mathfrak{m}_{x}\simeq R[1/p]_{\mathfrak{m}_{x}}.  \]
	It follows that $R[1/p]$ is reduced, and we have a natural isomorphism of rings
	\[ R[1/p]\simeq \prod_{x} R[1/p]/\mathfrak{m}_{x}=\prod_{x}E_{x}, \]
	where $x$ runs over all maximal ideals of $R[1/p]$.
	
	Finally, if $R$ is $p$-torsion-free,then $R$ is flat over $\mathcal{O}$. Since the set $\text{Hom}_{\mathcal{O}}(R,\overline{\mathbb{Q}}_{p})$ is finite, we see that $R$ is finite over $\mathcal{O}$. By \cite[Prop. 10]{MR3294389}, we see that the universal deformation $\rho^{\text{univ}}:G_{K,S}\to {\rm GL}_{n}(R)$ has finite image. This completes the proof of our theorem.
\end{proof}

To prove Theorem \ref{tame}, we need the following lemma.
\begin{Lem}\label{easy}
	Let $A$ be a Noetherian commutative ring, and $\mathfrak{m}$ a maximal ideal of $A$. Suppose that $A$ has no nontrivial idempotents and $A_{\mathfrak{m}}$ is a field. Then $\mathfrak{m}$ is the zero ideal.
\end{Lem}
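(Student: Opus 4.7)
The plan is to show that the hypotheses force $\mathfrak{m}$ to be the unique prime ideal of $A$, whence $A = A_{\mathfrak{m}}$ is a field and $\mathfrak{m} = 0$.

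First I would observe that since $A_{\mathfrak{m}}$ is a field, its only prime ideal is zero. Under the bijection between primes of $A_{\mathfrak{m}}$ and primes of $A$ contained in $\mathfrak{m}$, this says that $\mathfrak{m}$ is the only prime of $A$ contained in $\mathfrak{m}$; equivalently, $\mathfrak{m}$ is a minimal prime (and it is maximal by hypothesis).

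The next step is topological. Since $A$ is Noetherian, there are finitely many minimal primes $\mathfrak{m} = \mathfrak{p}_{1}, \mathfrak{p}_{2}, \ldots, \mathfrak{p}_{r}$, and $\mathrm{Spec}(A) = \bigcup_{i=1}^{r} V(\mathfrak{p}_{i})$. Because $\mathfrak{m}$ is maximal we have $V(\mathfrak{m}) = \{\mathfrak{m}\}$, and because $\mathfrak{m}$ and $\mathfrak{p}_{i}$ are distinct minimal primes for $i \geq 2$, we have $\mathfrak{p}_{i} \not\subset \mathfrak{m}$, so $\mathfrak{m} \notin V(\mathfrak{p}_{i})$. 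Thus $\mathrm{Spec}(A)$ decomposes as a disjoint union of two closed subsets $\{\mathfrak{m}\}$ and $Z := \bigcup_{i \geq 2} V(\mathfrak{p}_{i})$, each of which is therefore also open.

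If $Z$ were nonempty this clopen decomposition would yield, by the standard correspondence between clopen subsets of $\mathrm{Spec}(A)$ and idempotents of $A$, a nontrivial idempotent in $A$, contradicting the hypothesis. Hence $Z = \emptyset$, so $\mathrm{Spec}(A) = \{\mathfrak{m}\}$, which means $A$ is local with unique prime $\mathfrak{m}$. Consequently $A = A_{\mathfrak{m}}$ is a field, and $\mathfrak{m} = 0$ as desired. The one step that is not purely formal is the passage from the clopen decomposition to a nontrivial idempotent, but this is standard commutative algebra and is where the no-nontrivial-idempotents hypothesis is actually used; the rest of the argument is bookkeeping with primes in a Noetherian ring.
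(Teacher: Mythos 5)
Your proof is correct, but it takes a genuinely different route from the paper's. You argue topologically: since $A_{\mathfrak{m}}$ is a field, $\mathfrak{m}$ is both minimal and maximal, so $\{\mathfrak{m}\}$ is an isolated point of $\mathrm{Spec}(A)$; Noetherianity gives finitely many minimal primes, hence the complement of $\{\mathfrak{m}\}$ is closed; the resulting clopen decomposition, via the standard correspondence with idempotents, forces $\mathrm{Spec}(A)$ to be a singleton, whence $A = A_{\mathfrak{m}}$ is a field and $\mathfrak{m}=0$. The paper instead argues purely algebraically: from $\mathfrak{m}A_{\mathfrak{m}}=0$ and Noetherianity it writes $\mathfrak{m}=\mathrm{Ann}(t)$ for some $t\notin\mathfrak{m}$, notes $At\cong A/\mathfrak{m}$ is simple, deduces $At^{2}=At$ so $t=at^{2}$ for some $a$, observes $e=at$ is idempotent, and concludes $e=1$ (since $e=0$ would give $t^2=0$), so $t$ is a unit and $\mathfrak{m}=\mathrm{Ann}(t)=0$. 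Both proofs use Noetherianity, but in different places (finitely many minimal primes vs.\ expressing $\mathfrak{m}$ as an annihilator), and both ultimately produce a potential nontrivial idempotent; the paper constructs it explicitly from ring elements, while you appeal to the clopen-idempotent dictionary on $\mathrm{Spec}$. Your argument is more conceptual and makes the geometric content transparent ($\mathrm{Spec}(A)$ is connected with an isolated point, hence a point); the paper's is more self-contained and avoids invoking the Spec correspondence.
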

\begin{proof}
	Since $A_{\mathfrak{m}}$ is a field, we have $\mathfrak{m}A_{\mathfrak{m}}=0$. Since $A$ is Noetherian, we can write $\mathfrak{m}=\text{Ann}(t)$ for some $t\in A-\mathfrak{m}$ where $\text{Ann}(t)$ denotes the annihilator of $t$. Then the principal ideal $At$ of $A$ is a simple $A$-module. Since $t\in A-\text{Ann}(t)$, we have $t^2 \neq 0$, and $At^2=At$. It follows that $t=at^2$ for some $a \in A$. Put $e=at$. Then $e^2=e$. By our assumption, $e=at=1$ implies that $t$ is a unit of $A$, and $\mathfrak{m}=\text{Ann}(t)=0 $ is the zero ideal.
\end{proof}

\begin{proof}[Proof of Theorem \ref{tame}]
	Since the order of $\text{Im}(\overline{\rho})$ is prime to $p$, we have the trivial lift $\rho_{0}$ of $\overline{\rho}$ to ${\rm GL}_{n}(\mathcal{O})$ by Schur-Zassenhasu theorem \cite[Prop. 2.3.3]{MR2599132}, i.e. the one with $\text{Im}(\rho_{0})\simeq \text{Im}(\overline{\rho})$. Then we have a $E$-algebra map corresponding to $\rho_{0}$
	\[ x_{0}:R[1/p] \twoheadrightarrow R[1/p]/\mathfrak{m}_{x_{0}}=E. \]
	The same argument in the proof of Theorem \ref{main} shows that $R[1/p]_{\mathfrak{m}_{x_{0}}}^{\wedge}=E$. It follows that $R[1/p]_{\mathfrak{m}_{x_{0}}}$ is a field. By our assumption and Lemma \ref{easy}, we obtain that $R[1/p]=E$. We are done.
\end{proof}

\subsection{Deformations of mod $ p $ representations of $ G_{K,S} $ with full images}\label{section6.3}
In this subsection, we study deformations of mod $ p $ representations $ \overline{\rho} $ of the global Galois group $ G_{K,S} $ with big image where $ S\cap S_{p}=\emptyset $.

\subsubsection{Statements}
Let $ G $ be a profinite group and $ \mathbb{F} $ a finite field of characteristic $ p $. If $ n\geq 2 $, then we say that a representation $ \rho:G\to {\rm GL}_{n}(\mathbb{F}) $ of $ G $ is \emph{full} if its image contains $ {\rm SL}_{n}(\mathbb{F}) $.

Recall that if $ \rho:G\to {\rm GL}_{n}(\overline{\mathbb{Q}}_{p})$ is a $ p $-adic representation of a profinite group $ G $, then there is a finite extension $ L/\mathbb{Q}_{p} $ such that $ \text{Im}(\rho)\subset {\rm GL}_{n}(L) $ and there exists a $ g\in {\rm GL}_{n}(L) $ such that $ g^{-1}\rho g$ has image in $ {\rm GL}_{n}(\mathcal{O}_{L}) $. (However, the choice of $ g $ is not unique.) Then we can define the \emph{reduction} of $ \rho $ to be
\[ \overline{\rho}:G\to {\rm GL}_{n}(\mathcal{O}_{L})\twoheadrightarrow {\rm GL}_{n}(\mathcal{O}_{L}/\mathfrak{m}_{L}). \]
Note that this definition depends on the choice of $ g $ and it is only well-defined up to semi-simplification. The main result in this section is the following theorem. 

\begin{Thm}\label{dimensionalconjecturefails}
	Let $ n\geq 2 $ and $ p\geq 5 $. Suppose $p>5$ for $n=2$. Let $ K $ be a number field and $ S $ a finite set of primes of $ K $ such that $ S\cap S_{p}=\emptyset $. Then we have the following.
	\begin{enumerate}
		\item Assume that Conjecture \ref{UFM} holds. There is no $ p $-adic representation of $ G_{K,S} $ such that its reduction is full.
		\item Assume that both Conjecture \ref{UFM} and Conjecture \ref{UFMforcharp} hold. If $ \overline{\rho}:G_{K,S}\to {\rm GL}_{n}(\mathbb{F}) $ is a full representation of $ G_{K,S} $, then the universal deformation ring $ R_{\overline{\rho}} $ of $ \overline{\rho} $ is a finite ring.
		\item Assume that both Conjecture \ref{UFM} and Conjecture \ref{UFMforcharp} hold. If $ \overline{\rho}:G_{K,S}\to {\rm GL}_{n}(\mathbb{F}) $ is a full representation of $ G_{K,S} $, then we have $ \delta(\text{ad}(\overline{\rho}))>0 $ where $ \delta(-) $ is defined in Definition \ref{delta}.
	\end{enumerate}
\end{Thm}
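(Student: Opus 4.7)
The plan is to prove the three parts in order, with (i) feeding directly into (ii), and (ii) into (iii) via a dimension argument. For Part (i), I would argue by contradiction: if $\rho:G_{K,S}\to{\rm GL}_n(\overline{\mathbb{Q}}_p)$ has full reduction, then after replacing $L$ by a sufficiently large finite extension of $\mathbb{Q}_p$ we may conjugate so that $\text{Im}(\rho)\subset{\rm GL}_n(\mathcal{O}_L)$ and the residual image still contains ${\rm SL}_n(\mathbb{F})$. Under the hypotheses $p\geq 5$ (with $p>5$ when $n=2$), Manoharmayum's ``large image'' theorem from \cite{MR3336600} applies to $\rho$ and forces $\text{Im}(\rho)$ to contain a conjugate of ${\rm SL}_n(W(\mathbb{F}))$, which is an infinite subgroup of ${\rm GL}_n(\mathcal{O}_L)$. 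This contradicts Conjecture \ref{UFM}, which predicts that $\text{Im}(\rho)$ is finite.

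For Part (ii), Conjecture \ref{UFMforcharp} combined with Theorem \ref{finitenessofunramifieddeforamtionring} yields that the universal deformation ring $R_{\overline{\rho}}$ is a finitely generated $W(\mathbb{F})$-module. A $\overline{\mathbb{Q}}_p$-valued point of $R_{\overline{\rho}}$ corresponds to a lift of $\overline{\rho}$ to ${\rm GL}_n(\overline{\mathbb{Q}}_p)$; by Part (i) no such lift can exist, so $R_{\overline{\rho}}$ has no $\overline{\mathbb{Q}}_p$-points and therefore $R_{\overline{\rho}}[1/p]=0$. Hence $p$ acts nilpotently on $R_{\overline{\rho}}$, and a finitely generated module over the discrete valuation ring $W(\mathbb{F})$ on which $p^N$ vanishes is actually a finitely generated $W(\mathbb{F})/p^N$-module, hence finite as a set.

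For Part (iii), I would apply the presentation from Proposition \ref{presentation of universal ring}:
\[ R_{\overline{\rho}}\simeq W(\mathbb{F})[[X_1,\ldots,X_{h^1}]]/(f_1,\ldots,f_{h^2}), \]
where $h^i=h^i(G_{K,S},\text{ad}(\overline{\rho}))$. Since the ambient power series ring has Krull dimension $h^1+1$ and we quotient by $h^2$ elements, Krull's Hauptidealsatz gives
\[ \text{Krulldim}(R_{\overline{\rho}})\geq (h^1+1)-h^2=1-\delta(\text{ad}(\overline{\rho})). \]
By Part (ii), $R_{\overline{\rho}}$ is finite, hence Artinian of Krull dimension $0$, and therefore $\delta(\text{ad}(\overline{\rho}))\geq 1>0$. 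The main obstacle is Part (i), where one must invoke the precise form of Manoharmayum's theorem from \cite{MR3336600} and verify that the hypotheses $p\geq 5$ (with $p>5$ when $n=2$) genuinely suffice to promote largeness of the residual image to an infinite characteristic-zero image; once that lemma is available, Parts (ii) and (iii) are essentially formal, using only the finiteness result of Theorem \ref{finitenessofunramifieddeforamtionring} and standard dimension theory.
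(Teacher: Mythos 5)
Your proposal is correct, and it reaches (ii) and (iii) by a genuinely different route than the paper. Part (i) is exactly the paper's argument (Theorem \ref{fullimage} plus Conjecture \ref{UFM}). For (ii), the paper does not go through (i) at all: it applies Theorem \ref{fullimage} to the universal deformation itself, so that if $\text{char}(R_{\overline{\rho}})=0$ then $\text{Im}(\rho^{\text{univ}})$ contains a conjugate of ${\rm SL}_{n}(W(\mathbb{F})_{R_{\overline{\rho}}})$ and is infinite, contradicting Conjecture \ref{BUFM} (deduced from Conjectures \ref{UFM} and \ref{UFMforcharp} via Theorem \ref{reductionofBUFM}); finiteness then follows from $\text{char}(R_{\overline{\rho}})>0$ together with Theorem \ref{finitenessofunramifieddeforamtionring}. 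You instead combine Theorem \ref{finitenessofunramifieddeforamtionring} with the absence of $\overline{\mathbb{Q}}_{p}$-points to get $R_{\overline{\rho}}[1/p]=0$, hence $p^{N}R_{\overline{\rho}}=0$ and finiteness; this is valid and avoids both Theorem \ref{reductionofBUFM} and applying Theorem \ref{fullimage} over a possibly characteristic-zero coefficient ring, but note one caveat: the representation attached to a $\overline{\mathbb{Q}}_{p}$-point reduces into ${\rm GL}_{n}(k_{E})$ with $k_{E}$ possibly strictly larger than $\mathbb{F}$, and its residual image only contains ${\rm SL}_{n}(\mathbb{F})$, so it need not be ``full'' in the literal sense required by (i); you should rerun the argument of (i) (Manoharmayum's theorem applies with $\mathbb{F}$ a subfield of the residue field, and ${\rm SL}_{n}(W(\mathbb{F}))$ is still infinite) rather than quote (i) verbatim. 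For (iii), the paper argues by contradiction through Lemma \ref{22}: if $\delta(\text{ad}(\overline{\rho}))\leq 0$, the presentation would make $R_{\overline{\rho}}$ finite flat over $W(\mathbb{F})$, contradicting (ii); your direct dimension count $\text{Krulldim}(R_{\overline{\rho}})\geq h^{1}+1-h^{2}$ together with $R_{\overline{\rho}}$ Artinian is simpler and even yields the slightly sharper bound $\delta(\text{ad}(\overline{\rho}))\geq 1$. Both routes use the same hypotheses and are sound.
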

\begin{Rem}\label{remove}
	\begin{enumerate}
		\item In $ 2 $ and $ 3 $, we can drop the assumption that Conjecture \ref{UFMforcharp} holds if $ p>2n^{2}-1 $, and $ K $ is a totally real number field by \cite[Theorem 1 and Corollary 2]{MR3294389}.
		\item Note that conditions $ S $ is finite and $ S\cap S_{p}=\emptyset $ are indispensable, otherwise there are counterexamples. Indeed, it is proved that there exists a surjective map $ \rho:G_{\mathbb{Q}}\twoheadrightarrow {\rm SL}_{2}(\mathbb{Z}_{7}) $ unramified at $ 7 $ and ramified at infinitely many primes other than $ 7 $ in \cite[Corollary 13]{MR2154368}. Also, there exists a surjective map $ G_{\mathbb{Q}}\twoheadrightarrow {\rm SL}_{2}(\mathbb{Z}_{7})  $ which is ramified at $ 7 $ and finitely ramified but its reduction $  \overline{\rho} $ is unramified at $ 7 $, see \cite[Section 8]{MR1719819}.
	\end{enumerate}
\end{Rem}

\begin{Cor}\label{counterexample}
	Assume that both Conjecture \ref{UFM} and Conjecture \ref{UFMforcharp} hold. If $ p\geq 7 $ is a prime, $ K $ is a number field, $ S $ is a finite set of primes of $ K $ such that $ S\cap S_{p}=\emptyset $ and $ \overline{\rho}:G_{K,S}\to {\rm GL}_{n}(\mathbb{F}) $ is a full representation, then Conjecture \ref{dimensionconjecture} fails for $ \overline{\rho} $.
\end{Cor}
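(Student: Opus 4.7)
The plan is to combine parts (ii) and (iii) of Theorem \ref{dimensionalconjecturefails} and deduce a sign mismatch that prevents the equality predicted by Conjecture \ref{dimensionconjecture}. Since $p \geq 7$, the hypothesis $p \geq 5$ (with strict inequality when $n=2$) of Theorem \ref{dimensionalconjecturefails} is automatically satisfied. Moreover, by Theorem \ref{reductionofBUFM}, the joint truth of Conjectures \ref{UFM} and \ref{UFMforcharp} is equivalent to Conjecture \ref{BUFM}, so the hypotheses of Theorem \ref{dimensionalconjecturefails} are in force for the given full representation $\overline{\rho}$.

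From Theorem \ref{dimensionalconjecturefails}(ii) the universal deformation ring $R_{\overline{\rho}}$ is a finite ring, so the quotient $R_{\overline{\rho}}/(p)$ is finite and hence Artinian, giving
\[ \text{Krulldim}(R_{\overline{\rho}}/(p)) = 0. \]
From Theorem \ref{dimensionalconjecturefails}(iii) we have $\delta(\text{ad}(\overline{\rho})) > 0$, and therefore $-\delta(\text{ad}(\overline{\rho})) < 0$. Combining these,
\[ -\delta(\text{ad}(\overline{\rho})) \;<\; 0 \;=\; \text{Krulldim}(R_{\overline{\rho}}/(p)), \]
so the two quantities are unequal. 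Thus the equality predicted by Conjecture \ref{dimensionconjecture} fails for $\overline{\rho}$.

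There is no genuine obstacle here beyond correctly invoking the two parts of Theorem \ref{dimensionalconjecturefails}, whose proofs (via Manoharmayum's results on subgroups of ${\rm GL}_n$ with large residual image, together with the reduction in Theorem \ref{reduction}) contain all of the substantive content. The only elementary point to record is that a finite commutative ring has Krull dimension zero, which follows because every finite integral domain is a field, so every prime ideal of a finite ring is maximal. Note also that the general lower bound $\text{Krulldim}(R_{\overline{\rho}}/(p)) \geq -\delta(\text{ad}(\overline{\rho}))$ from Proposition \ref{presentation of universal ring} is trivially satisfied in this situation (since $0 > $ a negative number), but the strict inequality is exactly what precludes equality, and hence witnesses the failure of the dimension conjecture.
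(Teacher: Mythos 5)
Your argument is correct and is essentially the paper's own proof: both deduce from Theorem \ref{dimensionalconjecturefails} that $\delta(\text{ad}(\overline{\rho}))>0$ while $R_{\overline{\rho}}$ is a finite ring, hence $\text{Krulldim}(R_{\overline{\rho}}/(p))=0$, so $-\delta(\text{ad}(\overline{\rho}))<0=\text{Krulldim}(R_{\overline{\rho}}/(p))$ and the equality asserted by Conjecture \ref{dimensionconjecture} fails. Your added checks (that $p\geq 7$ meets the hypotheses of Theorem \ref{dimensionalconjecturefails} and that a finite ring has Krull dimension zero) are points the paper leaves implicit, but the route is the same.
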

\begin{Rem}
	As in Remark \ref{remove}, if $ p>2n^{2}-1 $, and $ K $ is a totally real number field, then we can drop the assumption that Conjecture \ref{UFMforcharp} holds.
\end{Rem}

\subsubsection{Proofs}
To prove Theorem \ref{dimensionalconjecturefails}, we need the following facts.

\begin{Lem}\cite[Lemma 2, Appendix]{MR2004460}\label{22}
	Suppose a commutative ring $ R $ has a presentation $ W(\mathbb{F})[[X_{1},\cdots,X_{r}]]/(f_{1},\cdots,f_{s}) $. 
	If $ R/(p) $ is finite and if $ r\geq s $, then $ r=s $, and $ R $ is a complete intersection and finite flat over $ W(\mathbb{F}) $.
\end{Lem}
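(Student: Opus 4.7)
The plan is to deduce the three conclusions $r=s$, complete intersection, and finite flatness by combining Krull's dimension theory with the Cohen--Macaulay property of regular local rings, working modulo $p$ first and then lifting.

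First I would reduce modulo $p$. The ring $R/(p)$ is $\mathbb{F}[[X_{1},\ldots,X_{r}]]/(\bar{f}_{1},\ldots,\bar{f}_{s})$, where $\bar{f}_{i}$ denotes the reduction of $f_{i}$. Since $R/(p)$ is finite, it has Krull dimension zero. The ambient ring $\mathbb{F}[[X_{1},\ldots,X_{r}]]$ is regular of Krull dimension $r$, and by Krull's Hauptidealsatz any quotient by $s$ elements has dimension at least $r-s$; hence $r-s\leq 0$, and combined with $r\geq s$ we obtain $r=s$. Moreover, since $\bar{f}_{1},\ldots,\bar{f}_{r}$ cut out a zero-dimensional quotient of a Cohen--Macaulay ring of dimension $r$, they form a system of parameters, and therefore a regular sequence.

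Next I would lift this regularity statement to mixed characteristic. In the ambient regular local ring $W(\mathbb{F})[[X_{1},\ldots,X_{r}]]$, the element $p$ is a non-zero-divisor, and reducing modulo $p$ the sequence $f_{1},\ldots,f_{r}$ becomes regular by the previous step, so $(p,f_{1},\ldots,f_{r})$ is a regular sequence. Because $W(\mathbb{F})[[X_{1},\ldots,X_{r}]]$ is Cohen--Macaulay, any permutation of a regular sequence is regular, so $(f_{1},\ldots,f_{r},p)$ is regular. In particular $f_{1},\ldots,f_{r}$ is a regular sequence, which means $R$ is a complete intersection of Krull dimension $(r+1)-r=1$.

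Finally, being a complete intersection, $R$ is Cohen--Macaulay of dimension $1$. Since $R/(p)$ is Artinian the element $p\in\mathfrak{m}_{R}$ is a parameter for $R$, and by the CM property it is a non-zero-divisor; thus $R$ is $p$-torsion-free and hence flat over the discrete valuation ring $W(\mathbb{F})$. Finiteness then follows from the topological Nakayama lemma: $R$ is $\mathfrak{m}_{R}$-adically (and hence $p$-adically) complete with $R/(p)$ finite, so any lift of an $\mathbb{F}$-basis of $R/(p)$ provides a finite generating set for $R$ as a $W(\mathbb{F})$-module. The only delicate point in the whole argument is the permutation step used to pass from $(p,f_{1},\ldots,f_{r})$ being regular to $(f_{1},\ldots,f_{r})$ being regular, which is where the Cohen--Macaulay property of the ambient power series ring is essential; everything else is standard dimension theory.
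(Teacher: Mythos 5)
Your proof is correct. Note that the paper itself offers no argument for this lemma at all --- it is simply quoted from \cite[Lemma 2, Appendix]{MR2004460} --- so there is no internal proof to compare against; your argument (dimension count modulo $p$ via Krull's height theorem, lifting $\bar f_{1},\ldots,\bar f_{r}$ to the regular sequence $p,f_{1},\ldots,f_{r}$, permuting, and then using the Cohen--Macaulay property of the resulting one-dimensional complete intersection to see that $p$ is a non-zero-divisor, with finiteness from complete Nakayama as in Lemma \ref{nakayama}) is exactly the standard proof of this statement. Two harmless remarks: the permutation step needs only that the ambient ring is Noetherian local and that the elements lie in the maximal ideal (Cohen--Macaulayness is sufficient but not necessary), and your argument tacitly assumes the ideal $(f_{1},\ldots,f_{s})$ is proper, i.e.\ $R\neq 0$; this is automatic in the deformation-theoretic situation where the lemma is applied, since $R_{\overline{\rho}}$ surjects onto $\mathbb{F}$.
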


We keep the notation as in Convention \ref{notai}. Let $ A\in \widehat{\mathcal{C}}_{W(\mathbb{F})} $, and we denote by $ W(\mathbb{F})_{A} $ the image of the natural local homomorphism $ W(\mathbb{F})\to A $.

\begin{Thm}\cite[Main theorem]{MR3336600}\label{fullimage}
	Let $ A\in \widehat{\mathcal{C}}_{W(\mathbb{F})} $. Let $ G $ be a closed subgroup of $ {\rm GL}_{n}(A) $. Assume that
	\begin{itemize}
		\item The cardinality of $ \mathbb{F} $ is least $ 4 $. Furthermore, assume that $ \mathbb{F}\neq \mathbb{F}_{5} $ if $ n=2 $ and that $ \mathbb{F}\neq \mathbb{F}_{4} $ if $ n=3 $.
		\item The residual image of $ G $ in $ {\rm GL}_{n}(A/\mathfrak{m}_{A}) $ contains $ {\rm SL}_{n}(\mathbb{F}) $.
	\end{itemize}
	Then $ G $ contains a conjugate of $ {\rm SL}_{n}(W(\mathbb{F})_{A}) $.
\end{Thm}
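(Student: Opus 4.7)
The plan is to reduce to constructing the Chevalley root subgroups $U_{ij}(W(\mathbb{F})_A) = \{I + t E_{ij} : t \in W(\mathbb{F})_A\}$ for $i \neq j$ inside a suitable conjugate of $G$, since these topologically generate ${\rm SL}_n(W(\mathbb{F})_A)$. I would proceed in three stages: first, standardize the residual embedding by a single conjugation in ${\rm GL}_n(A)$; second, produce canonical lifts of the Chevalley root generators using the pro-$p$ structure of ${\rm GL}_n^1(A)$; third, use the Chevalley commutator relations together with $\mathfrak{m}_A$-adic approximation to fill out the full root subgroups.

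For Stage 1, a Noether--Deuring / Jordan--H\"older argument over the finite field $A/\mathfrak{m}_A$, combined with the absolute irreducibility and rigidity of the standard representation of ${\rm SL}_n(\mathbb{F})$ (which holds when $|\mathbb{F}|\geq 4$ with the stated exceptions), lets us conjugate $G$ inside ${\rm GL}_n(A)$ so that the residual image contains the image of the canonical embedding ${\rm SL}_n(\mathbb{F}) \hookrightarrow {\rm SL}_n(A/\mathfrak{m}_A)$ induced by the natural map $W(\mathbb{F}) \to A/\mathfrak{m}_A$. For Stage 2, the closed subgroup $G_1 := G \cap {\rm GL}_n^1(A)$ sits inside the pro-$p$ group ${\rm GL}_n^1(A)$ (Lemma \ref{pro}), so we may analyse it via the $p$-adic logarithm. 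For each transvection $I + E_{ij}$ in $\bar G$, any lift $g \in G$ satisfies $g^p \in G_1$, and $\log(g^p) = p E_{ij} + \varepsilon$ with $\varepsilon \in \mathfrak{m}_A \cdot M_n(A)$. Successive corrections by conjugation with diagonal torus elements $h(\mu) = \mathrm{diag}(\mu,\mu^{-1},1,\dots,1) \in {\rm SL}_n(\mathbb{F}) \subset \bar G$ kill the error $\mathfrak{m}_A$-adically step by step, using that such conjugation scales the $(i,j)$-Lie-algebra entry by $\mu^2$. The convergence requires the existence of $\mu \in \mathbb{F}^\times$ with $\mu^2 \neq 1$, which is precisely where the bound $|\mathbb{F}| \geq 4$ enters.

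For Stage 3, having produced canonical root generators $I + [\lambda] E_{ij}$ for $\lambda$ in a set of $\mathbb{F}$-algebra generators of $W(\mathbb{F})$ inside $W(\mathbb{F})_A$, we apply the Chevalley commutator formula $[I + s E_{ij}, I + t E_{jk}] = I + st E_{ik}$ (for distinct $i,j,k$) inside the closed subgroup of $G$ generated by these elements. Combined with the torus action from Stage 1 and topological closure with respect to the $\mathfrak{m}_A$-adic topology, this sweeps out every $U_{ij}(W(\mathbb{F})_A)$, and therefore ${\rm SL}_n(W(\mathbb{F})_A) \subseteq G$ after the Stage 1 conjugation.

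The main obstacle is the $\mathfrak{m}_A$-adic approximation in Stage 2: one must show that the closed $\mathbb{Z}_p$-Lie subring of $\mathfrak{gl}_n(A)$ generated by the constructed Chevalley elements is \emph{exactly} $\mathfrak{sl}_n(W(\mathbb{F})_A)$ and not some strictly smaller subring. This amounts to a Steinberg-presentation calculation integrally over the coefficient ring $W(\mathbb{F})_A$, and the excluded cases $(n,\mathbb{F}) = (2,\mathbb{F}_5)$ and $(3,\mathbb{F}_4)$ correspond to small ranks where this presentation degenerates — for instance, ${\rm PSL}_2(\mathbb{F}_5) \cong A_5$ carries exceptional outer automorphisms that destroy the rigidity used in Stage 1, and ${\rm SL}_3(\mathbb{F}_4)$ admits an extra triality-like symmetry. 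Outside these cases the hypothesis $|\mathbb{F}| \geq 4$ supplies enough residual torus elements to iterate the correction to convergence, closing the argument.
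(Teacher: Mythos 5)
The paper does not actually prove this statement: it is quoted verbatim as the main theorem of Manoharmayum \cite{MR3336600}, so the only meaningful comparison is with his proof, which reduces to Artinian quotients, inducts over small extensions using the vanishing of cohomology groups such as $H^{1}({\rm SL}_{n}(\mathbb{F}),\mathfrak{gl}_{n}(\mathbb{F}))$ (the excluded pairs $(2,\mathbb{F}_{5})$ and $(3,\mathbb{F}_{4})$ are precisely where the required vanishing fails), and then assembles the conjugation by an inverse-limit argument. Your sketch takes a different route and has genuine gaps. The most serious one is Stage 2's reliance on the $p$-adic logarithm and the identity $\log(g^{p})=pE_{ij}+\varepsilon$: the theorem is stated for arbitrary $A\in \widehat{\mathcal{C}}_{W(\mathbb{F})}$, so $A$ may have characteristic $p$ (e.g. $\mathbb{F}[[T]]$, or the finite rings with $p^{r}A=0$ to which the paper in fact applies the theorem in the proof of Theorem \ref{dimensionalconjecturefails}); there $pE_{ij}=0$, the logarithm series is not even defined, and the construction of root elements collapses. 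Nilpotents and $p$-torsion in mixed characteristic cause the same failure, so any correct argument must avoid $\log$/$\exp$, which is exactly why the original proof is cohomological.

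Second, the point you yourself flag as ``the main obstacle'' --- that after a single convergent conjugation the root-group coefficients lie exactly in $W(\mathbb{F})_{A}$ rather than in some other, possibly non-conjugate, lift of the coefficients inside $A$ --- is not a side calculation but the actual content of the theorem; deferring it to ``a Steinberg-presentation calculation integrally over $W(\mathbb{F})_{A}$'' leaves the theorem unproved. Your successive torus corrections would also need to handle the fact that the lifted torus elements carry their own errors in all root and Cartan directions, and that the infinitely many corrections must converge to one conjugator in ${\rm GL}_{n}(A)$. Two further problems: the Chevalley commutator step in Stage 3 requires three distinct indices, so the case $n=2$ is left untreated; and your explanation of the exceptional cases is incorrect --- the outer automorphism of $A_{5}$ and a ``triality-like symmetry'' of ${\rm SL}_{3}(\mathbb{F}_{4})$ (the graph automorphism exists for ${\rm SL}_{3}$ over every field) are irrelevant; the exclusions come from the non-vanishing of $H^{1}({\rm SL}_{2}(\mathbb{F}_{5}),\mathrm{ad})$ and the analogous cohomological failure for ${\rm SL}_{3}(\mathbb{F}_{4})$.
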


\begin{proof}[Proof of Theorem \ref{dimensionalconjecturefails}]
	We prove our theorem as follows:
	\begin{enumerate}
		\item Let $ \rho:G_{K,S}\to {\rm GL}_{n}(\mathcal{O}_{L}) $ be a $ p $-adic representation where $ L/\mathbb{Q}_{p} $ is a finite extension. Suppose that the reduction $ \overline{\rho} $ of $ \rho $ is full. Then by Theorem \ref{fullimage} we see that $ \text{Im}(\rho) $ contains a conjugate of $ {\rm SL}_{n}(W(\mathbb{F})) $. In particular, $ \rho $ has infinite image, which contradicts Conjecture \ref{UFM}. Thus, such $ \rho$ does not exist.
		\item Assume that both Conjecture \ref{UFM} and Conjecture \ref{UFMforcharp} hold. By Theorem \ref{reductionofBUFM}, we see that Conjecture \ref{BUFM} holds.
		
		Let $ R_{\overline{\rho}} $ be the universal deformation ring  of $ \overline{\rho} $ and $ \rho^{\text{univ}}:G_{K,S}\to {\rm GL}_{n}(R_{\overline{\rho}}) $ the corresponding universal deformation. Suppose that $ \text{char}(R_{\overline{\rho}})=0 $. Then by Theorem \ref{fullimage} we see that $ \text{Im}(\rho^{\text{univ}}) $ contains a conjugate of $ {\rm SL}_{n}(W(\mathbb{F})_{R_{\overline{\rho}}}) $. Since $\text{char}(R_{\overline{\rho}})=0$, we know that the ring $ W(\mathbb{F})_{R_{\overline{\rho}}} $ is infinite, and hence $ \rho $ has infinite image. But this contradicts Conjecture \ref{BUFM}. Therefore, we obtain that $ \text{char}(R_{\overline{\rho}})>0 $. It follows that $p^{r}R_{\overline{\rho}}=0$ for some positive integer $r$, and the ring $W(\mathbb{F})_{R_{\overline{\rho}}}$ is finite. By Theorem \ref{finitenessofunramifieddeforamtionring}, $ R_{\overline{\rho}} $ is finite over $ W(\mathbb{F}) $ and hence $ R_{\overline{\rho}} $ is a finite ring. 
		\item Let $ R_{\overline{\rho}} $ be the universal deformation ring  of $\overline{\rho} $.
		Suppose that $ \delta(\text{ad}(\overline{\rho}))\leq 0 $. By Theorem \ref{finitenessofunramifieddeforamtionring}, we see that $ R_{\overline{\rho}} $ is finite over $ W(\mathbb{F}) $, and hence $ R_{\overline{\rho}}/(p) $ is finite. Write $ h^{i}=h^{i}(G_{K,S},\text{ad}(\overline{\rho})) $ for $ i=1,2 $. By Proposition \ref{presentation of universal ring}, $ R_{\overline{\rho}} $ has a presentation 
		\[ W(\mathbb{F})[[X_{1},\cdots,X_{h^{1}}]]/(f_{1},\cdots,f_{h^{2}}), \]
		with $ h^{1}\geq h^{2} $ because $ \delta(\text{ad}(\overline{\rho}))\leq 0 $ by our assumption. By Lemma \ref{22}, we see that $ R_{\overline{\rho}} $ is finite flat over $ W(\mathbb{F}) $, which contradicts the previous assertion $ 2 $. Therefore, we must have $  \delta(\text{ad}(\overline{\rho}))> 0 $. This completes the proof of our theorem.
	\end{enumerate}
\end{proof}

\begin{proof}[Proof of Corollary \ref{counterexample}]
	Let $ \overline{\rho}:G_{K,S}\to {\rm GL}_{n}(\mathbb{F}) $ be a full representation and let $ R_{\overline{\rho}} $ be the universal deformation ring  of $ \overline{\rho} $. By Theorem \ref{dimensionalconjecturefails}, we know that $ \delta(\text{ad}(\overline{\rho}))>0 $ and $ \text{Krulldim}(R_{\overline{\rho}}/(p))=0 $. Thus, we have
	\[ -\delta(\text{ad}(\overline{\rho}))<0= \text{Krulldim}(R_{\overline{\rho}}/(p)).\]
	That is, Conjecture \ref{dimensionconjecture} fails.
\end{proof}

\begin{acknowledgements}
Part of this paper is the author’s Ph.D. thesis at Humboldt-Universität zu Berlin. The author deeply thanks his supervisor Prof. Dr. Elmar Große-Klönne for all the helpful discussions. The author would like to thank the external examiners of the thesis for their valuable feedback. The author thanks Prof. Nigel Boston, Prof. John Labute, Prof. Dr. Andrei Jaikin-Zapirain, Prof. Ravi Ramakrishna, and Prof. Patrick Allen for helpful correspondence. 
\end{acknowledgements}

\bibliographystyle{amsalpha}
\bibliography{FM}

\end{document}